\documentclass[pdflatex,sn-mathphys-ay]{sn-jnl}% Math and Physical Sciences Numbered Reference Style

\usepackage[utf8]{inputenc} 
\usepackage[T1]{fontenc}
\usepackage{a4}
\usepackage{amsfonts, amssymb, amsmath, amsthm}

\usepackage{enumitem}

\usepackage{mathtools} % For the barred arrow.
\usepackage{tensor} % For pullbacks.
\usepackage{scalerel} % For defining a big oslash.

\usepackage[ngerman, english]{babel}
\useshorthands{"}
\makeatletter
 \defineshorthand[ngerman]{"/}{\mbox{-}\bbl@allowhyphens}
\makeatother
\addto\extrasenglish{\languageshorthands{ngerman}}

\usepackage{tikz} % Drawing pictures.
\usetikzlibrary{decorations.markings, matrix, arrows, fit, calc}
\tikzstyle{map} = [->, font=\small]
\tikzstyle{mapsto} = [|->, font=\small]
\tikzstyle{linj} = [left hook->, font=\small]
\tikzstyle{rinj} = [right hook->, font=\small]
\tikzstyle{mono} = [>->, font=\small]
\tikzstyle{epi} = [->>, font=\small]
\tikzstyle{cell} = [double,double equal sign distance,-implies, shorten >= 4.5pt, shorten <= 4.5pt, font=\small]
\tikzstyle{cellmap} = [double,double equal sign distance,-implies, font=\small]
\tikzstyle{eq} = [double,double equal sign distance]
\tikzstyle{ps} = [shorten >= 2pt]

\tikzstyle{iso} = [above, sloped, inner sep=1.5pt]
\tikzstyle{nat} = [above, sloped, inner sep=2pt]
\tikzstyle{desc} = [fill=white, inner sep=2pt]
\tikzstyle{dots} = [black, font=]

\tikzstyle{small} = [font=\scriptsize]

\tikzstyle{textbaseline} = [baseline=-3.2pt]

\tikzstyle{barred} = [decoration={markings, mark=at position 0.5 with {\draw[-] (0,-1.5pt) -- (0,1.5pt);}}, postaction ={decorate}]

\tikzstyle{math35} = [matrix of math nodes, row sep={3.25em,between origins}, column sep={3.5em,between origins}, text height=1.5ex, text depth=0.25ex, nodes in empty cells]
\tikzstyle{minimath} = [matrix of math nodes, row sep={3em,between origins}, column sep={3.25em,between origins}, font=\scriptsize, text height=1ex, text depth=0.25ex, nodes in empty cells]
\tikzstyle{scheme} = [textbaseline, x=1.6em, y=1.6em, yshift=-2.4em, font=\scriptsize, text depth=0ex, every node/.style={overlay}, execute at end picture = { \useasboundingbox ($(current bounding box.north west) + (0,0.4em)$) rectangle ($(current bounding box.south east) - (0,0.4em)$); }]

\makeatletter
\def\slashedarrowfill@#1#2#3#4#5{%
  $\m@th\thickmuskip0mu\medmuskip\thickmuskip\thinmuskip\thickmuskip
   \relax#5#1\mkern-7mu%
   \cleaders\hbox{$#5\mkern-2mu#2\mkern-2mu$}\hfill
   \mathclap{#3}\mathclap{#2}%
   \cleaders\hbox{$#5\mkern-2mu#2\mkern-2mu$}\hfill
   \mkern-7mu#4$%
}
\def\rightslashedarrowfill@{%
  \slashedarrowfill@\relbar\relbar\mapstochar\rightarrow}
\newcommand\xslashedrightarrow[2][]{%
  \ext@arrow 0055{\rightslashedarrowfill@}{#1}{#2}}
\makeatother

\def\slashedrightarrow{\xslashedrightarrow{}}

\makeatletter
\newcommand{\xRrightarrow}[2][]{\ext@arrow 0359\Rrightarrowfill@{#1}{#2}}
\newcommand{\Rrightarrowfill@}{\arrowfill@\equiv\equiv\Rrightarrow}
\newcommand{\xLleftarrow}[2][]{\ext@arrow 3095\Lleftarrowfill@{#1}{#2}}
\newcommand{\Lleftarrowfill@}{\arrowfill@\Lleftarrow\equiv\equiv}
\makeatother

\newcommand{\conc}{%
  \mathbin{
    \mathchoice
    {\raisebox{1ex}{\scalebox{.7}{$\frown$}}}
    {\raisebox{1ex}{\scalebox{.7}{$\frown$}}}
    {\raisebox{.7ex}{\scalebox{.5}{$\frown$}}}
    {\raisebox{.7ex}{\scalebox{.5}{$\frown$}}}
  }
}

\newtheorem{theorem}{Theorem}[section]
\newtheorem{lemma}[theorem]{Lemma}
\newtheorem{corollary}[theorem]{Corollary}
\newtheorem{proposition}[theorem]{Proposition}

\theoremstyle{definition}
\newtheorem{definition}[theorem]{Definition}

\newtheorem{construction}[theorem]{Construction}

\theoremstyle{remark}
\newtheorem{remark}[theorem]{Remark}
\newtheorem{example}[theorem]{Example}
\newtheorem{convention}[theorem]{Convention}

\providecommand{\cororef}[1]{Corollary~\ref{#1}}

\providecommand{\defref}[1]{Definition~\ref{#1}}
\providecommand{\defsref}[2]{Definitions~\ref{#1} and~\ref{#2}}
\providecommand{\defthreeref}[3]{Definitions~\ref{#1}, \ref{#2} and~\ref{#3}}
\providecommand{\exref}[1]{Example~\ref{#1}}

\providecommand{\exsref}[2]{Examples~\ref{#1} and~\ref{#2}}

\providecommand{\lemref}[1]{Lemma~\ref{#1}}

\providecommand{\propref}[1]{Proposition~\ref{#1}}
\providecommand{\propsref}[2]{Propositions~\ref{#1} and~\ref{#2}}
\providecommand{\remref}[1]{Remark~\ref{#1}}
\providecommand{\thmref}[1]{Theorem~\ref{#1}}

\providecommand{\conref}[1]{Construction~\ref{#1}}

\providecommand{\secref}[1]{Section~\ref{#1}}
\providecommand{\secsref}[2]{Sections~\ref{#1} and~\ref{#2}}

\providecommand{\appref}[1]{Appendix~\ref{#1}}

\providecommand{\outlineref}{\hyperref[Outline]{Outline}}

\providecommand{\dfn}{\coloneqq}
\providecommand{\nfd}{\eqqcolon}

\providecommand{\of}{\circ}
\providecommand{\iso}{\cong}

\providecommand{\brar}{\slashedrightarrow}
\providecommand{\xrar}{\xrightarrow}
\providecommand{\xlar}{\xleftarrow}
\providecommand{\xbrar}{\xslashedrightarrow}
\providecommand{\Rar}{\Rightarrow}
\providecommand{\xRar}{\xRightarrow}

\providecommand{\into}{\hookrightarrow}

\DeclareMathOperator{\dash}{\makebox[1.3ex]{\text{--}}}

\providecommand{\tens}{\otimes}

\providecommand{\ul}[1]{\underline{#1}{}}
\providecommand{\ull}[1]{\ul{\ul{#1}}{}}

\providecommand{\brks}[1]{\lbrack #1 \rbrack}
\providecommand{\bigbrks}[1]{\bigl\lbrack #1 \bigr\rbrack}

\providecommand{\pars}[1]{\left(#1\right)}
\providecommand{\bigpars}[1]{\bigl(#1\bigr)}

\providecommand{\lns}[1]{\lvert#1\rvert}

\providecommand{\map}[3]{#1\colon#2\to#3}

\providecommand{\emb}[3]{#1\colon#2\into#3}

\providecommand{\cell}[3]{#1\colon#2\Rightarrow#3}

\providecommand{\hmap}[3]{#1\colon#2\slashedrightarrow#3}

\providecommand{\inv}[1]{{#1}^{-1}}

\DeclareMathOperator{\src}{src}
\DeclareMathOperator{\tgt}{tgt}

\newcommand{\id}{\mathrm{id}}
\newcommand{\yon}{\mathrm{y}}
\providecommand{\A}{\mathcal A}

\providecommand{\op}[1]{#1^\textup{op}}
\providecommand{\co}[1]{#1^\textup{co}}
\providecommand{\coop}[1]{#1^\textup{coop}}

\providecommand{\ps}[1]{\widehat{#1}}

\providecommand{\catvar}[1]{\mathcal{#1}}

\providecommand{\1}{\mathsf 1}
\providecommand{\2}{\mathsf 2}
\providecommand{\A}{\catvar A}

\providecommand{\C}{\catvar C} % clashes with fontenc when uploading to arXiv
\providecommand{\D}{\catvar D}
\providecommand{\E}{\catvar E}

\providecommand{\K}{\catvar K}
\renewcommand{\L}{\catvar L}

\providecommand{\R}{\catvar R}
\providecommand{\s}{\catvar S}

\providecommand{\Set}{\mathsf{Set}}

\providecommand{\Cls}{\mathsf{Cls}}
\providecommand{\Cat}{\mathsf{Cat}} % Category of small categories
\providecommand{\Catun}{\Cat'_{\textup{un}}} % Sesquicategory of unnatural transformations

\providecommand{\inCat}[1]{\Cat(#1)}
\providecommand{\TwoCat}{\mathsf{2Cat}}
\providecommand{\TwoCatLax}{\TwoCat'_\textup{lax}}
\providecommand{\TwoCatOpl}{\TwoCat'_\textup{opl}}
\providecommand{\TwoCatPs}{\TwoCat'_\textup{ps}}
\providecommand{\SpTwoFib}{\mathsf{Sp2Fib}}
\providecommand{\SpCoopTwoFib}{\mathsf{SpCoop2Fib}}
\providecommand{\SpOpTwoFib}{\mathsf{SpOp2Fib}}
\providecommand{\SpCoTwoFib}{\mathsf{SpCo2Fib}}
\providecommand{\SpTwoTwoFib}{\mathsf{SpTwo2Fib}}
\providecommand{\LdSpTwoTwoFib}{\SpTwoTwoFib_\textup{ld}}
\providecommand{\SfLdSpTwoTwoFib}{\SpTwoTwoFib_\textup{ld,sf}}

\providecommand{\und}[1]{\lns{#1}} % undercategory
\providecommand{\inhom}[1]{\brks{#1}}

\DeclareMathOperator{\yev}{e} % Evaluation ( )_a(id_a) used in the Yoneda lemma

\DeclareMathOperator{\str}{str}
\DeclareMathOperator{\costr}{costr}

\providecommand{\Prof}{\mathsf{Prof}}

\providecommand{\inProf}[1]{\Prof(#1)}
\providecommand{\inSpTwoFib}[1]{\mathsf{SpTwoFib}(#1)}
\providecommand{\inSpFib}[1]{\mathsf{SpFib}(#1)}
\providecommand{\inSpOpFib}[1]{\mathsf{SpOpFib}(#1)}

\providecommand{\hc}{\odot}

\DeclareMathOperator{\hs}{\slash_\textup h}

\newcommand{\cocart}{\mathrm{cocart}}
\newcommand{\cart}{\mathrm{cart}}

\providecommand{\cur}[1]{#1^{\scriptscriptstyle\lambda}}

\begin{document}
	\title{Virtual double categories of split two-sided 2-fibrations}
	\subtitle{\it\small Dedicated to Bob Paré on the occasion of his 80th birthday}
	
	\author*{\fnm{Seerp Roald} \sur{Koudenburg}}\email{roaldkoudenburg@gmail.com}
	
	\affil{\orgdiv{Mathematics Research and Teaching Group}, \orgname{Middle East Technical University Northern Cyprus Campus}, \orgaddress{\postcode{99738} \city{Kalkanlı}, \state{Güzelyurt}, \country{TRNC}}}

	\abstract{
		This paper introduces and studies split two"/sided 2"/fibrations and locally discrete split two"/sided 2"/fibrations, using a formal categorical approach. We generalise Street's notion of split two"/sided fibration internal to a 2"/category to one internal to a sesquicategory. Given a sesquicategory we construct a virtual double category whose horizontal (loose) morphisms are its internal split two"/sided fibrations. Specialising to the sesquicategory of lax natural transformations we obtain the virtual double category of split two"/sided 2"/fibrations, which we study in detail. We then restrict to the sub"/virtual double category of locally discrete split two"/sided 2"/fibrations and show that therein the usual Yoneda 2"/functors satisfy a double"/categorical formal notion of Yoneda morphism, which formally captures universal properties similar to those satisfied by the morphisms comprising a Yoneda structure on a 2"/category. As a consequence we obtain a `two"/sided Grothendieck correspondence' of locally discrete split two"/sided 2"/fibrations $A \brar B$ and 2"/functors $B \to \Cat^{\op A}$. Restricting to $A = 1$, the terminal 2"/category, we improve Buckley and Lambert's `Grothendieck correspondence' for locally discrete split op"/2"/fibrations by extending the sense in which it is functorial.
	}
	
	\keywords{2-fibration, two-sided fibration, discrete fibration, virtual double category, Yoneda structure, formal category theory}
	\pacs[2020 Mathematics Subject Classification]{18D30, 18D70, 18N10}

	\maketitle
	%\tableofcontents
	
	\section*{Introduction} \addcontentsline{toc}{section}{\protect\numberline{}Introduction}
	Recently (split) $2$"/fibrations, a $2$"/categorical generalisation of ordinary (split) fibrations that was introduced and studied in \cite{Buckley14}, have found several uses (\cite{Lucyshyn-Wright16}, \cite{Lambert24}, \cite{Mesiti24} and \cite{Gagna-Harpaz-Lanari24}). The goal of the present paper is to similarly introduce and study the notion of split two"/sided $2$"/fibration, generalising the ordinary notion of split two"/sided fibration, by taking a formal categorical approach. As a consequence of this approach, besides their definition, we simultaneously obtain a virtual double category of split two"/sided $2$"/fibrations. Roughly, virtual double categories (\cite{Cruttwell-Shulman10}) generalise ordinary double categories by allowing multicells with paths of horizontal (loose) morphisms as their horizontal source, and the virtual double category $\SpTwoTwoFib$ obtained here is that of $2$"/categories, $2$"/functors and split two"/sided $2$"/fibrations: these form its objects, vertical (tight) morphisms and horizontal morphisms respectively. In the outline of the paper below we describe its construction in some more detail.
	
	Our formal categorical approach immediately is advantageous. By studying the multicells of $\SpTwoTwoFib$ for instance, we naturally obtain the right notion of their (multi)morphism of split two"/sided $2$"/fibrations. Similarly, after restricting to the sub"/virtual double category of locally discrete split two"/sided $2$"/fibrations, whose fibres are small categories, we immediately have available the right formal notion of ``Yoneda morphism'' therein. The latter notion, introduced in \cite{Koudenburg24}, isolates universal properties analogous to those satisfied by the morphisms comprising a Yoneda structure on a $2$"/category (\cite{Street-Walters78} and \cite{Weber07}). Given a locally small $2$"/category $A$, our main result (\cororef{main result}) shows that the usual Yoneda $2$"/functor $\map\yon A{\Cat^{\op A}}$, that sends $a \in A$ to the representable $2$"/functor $\map{A(\dash, a)}{\op A}\Cat$, is a formal Yoneda morphism in (roughly) the virtual double category of locally discrete split two"/sided $2$"/fibrations with small fibres. As a consequence we obtain a ``Grothendieck correspondence'' between such two"/sided $2$"/fibrations $A \brar B$ and $2$"/functors $B \to \Cat^{\op A}$, generalising the classical Grothendieck correspondence between ordinary split opfibrations over a $1$"/category $B$ and $1$"/functors $B \to \Cat$ (see e.g.\ Section~1 of \cite{Gray69}). Restricting the main result to $A = 1$, the terminal $2$"/category, we recover as well as ``extend the functoriality of'' (see the outline below and \remref{extending functoriality}) the correspondence of split op"/$2$"/fibrations over a $2$"/category $B$ and $2$"/functors $B \to \Cat$, which is due to \cite{Lambert24} and a restriction of a more general correspondence for fibrations of bicategories (\cite{Bakovic10} and \cite{Buckley14}).
	
	The motivation for this paper is twofold.
	\begin{enumerate}
		\item Once we have assembled $2$"/categories, $2$"/functors and (locally discrete) split two"/sided $2$"/fibrations into a virtual double category $\SpTwoTwoFib$, recent formal results obtained within the setting of virtual double categories---such as those of \cite{Koudenburg15b}, \cite{Koudenburg24}, \cite{Arkor-McDermott25} and \cite{Hoshino-Kawase24}---can be directly applied to $\SpTwoTwoFib$.
		
		In particular the lifting theorem for algebraic Yoneda embeddings of Section~8 of \cite{Koudenburg15b} applies to lift the Grothendieck correspondence for locally discrete split two"/sided $2$"/fibrations to a correspondence for monoidal such $2$"/fibrations. The latter generalises the split variant of the Grothendieck correspondence for monoidal ordinary fibrations introduced by \cite{Moeller-Vasilakopoulou20}.
		
		All of the formal category theory developed in \cite{Koudenburg24} too can be applied, such as its results on exact cells, totality and cocompleteness; see its Sections~5, 6 and 7 respectively as well as \remref{other formal results} below.

		\item The construction of the virtual double category of split two"/sided $2$"/fibrations presented here is set up in a general fashion that allows it to be re"/used in constructing virtual double categories of similar types of fibration. By modifying the construction slightly it is, for instance, possible to obtain a virtual double category of double split fibrations. Inside it the split variant of the Grothendieck correspondence of double fibrations, as obtained by \cite{Cruttwell-Lambert-Pronk-Szyld22}, can be considered as a formal Yoneda embedding. The latter can then be lifted to obtain an analogous correspondence for monoidal double split fibrations, which is likely to be of use in applied category theory (\cite{Patterson23}).
	\end{enumerate}
	
	In closing this introduction we briefly explain why Yoneda structures on $2$"/categories (\cite{Street-Walters78}) are unable to formally capture the universal property of the Yoneda $2$"/functors $\map\yon A\Cat^{\op A}$. The reason is that any locally discrete split two"/sided $2$"/fibration $A \xlar p J \xrar q B$ is related to its corresponding $2$"/functor $\map{\cur J}B{\Cat^{\op A}}$ via a lax natural transformation $\chi$ of the form below left, and lax natural transformations do not form the cells of any $2$"/category since their compositions do not satisfy the interchange law. On the other hand $\chi$ can be regarded as a $(1, 1)$"/ary cell below right in the virtual double category of split two"/sided $2$"/fibrations that we will construct, with the `unit' two"/sided $2$"/fibration as horizontal target.
	\begin{displaymath}
		\begin{tikzpicture}[baseline]
			\matrix(m)[math35]{J & A \\ B & \Cat^{\op A} \\};
			\path[map]	(m-1-1) edge node[above] {$p$} (m-1-2)
													edge node[left] {$q$} (m-2-1)
									(m-1-2) edge node[right] {$\yon$} (m-2-2)
									(m-2-1) edge node[below] {$\cur J$} (m-2-2);
			\path				(m-1-2) edge[cell, shorten >= 9pt, shorten <= 9pt] node[below right] {$\chi$} (m-2-1);
		\end{tikzpicture} \qquad \qquad \qquad \qquad \begin{tikzpicture}[baseline]
			\matrix(m)[math35, column sep={4.2em,between origins}]{ A & B \\ \Cat^{\op A} & \Cat^{\op A} \\};
			\path[map]	(m-1-1) edge[barred] node[above] {$J$} (m-1-2)
													edge node[left] {$\yon$} (m-2-1)
									(m-1-2) edge node[right] {$\cur J$} (m-2-2);
			\path				(m-2-1) edge[eq] (m-2-2);
			\path[transform canvas={xshift=2.1em}]	(m-1-1) edge[cell] node[right] {$\chi$} (m-2-1);
		\end{tikzpicture}
	\end{displaymath}
	
	\section*{Outline} \addcontentsline{toc}{section}{\protect\numberline{}Outline} \label{Outline}
	Here we briefly outline the contents of this paper. As explained the setting for our formal approach is required to contain lax natural transformations. Since such transformations do not form a $2$"/category we regard them as cells of a sesquicategory instead: a $2$"/dimensional categorical structure that has objects, morphisms and cells exactly like a $2$"/category does but whose cells do not admit horizontal composition. We recall their definition in \secref{sesquicategory section} and describe two examples, one of which being the sesquicategory $\mathsf{2Cat}_\textup{lax}$ of $2$"/categories, $2$"/functors and lax natural transformations.
	
	The main idea of our construction of a virtual double category of split two"/sided $2$"/fibrations is to generalise Street's notion of split two"/sided fibration $A \brar B$ internal to a suitable $2$"/category $\C$ (\cite{Street74b}) to one that is internal to a sesquicategory. Street's notion can be equivalently defined as a profunctor $A^\2 \brar B^\2$ internal to the underlying $1$"/category $\und{\C}$, between the `$\2$"/cotensor' internal categories of $A$ and $B$. The notion of comma object, which generalises that of $\2$"/cotensor, is considered inside sesquicategories in \secref{comma objects section}. Comma objects in $\mathsf{2Cat}_\textup{lax}$ recover Gray's notion of lax comma $2$"/category (\cite{Gray74}). \secref{internal profunctors section} recalls the notion of profunctors internal to a suitable $1$"/category $\E$ and shows that, together with internal functors in $\E$, they form an `augmented' virtual double category $\inProf\E$. The latter notion (\cite{Koudenburg20}) extends that of ordinary virtual double category, whose multicells are of the form left below, by including cells with empty horizontal targets, as in the middle below; in particular it adds `vertical' cells whose horizontal source and target are both empty, as below right. As explained, \secref{internal split two-sided fibrations section} defines split two"/sided fibrations $A \brar B$ internal to a suitable sesquicategory $\s$ to be profunctors of the form $A^\2 \brar B^\2$ internal to the underlying $1$"/category $\und\s$. We take the augmented virtual double category $\inSpTwoFib\s$ of objects, morphisms and internal split two"/sided fibrations in $\s$ to be a sub"/augmented virtual double category of $\inProf{\und{\s}}$.
	\begin{displaymath}
		\begin{tikzpicture}[baseline]
			\matrix(m)[math35]{A_0 & A_1 & A_{n'} & A_n \\ C & & & D \\};
			\path[map]	(m-1-1) edge[barred] node[above] {$J_1$} (m-1-2)
													edge node[left] {$f$} (m-2-1)
									(m-1-3) edge[barred] node[above] {$J_n$} (m-1-4)
									(m-1-4) edge node[right] {$g$} (m-2-4)
									(m-2-1) edge[barred] node[below] {$K$} (m-2-4);
			\path[transform canvas={xshift=1.75em}]	(m-1-2) edge[cell] node[right] {$\phi$} (m-2-2);
			\draw				($(m-1-2)!0.5!(m-1-3)$) node {$\dotsb$};
		\end{tikzpicture} \quad\quad\hspace{-0.2em} \begin{tikzpicture}[baseline]
			\matrix(m)[math35, column sep={1.75em,between origins}]
				{A_0 & & A_1 & & A_{n'} & & A_n \\ & & & C & & & \\};
			\path[map]	(m-1-1) edge[barred] node[above] {$J_1$} (m-1-3)
													edge node[below left] {$f$} (m-2-4)
									(m-1-5) edge[barred] node[above] {$J_n$} (m-1-7)
									(m-1-7) edge node[below right] {$g$} (m-2-4);
			\path				(m-1-4) edge[cell] node[right] {$\psi$} (m-2-4);
			\draw				(m-1-4) node {$\dotsb$};
		\end{tikzpicture} \quad\quad\hspace{-0.2em} \begin{tikzpicture}[baseline]
			\matrix(m)[math35]{A_0 \\ C \\};
			\path[map]	(m-1-1) edge[bend right=45] node[left] {$f$} (m-2-1)
													edge[bend left=45] node[right] {$g$} (m-2-1);
			\path				(m-1-1) edge[cell] node[right] {$\xi$} (m-2-1);
		\end{tikzpicture}
	\end{displaymath}
	
	In \secref{split op-2-fibrations section} we recall Lambert's notion of split op"/$2$"/fibration $J \to B$ and use one of the main results of \cite{Lambert24} to see that it coincides with that of a split two"/sided fibration $1 \brar B$ internal to the sesquicategory $\mathsf{2Cat}_\textup{lax}$.  This correspondence is used in \secref{split two-sided 2-fibrations section} to describe split two"/sided fibrations $A \brar B$ internal to $\mathsf{2Cat}_\textup{lax}$, which we define to be split two"/sided $2$"/fibrations. We also describe the (multi)cells of split two"/sided $2$"/fibrations, which form the cells of the augmented virtual double category $\SpTwoTwoFib \dfn \inSpTwoFib{\mathsf{2Cat}_\textup{lax}}$. Roughly, the $(n, 1)$"/ary cells $\phi$ left above are suitable multimorphisms of split two"/sided $2$"/fibrations (see \thmref{unary cells of split two-sided 2-fibrations}), the $(n, 0)$"/ary cells $\psi$ in the middle above are certain ``marked'' lax natural transformations (see \defref{marked lax natural transformation}), such as the transformation $\chi$ mentioned at the end of the introduction, while the $(0,0)$"/ary (vertical) cells $\xi$ above are ordinary $2$"/natural transformations $f \Rar g$.
	
	In \secref{locally discrete split two-sided 2-fibrations section} we restrict to split two"/sided $2$"/fibrations that are locally discrete and whose fibres are small, meaning that their fibres are small $1$"/categories. They generate a sub"/augmented virtual double category $\SfLdSpTwoTwoFib$ of $\SpTwoTwoFib$. In \cite{Koudenburg24} a formal Yoneda embedding is defined to be a (formally) dense morphism that satisfies a `Yoneda axiom', and the final \secsref{density section}{Yoneda axiom section} are devoted to proving that the Yoneda $2$"/functor \mbox{$\map\yon A{\Cat^{\op A}}$} is a formal Yoneda embedding in $\SfLdSpTwoTwoFib$. \secref{density section} considers the `two"/sided Grothendieck construction' of a $2$"/functor $\map gB{\Cat^{\op A}}$ as the lax comma $2$"/category $\yon \slash g$, which forms a locally discrete split two"/sided $2$"/fibration $\hmap{\yon \slash g}AB$. Formal density of $\yon$ requires the lax natural transformation defining $\yon \slash g$, regarded as a $(1, 0)$"/ary cell in $\LdSpTwoTwoFib$, to define $g$ as a left Kan extension of $\yon$ along $\yon \slash g$, in the sense of \cite{Koudenburg24}, which is shown in \thmref{Yoneda 2-functor is dense}. The formal Yoneda axiom requires, for each locally discrete split two"/sided $2$"/fibration $\hmap JAB$ with small fibres, the existence of a $2$"/functor $\map{\cur J}B{\Cat^{\op A}}$ such that $J \iso \yon \slash \cur J$. \secref{Yoneda axiom section} constructs the $2$"/functor $\cur J$, using a generalisation of the ordinary inverse Grothendieck construction, and \thmref{Yoneda axiom} proves the required isomorphism. The main result \cororef{main result} then concludes that the Yoneda $2$"/functor forms a formal Yoneda axiom. In closing we list some of its consequences, including the `Grothendieck correspondence' of small"/fibred locally discrete split two"/sided $2$"/fibrations $A \brar B$ and $2$"/functors $B \to \Cat^{\op A}$. Restricting to $A = 1$, the terminal $2$"/category, we recover and improve Lambert's Grothendieck correspondence of small"/fibred locally discrete split op"/$2$"/fibrations $1 \brar B$ and $2$"/functors $B \to \Cat$ (\cite{Lambert24}). The improvement (\cororef{bijection between cells induced by Yoneda 2-functor} and \remref{extending functoriality}) consists of extending the functoriality of the correspondence to a bijection between multimorphisms $\phi$ of locally discrete split two"/sided $2$"/fibrations below left (\propref{cells of locally discrete split two-sided 2-fibrations}) and ``marked'' lax natural transformations $\psi$ below right (\defref{marked lax natural transformation}). Here the $2$"/functors $\cur J$ and $\cur K$ correspond to the small"/fibred locally discrete split op"/$2$"/fibrations $J$ and $K$ respectively.
	\begin{displaymath}
		\begin{tikzpicture}[baseline]
			\matrix(m)[math35, column sep={1.75em,between origins}]
				{	1 & & B & & B_1 & \cdots & B_{n'} & & B_n \\
					& & & 1 & & D & & & \\};
			\path[map]	(m-1-1) edge[barred] node[above] {$J$} (m-1-3)
									(m-1-3) edge[barred] node[above] {$H_1$} (m-1-5)
									(m-1-7) edge[barred] node[above] {$H_n$} (m-1-9)
									(m-1-9) edge[transform canvas={xshift=2pt}] node[below right] {$s$} (m-2-6)
									(m-2-4) edge[barred] node[below] {$K$} (m-2-6);
			\path				(m-1-1) edge[transform canvas={xshift=-1pt}, eq] (m-2-4)
									(m-1-5) edge[cell] node[right] {$\phi$} (m-2-5);
		\end{tikzpicture} \qquad \qquad \begin{tikzpicture}[baseline]
			\matrix(m)[math35, column sep={1.75em,between origins}]
				{	B & & B_1 & \cdots & B_{n'} & & B_n \\
					& & & & & & \\
					& & & \Cat & & & \\};
			\draw				($(m-1-7)!0.5!(m-3-4)$) node (D) {$D$};
			\path[map]	(m-1-1) edge[barred] node[above] {$H_1$} (m-1-3)
													edge[transform canvas={xshift=-2pt}] node[below left] {$\cur J$} (m-3-4)
									(m-1-5) edge[barred] node[above] {$H_n$} (m-1-7)
									(m-1-7) edge[transform canvas={xshift=2pt}] node[below right] {$s$} (D)
									(D) edge[transform canvas={xshift=2pt}] node[below right] {$\cur K$} (m-3-4);
			\path				(m-1-4) edge[transform canvas={yshift=-1.3em}, cell] node[right] {$\psi$} (m-2-4);
		\end{tikzpicture}
	\end{displaymath}
	
	\subsection*{Conventions and notation} \label{Conventions and notation} \addcontentsline{toc}{section}{\protect\numberline{}Conventions and notation}
	Regarding paths:
	\begin{enumerate}[label=-]
		\item For any integer $n \geq 1$ we write $n' \dfn n - 1$ for its predecessor.
		\item Given ``composable'' tuples $\ul i = (i_1, \dotsc, i_n)$ and $\ul j = (j_1, \dotsc, j_m)$ we write $\ul i \conc \ul j \dfn (i_1, \dotsc, i_n, j_1, \dotsc, j_m)$ for their concatenation.
		\item For a tuple $\ul i = (i_1, \dotsc, i_n)$ of objects of a category we write $\id_{\ul i} \dfn (\id_{i_1}, \dotsc, \id_{i_n})$ for the corresponding tuple of identity morphisms.
	\end{enumerate}
	Regarding sizes:
	\begin{enumerate}[label=-]
		\item We assume fixed a hierarchy of sub"/$1$"/categories $\Set \subseteq \Set' \subseteq \Cls$ of small sets, large sets, and classes, such that the collections of morphisms of $\Set$ and $\Set'$ form objects in $\Set'$ and $\Cls$ respectively.
		\item For each $\E = \Set$, $\Set'$ or $\Cls$ we denote by $\inCat{\E}$ the $2$"/category of $1$"/categories, functors and $1$"/natural transformations internal in $\C$, and by $\und{\inCat{\C}}$ its underlying $1$"/category of internal categories and functors. Thus $\Cat \dfn \inCat{\Set}$ and $\Cat' \dfn \inCat{\Set'}$ are the $2$"/categories of small and large $1$"/categories respectively, and $\inCat{\Cls}$ is the $2$"/category of \emph{class-sized} $1$"/categories whose collections of objects and morphisms form classes.
		\item Following \cite{Lambert24}, given any ``$n$"/dimensional categorical structure'' $\mathcal K$ we denote by $\und{\mathcal K}$ the underlying $(n-1)$"/dimensional structure obtained by removing the top"/dimensional cells of $\mathcal K$. For example $\und{\und{\TwoCat}}$ denotes the $1$"/category of small $2$"/categories and $2$"/functors obtained by removing the $2$"/cells and $3$"/cells from the $3$"/category $\TwoCat$ of small $2$"/categories.
	\end{enumerate}
	Regarding composition and duals:
	\begin{enumerate}[label=-]
		\item In $2$"/categories composition of morphisms, horizontal composition of cells (i.e.\ along an object) and whiskerings of cells with morphisms are denoted by $\of$. Vertical composition of cells (i.e.\ along a morphism) is denoted by $\hc$ and written in diagrammatic order.
		\item In (augmented) virtual double categories composition of morphisms and composition of cells along a boundary of horizontal (loose) morphisms is called \emph{vertical composition} and denoted by $\of$. Composition of cells along a vertical (tight) morphism is called \emph{horizontal composition}, denoted by $\hc$ and written in diagrammatic order.
		\item Given a $2$"/category $\K$ we denote by $\op \K$, $\co \K$ and $\coop \K$ the three dual $2$"/categories formed by reversing the direction either of the morphisms (`op') or cells (`co') of $\K$, or of both (`coop'). By the ``op"/dual'' of a result or definition concerning $2$"/categories we mean its application to the op"/duals of the involved $2$"/categories; ``co"/dual'' and ``coop"/dual'' are used similarly. A ``split op"/$2$-fibration'' (\defref{split op-2-fibration}) for example is a $2$"/functor $\map qJB$ whose op"/dual $\map{\op q}{\op J}{\op B}$ is a split $2$"/fibration.

	\end{enumerate}
	
	\section{Sesquicategories} \label{sesquicategory section}
	We start by recalling the notion of sesquicategory; see e.g.\ Section~3 of \cite{Stell94} or Section~2 of \cite{Lack10}. In \secref{split two-sided 2-fibrations section} we will define a split two"/sided $2$"/fibration to be an internal split two"/sided fibration (\secref{internal split two-sided fibrations section}) in the sesquicategory of $2$"/categories, $2$"/functors and lax natural transformations (\exref{lax natural transformation}).
	\begin{definition} \label{sesquicategory}
		A \emph{sesquicategory} $\s$ consists of an underlying class"/sized $1$"/category $\und\s$, equipped with a lifting $\map{\s(\dash, \dash)}{\op{\und\s} \times \und\s}{\und{\inCat{\Cls}}}$ of its hom"/functor \mbox{$\map{\und\s(\dash, \dash)}{\op{\und\s} \times \und\s}{\Cls}$}. That is the diagram below commutes, where the functor $\map {\lns{\dash}}{\und{\inCat{\Cls}}}{\Cls}$ on the right sends each category to its class of objects.
		\begin{displaymath}
			\begin{tikzpicture}
				\matrix(m)[math35, column sep={7em,between origins}]{ & \und{\inCat{\Cls}} \\ \op{\und\s} \times \und{\s} & \Cls \\};
				\path[map]	(m-1-2) edge node[right] {$\lns{\dash}$} (m-2-2)
										(m-2-1) edge node[above left] {$\s(\dash, \dash)$} (m-1-2)
														edge node[below] {$\und{\s}(\dash, \dash)$} (m-2-2);
			\end{tikzpicture}
		\end{displaymath}
		
		We call $\s$ \emph{small} if its underlying $1$"/category $\und\s$ is a small $1$"/category and its lifted hom"/functor $\s(\dash, \dash)$ factors through the inclusion $\und{\Cat} \subseteq \und{\inCat{\Cls}}$. \emph{Large} sesquicategories are defined analogously.
	\end{definition}
	Unpacking this definition, a sesquicategory $\s$ consists of a class of objects $A$, $B$, \dots, a class of morphisms $\map fAC$, \dots, that is equipped with a composition $\of$ and, for each pair of objects $A$ and $C$, a class $\s(A, C)$ of cells $\cell\phi f{\map gAC}$, \dots, that is equipped with a vertical composition (along morphisms) $\hc$ which we write in diagrammatic order. Both compositions are associative and have units that we denote by $\id$. The lifting $\s(\dash, \dash)$ moreover extends composition $\of$ to include \emph{whiskerings} of cells with morphisms on either side. For the cells and morphisms below, for example, the composites $\cell{h \of \phi \dfn \s(\id_A, h)(\phi)}{h \of f}{h \of g}$ and $\cell{\psi \of g}{h \of g}{k \of g}$ are specified. Whiskering too is associative and unital, and is functorial with respect to $\hc$. Section~3 of \cite{Stell94} lists all axioms satisfied by $\of$ and $\hc$. 
	\begin{displaymath}
		\begin{tikzpicture}
			\matrix(m)[math35]{A \\ C \\ E \\};
						\path[map]	(m-1-1) edge[bend right=45] node[left] {$f$} (m-2-1)
																edge[bend left=45] node[right] {$g$} (m-2-1)
												(m-2-1) edge node[left] {$h$} (m-3-1);
						\path				([xshift=-8.5pt]$(m-1-1)!0.55!(m-2-1)$) edge[cell] node[above] {$\phi$} ([xshift=9.5pt]$(m-1-1)!0.55!(m-2-1)$);
		\end{tikzpicture} \qquad \qquad \begin{tikzpicture}
			\matrix(m)[math35]{A \\ C \\ E \\};
						\path[map]	(m-2-1) edge[bend right=45] node[left] {$h$} (m-3-1)
																edge[bend left=45] node[right] {$k$} (m-3-1)
												(m-1-1) edge node[right] {$g$} (m-2-1);
						\path				([xshift=-8.5pt]$(m-2-1)!0.55!(m-3-1)$) edge[cell] node[above] {$\psi$} ([xshift=9.5pt]$(m-2-1)!0.55!(m-3-1)$);
		\end{tikzpicture}
	\end{displaymath}
	
	\subsubsection*{Strict and costrict cells}
	Notice that the definition above does \emph{not} require the \emph{middle"/four interchange law} for horizontally composable cells $\phi$ and $\psi$ such as those above, which states that
	\begin{displaymath}
		(h \of \phi) \hc (\psi \of g) = (\psi \of f) \hc (k \of \phi).
	\end{displaymath}
	Consequently it is not possible in general to extend $\of$ to horizontal composition of cells, i.e.\ $\cell{\psi \of \phi}{h \of f}{k \of g}$ in the example above, like in a $2$"/category. In fact a sesquicategory $\s$ ``is'' a $2$"/category, that is its horizontal composition (necessary uniquely) extends to cells, if and only if all horizontally composable cells $\phi$, $\psi \in \s$ satisfy the middle"/four interchange law. It is in this sense that the notion of sesquicategory generalises that of $2$"/category.
	
	Call a subsesquicategory $\R \subseteq \s$ \emph{wide} if it contains all objects and morphisms of $\s$. It follows from the discussion above that $\R$ is a sub"/$2$"/category of $\s$ if its cells satisfy the middle"/four interchange law. Every sesquicategory contains two maximal such sub"/$2$"/categories consisting of its `strict' and `costrict' cells respectively as follows; this is Definition~2.1 of \cite{Lack10}.
	\begin{definition} \label{strict cells}
		Consider the composable cells $\phi$ and $\psi$ above. We say that $\psi$ is \emph{$\phi$"/strict}, and likewise that $\phi$ is \emph{$\psi$"/costrict}, whenever $\phi$ and $\psi$ satisfy the middle"/four interchange law above. The cell $\psi$ is called \emph{strict} if it is $\phi$"/strict for all cells $\phi$ horizontally precomposable with $\psi$. Dually, $\phi$ is \emph{costrict} if it is $\psi$"/costrict for all $\psi$ horizontally postcomposable with $\phi$.
	\end{definition}
	
	Since (co)strict cells are closed under whiskering we obtain the following.
	\begin{proposition} \label{sub-2-categories}
		Every sesquicategory $\s$ contains wide sub"/$2$"/categories $\s_{\str}$ and $\s_{\costr}$ consisting of its strict and costrict cells respectively.
	\end{proposition}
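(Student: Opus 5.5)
We take $\s_{\str}$ to have all objects and morphisms of $\s$, with cells the strict cells of $\s$; $\s_{\costr}$ is defined in the same way using costrict cells. There are three things to check. First, $\s_{\str}$ is a wide subsesquicategory, i.e.\ it contains identity cells and is closed under $\hc$ and under whiskering on both sides. Second, its cells satisfy the middle-four interchange law, so that by the discussion preceding \defref{strict cells} it is a sub-$2$-category. Third, the costrict case, which follows by duality.

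For closure, identity cells are strict because whiskering preserves identities: both sides of the interchange law reduce to the whiskering of $\phi$. For vertical composition, let $\psi\colon h\Rightarrow k$ and $\psi'\colon k\Rightarrow l$ be strict and $\phi\colon f\Rightarrow g$ arbitrary. Using functoriality of whiskering with respect to $\hc$ we get
\[
(h\of\phi)\hc((\psi\hc\psi')\of g) = (h\of\phi)\hc(\psi\of g)\hc(\psi'\of g).
\]
Applying the law for $\psi$ turns the right-hand side into $(\psi\of f)\hc(k\of\phi)\hc(\psi'\of g)$. Applying it again for $\psi'$ gives $(\psi\of f)\hc(\psi'\of f)\hc(l\of\phi)$, which is $((\psi\hc\psi')\of f)\hc(l\of\phi)$, as required.

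For whiskering, suppose $\psi$ is strict. To see that $m\of\psi$ is strict, whisker the interchange law for $\psi$ and $\phi$ by $m$, using that whiskering is functorial and associative. To see that $\psi\of e$ is strict for a morphism $e$ and any precomposable $\phi$, apply strictness of $\psi$ to the cell $e\of\phi$, using the associativity axiom $(\psi\of e)\of f = \psi\of(e\of f)$.

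Interchange among cells of $\s_{\str}$ holds trivially, since $\psi$ is strict and hence $\phi$-strict for every precomposable $\phi$, in particular for strict $\phi$. The costrict case is the same argument with roles reversed, or equivalently one applies the above to the sesquicategory with composition reversed, where costrict cells become strict. The only step requiring care is whiskering, namely checking that the relevant Stell axioms (associativity of mixed whiskerings and their functoriality in $\hc$) give exactly the rewrites used above. This is routine bookkeeping rather than a real obstacle.
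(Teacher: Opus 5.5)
Your proof is correct and follows the paper's approach. The paper justifies the proposition in one line: (co)strict cells are closed under whiskering, and by the preceding discussion a wide subsesquicategory whose cells satisfy middle-four interchange is a sub-$2$-category. You fill in what the paper leaves implicit: closure under identities and $\hc$, the explicit whiskering checks, and the duality via reversing the composition of morphisms, under which costrict cells become strict.
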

	
	\subsubsection*{Examples of sesquicategories}
	\begin{example} \label{unnatural transformation}
		An \emph{unnatural transformation} $\cell \phi fg$ of functors $f$ and $\map gAC$ is a family $\map{\phi_x}{fx}{gx}$ of morphisms in $C$, one for each object $x \in A$, that is not required to satisfy any (naturality) conditions. Large categories, functors and unnatural transformations form a sesquicategory $\Catun$.
		
		Strict unnatural transformations $\cell \psi h{\map kCE}$ in $\Catun$ are precisely the natural transformations; to see this precompose $\psi$ with $\phi$ with $A = 1$ the terminal category. The only costrict unnatural transformations are the identity transformations $\cell{\id_f}ff$. To see this take $\psi$ the universal unnatural transformation with $E = \2 \tens C$ given by the `funny tensor product'; see \appref{comma objects appendix} below or Section~5 of \cite{Lack10}.
	\end{example}
	
	\begin{example} \label{lax natural transformation}
		Consider parallel $2$"/functors $f$ and $\map g AC$ of $2$"/categories. Recall that a \emph{lax natural transformation} $\cell \phi fg$ consists of a \emph{component morphism} $\map{\phi_x}{fx}{gx}$ for each object $x \in A$ as well as a \emph{lax naturality cell}
		\begin{displaymath}
			\begin{tikzpicture}
				\matrix(m)[math35]{fx & gx \\ fy & gy \\};
				\path[map]	(m-1-1) edge node[above] {$\phi_x$} (m-1-2)
														edge node[left] {$fs$} (m-2-1)
										(m-1-2) edge node[right] {$gs$} (m-2-2)
										(m-2-1) edge node[below] {$\phi_y$} (m-2-2)
										(m-1-2) edge[cell, shorten >= 9pt, shorten <= 9pt] node[below right] {$\phi_s$} (m-2-1);
			\end{tikzpicture}
		\end{displaymath}
		for each morphism $\map sxy$ in $A$. These cells satisfy three axioms, ensuring naturality with respect to the cells of $A$ and compatibility both with the identity morphisms and with the composition of morphisms in $A$; see e.g.\ Section~I,2.4 of \cite{Gray74} (where they are called ``quasi"/natural transformations'') or Definition~4.2.1 of \cite{Johnson-Yau21}.
		
		Large $2$"/categories (that is categories with a large set of objects and large hom"/categories), $2$"/functors and lax natural transformations form a sesquicategory that we denote by $\und{\TwoCatLax}$. The postwhiskering $\cell{h \of \phi}{h \of f}{h \of g}$ in $\und{\TwoCatLax}$ is given by $(h \of \phi)_x = h(\phi_x)$ and $(h \of \phi)_s = h(\phi_s)$ while the prewhiskering $\cell{\psi \of f}{h \of f}{k \of f}$ is given by $(\psi \of f)_x = \psi_{fx}$ and $(\psi \of f)_s = \psi_{fs}$. Similar to the situation in the previous example, in Section~5 of \cite{Lack10} it is shown that lax natural transformations that are strict in $\und{\TwoCatLax}$ are precisely the $2$"/natural transformations, i.e.\ those with identity cells as naturality cells, and that the costrict ones are precisely those with identity morphisms as components.
				
		$\und{\TwoCatLax}$ contains as a wide subsesquicategory the sesquicategory $\und{\TwoCatPs}$ of large $2$"/categories, $2$"/functors and \emph{pseudonatural transformations} $\phi$, whose naturality cells $\phi_s$ are invertible. Similar to $\und{\TwoCatLax}$ there is also a sesquicategory $\und{\TwoCatOpl}$ of \emph{oplax natural transformations}. For the latter see e.g.\ Section~3 of \cite{Lack10} (wherein \emph{icons} are defined to be oplax natural transformations with identity components). Notice however that it is not possible to form a sesquicategory of $2$"/categories, \emph{pseudofunctors} (see e.g.\ Definition~4.1.2 of \cite{Johnson-Yau21}) and any notion of natural transformation: indeed, postwhiskering with a pseudofunctor is not functorial.
	\end{example}
	
	\subsubsection*{Sesquicategories underlying lax $3$"/categories}
	The reason that we use vertical bars `$\und{\dash}$' in the notation $\und{\TwoCatLax}$ above is that $\und{\TwoCatLax}$ forms the underlying sesquicategory of the ``lax $3$"/category'' $\TwoCatLax$ of large $2$"/categories, $2$"/functors, lax natural transformations and modifications (for the latter see e.g.\ Section~I,2.4 of \cite{Gray74} or Definition~4.4.1 of \cite{Johnson-Yau21}), as we shall now explain.
	
	Recall that a \emph{normal (or strictly unitary) lax functor} $\map F\C\D$ of $2$"/categories strictly preserves composition of cells as well as identity morphisms and identity cells, while preserving composition of morphisms only up to lax cells $\cell{F_\of}{Fg \of Ff}{F(g \of f)}$; see e.g.\ Definition~4.1.2 of \cite{Johnson-Yau21}. Let $\mathsf{Lax}(\Cls)$ denote the $1$"/category of class"/sized $2$"/categories and lax functors between them. The following definition is Definition~4.21 of \cite{Lambert24}.
	\begin{definition} \label{lax 3-category}
		A \emph{lax $3$"/category} is a class"/sized category enriched in $\mathsf{Lax}(\Cls)$.
	\end{definition}
	
	Thus a lax $3$"/category $\K$ consists of a class of objects and, for any two objects $A$, $C \in \K$, a class"/sized $2$"/category $\K(A, C)$ of morphisms $\map fAC$, $2$"/cells \mbox{$\cell\phi f{\map gAC}$} and $3$"/cells $\Phi\colon\phi \Rrightarrow \cell\psi fg$. Horizontal composition in $\K$ is defined by normal lax functors
	\begin{displaymath}
		\map\of {\K(C, E) \times \K(A, C)}{\K(A,E)}.
	\end{displaymath}
	By discarding the $3$"/cells every lax $3$"/category contains an underlying sesquicategory as follows.
	\begin{proposition}
		Every lax $3$"/category $\K$ contains an underlying sesquicategory $\und{\K}$ with
		\begin{enumerate}[label=-]
			\item underlying $1$"/category $\und{\und{\K}}$ consisting of the objects and morphisms of $\K$;
			\item hom"/categories $\und{\K}(A, C) = \und{\K(A, C)}$, the underlying $1$"/categories of the hom-$2$"/categories of $\K$.
		\end{enumerate}
	\end{proposition}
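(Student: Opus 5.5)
To build $\und{\K}$ I will pass to underlying structures along the forgetful functor $\und{\dash}\colon \mathsf{Lax}(\Cls) \to \und{\inCat{\Cls}}$. It sends a class"/sized $2$"/category to its underlying $1$"/category and a normal lax functor to its action on morphisms and $2$"/cells. The key observation is that this assignment is a genuine functor: composition of normal lax functors is strictly associative and unital on $0$"/cells and $1$"/cells (here the $1$"/cells of a hom"/$2$"/category are the $2$"/cells of $\K$), and each such functor strictly preserves vertical composition and identities. Only the comparison cells $F_\of$, which are $3$"/cells of $\K$, witness laxness, and these are discarded.

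The steps, in order, are as follows.

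First, take the objects and morphisms of $\K$, with its horizontal composition restricted to objects of the hom"/$2$"/categories, as the $1$"/category $\und{\und{\K}}$. Associativity and unitality hold because $\K$ is a category enriched in $\mathsf{Lax}(\Cls)$, whose composition and unit maps are (lax) functors between hom"/$2$"/categories and so are strictly associative on objects.

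Second, define the lifting $\und\K(\dash,\dash)$ on a pair $(a,h)$ with $\map a{A'}A$ and $\map hCC'$ as the underlying ordinary functor of the composite normal lax functor
\begin{displaymath}
\K(A,C) \to \K(A',C'), \qquad \phi \mapsto h \of \phi \of a.
\end{displaymath}
Here the whiskering is obtained by restricting $\of$ along the identity $2$"/cells $\id_h$ and $\id_a$. These restrictions are normal lax functors $\K(A,C)\to\K(A,C')$ and so on, whose underlying assignments on $2$"/cells are ordinary functors, since normal lax functors strictly preserve composition and identities of cells.

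Third, check functoriality of this lifting in $(a,h)$, i.e.\ that $\und\K(\dash,\dash)$ is a functor $\op{\und{\und\K}}\times\und{\und\K}\to\und{\inCat{\Cls}}$, and that it lies over the hom"/functor $\und{\und\K}(\dash,\dash)$. Both follow from the enrichment axioms, which hold as strict equalities of normal lax functors, after applying the forgetful functor above.

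The main obstacle is precisely that functoriality check: ensuring that iterated whiskering, e.g.\ $(k\of h)\of\phi = k\of(h\of\phi)$, holds strictly rather than up to the lax constraints. I would handle it by noting two facts. The enrichment associativity equation equates composite lax functors on the nose. Moreover the lax constraint cells $F_\of$ of a normal lax functor are identities whenever one of the composed $1$"/cells is an identity, and whiskering evaluates $\of$ at $\id_h$. Hence no constraint cells intervene, and the whiskering axioms of Section~3 of \cite{Stell94} follow.
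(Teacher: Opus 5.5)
\textbf{There is a genuine gap, at the step your whole argument rests on.} Your construction is the paper's: whisker by $\phi \mapsto \id_h \of \phi \of \id_a$ and justify functoriality by normality of $\of$. But that justification fails. A normal lax functor $G$ between $2$-categories strictly preserves composition of the $2$-cells of its domain, which here are the $3$-cells of $\K$. It preserves composition of its $1$-cells, which here are the $2$-cells of $\K$ composed by $\hc$, only up to the comparison cells $G_\of$. So $G$ has no underlying functor of underlying $1$-categories, and the ``forgetful functor'' $\mathsf{Lax}(\Cls) \to \und{\inCat{\Cls}}$ of your first paragraph does not exist. In your second step you confuse the cells of $\K(A,C)$ with its morphisms.

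Your last paragraph does not repair this. The equation $h \of (\phi \hc \phi') = (h \of \phi) \hc (h \of \phi')$ requires the comparison cell of $\map\of{\K(C,C') \times \K(A,C)}{\K(A,C')}$ at the composable pair $(\id_h, \phi)$, $(\id_h, \phi')$ to be an identity. Neither of these is an identity $1$-cell of the product $2$-category. Lax unity only trivialises comparisons at pairs in which one member has the form $(\id_h, \id_x)$. Normality does give preservation of identities. What you call the main obstacle, $(k \of h) \of \phi = k \of (h \of \phi)$, is the easy part: it follows from strict enriched associativity together with $\id_k \of \id_h = \id_{k \of h}$.

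\textbf{The missing step cannot be supplied from the definition as stated.} Here is a counterexample.
\begin{itemize}
\item Let $\K$ have objects $X < Y < Z$.
\item Let $\K(X,X)$, $\K(Y,Y)$, $\K(Z,Z)$ and $\K(Y,Z) = \{y\}$ be terminal.
\item Let $\K(X,Y)$ have one object $x$, the natural numbers under $+$ as $1$-cells, and only identity $2$-cells.
\item Let $\K(X,Z)$ have one object, the natural numbers under $+$ as $1$-cells, and a $2$-cell $m \Rightarrow n$ precisely when $m \leq n$.
\item Let the remaining homs be empty.
\item Let the only non-trivial composition $\K(Y,Z) \times \K(X,Y) \to \K(X,Z)$ be the normal lax functor $n \mapsto n^2$, with comparison cells $m^2 + n^2 \leq (m+n)^2$.
\end{itemize}
Every composable triple of non-empty homs contains one of the terminal homs $\K(X,X)$, $\K(Y,Y)$, $\K(Z,Z)$, so the enrichment axioms hold trivially. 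Yet for the $2$-cell $1\colon x \Rightarrow x$ we get $y \of (1 \hc 1) = 4 \neq 2 = (y \of 1) \hc (y \of 1)$.

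So your argument needs an additional hypothesis: fixing either variable of $\of$ must yield a strict $2$-functor. This does hold for $\und{\TwoCatLax}$, whose whiskerings $h(\phi_x)$ and $\psi_{fx}$ (\exref{lax natural transformation}) are plainly functorial, and that is the only case the paper actually uses. The paper's own one-line appeal to the lax left and right unity axioms passes over exactly the same point, so your proposal reproduces both its argument and its gap.
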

	\begin{proof}
		Horizontal composition $\of$ of $\K$ and its identity morphisms make $\und{\und{\K}}$ into a $1$"/category. The assignment on objects $(A, C) \mapsto \und{\K(A, C)}$ extends to a functor $\map{\und{\K(\dash, \dash)}}{\und{\und{\K}} \times \und{\und{\K}}}{\und{\inCat{\Cls}}}$ as follows. Given $\map fAC$ and $\map lEG$ define $\map{\und{\K(f, l)}}{\und{\K(C, E)}}{\und{\K(A,G)}}$ on objects by $\und{\K(f, l)}(h) = l \of h \of f$ and on morphisms by $\und{\K(f, l)}(\phi) = \id_l \of \phi \of \id_f$. That the latter is functorial follows from the lax left and right unity axioms satisfied by the normal lax functor $\of$; see e.g.\ Definition~4.1.2 of \cite{Johnson-Yau21}.
	\end{proof}
	
	\begin{example}
		Theorem~4.25 of \cite{Lambert24} shows that large $2$"/categories, $2$"/functors, lax natural transformations and modifications form a lax $3$"/category $\TwoCatLax$ in the sense of \defref{lax 3-category}. Its underlying sesquicategory $\und{\TwoCatLax}$ is the one described in \exref{lax natural transformation}.
	\end{example}
	
	\section{Comma objects in sesquicategories} \label{comma objects section}
	The notion of internal split fibration in a sesquicategory that will be introduced in \secref{internal split two-sided fibrations section} uses the notion of $\2$"/cotensor. Here we consider the notion of comma object in a sesquicategory, which generalises that of $\2$"/cotensor. The $1$"/dimensional universal property below is the same as the one satisfied by a comma object in a $2$"/category, see e.g.\ Section~1 of \cite{Street74b}. Since the latter does not involve horizontal compositions of cells it applies verbatim to sesquicategories.
	\begin{definition} \label{comma object}
		Given two morphisms $\map fAC$ and $\map gBC$ in a sesquicategory $\s$, the \emph{$1$"/universal comma object} (shortly \emph{comma object}) of $f$ and $g$ consists of an object $f \slash g$ equipped with projections $\pi_A$ and $\pi_B$, as in the diagram below left, as well as a cell $\cell\pi{f \of \pi_A}{g \of \pi_B}$ that satisfies the following $1$"/dimensional universal property. Given any other cell $\phi$ in $\s$ as below in the middle there exists a unique morphism $\map{\phi'}X{f \slash g}$ such that $\pi_A \of \phi' = \phi_A$, \mbox{$\pi_B \of \phi' = \phi_B$} and $\pi \of \phi' = \phi$.

		\begin{displaymath}
			\begin{tikzpicture}[baseline]
				\matrix(m)[math35]{f \slash g & A \\ B & C \\};
				\path[map]	(m-1-1) edge node[above] {$\pi_A$} (m-1-2)
														edge node[left] {$\pi_B$} (m-2-1)
										(m-1-2) edge node[right] {$f$} (m-2-2)
										(m-2-1) edge node[below] {$g$} (m-2-2);
				\path				(m-1-2) edge[cell, shorten >= 9pt, shorten <= 9pt] node[below right] {$\pi$} (m-2-1);
			\end{tikzpicture} \qquad\qquad\qquad \begin{tikzpicture}[baseline]
				\matrix(m)[math35]{X & A \\ B & C \\};
				\path[map]	(m-1-1) edge node[above] {$\phi_A$} (m-1-2)
														edge node[left] {$\phi_B$} (m-2-1)
										(m-1-2) edge node[right] {$f$} (m-2-2)
										(m-2-1) edge node[below] {$g$} (m-2-2);
				\path				(m-1-2) edge[cell, shorten >= 9pt, shorten <= 9pt] node[below right] {$\phi$} (m-2-1);
			\end{tikzpicture} \qquad\qquad\qquad \begin{tikzpicture}[baseline]
				\matrix(m)[math35]{A^\2 \\ A \\};
				\path[map]	(m-1-1) edge[bend right=45] node[left] {$\src$} (m-2-1)
														edge[bend left=45] node[right] {$\tgt$} (m-2-1);
				\path				([xshift=-8pt]$(m-1-1)!0.55!(m-2-1)$) edge[cell] node[above] {$\delta$} ([xshift=9pt]$(m-1-1)!0.55!(m-2-1)$);
			\end{tikzpicture}
		\end{displaymath}
		
		The \emph{$\2$"/cotensor} $A^\2$ of an object $A$ in $\s$ is defined to be the comma object $A^\2 \dfn \id_A \slash \id_A$. Two distinguish its two projections $A^\2 \rightrightarrows A$ we denote them by $\src$ and $\tgt$ respectively, while we denote its defining cell by $\delta$, as depicted above right.
		
		The $1$"/dimensional universal properties of $f \slash g$ and $A^\2$ are succinctly stated as the existence of natural bijections of classes
		\begin{displaymath}
			\und\s(X, f \slash g) \iso \und{\s(\id_X, f) \slash \s(\id_X, g)} \qquad \text{and} \qquad \und\s(X, A^\2) \iso \und{\s(X, A)^\2}
		\end{displaymath}
		where $\s(\id_X, f) \slash \s(\id_X, g)$ denotes the usual comma $1$"/category of the functors $\map{\s(\id_X, f)}{\s(X, A)}{\s(X, C)}$ and $\map{\s(\id_X, g)}{\s(X, B)}{\s(X, C)}$ and $\s(X, A)^\2$ denotes the usual arrow $1$"/category of $\s(X, A)$.
	\end{definition}
	
	\subsubsection*{Examples of comma objects}
	Besides the $1$"/dimensional universal property above the two examples of comma objects below satisfy different $2$"/dimensional universal properties; see \appref{comma objects appendix} below for details.
	
	\begin{example} \label{comma objects for unnatural transformations}
		The comma object $f \slash g$ in the sesquicategory $\Catun$ of unnatural transformations (\exref{unnatural transformation}) has as objects triples $(a, u, b)$ with $a \in A$, $b \in B$ and $\map u{fa}{gb}$ in $C$. A morphism $(a_0, u_0, b_0) \to (a_1, u_1, b_1)$ is a pair $(s, t)$ of morphisms $\map s{a_0}{a_1}$ in $A$ and $\map t{b_0}{b_1}$ in $B$, without any condition; in particular the square formed by $fs$, $gt$, $u_0$ and $u_1$ is not required to commute.
	\end{example}
	
	\begin{example} \label{lax comma 2-category}
		Let $f$ and $\map gAC$ be $2$"/functors between large $2$"/categories. The comma object $f \slash g$ in the sesquicategory $\und{\TwoCatLax}$ of lax natural transformations (\exref{lax natural transformation}) is the following \emph{lax comma $2$"/category}; see e.g.\ Section I,2.5 of \cite{Gray74} or Construction~2.13 of \cite{Lambert24}. Its objects are triples $(a, u, b)$ with $a \in A$, $b \in B$ and $\map u{fa}{gb}$ in $C$. A morphism $\map{(s, \zeta, t)}{(a_0, u_0, b_0)}{(a_1, u_1, b_1)}$ in $f \slash g$ consists of morphisms $\map s{a_0}{a_1}$ in $A$ and $\map t{b_0}{b_1}$ in $B$ together with a cell $\zeta$ as on the left below. A cell $\cell{(\sigma, \tau)}{(s_0, \zeta_0, t_0)}{(s_1, \zeta_1, t_1)}$ consists of cells $\cell\sigma{s_0}{s_1}$ in $A$ and $\cell\tau{t_0}{t_1}$ in $B$ satisfying the identity on the right below. The $2$"/categorical structure of $f \slash g$ is induced by those of $A$, $B$ and $C$.
		
		The $\2$"/cotensor $A^\2 \dfn \id_A \slash \id_A$ of a $2$"/category $A$ is called the \emph{lax arrow $2$"/category} associated to $A$.
		\begin{displaymath}
			\begin{tikzpicture}[textbaseline]
				\matrix(m)[math35]{fa_0 & gb_0 \\ fa_1 & gb_1 \\};
				\path[map]	(m-1-1) edge node[above] {$u_0$} (m-1-2)
														edge node[left] {$fs$} (m-2-1)
										(m-1-2) edge node[right] {$gt$} (m-2-2)
										(m-2-1) edge node[below] {$u_1$} (m-2-2);
				\path				(m-1-2) edge[cell, shorten >= 9pt, shorten <= 9pt] node[below right] {$\zeta$} (m-2-1);
			\end{tikzpicture} \qquad\qquad\qquad \begin{tikzpicture}[textbaseline]
				\matrix(m)[math35, column sep={3.7em,between origins}]{fa_0 & gb_0 \\ fa_1 & gb_1 \\};
				\path[map]	(m-1-1) edge node[above] {$u_0$} (m-1-2)
														edge[bend right=40] node[left] {$fs_1$} (m-2-1)
														edge[bend left=40] node[above right, inner sep=0pt] {$fs_0$} (m-2-1)
										(m-1-2) edge[bend left=40] node[right] {$gt_0$} (m-2-2)
										(m-2-1) edge node[below] {$u_1$} (m-2-2);
				\path				([xshift=9pt]$(m-1-1)!0.55!(m-2-1)$) edge[cell] node[above] {$f\sigma$} ([xshift=-7.5pt]$(m-1-1)!0.55!(m-2-1)$)
										(m-1-2) edge[cell, shorten >= 10pt, shorten <= 10pt, transform canvas={xshift=1.2em}] node[below right, inner sep=0.5pt] {$\zeta_0$} (m-2-1);
			\end{tikzpicture} = \begin{tikzpicture}[textbaseline]
				\matrix(m)[math35, column sep={3.7em,between origins}]{fa_0 & gb_0 \\ fa_1 & gb_1 \\};
				\path[map]	(m-1-1) edge node[above] {$u_0$} (m-1-2)
														edge[bend right=40] node[left] {$fs_1$} (m-2-1)
										(m-1-2) edge[bend right=40] node[below left, inner sep=0pt] {$gt_1$} (m-2-2)
														edge[bend left=40] node[right] {$gt_0$} (m-2-2)
										(m-2-1) edge node[below] {$u_1$} (m-2-2);
				\path				([xshift=9pt]$(m-1-2)!0.55!(m-2-2)$) edge[cell] node[above] {$g\tau$} ([xshift=-7.5pt]$(m-1-2)!0.55!(m-2-2)$)
										(m-1-2) edge[cell, shorten >= 10pt, shorten <= 10pt, transform canvas={xshift=-1.1em}] node[above left, inner sep=1pt] {$\zeta_1$} (m-2-1);
			\end{tikzpicture}
		\end{displaymath}
	\end{example}
	
	\subsubsection*{Conical limits}
	By `conical limits' in sesquicategories $\s$ we shall mean limits of $1$"/dimensional diagrams that satisfy the same $1$"/ and $2$"/dimensional universal properties as satisfied by conical limits in $2$"/categories (see e.g.\ Section~3.8 of \cite{Kelly82}). In detail, by a \emph{$1$"/diagram} $D$ in $\s$ we mean a $1$"/functor $\map D{\catvar I}{\und\s}$ into the $1$"/category $\und\s$ underlying $\s$. A \emph{$1$"/cone} $(C, \xi)$ over $D$ in $\und\s$ is a natural transformation $\cell\xi{\Delta C}D$ as usual, where $\Delta C$ denotes the constant diagram in $\und\s$ at the vertex $C \in \s$. Given another $1$"/cone $(C, \zeta)$ over $D$ with the same vertex $C$, by a \emph{modification} \mbox{$\map M{(C,\xi)}{(C,\zeta)}$} we mean a family of cells $\cell{M_I}{\xi_I}{\zeta_I}$ in $\s$ between the components $\xi_I$ and \mbox{$\map{\zeta_I}C{DI}$}, indexed by the objects $I \in \catvar I$, that is natural in the sense that $Du \of M_I = M_J$ for all morphisms $\map uIJ$ in $\catvar I$. In the latter equation $\of$ is the whiskering of $\s$.
	\begin{definition} \label{conical limit}
		A \emph{conical limit} of a $1$"/diagram $\map D{\catvar I}{\und \s}$ is a $1$"/cone $(L, \pi)$ over $D$ satisfying the following two universal properties. Any $1$"/cone $(C, \xi)$ over $D$ factors uniquely through $\pi$ as a morphism $\map{\xi'}CL$, that is $\pi_I \of \xi' = \xi_I$ for all $I \in \catvar I$. Any modification $\map M{(C, \xi)}{(C, \zeta)}$ of $1$"/cones, in the sense above, factors uniquely through $\pi$ as a cell $\cell{M'}{\xi'}{\zeta'}$ such that $\pi_I \of M' = M_I$ for all $I \in \catvar I$.
	\end{definition}
	
	That conical limits in the sense above are in fact conical enriched limits in the usual sense becomes clear in \exref{conical limit as enriched limit} below.
	
	\begin{convention} \label{conical limits convention}
		All $1$"/dimensional limits that we consider in sesquicategories (terminal objects, products, pullbacks, etc.) we shall understand to be conical limits in the sense above.
	\end{convention}
	
	Let $A \xrar f C \xlar g B$ be a cospan in a sesquicategory and assume that the $\2$"/cotensor $C^\2$ exists. We denote by $A \times_C C^\2 \times_C B$ the iterated pullback of the diagram $A \xrar f C \xlar{\src} C^\2 \xrar{\tgt} C \xlar g B$. The following result, where $\pi_{C^\2}$ denotes the projection onto $C^\2$, is easily checked.
	\begin{proposition} \label{comma objects from 2-cotensors}
		The iterated pullback $A \times_C C^\2 \times_C B$ forms the comma object $f \slash g$, being defined as such by the universal cell $\delta \of \pi_{C^\2}$.
	\end{proposition}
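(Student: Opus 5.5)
We prove this by checking the $1$"/dimensional universal property of \defref{comma object} directly, using only the two universal properties of the conical limits in \defref{conical limit}. Write $P \dfn A \times_C C^\2 \times_C B$ with projections $\pi_A$, $\pi_{C^\2}$ and $\pi_B$, so that $f \of \pi_A = \src \of \pi_{C^\2}$ and $\tgt \of \pi_{C^\2} = g \of \pi_B$. We first check that the whiskered cell $\delta \of \pi_{C^\2}$ has source $\src \of \pi_{C^\2} = f \of \pi_A$ and target $\tgt \of \pi_{C^\2} = g \of \pi_B$. It is therefore a cell of the required form $f \of \pi_A \Rar g \of \pi_B$.

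For existence, let $\cell\phi{f \of \phi_A}{g \of \phi_B}$ be a cell with $\map{\phi_A}XA$ and $\map{\phi_B}XB$. We regard $\phi$ as a cell $f \of \phi_A \Rar g \of \phi_B$ between morphisms $X \to C$. The $1$"/dimensional universal property of $C^\2 = \id_C \slash \id_C$ then gives a unique $\map{\bar\phi}X{C^\2}$ with $\src \of \bar\phi = f \of \phi_A$, $\tgt \of \bar\phi = g \of \phi_B$ and $\delta \of \bar\phi = \phi$. These two equations say exactly that $(\phi_A, \bar\phi, \phi_B)$ is a $1$"/cone over the cospan diagram $A \xrar f C \xlar{\src} C^\2 \xrar{\tgt} C \xlar g B$. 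Its components at the two copies of $C$ are $f \of \phi_A$ and $g \of \phi_B$. The $1$"/dimensional universal property of the pullback (\conref{}-free, just \defref{conical limit}) yields $\map{\phi'}XP$ with $\pi_A \of \phi' = \phi_A$, $\pi_{C^\2} \of \phi' = \bar\phi$ and $\pi_B \of \phi' = \phi_B$. By associativity of whiskering, $(\delta \of \pi_{C^\2}) \of \phi' = \delta \of \bar\phi = \phi$, as required.

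For uniqueness, suppose $\map{\psi}XP$ also satisfies $\pi_A \of \psi = \phi_A$, $\pi_B \of \psi = \phi_B$ and $\delta \of \pi_{C^\2} \of \psi = \phi$. Then $\pi_{C^\2} \of \psi$ satisfies the three defining equations of $\bar\phi$: its source and target follow from the pullback equations, and its whiskering with $\delta$ equals $\phi$. Hence $\pi_{C^\2} \of \psi = \bar\phi$. Now $\psi$ and $\phi'$ agree after composing with every projection of the limit cone, so uniqueness in the $1$"/dimensional universal property of the conical limit gives $\psi = \phi'$.

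Only the $1$"/dimensional parts of the universal properties are needed, and the $2$"/dimensional clause of \defref{conical limit} is never used. There is no real obstacle here. The only point needing care is the bookkeeping: the cone over the iterated pullback must be specified at all five objects of the diagram, including the legs into the two copies of $C$, and naturality of the cone must be checked as the two displayed equations.
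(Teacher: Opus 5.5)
Your proof is correct. It is the routine verification that the paper leaves implicit, since the paper only remarks that the result ``is easily checked''. You factor $\phi$ through $\delta$ to obtain $\bar\phi\colon X \to C^\2$, factor the resulting cone through the iterated pullback, and get uniqueness from the uniqueness of $\bar\phi$ together with that of the pullback factorisation. As you observe, only the $1$-dimensional clauses are needed, because the comma object of \defref{comma object} is defined by a $1$-dimensional universal property alone.
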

	
	\section{Profunctors in categories}\label{internal profunctors section}
	Here we recall from Example~2.10 of \cite{Cruttwell-Shulman10} (or Example~2.9 of \cite{Koudenburg20}) the notion of internal category in a $1$"/category $\E$ with pullbacks, as well as that of internal profunctor $A \brar B$ between such categories $A$ and $B$. In the next section we will see that $\2$"/cotensors $A^\2$ in a sesquicategory $\s$ (\defref{comma object}) form internal categories in the underlying $1$"/category $\und\s$, and define split fibrations in $\s$ to be internal profunctors $A^\2 \brar B^\2$ in $\und\s$.
	
	We will regard internal profunctors as being horizontal morphisms in an `augmented virtual double category' in the sense of \cite{Koudenburg20} as follows. First recall from e.g.\ Definition~2.1 of \cite{Cruttwell-Shulman10} the notion of a \emph{virtual double category}, consisting of objects $A$, $B$, $C,\dotsc$, \emph{vertical} morphisms $\map fAC$ as well as \emph{horizontal} morphisms $\hmap JAB$, and \emph{multicells} (or, shortly, \emph{cells}) $\phi$ of the form as on the left below, each with a single morphism $\hmap KCD$ as horizontal target and a (potentially empty) path $\ul J = (A_0 \xbrar{J_1} A_1 \dotsb A_{n-1} \xbrar{J_n} A_n)$ of morphisms as horizontal source. Vertical morphisms are equipped with a composition that is strictly unital and associative, while no composition is specified for horizontal morphisms. Cells are equipped with a unital and associative composition reminiscent of that of operads; see \cite{Cruttwell-Shulman10} or \cite{Koudenburg20} for details.
	
	The notion of \emph{augmented virtual double category} (Definition~1.2 of \cite{Koudenburg20}) extends that of virtual double category by including cells $\psi$ with empty horizontal targets, as in the middle below. We call cells of the form $\cell\phi{\ul J}K$ \emph{unary} and cells of the form $\cell\psi{\ul J}C$ \emph{nullary}. Nullary cells $\cell\psi{A_0}C$ with empty horizontal source, like on the right below, are called \emph{vertical} and denoted by $\cell\psi fg$. A \emph{horizontal} cell is one whose vertical morphisms $f$ and $g$ are identity morphisms. The sense in which cells compose becomes clear in \propref{profunctors form an augmented virtual double category} below, or see Definition~1.2 of \cite{Koudenburg20} for details.
	\begin{displaymath}
		\begin{tikzpicture}[baseline]
			\matrix(m)[math35]{A_0 & A_1 & A_{n'} & A_n \\ C & & & D \\};
			\path[map]	(m-1-1) edge[barred] node[above] {$J_1$} (m-1-2)
													edge node[left] {$f$} (m-2-1)
									(m-1-3) edge[barred] node[above] {$J_n$} (m-1-4)
									(m-1-4) edge node[right] {$g$} (m-2-4)
									(m-2-1) edge[barred] node[below] {$K$} (m-2-4);
			\path[transform canvas={xshift=1.75em}]	(m-1-2) edge[cell] node[right] {$\phi$} (m-2-2);
			\draw				($(m-1-2)!0.5!(m-1-3)$) node {$\dotsb$};
		\end{tikzpicture} \quad\quad\hspace{-0.2em} \begin{tikzpicture}[baseline]
			\matrix(m)[math35, column sep={1.75em,between origins}]
				{A_0 & & A_1 & & A_{n'} & & A_n \\ & & & C & & & \\};
			\path[map]	(m-1-1) edge[barred] node[above] {$J_1$} (m-1-3)
													edge node[below left] {$f$} (m-2-4)
									(m-1-5) edge[barred] node[above] {$J_n$} (m-1-7)
									(m-1-7) edge node[below right] {$g$} (m-2-4);
			\path				(m-1-4) edge[cell] node[right] {$\psi$} (m-2-4);
			\draw				(m-1-4) node {$\dotsb$};
		\end{tikzpicture} \quad\quad\hspace{-0.2em} \begin{tikzpicture}[baseline]
			\matrix(m)[math35]{A_0 \\ C \\};
			\path[map]	(m-1-1) edge[bend right=45] node[left] {$f$} (m-2-1)
													edge[bend left=45] node[right] {$g$} (m-2-1);
			\path				(m-1-1) edge[cell] node[right] {$\psi$} (m-2-1);
		\end{tikzpicture}
	\end{displaymath}
	
	The notions below of category, functor and transformation in a $1$"/category are the classical ones, see e.g.\ Section~1 of \cite{Street74b}. The virtual double category of internal profunctors in a $1$"/category is described in Example~2.10 of \cite{Cruttwell-Shulman10}. Recall the notation $n' \dfn n - 1$ for any integer $n \geq 1$.
	\begin{definition} \label{internal profunctor}
		Let $\E$ be a $1$"/category with iterated pullbacks $J_1 \times_{A_1} \dotsb \times_{A_{n'}} J_n$ chosen for all finite iterated spans $(A_0 \leftarrow J_1 \rightarrow A_1 \leftarrow J_2 \rightarrow \dotsb \leftarrow J_n \rightarrow A_n)$.
		\begin{enumerate}[label =-]
			\item A \emph{category $A = (A, \alpha, \bar\alpha, \tilde\alpha)$ in} $\E$ is a span $A \xleftarrow{\src} \alpha \xrightarrow{\tgt} A$ in $\E$ equipped with \emph{composition} and \emph{unit} morphisms of spans $\map{\bar\alpha}{\alpha \times_A \alpha}\alpha$ and $\map{\tilde\alpha}A\alpha$, that satisfy the associativity axiom $\bar\alpha \of (\bar\alpha \times_A \id_\alpha) = \bar\alpha \of (\id_\alpha \times_A \bar\alpha)$ and the unit axioms $\bar\alpha \of (\tilde\alpha \of \src, \id_\alpha) = \id_\alpha = \bar\alpha \of (\id_\alpha, \tilde\alpha \of \tgt)$.
			\item	A \emph{functor} $\map fAC$ of categories in $\E$ is a morphism $\map fAC$ equipped with a morphism of spans $\map{\bar f}\alpha\gamma$ that satisfies the associativity and unit axioms $\bar\gamma \of (\bar f \times_f \bar f) = \bar f \of \bar \alpha$ and $\tilde \gamma \of f = \bar f \of \tilde \alpha$.
			\item A \emph{profunctor} $\hmap JAB$ between categories in $\E$ is a span $A \xleftarrow{j_A} J \xrightarrow{j_B} B$ that is equipped with left and right \emph{action} morphisms of spans $\map\lambda{\alpha \times_A J}J$ and $\map\rho{J \times_B \beta}J$ satisfying the usual associativity, unit and compatibility axioms for bimodules:
			\begin{align*}
				\lambda \of (\bar\alpha \times_A \id_J) & = \lambda \of (\id_\alpha \times_A \lambda); & \rho \of (\id_J \times_B \bar\beta) &= \rho \of (\rho \times_B \id_\beta); \\
				\lambda \of (\tilde\alpha \of j_A, \id_J) & = \id_J = \rho \of (\id_J, \tilde\beta \of j_B); &\rho \of (\lambda \times_B \id_\beta) &= \lambda \of (\id_\alpha \times_A \rho).
			\end{align*}
			\item	A unary cell $\cell\phi{(J_1, \dotsc, J_n)}K$ of functors and profunctors in $\E$, as on the left above and where $n \geq 1$, is a morphism of spans $\map\phi{J_1 \times_{A_1} \dotsb \times_{A_{n'}} J_n}K$ satisfying the external left and right equivariance axioms
			\begin{align*}
				\phi \of (\lambda_{J_1} \times_{\id_{A_1}} \id_{J_2} \times_{\id_{A_2}} \dotsb \times_{\id_{A_{n'}}} \id_{J_n}) &= \lambda_K \of (\bar f \times_f \phi) \\
				\phi \of (\id_{J_1} \times_{\id_{A_1}} \dotsb \times_{\id_{A_{n''}}} \id_{J_{n'}} \times_{\id_{A_{n'}}} \rho_{J_n}) &= \rho_K \of (\phi \times_g \bar g)
			\end{align*}
			and the internal equivariance axioms
			\begin{multline*}
				\phi \of (\id_{J_1} \times_{\id_{A_1}} \dotsb \times_{\id_{A_{i'''}}} \id_{J_{i''}} \times_{\id_{A_{i''}}} \rho_{J_{i'}} \times_{\id_{A_{i'}}} \id_{J_i} \times_{\id_{A_i}} \dotsb \times_{\id_{A_{n'}}} \id_{J_n}) \\
					= \phi \of (\id_{J_1} \times_{\id_{A_1}} \dotsb \times_{\id_{A_{i''}}} \id_{J_{i'}} \times_{\id_{A_{i'}}} \lambda_{J_i} \times_{\id_{A_i}} \id_{J_{i+1}} \times_{\id_{A_{i+1}}} \dotsb \times_{\id_{A_{n'}}} \id_{J_n})
			\end{multline*}
			for $2 \leq i \leq n$.
			\item	A nullary cell $\cell\psi{(J_1, \dotsc, J_n)}C$, as in the middle above and where $n \geq 1$, is a morphism of spans $\map\phi{J_1 \times_{A_1} \dotsb \times_{A_{n'}} J_n}\gamma$ satisfying equivariance axioms analogous to those above with $\bar\gamma$ replacing $\lambda_K$ and $\rho_K$.
			\item A $(0,1)$"/ary cell $\cell\phi AK$ as on the left below is a morphism of spans $\map\phi AK$ satisfying the external equivariance axiom $\lambda_K \of (\bar f, \phi \of \tgt) = \rho_K \of (\phi \of \src, \bar g)$.
			\item A vertical cell $\cell\psi fg$ as on the right is a \emph{transformation} $f \Rar g$ in $\E$, that is a morphism of spans $\map\psi A\gamma$ satisfying the external equivariance axiom $\bar\gamma \of (\bar f, \psi \of \tgt) = \bar\gamma \of (\psi \of \src, \bar g)$.
			\begin{displaymath}
				\begin{tikzpicture}[baseline]
					\matrix(m)[math35, column sep={1.75em,between origins}]{& A & \\ C & & D \\};
					\path[map]	(m-1-2) edge[transform canvas={xshift=-1pt}] node[left] {$f$} (m-2-1)
															edge[transform canvas={xshift=1pt}] node[right] {$g$} (m-2-3)
											(m-2-1) edge[barred] node[below] {$K$} (m-2-3);
					\path				(m-1-2) edge[cell, transform canvas={yshift=-0.25em}] node[right, inner sep=2.5pt] {$\phi$} (m-2-2);
				\end{tikzpicture}	\qquad\qquad\qquad\qquad \begin{tikzpicture}[baseline]
			\matrix(m)[math35]{A \\ C \\};
			\path[map]	(m-1-1) edge[bend right=45] node[left] {$f$} (m-2-1)
													edge[bend left=45] node[right] {$g$} (m-2-1);
			\path				(m-1-1) edge[cell] node[right] {$\psi$} (m-2-1);
		\end{tikzpicture}
			\end{displaymath}
		\end{enumerate}
	\end{definition}

	Notice that for any category $A$ in $\E$ the span $I_A \dfn (A \xlar{\src} \alpha \xrar{\tgt} A)$ forms a profunctor in $\E$ when equipped with the composition morphism $\bar\alpha$ as its actions; $I_A$ is called the \emph{unit} profunctor of $A$. In terms of unit profunctors the nullary cells $\cell\psi{\ul J}C$ above can be alternatively defined as being unary cells $\cell\psi{\ul J}{I_C}$.
	
	The following is well"/known; see e.g. Section~1 of \cite{Street74b}.
	\begin{proposition} \label{categories form a 2-category}
		Let $\E$ be a $1$"/category with chosen iterated pullbacks. Categories in $\E$, their functors and the transformations between them, as defined above, form a $2$"/category $\inCat\E$.
	\end{proposition}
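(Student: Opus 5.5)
The plan is to build the $2$"/category $\inCat\E$ directly from the data of \defref{internal profunctor}. Its objects are categories $A = (A, \alpha, \bar\alpha, \tilde\alpha)$ in $\E$, its morphisms are functors $(f, \bar f)$ and its cells are transformations $\map\psi A\gamma$. First the underlying $1$"/category. The composite of $\map fAC$ and $\map hCE$ is $(h \of f, \bar h \of \bar f)$, and the identity on $A$ is $(\id_A, \id_\alpha)$. The functor axioms for the composite follow from those of $f$ and $h$, using $\bar h \times_h \bar h \of \bar f \times_f \bar f = (\bar h \of \bar f) \times_{h \of f} (\bar h \of \bar f)$, which holds by the universal property of the chosen pullbacks. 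Associativity and unitality are inherited from $\E$.

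Next the hom"/categories $\inCat\E(A, C)$. For $\cell\psi fg$ and $\cell\chi gk$ I define the vertical composite $\psi \hc \chi \dfn \bar\gamma \of (\psi, \chi)$. Here $\map{(\psi, \chi)}A{\gamma \times_C \gamma}$ is well defined because $\tgt \of \psi = g = \src \of \chi$. The identity on $f$ is $\tilde\gamma \of f$. To check that $\psi \hc \chi$ satisfies the external equivariance axiom, I first rewrite $\bar\gamma \of (\bar f, \bar\gamma \of (\psi, \chi) \of \tgt)$ by associativity of $\bar\gamma$. Then I apply the axiom for $\psi$, then associativity again, then the axiom for $\chi$. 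Associativity and unit laws for $\hc$ reduce to those of $\bar\gamma$ and $\tilde\gamma$. The identity $\tilde\gamma \of f$ is a transformation because of the unit axioms together with $\tilde\gamma \of f = \bar f \of \tilde\alpha$.

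Then the whiskerings. Put $h \of \psi \dfn \bar h \of \psi$ and $\psi \of l \dfn \psi \of l$ for $\map lXA$. Equivariance is preserved: for postwhiskering this uses $\bar h \of \bar\gamma = \bar\epsilon \of (\bar h \times_h \bar h)$, and for prewhiskering it uses $\bar f \of \bar l$ together with $\tgt \of \bar l = l \of \tgt$. Functoriality of whiskering with respect to $\hc$ and identities follows from the same functor axioms. This gives a sesquicategory.

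The main obstacle is the middle"/four interchange law. By the discussion after \defref{strict cells} it is exactly what upgrades this sesquicategory to a $2$"/category. Take $\cell\psi fg\colon A \to C$ and $\cell\chi hk\colon C \to E$. Both sides must equal $\bar\epsilon \of (\bar h \of \psi, \chi \of g)$ and $\bar\epsilon \of (\chi \of f, \bar k \of \psi)$ respectively. The key is to use $\chi$'s external equivariance $\bar\epsilon \of (\bar h, \chi \of \tgt) = \bar\epsilon \of (\chi \of \src, \bar k)$ as morphisms $\gamma \to \epsilon$, and precompose it with $\map\psi A\gamma$. Since $\src \of \psi = f$ and $\tgt \of \psi = g$, this yields precisely the interchange equation. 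I expect only bookkeeping with pullback pairings here. Hence horizontal composition $\chi \of \psi$ is well defined, and $\inCat\E$ is a $2$"/category.
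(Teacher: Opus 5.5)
Your proof is correct. It differs from the paper mainly in that the paper gives no argument at all: it calls the result well known and refers to Section~1 of Street's \emph{Fibrations and Yoneda's lemma in a $2$-category}.

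The approach Street uses is recalled in the proof of \thmref{embedding of strict cells}, and it is representable. Since each $\E(X, \dash)$ preserves pullbacks, a category $A$ in $\E$ induces categories in $\Cls$, naturally in $X$. Internal functors and transformations then become natural families of ordinary functors and natural transformations. The resulting $2$-functor $Y\colon \inCat\E \to \inCat{\Cls}^{\op\E}$ is fully faithful, so the $2$-category axioms for $\inCat\E$, interchange included, are inherited from ordinary category theory.

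You instead verify everything by hand. You organise the check through the sesquicategory-plus-interchange characterisation from \secref{sesquicategory section}. This isolates the one non-formal point: the middle-four interchange law for $\psi$ and $\chi$ is exactly the external equivariance of $\chi$ precomposed with $\psi$. The same link between interchange and equivariance reappears in the proof of \thmref{embedding of strict cells}, where strictness of a cell of $\s$ is what makes the induced family a transformation.

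Your route is self-contained and gives explicit formulas: $\psi \hc \chi = \bar\gamma \of (\psi, \chi)$, $h \of \psi = \bar h \of \psi$, and the horizontal composite. The representable route is shorter once $Y$ is available, and it produces $Y$ itself, which the paper reuses.

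Two small bookkeeping points, neither affecting correctness:
\begin{itemize}
\item The equivariance computation for $\psi \hc \chi$ needs one final application of associativity.
\item For $\tilde\gamma \of f$ to be a transformation, the quickest argument uses $\src \of \bar f = f \of \src$, $\tgt \of \bar f = f \of \tgt$ and the unit axioms of $C$. Your route via $\tilde\gamma \of f = \bar f \of \tilde\alpha$ also works, but it additionally uses the composition axiom of $f$.
\end{itemize}
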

	
	\subsubsection*{Augmented virtual double categories of internal profunctors}
	The proposition below (which is Example~2.9 of \cite{Koudenburg20}) ensures that functors, profunctors and their cells in a $1$"/category $\E$ with iterated pullbacks combine to form an augmented virtual double category. To state it we need the following two notations regarding iterated pullbacks.
	
	Given an iterated span $\ul J = (A_0 \leftarrow J_1 \rightarrow \dotsb \leftarrow J_n \rightarrow A_n)$ we write $\prod_{\dash} \ul J \dfn (A_0 \leftarrow J_1 \times_{A_1} \dotsb \times_{A_{n'}} J_n \rightarrow A_n)$ for the span induced by its iterated pullback. Extending this definition to empty iterated spans $\prod_{\dash} (A_0) \dfn (A_0 \xleftarrow{\id} A_0 \xrightarrow{\id} A_0)$ notice that the universal property of iterated pullbacks induces, for every path $\ull J = (\ul J_1, \dotsc, \ul J_n)$ of (possible empty) iterated spans $\ul J_i$ of length $m_i \geq 0$, a canonical isomorphism of spans
	\begin{displaymath}
		\eta_{\ull J}\colon (\prod\nolimits_{\dash} \ul J_1) \times_{A_{1m_1}} \dotsb \times_{A_{n'm_{n'}}} (\prod\nolimits_{\dash} \ul J_n) \xrightarrow{\iso} \prod\nolimits_{\dash} (\ul J_1 \conc \dotsb \conc \ul J_n).
	\end{displaymath}
	Here $\ul J_1 \conc \dotsb \conc \ul J_n$ denotes the concatenation of $\ull J$ into a single iterated span, in which empty iterated spans are discarded.
	
	Next consider a path $\ull K = (\ul K_1, \dotsc, \ul K_n)$ of paths of profunctors in $\E$ that are either singleton profunctors $\ul K_i = (\hmap{K_i}{B_{i'}}{B_i})$ between categories $B_{i'}$ and $B_i$ in $\E$ or empty, that is $\ul K_j = (B_{j'})$ is a category $B_{j'} = (B_{j'}, \beta_{j'}, \bar\beta_{j'}, \tilde\beta_{j'})$ in $\E$. We associate to the path $\ull K$ the path $(K_1^\textup u, \dotsc, K_n^\textup u)$ of profunctors $K_i^\textup u$ that are defined as follows. Set $K_i^\textup u \dfn K_i$ if $\ul K_i = (K_i)$ is a singleton and take $K_i^\textup u \dfn I_{B_{i'}}$ to be the unit profunctor if $\ul K_i = (B_{i'})$ is empty. The proposition below uses that any cell $\psi$ of profunctors in $\E$ as on the left below induces a cell $\cell{\psi^\textup u}{(K_1^\textup u, \dotsc, K_n^\textup u)}{\ul L}$ as on the right, as follows.
	\begin{displaymath}
		\begin{tikzpicture}[baseline]
			\matrix(m)[math35, column sep={10em,between origins}]{B_0 & B_n \\ C & D \\};
			\path[map]	(m-1-1) edge[barred] node[above] {$\ul K_1 \conc \ul K_2 \conc \dotsb \conc \ul K_n$} (m-1-2)
													edge node[left] {$h$} (m-2-1)
									(m-1-2) edge node[right] {$k$} (m-2-2)
									(m-2-1) edge[barred] node[below] {$\ul L$} (m-2-2);
			\path[transform canvas={xshift=5em}]	(m-1-1) edge[cell] node[right] {$\psi$} (m-2-1);
		\end{tikzpicture} \qquad \qquad \qquad \begin{tikzpicture}[baseline]
			\matrix(m)[math35]{B_0 & B_1 & B_{n-1} & B_n \\ C & & & D \\};
			\path[map]	(m-1-1) edge[barred] node[above] {$K_1^\textup u$} (m-1-2)
													edge node[left] {$h$} (m-2-1)
									(m-1-3) edge[barred] node[above] {$K_n^\textup u$} (m-1-4)
									(m-1-4) edge node[right] {$k$} (m-2-4)
									(m-2-1) edge[barred] node[below] {$\ul L$} (m-2-4);
			\path[transform canvas={xshift=1.75em}]	(m-1-2) edge[cell] node[right] {$\psi^\textup u$} (m-2-2);
			\draw				($(m-1-2)!0.5!(m-1-3)$) node[xshift=-1.5pt] {$\dotsb$};
		\end{tikzpicture}
	\end{displaymath}
	
	To obtain $\psi^\textup u$ we add the unit profunctors $K_j^\textup u = I_{B_{j'}} = \brks{B_{j'} \xlar{\src} \beta_{j'} \xrar{\tgt} B_{j'}}$, for each $1 \leq j \leq n$ with $\ul K_j$ empty, to the multi"/source of $\psi$ by composing $\psi$ with any suitable choice of actions $\map\lambda{\beta_{i'} \times_{B_{i'}} K_i}{K_i}$ and $\map\rho{K_i \times_{B_i} \beta_i}{K_i}$ on the singletons $\ul K_i = (K_i)$ among $\ull K$, as well as the actions $\map\lambda{\gamma \times_C L}L$ and $\map\rho{L \times_D \delta}L$, if $\psi$ is unary, or the action $\map{\bar\gamma}{\gamma \times_C \gamma}\gamma$, if $\psi$ is nullary. In the latter cases the actions must also be composed with the appropriate morphism of spans $\map{\bar h}{\beta_0}\gamma$ or $\map{\bar k}{\beta_n}\delta$. It might be necessary to iterate a single action multiple times, for instance in the case that consecutive paths $\ul K_j$ and $\ul K_{j+1}$ are empty. It follows from the equivariance axioms for $\psi$ (\defref{internal profunctor}) that this construction of $\psi^\textup u$ is well"/defined: composing $\psi$ with any two choices of actions giving the right multi"/source $(K_1^\textup u, \dotsc, K_n^\textup u)$ result in the same cell $\psi^\textup u$. 
	
	Remember that every augmented virtual double category $\K$ contains a \emph{vertical $2$"/category} $V(\K)$ consisting of its objects, vertical morphisms and vertical cells; see Example~1.5 of \cite{Koudenburg20}.
	\begin{proposition} \label{profunctors form an augmented virtual double category}
		Let $\E$ be a $1$"/category with chosen iterated pullbacks. Categories in $\E$, their functors and profunctors, as well as the cells between them, as in \defref{internal profunctor} above, combine into an augmented virtual double category $\inProf\E$ as follows, with $V(\inProf\E) = \inCat\E$.
		\begin{enumerate}[label=-]
			\item Composition $\of$ of functors in $\E$ is induced by composition of morphisms in $\E$.
			\item	For any path of cells
				\begin{displaymath}
					\begin{tikzpicture}[textbaseline]
						\matrix(m)[math35, column sep={4.25em,between origins}]
							{	A_{10} & A_{1m_1} & A_{2m_2} &[1em] A_{n'm_{n'}} & A_{nm_n} \\
								B_0 & B_1 & B_2 & B_{n'} & B_n \\ };
						\path[map]	(m-1-1) edge[barred] node[above] {$\ul J_1$} (m-1-2)
																edge node[left] {$f_0$} (m-2-1)
												(m-1-2) edge[barred] node[above] {$\ul J_2$} (m-1-3)
																edge node[right] {$f_1$} (m-2-2)
												(m-1-3) edge node[right] {$f_2$} (m-2-3)
												(m-1-4) edge[barred] node[above] {$\ul J_n$} (m-1-5)
																edge node[left] {$f_{n'}$} (m-2-4)
												(m-1-5) edge node[right] {$f_n$} (m-2-5)
												(m-2-1) edge[barred] node[below] {$\ul K_1$} (m-2-2)
												(m-2-2) edge[barred] node[below] {$\ul K_2$} (m-2-3)
												(m-2-4) edge[barred] node[below] {$\ul K_n$} (m-2-5);
						\path[transform canvas={xshift=2.125em}]	(m-1-1) edge[cell] node[right] {$\phi_1$} (m-2-1)
												(m-1-2)	edge[cell] node[right] {$\phi_2$} (m-2-2)
												(m-1-4) edge[cell] node[right] {$\phi_n$} (m-2-4);
						\draw				($(m-1-3)!0.5!(m-2-4)$) node {$\dotsb$};
					\end{tikzpicture}
				\end{displaymath}
				of length $n \geq 1$ and a cell $\psi$ as above, the vertical composite $\psi \of \ul\phi$ below is the morphism of spans
				\begin{displaymath}
					\psi \of (\phi_1, \dotsc, \phi_n) \dfn \brks{\psi^\textup u \of (\phi_1 \times_{f_1} \dotsb \times_{f_{n'}} \phi_n) \of \inv\eta_{\ull J}}
				\end{displaymath}
				with $\psi^\textup u$ and $\eta_{\ull J}$ as defined above.
				\begin{displaymath}
					\begin{tikzpicture}[textbaseline]
						\matrix(m)[math35, column sep={9em,between origins}]{A_{10} & A_{nm_n} \\ C & D \\};
						\path[map]	(m-1-1) edge[barred] node[above] {$\ul J_1 \conc \ul J_2 \conc \dotsb \conc \ul J_n$} (m-1-2)
																edge node[left] {$h \of f_0$} (m-2-1)
												(m-1-2) edge node[right] {$k \of f_n$} (m-2-2)
												(m-2-1) edge[barred] node[below] {$\ul L$} (m-2-2);
						\path[transform canvas={xshift=1.35em}]	(m-1-1) edge[cell] node[right] {$\psi \of (\phi_1, \dotsc, \phi_n)$} (m-2-1);
					\end{tikzpicture}
				\end{displaymath}
			\item The horizontal identity cell $\id_J$, as on the left below, is the identity morphism of the span $J$. The vertical identity cell $\id_f$, as on the right, is the morphism of spans $\tilde\gamma \of f$ with $\tilde\gamma$ the unit morphism of $C$.
				\begin{displaymath}
					\begin{tikzpicture}[baseline]
						\matrix(m)[math35]{A & B \\ A & B \\};
						\path[map]	(m-1-1) edge[barred] node[above] {$J$} (m-1-2)
																edge node[left] {$\id_A$} (m-2-1)
												(m-1-2) edge node[right] {$\id_B$} (m-2-2)
												(m-2-1) edge[barred] node[below] {$J$} (m-2-2);
						\path[transform canvas={xshift=1.25em}]	(m-1-1) edge[cell] node[right] {$\id_J$} (m-2-1);
					\end{tikzpicture} \qquad\qquad\qquad\qquad \begin{tikzpicture}[baseline]
			\matrix(m)[math35]{A \\ C \\};
			\path[map]	(m-1-1) edge[bend right=45] node[left] {$f$} (m-2-1)
													edge[bend left=45] node[right] {$f$} (m-2-1);
			\path[transform canvas={xshift=-0.6em}]	(m-1-1) edge[cell] node[right, inner sep=3pt] {$\id_f$} (m-2-1);
		\end{tikzpicture}
				\end{displaymath}
		\end{enumerate}
	\end{proposition}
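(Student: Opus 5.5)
The plan is to check the axioms of an augmented virtual double category (Definition~1.2 of \cite{Koudenburg20}) directly. Most of the work reduces to manipulating morphisms of spans between iterated pullbacks in $\E$, using the canonical isomorphisms $\eta_{\ull J}$. First I will verify that the proposed data are well typed. Composition of functors $\map fAC$ and $\map gCE$ is $g \of f$ with $\overline{g \of f} \dfn \bar g \of \bar f$; its associativity and unit axioms follow from those of $f$ and $g$. For the vertical composite $\psi \of (\phi_1, \dotsc, \phi_n)$, the morphism $\phi_1 \times_{f_1} \dotsb \times_{f_{n'}} \phi_n$ is well defined because each $\phi_i$ is a morphism of spans over $f_{i'}$ and $f_i$. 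This is where nullary $\phi_i$ land in $\beta_{i'}$, which is exactly why $\psi$ has to be replaced by $\psi^\textup u$. Well-definedness of $\psi^\textup u$ is assumed from the preceding discussion.

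The next step is to show that the composite is again a cell, i.e.\ that it satisfies the external and internal equivariance axioms of \defref{internal profunctor}.
\begin{itemize}
\item Internal equivariance between two factors of the same $\ul J_i$ follows from internal equivariance of $\phi_i$.
\item Equivariance across the boundary of $\ul J_i$ and $\ul J_{i+1}$ is handled in two moves. The right action of $\ul J_i$ is pushed through $\phi_i$ using its external right equivariance, which turns it into a $\rho$ of $K_i^\textup u$ composed with $\bar f_i$. The resulting action is then moved across to $K_{i+1}^\textup u$ using internal equivariance of $\psi^\textup u$. Finally it is pulled back through $\phi_{i+1}$ using its external left equivariance.
\item When $\ul J_i$ is empty, so that $\phi_i$ is a $(0,1)$-ary cell or a vertical cell, I will instead use the single external equivariance axiom for $\phi_i$, e.g.\ $\lambda_K \of (\bar f, \phi \of \tgt) = \rho_K \of (\phi \of \src, \bar g)$.
\item The outer external equivariance follows from that of $\phi_1$ and $\phi_n$ together with the external axioms for $\psi^\textup u$, combined with functoriality of $h$ and $k$ ($\bar h \of \bar\alpha = \bar\gamma \of (\bar h \times_h \bar h)$).
\end{itemize}

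Then I will check the unit laws. That $\id_J$ is a unit is immediate. For $\id_f = \tilde\gamma \of f$, the key observation is that precomposing with it inserts a unit $I_C$ into the source, which the action in $\psi^\textup u$ then absorbs via the bimodule unit axioms $\lambda \of (\tilde\alpha \of j_A, \id_J) = \id_J$. After that I will check associativity of the operadic composition. Both bracketings yield the same morphism out of the iterated pullback: the $\eta$'s are canonical and compose coherently by the universal property of iterated pullbacks, and the $(\dash)^\textup u$ construction is compatible with composition, $(\psi \of \ul\phi)^\textup u = \psi^\textup u \of \ul\phi^\textup u$ up to $\eta$. This compatibility again follows from the independence of the choice of actions. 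Finally, I will identify the vertical cells. A vertical cell is a transformation in the sense of \defref{internal profunctor}, and vertical composition of vertical cells via $\bar\gamma$ is exactly the vertical composition in $\inCat\E$. Whiskering arises from composing with identity cells $\id_f$. Hence $V(\inProf\E) = \inCat\E$, and \propref{categories form a 2-category} applies.

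The main obstacle will be the associativity axiom when empty paths are involved. A nullary or $(0,1)$-ary $\phi_i$ produces new empty positions, and these interact with the unit-insertion of $(\dash)^\textup u$ at the outer level. There I would first prove the lemma $(\psi \of \ul\phi)^\textup u = \psi^\textup u \of (\phi_1^\textup u, \dotsc)$ by expanding both sides as $\psi$ composed with chosen actions. I would then invoke the uniqueness statement (independence of the choice of actions) rather than computing explicitly.
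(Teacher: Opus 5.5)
Your outline is correct. The paper gives no argument of its own here: it takes the statement from Example~2.9 of \cite{Koudenburg20}, building on Example~2.10 of \cite{Cruttwell-Shulman10}. So your direct verification is a legitimate self-contained substitute rather than a rival proof.

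The route suggested by the paper's remark after \defref{internal profunctor} is more conceptual. That remark says nullary cells $\ul J \Rightarrow C$ are the same as unary cells $\ul J \Rightarrow I_C$. One starts from Cruttwell--Shulman's virtual double category of internal profunctors, in which the $(0,1)$-ary cells $\tilde\beta\colon B \Rightarrow I_B$ are opcartesian, so that each $I_B$ is a unit. Then:
\begin{itemize}
\item $\psi^\textup u$ is simply the unique factorisation of $\psi$ through copies of $\tilde\beta$ placed at the empty positions; your `independence of the choice of actions' is exactly this uniqueness.
\item The composite in the statement is ordinary virtual double category composition of $\psi^\textup u$ with the $\phi_i$.
\item Your key lemma $(\psi \of \ul\phi)^\textup u = \psi^\textup u \of (\phi_1^\textup u, \dotsc, \phi_n^\textup u)$ follows at once, because both sides become $\psi \of \ul\phi$ after precomposing with the unit cells. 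Associativity and unitality, including the empty-path cases you flag as the main obstacle, are then inherited.
\end{itemize}

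What your approach buys is independence from the virtual double category literature. Its cost is that lemma, and your phrase `expand both sides and invoke independence' hides a real step there. If some $\phi_i$ has empty source, then $\phi_i^\textup u$ is built from actions on the \emph{target} of $\phi_i$, composed with the $\bar f$'s. Before the right-hand side is visibly $\psi \of \ul\phi$ composed with an admissible choice of actions, you must transport these actions onto a profunctor in the source of $\psi \of \ul\phi$ or onto its target $\ul L$. You do this using the internal or external equivariance of $\psi^\textup u$ and the external equivariance of the neighbouring $\phi_j$. This always succeeds, but it is where the equivariance axioms are actually used, not merely the well-definedness of $(\dash)^\textup u$.
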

	
	\section{Split two-sided fibrations in sesquicategories} \label{internal split two-sided fibrations section}
	Let $\C$ be a $2$"/category with all pullbacks and $\2$"/cotensors. Street shows in Proposition~2 of \cite{Street74b} that $\2$"/cotensors $A^\2$ in $\C$ form categories in the underlying $1$"/category $\und{\C}$, and that the assignment $A \mapsto A^\2$ extends to a faithful $2$"/functor $\map{\dash^\2}\C{\inCat{\und{\C}}}$ (\propref{categories form a 2-category}). Section~2 of \cite{Street74b} then proceeds by defining \emph{split two"/sided fibrations} $A \brar B$ in $\C$ (called ``split bifibrations'' in the reference) to be (in our terms) profunctors in $\und{\C}$ of the form $A^\2 \brar B^\2$.
	
	In this section we apply Street's strategy, almost verbatim, to a sesquicategory $\s$ that has all pullbacks and $\2$"/cotensors. We show that the assignment \mbox{$A \mapsto A^\2$} in $\s$ extends to a faithful $2$"/functor $\map{\dash^\2}{\s_{\str}}{\inCat{\und\s} = V(\inProf{\und\s}})$, where $\s_{\str} \subseteq \s$ is the wide sub"/$2$"/category of strict cells (\propref{sub-2-categories}). We then define \emph{split two"/sided fibrations} $A \brar B$ in $\s$ to be profunctors in $\und{\s}$ of the form $A^\2 \brar B^\2$.
	
	Let $\s$ be a sesquicategory that has all pullbacks (Convention~\ref{conical limits convention}) and $\2$"/cotensors (\defref{comma object}). Let $A$ be an object with $\2$"/cotensor $A^\2$. The universal cell $\delta_A \dfn \delta$ defining $A^\2$, in the right"/hand side below, induces for the pair $(A, A^\2)$ a composition morphism $\map{\bar\alpha}{A^\2 \times_A A^\2}{A^\2}$ and a unit morphism $\map{\tilde\alpha}A{A^\2}$, in the sense of \defref{internal profunctor}. The composition $\bar\alpha$ is the unique factorisation of the left"/hand side below through $\delta$, as shown, while the unit $\tilde\alpha$ is the unique factorisation of the identity cell $\id_{\id_A}$ through $\delta$.
	\begin{displaymath}
		\begin{tikzpicture}[textbaseline]
				\matrix(m)[math35, column sep={4.5em,between origins}, row sep={4em,between origins}]{A^\2 \times_A A^\2 & A^\2 \\ A^\2 & A \\};
				\path[map]	(m-1-1) edge node[above] {$\pi_2$} (m-1-2)
														edge node[left] {$\pi_1$} (m-2-1)
										(m-1-2) edge[bend right=26] node[above left, inner sep=1pt] {$\src$} (m-2-2)
														edge[bend left=26] node[right] {$\tgt$} (m-2-2)
										(m-2-1) edge[bend right=26] node[below] {$\src$} (m-2-2)
														edge[bend left=26] node[above, inner sep=1pt] {$\tgt$} (m-2-2);
				\path				([xshift=-7.5pt]$(m-1-2)!0.5!(m-2-2)$) edge[cell] node[above] {$\delta$} ([xshift=8.5pt]$(m-1-2)!0.5!(m-2-2)$)
										([yshift=-7.5pt]$(m-2-1)!0.5!(m-2-2)$) edge[cell] node[right] {$\delta$} ([yshift=8.5pt]$(m-2-1)!0.5!(m-2-2)$);
				\draw				(-0.2,0.2) -- (0,0.2) -- (0,0.4);
			\end{tikzpicture} = \begin{tikzpicture}[textbaseline]
			\matrix(m)[math35]{A^\2 \times_A A^\2 \\ A^\2 \\ A \\};
						\path[map]	(m-2-1) edge[bend right=45] node[left] {$\src$} (m-3-1)
																edge[bend left=45] node[right] {$\tgt$} (m-3-1)
												(m-1-1) edge[dashed] node[right] {$\bar\alpha$} (m-2-1);
						\path				([xshift=-8pt]$(m-2-1)!0.55!(m-3-1)$) edge[cell] node[above] {$\delta$} ([xshift=9pt]$(m-2-1)!0.55!(m-3-1)$);
		\end{tikzpicture}
	\end{displaymath}
	
	 The following theorem, which is a variation on Proposition~2 of \cite{Street74b}, shows that $\bar\alpha$ and $\tilde\alpha$ make $(A, A^\2)$ into a category in $\und\s$.
	 
	\begin{theorem} \label{embedding of strict cells}
		Let $\s$ be a sesquicategory with chosen iterated pullbacks (Convention~\ref{conical limits convention}) and $\2$"/cotensors (\defref{comma object}). For each object $A$ the quadruple $A^\2 \dfn (A, A^\2, \bar\alpha, \tilde\alpha)$, as defined above, forms a category in $\und\s$ (\defref{internal profunctor}). The assignment $A \mapsto A^\2$ extends to a faithful locally full $2$"/functor (\propsref{sub-2-categories}{profunctors form an augmented virtual double category})
		\begin{displaymath}
			\map{\dash^\2}{\s_{\str}}{\inCat{\und\s} = V(\inProf{\und\s})}
		\end{displaymath}
		as follows (\defref{internal profunctor}):
		\begin{enumerate}[label=-]
			\item assign to a morphism $\map fAC$ of $\s$ the functor $f^\2 \dfn (f, \bar f)$ in $\und\s$, where $\map{\bar f}{A^\2}{C^2}$ denotes the unique factorisation of $f \of \delta_A$ through the cell $\delta_C$ that defines $C^\2$;
			\item assign to a strict cell $\cell\phi fg$ of $\s$ the transformation $\cell{\phi^\2}{f^\2}{g^\2}$ in $\und\s$ that is given by the unique factorisation $\map{\phi^\2}A{C^\2}$ of $\phi$ through $\delta_C$.
		\end{enumerate}
	\end{theorem}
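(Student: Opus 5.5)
Every equation to be verified is an equation between morphisms into some $2$"/cotensor $C^\2$. By the $1$"/dimensional universal property of \defref{comma object}, two morphisms $\map{x, y}X{C^\2}$ coincide as soon as $\src \of x = \src \of y$, $\tgt \of x = \tgt \of y$ and $\delta_C \of x = \delta_C \of y$. The plan is therefore to reduce every axiom to an identity between cells of $\s$, and to check that identity using only vertical composition, whiskering and the universal cells. Pullbacks are conical limits (Convention~\ref{conical limits convention}), so maps into $A^\2 \times_A A^\2$ are determined by their two projections, and whiskering commutes with these projections.

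\textbf{Category structure.} By construction, $\delta \of \bar\alpha = (\delta \of \pi_1) \hc (\delta \of \pi_2)$ and $\delta \of \tilde\alpha = \id_{\id_A}$. Hence $\src\of\bar\alpha = \src\of\pi_1$ and $\tgt\of\bar\alpha=\tgt\of\pi_2$, so $\bar\alpha$ and $\tilde\alpha$ are morphisms of spans.
\begin{itemize}[label=-]
	\item \emph{Associativity.} Whiskering $\delta$ with either side of the associativity equation gives the triple composite $(\delta\of\pi_1)\hc(\delta\of\pi_2)\hc(\delta\of\pi_3)$. This uses associativity of $\hc$ and functoriality of prewhiskering with respect to $\hc$.
	\item \emph{Unit axioms.} These follow in the same way, since $\id_{\id_A}$ is a unit for $\hc$.
\end{itemize}

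\textbf{Action on morphisms.} For $\map fAC$ we have
\begin{displaymath}
	\delta_C \of \bar f \of \bar\alpha = f \of \delta_A \of \bar\alpha = (f\of\delta_A\of\pi_1)\hc(f\of\delta_A\of\pi_2),
\end{displaymath}
by functoriality of postwhiskering, and $\delta_C \of \bar f \of \tilde\alpha = f \of \id_{\id_A} = \id_f$. These identities give the functor axioms for $f^\2$. Functoriality of $\dash^\2$ on morphisms, namely $\overline{hf}=\bar h\bar f$ and $\overline{\id}=\id$, follows by uniqueness, using associativity of whiskering.

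\textbf{Action on strict cells.} This is where strictness enters, and I expect it to be the main point. For $\cell\phi fg$, define $\phi^\2$ by $\delta_C \of \phi^\2 = \phi$. The transformation axiom requires
\begin{displaymath}
	\bar\gamma \of (\bar f, \phi^\2 \of \tgt) = \bar\gamma\of(\phi^\2\of\src, \bar g).
\end{displaymath}
Whiskering both sides with $\delta_C$ reduces this to
\begin{displaymath}
	(f\of\delta_A) \hc (\phi \of \tgt) = (\phi\of\src)\hc(g\of\delta_A),
\end{displaymath}
which is exactly middle"/four interchange for $\phi$ against $\delta_A$. It holds because $\phi$ is strict. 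Only $\delta_A$"/strictness is used here, but strictness against all cells is needed below, for $\s_\str$ to be a $2$"/category.

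For the remaining $2$"/functor laws:
\begin{itemize}[label=-]
	\item Vertical composition in $\inCat{\und\s}$ is $\bar\gamma\of(\phi^\2,\psi^\2)$. Whiskering with $\delta_C$ gives $\phi\hc\psi$, so $(\phi\hc\psi)^\2 = \phi^\2\hc\psi^\2$ and identities are preserved.
	\item Whiskerings are preserved: $h^\2\of\phi^\2 = \bar h \of\phi^\2$ is classified by $h\of\phi$, and $\phi^\2\of f^\2 = \phi^\2\of f$ is classified by $\phi\of f$.
	\item Since horizontal composition in both $2$"/categories is built from whiskering and $\hc$, these preservation properties suffice.
\end{itemize}

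\textbf{Faithful and locally full.} Faithfulness holds because $\phi = \delta_C\of\phi^\2$ is recovered from $\phi^\2$. For local fullness, take a transformation $\map\psi A{C^\2}$ with $\src\of\psi=f$ and $\tgt\of\psi=g$, and set $\phi\dfn\delta_C\of\psi$; then $\psi=\phi^\2$. The subtle point is whether such a $\phi$ is automatically strict. If the intended reading of local fullness is with respect to transformations between functors of the form $f^\2$, then the transformation axiom only yields $\delta_A$"/strictness of $\phi$. To upgrade this, let $\chi$ be any cell precomposable with $\phi$. Factor $\chi$ through $\delta_A$ as $\chi = \delta_A \of \chi'$, and whisker the $\delta_A$"/interchange for $\phi$ by $\chi'$. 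This gives interchange with $\chi$, so $\phi$ is strict and lies in $\s_\str$. This factorisation step is the one to check carefully, since it relies only on the $1$"/dimensional universal property.
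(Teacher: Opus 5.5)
Your proof is correct, but it takes a different route from the paper.

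\emph{The paper's route.} The paper follows Street's Proposition~2 almost verbatim and works representably. It extends the Yoneda embedding to a fully faithful $2$-functor $Y\colon \inCat{\und\s}\to\inCat{\Cls}^{\op{\und\s}}$. It then obtains the category structure on $A^\2$ and the functors $f^\2$ by transporting the arrow categories $\s(X,A)^\2$ along the natural bijection $\und\s(\dash,A^\2)\iso\und{\s(\dash,A)^\2}$. Strict cells are characterised as exactly those $\phi$ for which postcomposition with $\phi$ is a transformation $\und\s(X,f)\Rar\und\s(X,g)$ for every $X$, that is, $\phi$ interchanges with every cell into $A$. The bijection between such natural families and transformations $f^\2\Rar g^\2$, which is full faithfulness of $Y$, then yields faithfulness and local fullness.

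\emph{Your route.} You verify each axiom by hand, whiskering every equation by the universal cell $\delta$ and using only the $1$-dimensional universal property. You isolate the key fact that the transformation axiom for $\phi^\2$ is precisely $\delta_A$-strictness of $\phi$. This is equivalent to full strictness, because every cell $\chi\colon x\Rar y\colon X\to A$ factors as $\delta_A\of\chi'$.

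\emph{Comparison.} This is the same mechanism as the paper's argument, namely the representable condition evaluated at the generic element $\id_{A^\2}\in\und\s(A^\2,A^\2)$, but made explicit. The paper's version is shorter and delegates the details, such as full faithfulness of $Y$ and the description of its image, to Street. Yours is self-contained and shows directly why strictness is exactly the condition matching transformations $f^\2\Rar g^\2$, which is what local fullness requires.

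Two hedges in your write-up are unnecessary. Local fullness by definition only concerns transformations between images $f^\2$ and $g^\2$, so your reading is the right one. And your remark that strictness ``against all cells is needed'' beyond $\delta_A$-strictness is moot, since you go on to prove the two coincide.
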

	\begin{proof}
		Following Section~1 of \cite{Street74b}, we use that the usual Yoneda embedding \mbox{$\und\s \hookrightarrow \Cls^{\op{\und\s}}$} extends to a $2$"/functor $\emb Y{\inCat{\und\s}} \inCat{\Cls}^{\op{\und\s}}$ that is full and faithful as follows. The image $YA$ of a category $A = (A, \alpha, \bar\alpha, \tilde\alpha)$ in $\und\s$ is the contravariant functor that assigns to each object $X \in \und\s$ the category $(YA)(X)$ in $\Cls$ whose underlying span is \mbox{$\und\s(\id_X, \src), \und\s(\id_X, \tgt)\colon \und\s(X, \alpha) \rightrightarrows \und\s(X, A)$} and whose composition and unit morphisms are induced by $\und\s(X, \bar\alpha)$ and $\und\s(X, \tilde\alpha)$. See also Section~1 of \cite{Street17} for a more detailed description of a closely related construction. Conversely a functor \mbox{$\op{\und{\s}} \to \inCat{\Cls}$} is in the image of $Y$ if and only if its composite with the forgetful functor $\inCat{\Cls} \to \Cls \times \Cls$, that sends a category in $\Cls$ to its classes of objects and morphisms, is a pair of contravariant hom"/functors $(\und\s(\dash, A), \und\s(\dash, \alpha))$ for some objects $A$, $\alpha \in \s$.
		
		To see that $A^\2 \dfn (A, A^\2, \bar\alpha, \tilde\alpha)$ forms a category $\und\s$ we can apply the proof of Proposition~2 of \cite{Street74b} verbatim, as follows. First notice that the classes of objects of the arrow categories $\s(X,A)^\2$ form categories in $\Cls$, for each object $X \in \und\s$, with the projections $\und{\s(X, A)^\2} \rightrightarrows \und\s(X, A)$ as underlying spans, and that these combine into a functor $\map{\und\s(\dash, A)}{\op{\und\s}}{\inCat\Cls}$. Composing these spans with the natural bijection $\und\s(\dash, A^\2) \iso \und{\s(\dash, A)^\2}$ that is the universal property of $A^\2$ (\defref{comma object}) we obtain a functor in the image of $Y$, which shows that the pair of projections $\src, \tgt\colon A^\2 \rightrightarrows A$ carries the structure of a category in $\und\s$. That the latter coincides with that of the proposition is readily checked. Similarly $f^\2 \dfn (f, \bar f)$ corresponds to the natural family of functors $\map{\und\s(X, f)}{\und\s(X, A)}{\und\s(X, C)}$ in $\Cls$ given by the functions $\und\s(\id_X, f)$ and $\und{\s(\id_X, f)^\2}$.
		
		Next consider a cell $\cell\phi f{\map gAC}$ in $\s$. Post"/composition with $\phi$ gives a natural family of morphisms of spans $\und\s(X, A) \to \und{\s(X, C)^\2}$. These morphisms form transformations $\cell{\und\s(X, \phi)}{\und\s(X, f)}{\und\s(X, g)}$ in $\Cls$, that is they satisfy the external equivariance axiom (\defref{internal profunctor}), if and only if $\phi$ is strict (\defref{strict cells}). Under the natural bijections $\und\s(\dash, C^\2) \iso \und{\s(\dash, C)^\2}$ natural families of transformations $\und\s(X, f) \Rar \und\s(X, g)$ in $\Cls$ bijectively correspond to transformations $f^\2 \Rar g^\2$ in $\inCat{\und\s}$, with $\und\s(X, \phi)$ corresponding to $\cell{\phi^\2}{f^\2}{g^\2}$ as defined in the proposition.
	\end{proof}
	
	\begin{example}	\label{lax arrow 2-category as internal category}
		Consider the lax arrow $2$"/category $A^\2$ associated to a $2$"/category $A$ (\exref{lax comma 2-category}). The unit and composition $2$"/functors $\map{\tilde\alpha} A{A^\2}$ and $\map{\bar\alpha}{A^\2 \times_A A^\2}{A^\2}$, as supplied by the theorem, that make $A^\2$ into an internal category in $\und{\TwoCatLax}$ (\exref{lax natural transformation}), are defined as follows. In obtaining $\tilde\alpha$ and $\bar\alpha$ we used the universal property of $A^\2$ (\defref{comma object}) and the vertical composition of lax natural transformations; see e.g.\ Definition~4.2.15 of \cite{Johnson-Yau21}.
		\begin{align*}
			\tilde\alpha(a) &= (a, \id_a, a) &&\bar\alpha(a, u, b, v, c) = (a, v \of u, c)\\
			\tilde\alpha(s) &= (s, \id_s, s) &&\bar\alpha(r, \zeta, s, \eta, t) = \bigpars{r, tv_0u_0 \xRar{\eta u_0} v_1su_0 \xRar{v_1\zeta} v_1u_1r, t} \\
			\tilde\alpha(\sigma) &= (\sigma, \sigma) &&\bar\alpha(\rho, \sigma, \tau) = (\sigma, \tau) 
		\end{align*}
	\end{example}
		
	As promised our notion of split two"/sided fibration in a sesquicategory is analogous to Street's notion of ``split bifibration'' in a $2$"/category, introduced in Section~2 of \cite{Street74b}.
	\begin{definition} \label{internal split two-sided fibration}
		Let $\s$ be a sesquicategory with specified pullbacks and $\2$"/cotensors. A \emph{split two"/sided fibration $\hmap JAB$ from $A$ to $B$} in $\s$ is a profunctor $\hmap J{A^\2}{B^\2}$ in $\und\s$ (\defref{internal profunctor} and \thmref{embedding of strict cells}).
	\end{definition}
	
	We denote by $\inSpTwoFib\s \subset \inProf{\und\s}$ the locally full sub"/augmented virtual double category that is generated by categories in $\und\s$ of the form $A^\2 = (A, A^\2, \bar\alpha, \tilde\alpha)$, with $A \in \und\s$, and vertical morphisms of the form $f^\2 = (f, \bar f)$, with $f \in \und\s$, both as in the theorem above. When refering to $\inSpTwoFib\s$ we will identify the objects $A$ and morphisms $f$ of $\s$ with the categories $A^\2$ and functors $f^\2$ in $\und\s$ that they induce. Under this identification the horizontal morphisms $A \brar B$ of $\inSpTwoFib\s$ are precisely the split two"/sided fibrations $A \brar B$ in $\s$.
	
	\begin{definition} \label{internal split (op)fibration}
		If $\s$ has a terminal object $1$ then by a \emph{split fibration over $A$} in $\s$ we mean a profunctor of the form $A^\2 \brar 1$ in $\und\s$ and by a \emph{split opfibration over $B$} in $\s$ we mean a profunctor $1 \brar B^\2$ in $\und\s$.
		
		We denote by $\inSpFib\s$ the $1$"/category of split fibrations in $\s$, whose morphisms are cells in $\inSpTwoFib\s$ of the form below. The $1$"/category $\inSpOpFib\s$ of split opfibrations in $\s$ is defined analogously.
		\begin{displaymath}
			\begin{tikzpicture}
				\matrix(m)[math35]{1 & B \\ 1 & D \\};
					\path[map]	(m-1-1) edge[barred] node[above] {$J$} (m-1-2)
											(m-1-2) edge node[right] {$g$} (m-2-2)
											(m-2-1) edge[barred] node[below] {$K$} (m-2-2);
					\path				(m-1-1) edge[eq] (m-2-1)
											(m-1-1) edge[cell, transform canvas={xshift=1.75em}] node[right] {$\phi$} (m-2-1);
			\end{tikzpicture}
		\end{displaymath}
	\end{definition}
	
	Since the span underlying a split fibration $J$ over $A$ is necessarily of the form $A \xlar p J \xrar ! 1$ we will often denote $J$ by $\map pJA$. Analogously split opfibrations $\hmap{(!, q)}1B$ will be denoted $\map qJB$. In terms of Subsection~4.23 of \cite{Koudenburg24} the $1$"/category $\inSpFib\s$ of split fibrations in $\s$ is the \emph{horizontal slice category} $1 \hs \inSpTwoFib\s$ over $1$; see \cororef{equivalence from yoneda embedding} below.
	\begin{lemma}
		Let $\s$ be a sesquicategory with chosen iterated pullbacks, $\2$"/cotensors and a terminal object $1$. If $\hmap JAB$ is a split two"/sided fibration in $\s$ with underlying span $A \xlar p J \xrar q B$ then $p$ forms a split fibration over $A$ in $\s$ and $q$ forms a split opfibration over $B$ in $\s$.
	\end{lemma}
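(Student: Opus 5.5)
The plan is to restrict the actions of $J$ along the unit profunctors, i.e.\ to compose $J$ with suitable cells in $\inProf{\und\s}$ so that one side becomes trivial. We first check that the terminal object $1$ has a trivial $\2$"/cotensor. By \convref{conical limits convention} every cell $X \to 1$ factors uniquely as an identity, because modifications into $1$ are unique. Hence $1^\2 \iso 1$, the category $1^\2$ in $\und\s$ is the terminal category $(1, 1, \id, \id)$, and the unit $\tilde\alpha\colon 1 \to 1^\2$ is an identity.

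We then define the split fibration structure on $p$. The underlying span is $A \xlar{p} J \xrar{!} 1$, regarded as a span from $A^\2$ to $1^\2 = 1$. Its left action is the given left action
\[
\lambda\colon A^\2 \times_A J \to J
\]
of $J$. Its right action must be a morphism $J \times_1 1 \iso J \to J$, and we take it to be the identity. Associativity and unitality of $\lambda$ are among the bimodule axioms of $J$ in \defref{internal profunctor}. The axioms for the trivial right action hold tautologically, and so does the compatibility axiom $\rho \of (\lambda \times \id) = \lambda \of (\id \times \rho)$, since $\rho$ is an identity.

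One must also check that $\lambda$ is a morphism of spans for the new span. Its left leg is the same as before. Its right leg lands in $1$, so it commutes automatically. The case of $q$ is dual: we use the span $1 \xlar{!} J \xrar{q} B$, the given right action $\rho$, and the identity as left action.

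It is cleaner to phrase the construction as restriction. The functor $!^\2\colon B^\2 \to 1^\2$ induced by $!\colon B \to 1$ is the terminal internal functor. Composing the span $J$ with it on one side yields the required structure, since the unique morphism $B^\2 \to 1$ is compatible with everything.

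The only point that needs care is the identification $1^\2 \iso 1$ as internal categories. Here we use the $1$"/dimensional universal property of $1^\2$ together with the fact that $\s(X,1)$ is the terminal category. This last fact follows from the $2$"/dimensional part of \defref{conical limit} applied to the empty diagram. After that, the rest is routine bookkeeping of the bimodule axioms.
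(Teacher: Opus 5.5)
Your argument is correct and is essentially the paper's. Like the paper, you keep the original left action $\lambda$ and take as right action the projection $J \times_1 1^\2 \iso J \to J$, which the unit axiom forces; your explicit check that $1^\2 \iso 1$, via the $2$-dimensional property of the terminal object, fills in what the paper's sketch leaves implicit. One caveat: drop the aside calling this a restriction, since restriction in the sense of \defref{cartesian cells} pulls a horizontal morphism back along functors into its endpoints, whereas you are only postcomposing the leg $q$ with $\map !B1$ (and a horizontal composite with the companion of $!^\2$ would, where it exists, quotient $J$ by the $B^\2$-action rather than return $J$).
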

	\begin{proof}[Sketch of the proof.]
		The terminal morphism $\map !J1$ admits a unique right action $\map\rho{J \times_1 1^\2}J$ that, as a consequence of the unit axiom for $\rho$ (\defref{internal profunctor}), necessarily equals the projection onto $J$. It is easily checked that, together with the original left action $\map\lambda{A^\2 \times_A J}J$, this projection furnishes the span $A \xlar p J \xrar ! B$ into a split fibration in $\s$.
	\end{proof}
	
	\subsubsection*{Restricting internal split two"/sided fibrations}
	A universal construction in $\inSpTwoFib\s$ that we will use is that of ``restricting a horizontal morphism along vertical morphisms''. Such restrictions are defined by universal cells as follows, which is Definition~4.1 of \cite{Koudenburg20}.
	\begin{definition} \label{cartesian cells}
		A cell $\cell\chi J{\ul K}$, as in the right-hand side below, is called \emph{cartesian} if any cell $\phi$, as on the left-hand side, factors uniquely through $\chi$ as a cell $\phi'$ as shown.
		\begin{displaymath}
			\begin{tikzpicture}[textbaseline]
				\matrix(m)[math35]{X_0 & X_1 & X_{n'} & X_n \\ A & & & B \\ C & & & D \\};
				\path[map]	(m-1-1) edge[barred] node[above] {$H_1$} (m-1-2)
														edge node[left] {$h$} (m-2-1)
										(m-1-3) edge[barred] node[above] {$H_n$} (m-1-4)
										(m-1-4) edge node[right] {$k$} (m-2-4)
										(m-2-1) edge node[left] {$f$} (m-3-1)
										(m-2-4) edge node[right] {$g$} (m-3-4)
										(m-3-1) edge[barred] node[below] {$\ul K$} (m-3-4);
				\draw				($(m-1-2)!0.5!(m-1-3)$) node {$\dotsc$};
				\path[transform canvas={yshift=-1.75em}]	($(m-1-1.south)!0.5!(m-1-4.south)$) edge[cell] node[right] {$\phi$} ($(m-2-1.north)!0.5!(m-2-4.north)$);
			\end{tikzpicture} \quad = \quad \begin{tikzpicture}[textbaseline]
				\matrix(m)[math35]{X_0 & X_1 & X_{n'} & X_n \\ A & & & B \\ C & & & D \\};
				\path[map]	(m-1-1) edge[barred] node[above] {$H_1$} (m-1-2)
														edge node[left] {$h$} (m-2-1)
										(m-1-3) edge[barred] node[above] {$H_n$} (m-1-4)
										(m-1-4) edge node[right] {$k$} (m-2-4)
										(m-2-1) edge[barred] node[below] {$J$} (m-2-4)
														edge node[left] {$f$} (m-3-1)
										(m-2-4) edge node[right] {$g$} (m-3-4)
										(m-3-1) edge[barred] node[below] {$\ul K$} (m-3-4);
				\draw				($(m-1-2)!0.5!(m-1-3)$) node {$\dotsc$};
				\path				($(m-1-1.south)!0.5!(m-1-4.south)$) edge[cell] node[right] {$\phi'$} ($(m-2-1.north)!0.5!(m-2-4.north)$)
										($(m-2-1.south)!0.5!(m-2-4.south)$) edge[cell, transform canvas={yshift=-2pt}] node[right] {$\chi$} ($(m-3-1.north)!0.5!(m-3-4.north)$);
			\end{tikzpicture}
		\end{displaymath}
	\end{definition}
	
	If the cartesian cell $\chi$ of the form above exists then its horizontal source \mbox{$\hmap JAB$} is called the \emph{restriction} of $\hmap{\ul K}CD$ along $f$ and $g$, and denoted $\ul K(f, g) \dfn J$. If $\ul K = (C \xbrar K D)$ then we call $K(f, g)$ \emph{unary}; in the case that $\ul K = (C)$ we call $C(f, g)$ \emph{nullary}. The nullary restriction \mbox{$\hmap{C(\id, \id)}CC$} is called the \emph{(horizontal) unit} of the object $C$ and denoted $I_C \dfn C(\id, \id)$; if $I_C$ exists then we call the object $C$ \emph{unital}.
	
	\begin{definition} \label{augmented virtual equipment}
		An augmented virtual double category that has all horizontal units is called an \emph{unital virtual double category} (see also Section~10 of \cite{Koudenburg20}). An augmented virtual double category that has all unary restrictions $K(f, g)$ is called an \emph{augmented virtual equipment}. Combining the latter notions, a \emph{unital virtual equipment} is an augmented virtual equipment that has all restrictions $\ul K(f, g)$.
	\end{definition}
	
	The following result is Example~4.9 of \cite{Koudenburg20}.
	\begin{proposition}
		The augmented virtual double category $\inProf\E$ of \propref{profunctors form an augmented virtual double category} is a unital virtual equipment.
	\end{proposition}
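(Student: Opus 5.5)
To show that $\inProf\E$ is a unital virtual equipment I will construct, for each internal profunctor or category and each pair of functors, an explicit restriction together with its cartesian cell, and check the universal property using the description of vertical composition in \propref{profunctors form an augmented virtual double category}.

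\textbf{Unary restrictions.} Given $\hmap KCD$ with underlying span $C \xlar{k_C} K \xrar{k_D} D$ and functors $\map fAC$, $\map gBD$, I take $K(f,g)$ to be the iterated pullback
\begin{displaymath}
	K(f,g) \dfn A \times_C \gamma \times_C K \times_D \delta \times_D B,
\end{displaymath}
formed along $f$, $\src$, $\tgt$, $k_C$, $k_D$, $\src$, $\tgt$ and $g$. Informally, an element is a pair $a \in A$, $b \in B$ together with an element of $K(fa, gb)$, obtained by acting on $K$ with $\gamma$ and $\delta$. Its left action is induced by $\bar f$ followed by $\bar\gamma$, and its right action is induced dually by $\bar g$ and $\bar\delta$. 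The cartesian cell $\chi$ is $\lambda_K \of (\id \times \rho_K)$ restricted to the middle part; strictly speaking it is the morphism of spans $K(f,g) \to K$ obtained by acting on both sides.

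This first choice, however, gives a span that is too large: it is not the restriction. The correct, simpler choice is the plain pullback
\begin{displaymath}
	K(f,g) \dfn A \times_C K \times_D B
\end{displaymath}
along $f$, $k_C$, $k_D$ and $g$. Its actions are induced from $\lambda_K \of (\bar f \times \id)$ and $\rho_K \of (\id \times \bar g)$ via the universal property of the pullback, and the bimodule axioms follow from those of $K$ together with the functoriality axioms of $\bar f$ and $\bar g$. The cartesian cell $\chi$ is the projection onto $K$, which is equivariant by construction.

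Factorisation is then immediate. A cell $\phi$ with boundary $f \of h$ and $g \of k$ is a morphism of spans $\prod \ul H \to K$ whose endpoints, composed through the span, are $f h$ and $g k$. Pairing it with the legs to $X_0 \to A$ and $X_n \to B$ gives a unique map $\phi'$ into the pullback. Its equivariance follows from that of $\phi$, using that $\chi$ is jointly monic together with the projections onto $A$ and $B$. I check the nullary case $n=0$ the same way, with $\phi\colon X_0 \to K$.

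\textbf{Nullary restrictions.} For $C(f,g)$ I use the same construction with $K$ replaced by the unit profunctor $I_C = (C \xlar{\src} \gamma \xrar{\tgt} C)$, so that $C(f,g) \dfn A \times_C \gamma \times_C B$. The nullary cartesian cell is the projection to $\gamma$. This works because nullary cells into $C$ are exactly unary cells into $I_C$, as remarked after \defref{internal profunctor}; under that correspondence the universal property transfers verbatim. In particular $I_C = C(\id,\id)$ exists, so every object is unital.

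\textbf{Main obstacle.} The step I expect to be hardest is checking that the factorisation $\phi'$ satisfies all the equivariance axioms, including the internal ones and those involving empty source paths. I will handle this by reducing each equation to equations after composing with the jointly monic family of projections of the pullback. The vertical composition formula through $\psi^\textup u$ is designed so that $\chi \of \phi' = \phi$ holds on the nose.
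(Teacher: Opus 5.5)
Your final argument is correct and is essentially the construction the paper records in \cororef{internal split two-sided fibrations form a unital virtual equipment}. The unary restriction is the iterated pullback $A \times_C K \times_D B$, with the projection onto $K$ as cartesian cell. The nullary restriction is $A \times_C \gamma \times_C B = I_C(f,g)$, with the projection onto $\gamma$ as cartesian cell. Transferring from unary cells into $I_C$ to nullary cells into $C$ is legitimate because $\chi$ has a single horizontal source, so $\chi \of \phi'$ is the same morphism of $\E$ in either reading.

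The opening candidate $A \times_C \gamma \times_C K \times_D \delta \times_D B$, which you rightly reject, should simply be deleted from the write-up.
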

	
	Using that the embedding $\inSpTwoFib\s \hookrightarrow \inProf{\und\s}$ is full on horizontal morphisms and cells, together with \propref{comma objects from 2-cotensors}, the (proof of the) previous proposition implies the following.
	\begin{corollary} \label{internal split two-sided fibrations form a unital virtual equipment}
		The augmented virtual double category $\inSpTwoFib\s$ of \defref{internal split two-sided fibration} is a unital virtual equipment as follows.
		\begin{enumerate}[label=\textup{(\alph*)}]
			\item The unary restriction $K(f, g)$ is the iterated pullback $A \times_C K \times_D B$ equipped with the unique actions making the projection $\map{\pi_K}{K(f, g)}K$ into a unary cell \mbox{$K(f, g) \Rar K$} (\defref{internal profunctor}), which then forms the defining cartesian cell.
			\item The horizontal unit $I_C$ is the unit split two"/sided fibration $C \xlar{\src} C^\2 \xrar{\tgt} C$ (\defref{internal profunctor}) of $C$, defined as such by the cartesian nullary cell $C^\2 \Rar C$ given by $\id_{C^\2}$.
			\item The nullary restriction $C(f, g)$ is the comma object $A \xlar{\pi_A} f \slash g \xrar{\pi_B} B$ (\defref{comma object}) equipped with the unique actions making the factorisation $f \slash g \to C^\2$, of the cell $\pi$ defining $f \slash g$ through the cell $\delta$ defining $C^\2$, into a nullary cell $f \slash g \Rar C$, which then forms the defining cartesian cell.
		\end{enumerate}
	\end{corollary}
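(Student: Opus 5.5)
The plan is to deduce all three parts from the fact that $\inProf{\und\s}$ is a unital virtual equipment (Example~4.9 of \cite{Koudenburg20}), together with the fullness of $\inSpTwoFib\s \hookrightarrow \inProf{\und\s}$ on horizontal morphisms and cells. Any cartesian cell in $\inProf{\und\s}$ whose vertical boundaries are functors $f^\2$, $g^\2$ and whose horizontal source is a profunctor between $\2$"/cotensors $A^\2 \brar B^\2$ lies in $\inSpTwoFib\s$. I then want to show it remains cartesian there. The factorisations demanded in \defref{cartesian cells} within $\inSpTwoFib\s$ are among those in $\inProf{\und\s}$: the sources $\ul H$ and vertical morphisms $h$, $k$ are also of the required form. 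Because the subcategory is full on cells, the unique factorisation $\phi'$ in $\inProf{\und\s}$ is automatically a cell of $\inSpTwoFib\s$. So it suffices to recall the form of the restrictions in $\inProf{\und\s}$ and check that their sources are split two"/sided fibrations between the right objects.

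For (a), the restriction in $\inProf\E$ of $\hmap K{C^\2}{D^\2}$ along $f^\2$ and $g^\2$ is the iterated pullback $A \times_C K \times_D B$ of the span underlying $K$ along the object parts $f$ and $g$. Its actions are induced by the universal property of the pullback from $\lambda_K \of (\bar f \times \pi_K)$ and $\rho_K \of (\pi_K \times \bar g)$, paired with the source and target actions on $A^\2$ and $B^\2$. I will briefly verify cartesianness directly. Given a cell $\phi$ with vertical boundaries $f h$ and $g k$, the composite $j_A$, $\phi$, $j_B$ determines a unique map into the pullback. Its equivariance follows from that of $\phi$ and from uniqueness in the pullback's universal property. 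Uniqueness of this factorisation holds because $\pi_K$ together with the projections to $A$ and $B$ are jointly monic.

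For (b), the unit $I_{C^\2}$ in $\inProf{\und\s}$ is the span $C \xlar{\src} C^\2 \xrar{\tgt} C$ with composition $\bar\gamma$ as both actions, and the nullary cell $\id_{C^\2}$ is cartesian. Here a unary cell into $I_C$ and a nullary cell into $C$ are literally the same morphism of spans with the same equivariance axioms, as remarked after \defref{internal profunctor}.

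For (c), the approach is to compose (a) and (b): the nullary restriction $C(f,g)$ is the unary restriction $I_C(f,g)$, whose underlying object is $A \times_C C^\2 \times_C B$. By \propref{comma objects from 2-cotensors} this is the comma object $f \slash g$, with universal cell $\delta \of \pi_{C^\2}$. The projection $\pi_{C^\2}$ is then precisely the factorisation of $\pi$ through $\delta$. Composing the cartesian cell of (a) with that of (b) gives a cartesian cell, since cartesian cells compose; this is immediate from uniqueness of factorisations.

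The main obstacle is bookkeeping rather than conceptual. I need to make sure that the actions on the pullback are the \emph{unique} ones making $\pi_K$ (respectively $\pi_{C^\2}$) a cell, which holds because these projections are jointly monic with the span legs. I also need to confirm that the identifications in (c) hold on the nose for the chosen pullbacks and comma objects. Both follow from the universal properties, so the checks are routine.
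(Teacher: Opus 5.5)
Your proposal is correct and follows essentially the same route as the paper. The paper deduces the corollary from $\inProf{\und\s}$ being a unital virtual equipment (Example~4.9 of \cite{Koudenburg20}), the fullness of $\inSpTwoFib\s \hookrightarrow \inProf{\und\s}$ on horizontal morphisms and cells, and \propref{comma objects from 2-cotensors}; your derivation of (c) by composing the cartesian cells of (a) and (b), so that $C(f, g) = I_C(f, g) = A \times_C C^\2 \times_C B$, spells out what the paper leaves implicit.
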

	
	It is shown in Lemma~5.9 of \cite{Koudenburg20} that the nullary cartesian cell $C^\2 \Rar C$ of part (b) above admits a vertical $(0,1)$"/ary inverse $C \Rar C^\2$; whence the following.
	\begin{corollary} \label{correspondences induced by horizontal units}
		Vertical pre"/ and post"/composition respectively with the cartesian cell $C^\2 \Rar C$ defining $C^\2$ as the horizontal unit induces bijective correspondences between the following types of cells of $\inSpTwoFib\s$, which leave the vertical morphisms unchanged:
		\begin{enumerate}[label=\textup{(\alph*)}]
			\item unary cells $(J_1, \dotsc, J_n) \Rar C^\2$ and nullary cells $(J_1, \dotsc, J_n) \Rar C$;
			\item $(1,n)$"/ary cells $C^\2 \Rar \ul K$ and $(0, n)$"/ary cells $C \Rar \ul K$.
		\end{enumerate}
		
		In particular $(1, 1)$"/ary cells $C^\2 \Rar C^\2$ correspond bijectively to the vertical cells of $\inSpTwoFib\s$ which, by \defref{internal split two-sided fibration} and \thmref{embedding of strict cells}, are the strict cells of $\s$ (\defref{strict cells}).
	\end{corollary}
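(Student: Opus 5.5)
The plan is to derive both correspondences from the existence of a vertical inverse to the cartesian nullary cell $\cell\epsilon{C^\2}C$ of \cororef{internal split two-sided fibrations form a unital virtual equipment}(b), which is given by $\id_{C^\2}$ regarded as a morphism of spans. By Lemma~5.9 of \cite{Koudenburg20} there is a $(0,1)$"/ary cell $\cell\eta C{C^\2}$ with $\id_C$ on both vertical sides. Concretely, $\eta$ will be the morphism of spans $\map{\tilde\gamma}C{C^\2}$. The first step is to verify the two inverse identities $\epsilon \of \eta = \id_{\id_C}$ and $\eta \of \epsilon = \id_{C^\2}$, the latter holding up to the identification of the horizontal source via unit profunctors. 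The first identity follows from the definition of vertical composition in \propref{profunctors form an augmented virtual double category} together with the unit axiom $\bar\gamma \of (\tilde\gamma \of \src, \id) = \id$. The second follows from the fact that $\epsilon^\textup u$ is the composition $\bar\gamma$, so that precomposing with $\tilde\gamma$ again yields an identity.

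For part (a), I would send a unary cell $\cell\phi{\ul J}{C^\2}$ to $\epsilon \of \phi$, which is nullary. In the other direction, a nullary cell $\cell\psi{\ul J}C$ is already the same data as a unary cell $\ul J \Rar I_C = C^\2$, by the remark following \defref{internal profunctor}. The cleanest argument, however, is the universal property itself. Since $\epsilon$ is cartesian with identity vertical morphisms, \defref{cartesian cells} states exactly that every nullary cell $\ul J \Rar C$ with vertical morphisms $f,g$ factors uniquely as $\epsilon \of \phi'$, where $\phi'$ is unary with the same vertical morphisms. This gives the bijection of (a) directly, without using $\eta$.

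For part (b), I would send a $(1,n)$"/ary cell $\cell\chi{C^\2}{\ul K}$ to $\chi \of \eta$, and send a $(0,n)$"/ary cell $\xi$ to $\xi \of \epsilon$. Associativity and unitality of vertical composition, combined with the two inverse identities, show that these assignments are mutually inverse. Both leave the vertical morphisms unchanged because $\epsilon$ and $\eta$ have identity vertical morphisms.

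For the final claim, I would apply (b) with $\ul K = (C^\2)$, or equivalently (a) followed by the nullary version, together with the identification of nullary $(0,0)$"/ary cells with vertical cells. This shows that $(1,1)$"/ary cells $C^\2 \Rar C^\2$ correspond to vertical cells $C \Rar C$ in $\inSpTwoFib\s$. Since $\inSpTwoFib\s$ is locally full in $\inProf{\und\s}$ on the image of $\dash^\2$, these vertical cells are the transformations $f^\2 \Rar g^\2$. By \thmref{embedding of strict cells}, being faithful and locally full, these are exactly the strict cells of $\s$.

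I expect the main obstacle to be checking $\eta \of \epsilon = \id_{C^\2}$. This identity involves the $\textup u$"/construction, which inserts unit profunctors into an empty source, and correctly identifying the source path there requires care. If the direct check becomes messy, I would instead prove it by cartesianness of $\epsilon$: both $\eta \of \epsilon$ and $\id_{C^\2}$ become equal after postcomposing with $\epsilon$, using the first identity, so uniqueness of factorisations forces them to be equal.
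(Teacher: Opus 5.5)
Your proposal is correct and is essentially the paper's argument, which obtains the corollary directly from the vertical $(0,1)$-ary inverse $C \Rar C^\2$ of the cartesian cell provided by Lemma~5.9 of \cite{Koudenburg20}; deriving (a) straight from the cartesian property is an equally valid shortcut. One small slip: when computing $\eta \of \epsilon$, the cell that involves $\bar\gamma$ and the unit axiom is $\eta^\textup u = \bar\gamma \of (\id_{C^\2}, \tilde\gamma \of \tgt)$, not $\epsilon^\textup u$, and the result is literally $\id_{C^\2}$ with no identification of sources needed, whereas $\epsilon \of \eta = \tilde\gamma = \id_{\id_C}$ holds immediately because $\epsilon$ is $\id_{C^\2}$.
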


	\section[Split op-2-fibrations]{Split op"/$2$"/fibrations} \label{split op-2-fibrations section}
	Recall from \exref{lax natural transformation} the sesquicategory $\und{\TwoCatLax}$ of large $2$"/categories, $2$"/functors and lax natural transformations. In this section we recall a theorem of \cite{Lambert24} which shows, in our terms, that split opfibrations in the sesquicategory $\und{\TwoCatLax}$ are the split op"/$2$"/fibrations introduced in \cite{Buckley14}. In the next section we will use this to describe split two"/sided fibrations in $\und{\TwoCatLax}$, as well as the morphisms (i.e.\ the cells of $\inSpTwoFib{\und{\TwoCatLax}} \subset \inProf{\und{\TwoCatLax}}$ as in \defref{internal profunctor}) between them.
	
	We start by recalling the notion of split op"/$2$"/fibration from Definition~2.1 of \cite{Lucyshyn-Wright16}. It is obtained by reversing the direction of the morphisms in the notion of split $2$"/fibration, which was introduced in Defintions~2.1.6 and 2.1.10 of \cite{Buckley14}.
	\begin{definition} \label{split op-2-fibration}
		A \emph{split op"/2"/fibration} is a $2$"/functor $\map qJB$ equipped with a \emph{cleavage} $\rho(\dash, \dash)$, which supplies
		\begin{enumerate}
			\item[\textup{(ml)}] for each morphism $\map u{qi}b$ in $B$ a lift $\map{\rho(i, u)}i{u_!i}$ in $J$, that is $q(\rho(i, u)) = u$;
			\item[\textup{(cl)}] for each cell $\beta$ in $B$ of the form as below left a lift $\cell{\rho(\beta, s)}{\beta^* s}s$ in $J$ as below in the middle, that is $q(\rho(\beta, s)) = \beta$.
			\begin{displaymath}
				\begin{tikzpicture}[baseline]
					\matrix(m)[math35]{ qi \\ qj \\};
						\path[map]	(m-1-1) edge[bend right=45] node[left] {$u$} (m-2-1)
																edge[bend left=45] node[right] {$qs$} (m-2-1);
						\path				([xshift=-8pt]$(m-1-1)!0.55!(m-2-1)$) edge[cell] node[above] {$\beta$} ([xshift=9pt]$(m-1-1)!0.55!(m-2-1)$);
				\end{tikzpicture} \qquad\qquad\qquad \begin{tikzpicture}[baseline]
					\matrix(m)[math35]{ i \\ j \\};
						\path[map]	(m-1-1) edge[bend right=70] node[left] {$\beta^* s$} (m-2-1)
																edge[bend left=70] node[right] {$s$} (m-2-1);
						\path				([xshift=-8pt, yshift=-6pt]$(m-1-1)!0.5!(m-2-1)$) edge[cell] node[above] {$\rho(\beta, s)$} ([xshift=9pt, yshift=-6pt]$(m-1-1)!0.5!(m-2-1)$);
				\end{tikzpicture} \qquad\qquad\qquad \begin{tikzpicture}[baseline]
					\matrix(m)[math35]{ qj \\ qk \\};
						\path[map]	(m-1-1) edge[bend right=45] node[left] {$v$} (m-2-1)
																edge[bend left=45] node[right] {$qt$} (m-2-1);
						\path				([xshift=-8pt]$(m-1-1)!0.55!(m-2-1)$) edge[cell] node[above] {$\delta$} ([xshift=9pt]$(m-1-1)!0.55!(m-2-1)$);
				\end{tikzpicture}
			\end{displaymath}
		\end{enumerate}
		
		These lifts are required to satisfy two properties. Firstly they should satisfy the following \emph{splitting equations}, which ensure that they are strictly coherent with respect to the $2$"/categorical structures of $J$ and $B$:
		\begin{enumerate}
			\item[\textup{(sm$\of$)}] $\rho(i, v \of u) = \rho(u_! i, v) \of \rho(i, u)$ for any $qi \xrar u b \xrar v b'$ in $B$;
			\item[\textup{(sm1)}] $\rho(i, \id_{qi}) = \id_i$ for any $i \in J$;
			\item[\textup{(sc$\hc$)}] $\rho(\gamma \hc \beta, s) = \rho(\gamma, \beta^*s) \hc \rho(\beta, s)$ for any $u' \xRar\gamma u \xRar\beta qs$ in $B$ with $\beta$ as above;
			\item[\textup{(sc$\of$)}] $\rho(\delta \of \beta, t \of s) = \rho(\delta, t) \of \rho(\beta, s)$ for any cells $\beta$ and $\delta$ in $B$ of the form above;
			\item[\textup{(sc1)}] $\rho(\id_{qs}, s) = \id_s$ for any morphism $s \in J$.
		\end{enumerate}
		
		Secondly the lifts of morphisms $\rho(i, u)$ are required to be \emph{opcartesian} and the lifts of cells $\rho(\beta, s)$ are required to be \emph{cartesian}, in the sense below. Informally, this ensures that ``factorisations through $\rho(i, u)$ and $\rho(\beta, s)$ are created by $q$''.
		\begin{enumerate}
			\item[\textup{(om1)}] If $\map sij$ in $J$ is such that $\map{qs}{qi}{qj}$ factors through $\map u{qi}b$ as $\map vb{qj}$ in $B$ then there exists a unique lift $\map{\tilde v}{u_!i}j$ in $J$ such that $q\tilde v = v$ and $\tilde v \of \rho(i, u) = s$.
			\begin{displaymath}
				\begin{tikzpicture}
					\matrix(m)[math35, xshift=-3cm]{i & u_!i \\ & j \\};
						\path[map]	(m-1-1) edge node[above] {$\rho(i, u)$} (m-1-2)
																edge node[below left] {$s$} (m-2-2)
												(m-1-2) edge[dashed] node[right] {$\tilde v$} (m-2-2);
					
					\matrix(m)[math35, xshift=3cm]{qi & b \\ & qj \\};
						\path[map]	(m-1-1) edge node[above] {$u$} (m-1-2)
																edge node[below left] {$qs$} (m-2-2)
												(m-1-2) edge node[right] {$v$} (m-2-2);
					\draw	(-0.5cm, 0) edge[mapsto] node[above] {$q$} (0.5cm, 0);
				\end{tikzpicture}
			\end{displaymath}
			\item[\textup{(om2)}] Consider parallel morphisms $s$ and $\map tij$ in $J$. If $\cell\sigma st$ is such that $q\sigma$ factors through $\map u{qi}b$ as $\cell\beta vw$ in $B$, as shown below right, then there exists a unique lift $\cell{\tilde\beta}{\tilde v}{\tilde w}$ in $J$ as below left such that $q\tilde\beta = \beta$ and $\tilde\beta \of \rho(i, u) = \sigma$.
				\begin{displaymath}
					\begin{tikzpicture}
						\matrix(m)[math35, column sep={4.75em,between origins}, xshift=-3cm]{i & u_!i \\ & j \\};
							\path[map]	(m-1-1) edge node[above] {$\rho(i, u)$} (m-1-2)
													(m-1-2) edge[bend right=30, dashed] node[above left, inner sep=0.5pt] {$\tilde v$} (m-2-2)
																	edge[bend left=30, dashed] node[right] {$\tilde w$} (m-2-2);
							\path[map, transform canvas={xshift=-2pt, yshift=-2pt}]	(m-1-1)	edge[bend right=20] node[below left] {$s$} (m-2-2)
																	edge[bend left=20] node[above right, inner sep=0.5pt] {$t$} (m-2-2);
							\path				([xshift=-7.5pt,yshift=-7.5pt]$(m-1-1)!0.5!(m-2-2)$) edge[cell] node[above left, inner sep=1pt] {$\sigma$} ([xshift=4.5pt,yshift=4.5pt]$(m-1-1)!0.5!(m-2-2)$)
													([xshift=-8pt, yshift=-3pt]$(m-1-2)!0.5!(m-2-2)$) edge[cell, dashed] node[above, inner sep=2pt] {$\tilde\beta$} ([xshift=9pt, yshift=-3pt]$(m-1-2)!0.5!(m-2-2)$);
						
						\matrix(m)[math35, column sep={4.75em,between origins}, xshift=3cm]{qi & b \\ & qj \\};
							\path[map]	(m-1-1) edge node[above] {$u$} (m-1-2)
													(m-1-2) edge[bend right=30] node[above left, inner sep=0.5pt] {$v$} (m-2-2)
																	edge[bend left=30] node[right] {$w$} (m-2-2);
							\path[map, transform canvas={xshift=-2pt, yshift=-2pt}]	(m-1-1)	edge[bend right=20] node[below left] {$qs$} (m-2-2)
																	edge[bend left=20] node[above right, inner sep=0pt] {$qt$} (m-2-2);
							\path				([xshift=-7.5pt,yshift=-7.5pt]$(m-1-1)!0.5!(m-2-2)$) edge[cell] node[above left, inner sep=1pt] {$q\sigma$} ([xshift=4.5pt,yshift=4.5pt]$(m-1-1)!0.5!(m-2-2)$)
													([xshift=-8pt, yshift=-3pt]$(m-1-2)!0.5!(m-2-2)$) edge[cell] node[above, inner sep=2pt] {$\beta$} ([xshift=9pt, yshift=-3pt]$(m-1-2)!0.5!(m-2-2)$);
													
						\draw	(-0.5cm, 0) edge[mapsto] node[above] {$q$} (0.5cm, 0);
					\end{tikzpicture}
				\end{displaymath}
			\item[\textup{(cc)}] Consider parallel morphisms $r$ and $\map sij$ in $J$. If $\cell\sigma rs$ is such that $\cell{q\sigma}{qr}{qs}$ factors through $\cell\beta u{qs}$ as $\cell\alpha {qr}u$ in $B$ then there exists a unique lift $\cell{\tilde\alpha}r{\beta^*s}$ in $J$ such that $q\tilde\alpha = \alpha$ and $\tilde\alpha \hc \rho(\beta, s) = \sigma$.
			\begin{displaymath}
				\begin{tikzpicture}
					\matrix(m)[math35, xshift=-3cm]{r & \\ \beta^*s & s \\};
						\path	(m-1-1) edge[cellmap, dashed] node[left] {$\tilde \alpha$} (m-2-1)
													edge[cellmap] node[above right] {$\sigma$} (m-2-2)
									(m-2-1)	edge[cellmap] node[below] {$\rho(\beta, s)$} (m-2-2);
					
					\matrix(m)[math35, xshift=3cm]{qr & \\ u & qs \\};
						\path	(m-1-1) edge[cellmap] node[left] {$\alpha$} (m-2-1)
													edge[cellmap] node[above right] {$q\sigma$} (m-2-2)
									(m-2-1)	edge[cellmap] node[below] {$\beta$} (m-2-2);
									
					\draw	(-0.5cm, 0) edge[mapsto] node[above] {$q$} (0.5cm, 0);
				\end{tikzpicture}
			\end{displaymath}
		\end{enumerate}
	\end{definition}
	
	\begin{remark} \label{relation to 1-fibration}
		Notice that the $1$"/dimensional opcartesian property satisfied by $\rho(i, u)$, (om1) above, precisely means that $\rho(i, u)$ is an opcartesian morphism in $J$ with respect to the $1$"/functor $\map{\und q}{\und J}{\und B}$ underlying $q$, in the usual $1$"/categorical sense. In particular $\und q$ is a split $1$"/opfibration whenever $q$ is a split op"/$2$"/fibration. Similarly the cartesian property (cc) can be restated as $\cell{\rho(\beta, s)}{\beta^*s}s$ being a cartesian morphism in the hom"/category $J(i, j)$, with respect to the hom"/functor $\map{q_{i, j}}{J(i, j)}{B(qi, qj)}$. In particular (cl) and (cc), together with the splitting equations (sc$\hc$)--(sc$1$), can be thought of as requiring that $q$ is ``locally a split $1$"/fibration''; see e.g.\ Definition~4.7 of \cite{Bakovic10}.
	\end{remark}
	
	\begin{remark}
		A (not necessarily split) \emph{op"/$2$"/fibration} (Definition~2.1 of \cite{Lucyshyn-Wright16}) is a $2$"/functor $\map qJB$ such that any morphism $\map u{qi}b$ in $B$ admits an opcartesian lift satisfying (om1) and (om2) above, any cell $\cell \beta u{qs}$ in $B$ admits a cartesian lift satisfying (cc), and cartesian cells in $J$ are preserved under horizontal composition. In particular $q$ need not be equipped with a coherent choice of such lifts; indeed a cleavage $\rho(\dash, \dash)$ satisfying (sm$\of$)--(sc1) above might not exists for $q$.
		
		In Section~2 of \cite{Hermida99} (the ``op"/dual'' of) a weaker notion of op"/$2$"/fibration is considered, by requiring the existence of (op)cartesian lifts but not the preservation property.
	\end{remark}
	
	\begin{example} \label{lax comma 2-category is split op-2-fibered}
		Consider the lax comma $2$"/category $f \slash g$ of a cospan of $2$"/functors $A \xrar f C \xlar g B$ (\exref{lax comma 2-category}). The projection $2$"/functor $\map{\pi_B}{f \slash g}B$ is a split op"/$2$"/fibration as follows. The opcartesian lift of $\bigbrks{\pi_B(a, fa \xrar i gb, b) = b} \xrar u b'$ is the morphism
		\begin{displaymath}
			(a, i, b) \xrar{(\id_a, \id_{gu \of i}, u)}(a, gu \of i, b') \nfd u_!(a, i, b).
		\end{displaymath}
		Let $\map{(s, \zeta, t)}{(a_0, i_0, b_0)}{(a_1, i_1, b_1)}$ be a morphism in $f \slash g$, with $\cell\zeta{gt \of i_0}{i_1 \of fs}$. The cartesian lift of a cell $t' \xRar \beta \bigbrks{\pi_B(s, \zeta, t) = t}$ is the cell
		\begin{displaymath}
			\beta^*(s, \zeta, t) \dfn \bigpars{s, (g\beta \of i_0) \hc \zeta, t'}  \xRar{(\id_s, \beta)} (s, \zeta, t).
		\end{displaymath}
		
		Next consider a lax arrow $2$"/category $A^\2 \dfn \id_A \slash \id_A$ (\defref{comma object}). By the previous the target $2$"/functor $\map{\tgt}{A^\2}A$ is a split $2$"/opfibration.
	\end{example}
	
	Recall that reversing the directions of the morphisms or cells of a $2$"/category gives two types of duality. Thus associated to a $2$"/category $A$ are three $2$"/categories $\op A$, $\co A$ and $\coop A$, formed by reversing the direction either of the morphisms (`op') or cells (`co') of $A$, or of both (`coop'). These three constructions induce three kinds of $2$"/fibration as follows.
	\begin{definition} \label{split co-2-fibration}
		A $2$"/functor $\map pJA$ is a \emph{split co"/$2$"/fibration} if \mbox{$\map{\coop p}{\coop J}{\coop A}$} is a split op"/$2$"/fibration. Thus $p$ comes equipped with a \emph{cleavage} $\lambda(\dash, \dash)$ that supplies lifts $\map{\lambda(u,i)}{u^* i}i$ in $J$ of morphisms $\map ua{pi}$ in $A$ and lifts \mbox{$\cell{\lambda(s, \alpha)}s{\alpha_! s}$} in $J$ of cells $\cell\alpha{ps}v$ in $A$. The lifts $\lambda(u, i)$  are required to be \emph{cartesian} and the lifts $\lambda(s, \alpha)$ \emph{opcartesian}, that is they satisfy properties dual to (om1), (om2) and (cc) above, while as a family they satisfy splitting equations analogous to those of \defref{split op-2-fibration}.
		
		A split \emph{2"/fibration} is a $2$"/functor $f$ such that $\op f$ is a split op"/$2$"/fibration while a \emph{split coop"/2"/fibration} is a $2$"/functor $g$ such that $\co g$ is a split op"/$2$"/fibration. Thus $f$ is equipped with a coherent family of cartesian lifts of both morphisms and cells, while in the case of $g$ the lifts of morphisms and cells are opcartesian.
	\end{definition}
	Section~3.3 of \cite{Lambert24} lists some examples of (split) $2$"/fibrations.
	
	\begin{definition} \label{3-category of split op-2-fibrations}
		Let $\map qJB$ and $\map rKD$ be split op"/$2$"/fibrations. A morphism $\map{(\phi, g)}qr$ consists of a $2$"/functor $\map\phi JK$ over a $2$"/functor $\map gBD$, that is $r \of \phi = g \of q$, that preserve the cleavages as follows:
		\begin{displaymath}
			\rho_r(\phi i, gu) = \phi(\rho_q(i, u)) \qquad\qquad \text{and} \qquad\qquad \rho_r(g\beta, \phi s) = \phi(\rho_q(\beta, s)).
		\end{displaymath}
		A $2$"/natural transformation $\cell{(\zeta, \xi)}{(\phi, g)}{(\psi, k)}$ of morphisms of split op"/$2$"/fibrations consists of $2$"/natural transformations $\cell\zeta\phi\psi$ over $\cell\xi gk$, that is $r \of \zeta = q \of \xi$. Similarly modifications (see e.g.\ Definition~4.4.1 of \cite{Johnson-Yau21}) of such $2$"/natural transformations are pairs of modifications, one over the other.
		
		Let $\SpOpTwoFib$ denote the $3$"/category of split op"/$2$"/fibrations, their morphisms, $2$"/natural transformations and modifications. Analogously there are $3$"/categories $\SpCoTwoFib$, $\SpTwoFib$ and $\SpCoopTwoFib$ of split co"/$2$"/fibrations, split $2$"/fibrations and split coop"/$2$"/fibrations.
	\end{definition}
	
	\subsubsection*{Split op"/$2$"/fibrations as internal split opfibrations}
	We shall now recall (the op"/dual variant of) Theorem~4.13 of \cite{Lambert24} in detail, which shows, in our terms, that internal split opfibrations $\map qJB$ (\defref{internal split (op)fibration}) in the sesquicategory $\und{\TwoCatLax}$ (\exref{lax natural transformation}) are precisely split op"/$2$"/fibrations in the sense of \defref{split op-2-fibration}. Recall from \defsref{internal split two-sided fibration}{internal profunctor} that $q$ is a $2$"/functor equipped with a right action $\map\rho{J \times_B B^\2}J$ that is associative and unital. Using \propref{comma objects from 2-cotensors} we identify the $2$"/pullback $J \times_B B^\2$ with the lax comma $2$"/category $q \slash B \dfn q \slash \id_B$ (\exref{lax comma 2-category}). Thus regarding $\rho$ as a $2$"/functor $q \slash B \to J$, it sends
	\begin{enumerate}[label=-]
		\item objects $(i, qi \xrar u b, b)$ in $q \slash B$ to objects $\rho(i, u, b)$ in $J$;
		\item morphisms $\map{(i \xrar s j, \beta, b \xrar t c)}{(i, u, b)}{(j, v, c)}$ in $q \slash B$, with $\cell\beta{t \of u}{v \of qs}$, to morphisms $\map{\rho(s, \beta, t)}{\rho(i, u, b)}{\rho(j, v, c)}$ in $J$;
		\item cells $\cell{(s_1 \xRar\sigma s_2, t_1 \xRar\tau t_2)}{(s_1, \beta_1, t_1)}{(s_2, \beta_2, t_2)}$ in $q \slash B$, satisfying the equation described in \exref{lax comma 2-category}, to cells \mbox{$\cell{\rho(\sigma, \tau)}{\rho(s_1, \beta_1, t_1)}{\rho(s_2, \beta_2, t_2)}$} in $J$.
	\end{enumerate}
	
	In \thmref{opcleavages are right actions} below we will see that right actions $\map\rho{q \slash B}J$ can be identified with cleavages $\rho(\dash, \dash)$ making $q$ into a split op"/$2$"/fibration (\defref{split op-2-fibration}). Throughout the remainder of this article we will identify such right actions and cleavages, which is the reason for us denoting both these structures by $\rho$. If necessary they can be distinguished by the types of their inputs: for cleavages the inputs are of the form $(i, qi \xrar u b)$ and $(s, qs \xRar\beta u)$ (both are binary); different from the inputs of the right actions above.
	
	The following construction is described in the proof of (the op"/dual of) Lemma~4.10 of \cite{Lambert24}.
	\begin{construction}[Cleavage from a right action] \label{cleavage from a right action} 
		We claim that any right action \mbox{$\map\rho{q \slash B}J$} as above induces a well"/defined cleavage $\rho(\dash, \dash)$ making $\map qJB$ into a split op"/$2$"/fibration (\defref{split op-2-fibration}) as follows.
		\begin{enumerate}
			\item[\textup{(ml)}] Given $\map u{qi}b$ in $B$ set $u_!i \dfn \rho(i, u, b)$ and take the opcartesian lift to be
			\begin{displaymath}
				\rho(i, u) \dfn \brks{i = \rho(i, \id_{qi}, qi) \xrar{\rho(\id_i, \id_u, u)} \rho(i, u, b) = u_!i},
			\end{displaymath}
			where $(\id_i, \id_u, u)$ is the morphism $(i, \id_{qi}, qi) \to (i, u, b)$ in $q \slash B$. Here $\rho(i, \id_{qi}, qi) = i$ by the unit axiom for $\rho$ (\defref{internal profunctor}).
			\item[\textup{(om1)}] Given $\map sij$ in $J$ such that $qs = v \of u$, as in \defref{split op-2-fibration}(om1), take the lift $\tilde v$ of $\map vb{qj}$ to be
			\begin{displaymath}
				\tilde v \dfn \brks{u_! i = \rho(i, u, b) \xrar{\rho(s, \id_{qs}, v)} \rho(j, \id_{qj}, qj) = j}.
			\end{displaymath}
			\item[\textup{(om2)}] Given $\cell\sigma st$ in $J$ such that $q\sigma = \beta \of u$, as in \defref{split op-2-fibration}(om2), take the lift $\tilde \beta$ of $\cell\beta vw$ to be
			\begin{displaymath}
				\tilde\beta \dfn \brks{\tilde v = \rho(s, \id_{qs}, v) \xRar{\rho(\sigma, \beta)} \rho(t, \id_{qt}, w) = \tilde w}.
			\end{displaymath}
			\item[\textup{(cl)}] Given $\cell\beta u{qs}$ in $B$ as in \defref{split op-2-fibration}(cl) set
			\begin{displaymath}
				\beta^*s \dfn \brks{i = \rho(i, \id_{qi}, qi) \xrar{\rho(s, \beta, u)} \rho(j, \id_{qj}, qj) = j}
			\end{displaymath}
			and take the cartesian lift to be
			\begin{displaymath}
				\rho(\beta, s) \dfn \brks{\beta^*s = \rho(s, \beta, u) \xRar{\rho(\id_s, \beta)} \rho(s, \id_{qs}, qs) = s},
			\end{displaymath}
			where $\rho(s, \id_{qs}, qs) = s$ by the unit axiom for $\rho$.
			\item[\textup{(cc)}] Given $\cell\sigma rs$ in $J$ such that $q\sigma = \alpha \hc \beta$, as in \defref{split op-2-fibration}(cc), take the lift $\tilde\alpha$ of $\cell\alpha {qr}u$ to be
			\begin{displaymath}
				\tilde\alpha \dfn \brks{r = \rho(r, \id_{qr}, qr) \xRar{\rho(\sigma , \alpha)} \rho(s, \beta, u) = \beta^* s}.
			\end{displaymath}
		\end{enumerate}
	\end{construction}
	
	The following is (the op"/dual of) Construction~4.8 of \cite{Lambert24}.
	\begin{construction}[Right action from a cleavage] \label{right action from a cleavage}
		Let $\map qJB$ be a $2$"/functor. We claim that any cleavage $\rho(\dash, \dash)$ making $q$ into a split op"/$2$"/fibration (\defref{split op-2-fibration}) induces a well"/defined right action $\map\rho{q \slash B}J$ (\defref{internal profunctor}) as follows.
		\begin{enumerate}
			\item[(o)] Send the object $(i, qi \xrar u b, b)$ of $q \slash B$ to $\rho(i, u, b) \dfn u_!i$ in $J$.
			\item[(m)] Send the morphism $\map{(i \xrar s j, \beta, b \xrar t c)}{(i, u, b)}{(j, v, c)}$ of $q \slash B$, where \mbox{$\cell\beta{t \of u}{v \of qs}$}, to the unique lift $\map{\rho(s, \beta, t) \dfn \tilde t}{u_!i}{v_!j}$ of $t$ in the factorisation $q\bigpars{\beta^*(\rho(j, v) \of s)} = t \of u$ (\defref{split op-2-fibration}(cl) and (om1)):
				\begin{displaymath}
					\begin{tikzpicture}
						\matrix(m)[math35, column sep={8.5em,between origins}, row sep={4em,between origins}, xshift=-4.5cm]{i & u_!i \\ j & v_!j \\};
							\path[map]	(m-1-1) edge node[above] {$\rho(i, u)$} (m-1-2)
																	edge node[left] {$s$} (m-2-1)
													(m-1-2) edge[dashed] node[right] {$\tilde t \nfd \rho(s, \beta, t)$} (m-2-2)
													(m-2-1) edge node[below] {$\rho(j, v)$} (m-2-2);
							\path[transform canvas={xshift=2pt}, map]	(m-1-1) edge node[above, xshift=16pt] {$\beta^*(\rho(j,v ) s)$} (m-2-2);
							\path				($(m-1-1)!0.33!(m-2-2)$) edge[cell, shorten >=9pt, shorten <=11pt] node[yshift=-10pt, xshift=20pt] {$\rho(\beta, \rho(j, v) s)$} (m-2-1);
						
						\matrix(m)[math35, column sep={4.5em,between origins}, row sep={4em,between origins}, xshift=2.5cm]{qi & b \\ qj & c \\};
							\path[map]	(m-1-1) edge node[above] {$u$} (m-1-2)
																	edge node[left] {$qs$} (m-2-1)
																	edge node[above, xshift=6pt] {$t \of u$} (m-2-2)
													(m-1-2) edge node[right] {$t$} (m-2-2)
													(m-2-1) edge node[below] {$v$} (m-2-2);
							\path				($(m-1-1)!0.5!(m-2-2)$) edge[cell, shorten >=4pt, shorten <=6pt] node[above left, inner sep=2pt] {$\beta$} (m-2-1);
						\draw	(-0.5cm, 0) edge[mapsto] node[above] {$q$} (0.5cm, 0);
					\end{tikzpicture}
				\end{displaymath}
			\item[(c)] Send the cell $\cell{(s_1 \xRar\sigma s_2, t_1 \xRar\tau t_2)}{(s_1, \beta_1, t_1)}{(s_2, \beta_2, t_2)}$ of $q \slash B$, making the square below right commute (see \exref{lax comma 2-category}), to the unique lift \mbox{$\cell{\rho(\sigma, \tau) \dfn \tilde\tau}{\tilde t_1}{\tilde t_2}$} of $\tau$ in the factorisation $q(\widetilde{\tau \of u}) = \tau \of u$ (\defref{split op-2-fibration}(om2)). Here $\widetilde{\tau \of u}$	is the unique lift of $\tau \of u$ in the factorisation $q\bigpars{\rho(\beta_1, \rho(j, v) \of s_1) \hc (\rho(j, v) \of \sigma)} = (\tau \of u) \hc \beta_2$ (\defref{split op-2-fibration}(cc)):
				\begin{displaymath}
					\begin{tikzpicture}
						\matrix(m)[math35, column sep={10em,between origins}, xshift=-3.6cm]{\tilde t_1 \of \rho(i, u) & \rho(j, v) \of s_1 \\ \tilde t_2 \of \rho(i, u) & \rho(j, v) \of s_2 \\};
						\path[map]	(m-1-1) edge[cellmap] node[above, xshift=2pt] {$\rho(\beta_1, \rho(j, v)s_1)$} (m-1-2)
																edge[cellmap, dashed] node[left] {$\widetilde{\tau \of u}$} (m-2-1)
												(m-1-2) edge[cellmap] node[right] {$\rho(j, v) \of \sigma$} (m-2-2)
												(m-2-1) edge[cellmap] node[below, xshift=-2pt] {$\rho(\beta_2, \rho(j, v)s_2)$} (m-2-2);
												
						\matrix(m)[math35, column sep={5em,between origins}, xshift=3.5cm]{t_1 \of u & v \of qs_1 \\ t_2 \of u & v \of qs_2 \\};
						\path[map]	(m-1-1) edge[cellmap] node[above] {$\beta_1$} (m-1-2)
																edge[cellmap] node[left] {$\tau \of u$} (m-2-1)
												(m-1-2) edge[cellmap] node[right] {$v \of q\sigma$} (m-2-2)
												(m-2-1) edge[cellmap] node[below] {$\beta_2$} (m-2-2);
						
						\draw (0.3cm, 0) edge[mapsto] node[above] {$q$} (1.1cm, 0);
					\end{tikzpicture}
				\end{displaymath}
		\end{enumerate}
	\end{construction}
	
	As the main result of this section we partly recall (the op"/dual of) Theorem~4.13 of \cite{Lambert24}.
	\begin{theorem}[Lambert] \label{opcleavages are right actions}
		Let $\map qJB$ be a $2$"/functor. The cleavage $\rho(\dash, \dash)$ for $q$ constructed in \conref{cleavage from a right action}  and the right action $\map\rho{q \slash B}J$ on $q$ constructed in \conref{right action from a cleavage} are both well"/defined. These constructions combine to form a bijective correspondence between cleavages for $q$ (making $q$ into split op"/$2$"/fibration; see \defref{split op-2-fibration}) and right actions on $q$ (making $q$ into an internal split opfibration in $\und{\TwoCatLax}$; see \defref{internal split (op)fibration} and \exref{lax natural transformation}).
	\end{theorem}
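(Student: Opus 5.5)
The proof has three parts, in order. First, check that \conref{right action from a cleavage} yields a $2$"/functor satisfying the right action axioms of \defref{internal profunctor}. Second, check that \conref{cleavage from a right action} yields a cleavage satisfying the splitting equations and the (op)cartesian properties. Third, show that the two constructions are mutually inverse. Throughout I identify $J \times_B B^\2$ with $q \slash B$ via \propref{comma objects from 2-cotensors}. I also use the explicit description of the composition $\bar\beta$ of $B^\2$ from \exref{lax arrow 2-category as internal category}, which transports to the composite $2$"/functor $(q \slash B) \times_B B^\2 \to q \slash B$. The action axioms are then concrete equations of $2$"/functors on objects, morphisms and cells.

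\emph{From cleavage to action.} The assignment $\rho(s, \beta, t) \dfn \tilde t$ and $\rho(\sigma, \tau) \dfn \tilde\tau$ is well defined by the uniqueness clauses (om1), (om2) and (cc). I will prove every required identity by the same pattern. Both sides lie over the same morphism or cell of $B$, and both factor the same morphism or cell through an opcartesian lift $\rho(i, u)$. Uniqueness in (om1) or (om2) then forces equality.
\begin{itemize}
\item Functoriality on morphisms: the composite of $\rho(s, \beta, t)$ and $\rho(s', \beta', t')$ factors $\beta'' {}^*(\rho(k, w) \of s' s)$ for the pasted cell $\beta''$. This uses (sc$\hc$), (sc$\of$) and (sm$\of$) to rewrite cartesian lifts of pasted cells.
\item Functoriality on cells and identities: this follows from (sc$\hc$), (sc1) and (sm1).
\item The unit axiom $\rho(i, \id, qi) = i$: this is (sm1).
\item Associativity $\rho \of (\rho \times_B \id) = \rho \of (\id \times_B \bar\beta)$: on objects it is (sm$\of$). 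On morphisms and cells it is again an instance of uniqueness of factorisations through opcartesian lifts.
\end{itemize}

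\emph{From action to cleavage.} This direction uses the action axioms systematically.
\begin{itemize}
\item The splitting equations follow from functoriality of $\rho$ together with the associativity and unit axioms. For example, (sm$\of$) is $\rho$ applied to a composite in $q \slash B$, rewritten via associativity; (sc$\of$) is functoriality of $\rho$ on horizontal composites of cells $(\sigma, \tau)$.
\item The lifting properties: existence of the lifts is given explicitly in \conref{cleavage from a right action}, and I must check they have the stated images and composites. For instance, in (om1) I need $\rho(s, \id_{qs}, v) \of \rho(\id_i, \id_u, u) = \rho(s, \id, v)$, which holds by functoriality, and $\rho(s, \id_{qs}, qs) = s$, which holds by the unit axiom.
\item Uniqueness of the lifts is the delicate part. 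Given another $\tilde v'$ with $q\tilde v' = v$ and $\tilde v' \of \rho(i, u) = s$, I will write $\tilde v' = \rho(\tilde v', \id, q\tilde v')$ by the unit axiom. I then use associativity of the action to move $\rho(i, u)$ inside, expressing $\tilde v'$ as $\rho$ of a morphism of $q \slash B$ determined by $s$ and $v$. Analogous manipulations, with cells of $q \slash B$, give (om2) and (cc).
\end{itemize}

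\emph{Mutual inverses.} Starting from a cleavage, forming the action and then the cleavage again recovers $\rho(i, u)$ and $\rho(\beta, s)$ by the uniqueness parts of (om1) and (cc). Starting from an action, the rebuilt action agrees on objects trivially. On morphisms $\rho(s, \beta, t)$ it agrees because the original value also satisfies the defining factorisation and the factorisation is unique; the same argument applies on cells.

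I expect the main obstacle to be the uniqueness claims in the action-to-cleavage direction, since uniqueness must be extracted purely from the algebraic action axioms. The key is to find the right decomposition of morphisms and cells of $q \slash B$ into an ``opcartesian part'' $(\id_i, \id_u, u)$ and a ``vertical part'', and then apply associativity. Since this is Lambert's Theorem~4.13 (op"/dualised), at worst I would cite his proof for that step.
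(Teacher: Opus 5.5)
Your outline is correct. The paper gives no argument of its own for this statement. It recalls the statement and both constructions from Lambert (the op-dual of his Theorem~4.13, Lemma~4.10 and Construction~4.8) and stops there. So your direct verification replaces a citation rather than reproducing a proof in the text. What it buys is a self-contained account of which action axioms and splitting equations are used where; the citation buys brevity.

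The step you flag as delicate does go through, and it rests on one identity. Apply associativity of $\rho$ to the morphism $\bigpars{(\id_i, \id_u, u), (u, \id_u, \id_b)}$ of $J \times_B B^\2 \times_B B^\2$. Its image under $\id \times_B \bar\beta$ is $\id_{(i, u, b)}$, so associativity yields $\rho\bigpars{\rho(i, u), \id_u, \id_b} = \id_{u_!i}$. The paper derives exactly this in the proof of \thmref{unary cells of split two-sided 2-fibrations}. Hence any $w$ with $qw = v$ and $w \of \rho(i, u) = s$ satisfies
\[ w = \rho(w, \id_v, v) = \rho(w, \id_v, v) \of \rho\bigpars{\rho(i, u), \id_u, \id_b} = \rho(s, \id_{qs}, v), \]
which makes precise your `moving $\rho(i, u)$ inside'.

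The other uniqueness claims follow the same pattern, for a competing lift $\gamma$:
\begin{itemize}
\item For (om2), whisker the cell $(\gamma, q\gamma)$ of $q \slash B$ with the same morphism $\bigpars{\rho(i, u), \id_u, \id_b}$.
\item For (cc), vertically compose $(\gamma, q\gamma)$ with the cell $\bigpars{\rho(\beta, s), \id_u}\colon (\beta^*s, \id_u, u) \Rar (s, \beta, u)$. Its $\rho$-image is $\id_{\beta^*s}$, by associativity at the cell $(\id_s, \beta, \id_u)$ of $J \times_B B^\2 \times_B B^\2$.
\end{itemize}

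The same identity, now for $\rho(j, v)$, is what your `mutual inverses' step uses without saying so. It gives $\rho\bigpars{\rho(j, v) \of s, \beta, t \of u} = \rho(s, \beta, t \of u) = \rho(s, \beta, t) \of \rho(i, u)$. That is, the original $\rho(s, \beta, t)$ satisfies the defining factorisation of \conref{right action from a cleavage}(m).

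One small slip: in the (om1) existence check the composite is $\rho(s, \id_{qs}, v \of u) = \rho(s, \id_{qs}, qs) = s$, not $\rho(s, \id, v)$.
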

	
	Recall from \defref{split co-2-fibration} the notion of split co"/$2$"/fibration.
	\begin{corollary}[Lambert] \label{cleavages are left actions}
		Let $\map pJA$ be a $2$"/functor. There is a bijective correspondence between cleavages $\lambda(\dash, \dash)$ for $p$ (making $p$ into a split co"/$2$"/fibration) and left actions $\map\lambda{A \slash p}J$ on $p$ (making $p$ into an internal split fibration in $\und{\TwoCatLax}$; see \defref{internal split (op)fibration}).
	\end{corollary}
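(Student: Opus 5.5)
The plan is to deduce the corollary from \thmref{opcleavages are right actions} by a duality argument. The coop-dual $\coop{(\dash)}$ transforms the relevant structure, so the theorem applied to $\map{\coop p}{\coop J}{\coop A}$ should yield it directly. By \defref{split co-2-fibration}, a cleavage $\lambda(\dash,\dash)$ making $p$ a split co-$2$-fibration is, by definition, the same thing as a cleavage making $\coop p$ a split op-$2$-fibration. The theorem identifies the latter with right actions $\map{\rho}{\coop p \slash \coop A}{\coop J}$ that are associative and unital with respect to the internal category $(\coop A)^\2$ in $\und{\TwoCatLax}$. It therefore remains to identify such right actions on $\coop p$ with left actions $\map\lambda{A \slash p}J$ on $p$, i.e.\ with structures making $p$ an internal split fibration $A^\2 \brar 1$.

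The key step is a comparison of lax comma $2$-categories under coop-duality. From \exref{lax comma 2-category}, I will check that there is a canonical isomorphism of $2$-categories
\[
\coop{(A \slash p)} \iso \coop p \slash \coop A ,
\]
where on the right the roles of the two legs are swapped. An object $(a, u\colon a \to pi, i)$ of $A \slash p$ becomes an arrow $pi \to a$ in $\coop A$, i.e.\ an object $(i, u, a)$ of $\coop p \slash \coop A$. A morphism $(s,\zeta,t)$, with $\cell\zeta{pt \of u_0}{u_1 \of s}$, reverses its $1$-cell direction and its $2$-cell direction. In $\coop A$, $\zeta$ then has exactly the shape $\cell{}{s\of u_1}{u_0\of pt}$ required of a morphism $(t,\zeta,s)$ from $(i_1,u_1,a_1)$ to $(i_0,u_0,a_0)$ in $\coop p\slash\coop A$. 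On cells, reversing both the cells $\sigma,\tau$ and the compatibility equation matches the two conditions. Similarly $\coop{(A^\2)} \iso (\coop A)^\2$ with source and target interchanged.

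I will then verify that under these isomorphisms the internal category structures correspond. By the formulas of \exref{lax arrow 2-category as internal category}, $\bar\alpha$ on $(\coop A)^\2$ corresponds to $\bar\alpha$ on $A^\2$ with the order of composable pairs reversed (i.e.\ the span is transposed), and the units agree. A $2$-functor $\coop{(A\slash p)}\to\coop J$ is the same as a $2$-functor $A\slash p\to J$. Moreover the right-action axioms for $\rho$ (associativity via $\bar\alpha$, unitality via $\tilde\alpha$), transported along the reversal of spans, become precisely the left-action axioms of \defref{internal profunctor} for $\lambda$ on the span $A \xlar p J \xrar{!} 1$. Composing the three bijections gives the claimed correspondence.

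The main obstacle is the bookkeeping in the isomorphism $\coop{(A\slash p)}\iso \coop p\slash\coop A$, specifically checking that the direction of $\zeta$ and the cell-compatibility equation match after reversing both $1$- and $2$-cells, rather than requiring a mere op- or co-dual. I would first test this on the arrow $2$-category $A^\2$, where $\tgt$ being a split op-$2$-fibration (\exref{lax comma 2-category is split op-2-fibered}) should dualise to $\src$ being a split co-$2$-fibration. Should the coop-dual fail to line up, the fallback is to redo \conref{cleavage from a right action} and \conref{right action from a cleavage} verbatim with the dual lifting properties, which is routine but longer.
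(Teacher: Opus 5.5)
Your proposal is correct and follows the paper's intended argument. The corollary is stated without proof as the coop-dual of \thmref{opcleavages are right actions}, because a split co-$2$-fibration is by definition a $2$-functor $p$ for which $\coop p$ is a split op-$2$-fibration. Your isomorphisms $\coop{(A \slash p)} \iso \coop p \slash \coop A$ and $\coop{(A^\2)} \iso (\coop A)^\2$, with source and target interchanged and composition transposed, are exactly what is needed to identify right actions on $\coop p$ with left actions on $p$; I checked the matching of $\zeta$ and of the cell equation in both.
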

	
	\section[Split two-sided 2-fibrations]{Split two"/sided $2$"/fibrations} \label{split two-sided 2-fibrations section}
	Here we characterise internal split two"/sided fibrations (\defref{internal split two-sided fibration}) in the sesquicategory $\und{\TwoCatLax}$ of lax natural transformations (\exref{lax natural transformation}), as well as the unary multicells in the unital virtual equipment $\inSpTwoFib{\und{\TwoCatLax}}$ (\defsref{internal split two-sided fibration}{internal profunctor}) between them. In doing so we use the characterisations of internal split (op)fibrations in $\und{\TwoCatLax}$ as split op"/$2$"/fibrations and co"/$2$"/fibrations, given in \thmref{opcleavages are right actions} and \cororef{cleavages are left actions} above.
	
	\begin{theorem} \label{split two-sided 2-fibration description}
		A span $A \xlar p J \xrar q B$ of $2$"/functors forms an internal split two"/sided fibration in $\und{\TwoCatLax}$ if and only if
		\begin{enumerate}
			\item[\textup{(f)}] $p$ is a split co"/$2$"/fibration (\defref{split co-2-fibration}), with cleavage say $\lambda(\dash, \dash)$;
			\item[\textup{(o)}] $q$ is a split op"/$2$"/fibration (\defref{split op-2-fibration}), with cleavage say $\rho(\dash, \dash)$;
			\item[\textup{(v)}] the cleavages are ``vertical'' with respect to the other leg:
			\begin{align*}
				&q(\lambda(u, i)) = \id_{qi} &\text{and}& \qquad\qquad\quad q(\lambda(s, \alpha)) = \id_{qs} \\
				&p(\rho(j, v)) = \id_{pj} &\text{and}& \qquad\qquad\quad p(\rho(\beta, t)) = \id_{pt};
			\end{align*}
			\item[\textup{(cl)}] for $\map ua{pi}$ in $A$ and $\map v{qi}b$ in $B$ taking cartesian and opcartesian lifts ``commutes'' in $J$: $v_!(u^* i) = u^*(v_! i)$ and the diagram below left commutes;
				\begin{displaymath}
					\begin{tikzpicture}[baseline]
						\matrix(m)[math35, column sep={3.5em,between origins}]
							{& i & \\ u^* i & & v_!i \\ & v_!(u^* i) = u^*(v_! i) & \\};
						\path[map]	(m-1-2) edge node[above right] {$\rho(i, v)$} (m-2-3)
												(m-2-1) edge node[above left] {$\lambda(u, i)$} (m-1-2)
																edge node[left] {$\rho(u^*i, v)$} ($([xshift=-22pt, yshift=8pt]m-3-2)$)
												($([xshift=22pt, yshift=8pt]m-3-2)$) edge node[right] {$\lambda(u, v_!i)$} (m-2-3); 	
					\end{tikzpicture} \qquad\qquad \begin{tikzpicture}[baseline]
						\matrix(m)[math35, column sep={3.5em,between origins}]
							{& s & \\ \beta^* s & & \alpha_!s \\ & \alpha_!(\beta^* s) = \beta^*(\alpha_! s) & \\};
						\path				(m-1-2) edge[cellmap] node[above right] {$\lambda(s, \alpha)$} (m-2-3)
												(m-2-1) edge[cellmap] node[above left] {$\rho(\beta, s)$} (m-1-2)
																edge[cellmap] node[left] {$\lambda(\beta^*s, \alpha)$} ($([xshift=-22pt, yshift=8pt]m-3-2)$)
												($([xshift=22pt, yshift=8pt]m-3-2)$) edge[cellmap] node[right] {$\rho(\beta, \alpha_!s)$} (m-2-3); 	
					\end{tikzpicture}
				\end{displaymath}
			\item[\textup{(cc)}] given $\map sij$ in $J$, for $\cell\alpha{ps}v$ in $A$ and $\cell\beta u{qs}$ in $B$ taking cartesian and opcartesian lifts ``commutes'' in $J$: $\alpha_!(\beta^* s) = \beta^*(\alpha_! s)$ and the diagram above right commutes.
		\end{enumerate}
	\end{theorem}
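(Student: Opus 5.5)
The plan is to reduce the two-sided structure to the one-sided structures already characterised, and then to check that the remaining bimodule compatibility axiom is exactly (v), (cl) and (cc).

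By \defref{internal split two-sided fibration}, an internal split two-sided fibration on the span $A \xlar p J \xrar q B$ consists of the following data and axioms:
\begin{enumerate}[label=-]
	\item a left action $\map\lambda{A^\2 \times_A J}J$ and a right action $\map\rho{J \times_B B^\2}J$, which are morphisms of spans;
	\item associativity and unit axioms for each action separately;
	\item the compatibility axiom $\rho \of (\lambda \times_B \id) = \lambda \of (\id \times_A \rho)$.
\end{enumerate}
Using \propref{comma objects from 2-cotensors} I identify $A^\2 \times_A J$ with the lax comma $2$"/category $A \slash p$ and $J \times_B B^\2$ with $q \slash B$.

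I first strip off the ``morphism of spans'' condition. For $\rho$, the condition that $q \of \rho$ is the target projection is already part of a right action on $q$ regarded as a split opfibration $1 \brar B$. The genuinely new part is that $p \of \rho = p \of \pi_J$. Once $\rho$ is translated into a cleavage, this becomes (v) for $\rho$. Concretely, $p$ applied to $\rho(\id_i, \id_u, u)$ and to $\rho(\id_s, \beta)$ must be identities, because these are the images of morphisms and cells of $q \slash B$ whose $J$"/component is an identity. Conversely, given (v), every $\rho(s,\beta,t)$ of \conref{right action from a cleavage} lies over $ps$, by uniqueness of the lifts in (om1), (om2) and (cc) and the fact that $p$ preserves them. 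The same argument works for $\lambda$ dually.

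The associativity and unit axioms of each action, together with its $B$"/leg (respectively $A$"/leg), are precisely the statement that $\rho$ is a right action on $q$ (respectively $\lambda$ a left action on $p$). \thmref{opcleavages are right actions} and \cororef{cleavages are left actions} therefore turn them bijectively into (o) and (f).

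The main step is the compatibility axiom. Both sides are $2$"/functors out of the iterated lax comma $A^\2 \times_A J \times_B B^\2$. Its objects are $(u, i, v)$; its morphisms are triples $(\alpha', s, \beta')$ with lax squares at each end; its cells are triples of cells. I will show the two $2$"/functors agree iff (cl) and (cc) hold. On objects the axiom reads $v_!(u^*i) = u^*(v_!i)$. On the generating morphisms $(\id, \id_i, v)$ and $(u, \id_i, \id)$ it reads that the square in (cl) commutes, since under \conref{cleavage from a right action} these morphisms are sent to $\rho(u^*i, v)$, $\lambda(u, v_!i)$, and so on. On the morphisms $(\id_{pi}, s, \id)$ carrying lax cells $\alpha$ and $\beta$ at the two ends, it reads $\alpha_!(\beta^*s) = \beta^*(\alpha_!s)$; on the corresponding cells it gives the square of (cc).

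For the converse I will argue that these instances generate. In \conref{right action from a cleavage} every value $\rho(s,\beta,t)$ and $\rho(\sigma,\tau)$ is characterised by a universal property relative to the opcartesian morphisms $\rho(i,u)$ and cartesian cells $\rho(\beta,s)$. By (v), $\lambda$ sends the latter to morphisms and cells with the same universal property, which rests on preservation of (op)cartesianness under the other leg. Uniqueness in (om1), (om2) and (cc) then forces equality on arbitrary morphisms and cells once it holds on the lifts.

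I expect this last generation argument to be the obstacle. It needs a careful check that $\lambda(u,\dash)$ and $\lambda(\dash,\alpha)$ preserve the opcartesian morphisms and cartesian cells of $q$. My first attempt will be to derive this from (v) together with the splitting equations: a factorisation problem for $q$ on $u^*i$ should be transported to one on $i$ via the cartesian lift $\lambda(u,i)$.
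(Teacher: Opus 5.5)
Your reduction matches the paper's: actions correspond to cleavages via \thmref{opcleavages are right actions} and \cororef{cleavages are left actions}, and the span"/morphism condition corresponds to (v). Your forward direction, reading off (cl) and (cc) from the compatibility axiom at special objects, morphisms and cells, is also essentially the paper's. The gap is in the converse, and it has two parts.

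First, taken literally, your special morphisms do not generate $A^\2 \times_A J \times_B B^\2$. A composite of them that ends at an object $(u', i', v')$ with $u' \neq \id$ can only be a single morphism $(u', i', \id) \to (u', i', v')$. So a general $M = \bigpars{(r, \zeta, ps), s, (qs, \xi, t)}\colon (u, i, v) \to (u', i', v')$ is never reached.

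Second, the preservation property you plan to derive from (v) and the splitting equations does not follow from them. Take split opfibrations $E_0, E_1 \to B$ of $1$"/categories and a functor $\map F{E_1}{E_0}$ over $B$ that does not preserve opcartesian morphisms. Form its cograph $J$, with $\map pJ\2$ and the evident $\map qJB$, all viewed as locally discrete $2$"/categories. Then (f), (o) and (v) hold, but reindexing $u^*$ along $0 \to 1$ is $F$, so $u^*\rho(i, v) = F\rho(i, v)$ is not opcartesian. What actually makes $\lambda$ carry the chosen lift $\rho(i, v)$ to the chosen lift $\rho(u^*i, v)$ is the second condition of (cl), which you are assuming anyway. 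No general preservation lemma is needed, and attributing this step to (v) is a mistake.

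The missing idea is a cancellation argument. Write $F \dfn \lambda \of (\id_{A^\2} \times_A \rho)$ and $G \dfn \rho \of (\lambda \times_B \id_{B^\2})$. Let $T\colon (u, i, \id_{qi}) \to (u, i, v)$ and $T'\colon (u', i', v') \to (\id_{pi'}, i', v')$ be your two kinds of special morphism. By (cl), $F(T) = G(T) = \rho(u^*i, v)$ and $F(T') = G(T') = \lambda(u', v'_!i')$.

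Now consider the composite $N \dfn T' \of M \of T$. Its source has trivial $B^\2$"/part and its target has trivial $A^\2$"/part. Unwinding \conref{right action from a cleavage} and its coop"/dual, using (v) and the splitting equations, gives
\[
F(N) = \zeta_!\xi^*\bigpars{\rho(i', v') \of s \of \lambda(u, i)} \qquad\text{and}\qquad G(N) = \xi^*\zeta_!\bigpars{\rho(i', v') \of s \of \lambda(u, i)}.
\]
These agree by the first condition of (cc). Hence $\lambda(u', v'_!i') \of F(M) \of \rho(u^*i, v) = \lambda(u', v'_!i') \of G(M) \of \rho(u^*i, v)$.

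By (v), both $F(M)$ and $G(M)$ lie over $r$ and over $t$. So you can first cancel the $p$"/cartesian morphism $\lambda(u', v'_!i')$ and then the $q$"/opcartesian morphism $\rho(u^*i, v)$, giving $F(M) = G(M)$. This is the paper's chain of equalities reorganised. On cells the same pattern works, using the second condition of (cc) and the $2$"/dimensional universal properties.
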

	
	\begin{remark} \label{relation to two-sided 1-fibration}
		Following \remref{relation to 1-fibration} notice that the underlying span of $1$"/functors \mbox{$\und A \xlar{\und p} \und J \xrar{\und q} \und B$} forms a split two"/sided $1$"/fibration in the usual sense, that is $\und p$ is a split $1$"/fibration and $\und q$ is a split $1$"/opfibration whose $1$"/dimensional cleavages satisfy the equations on the left of (v) above as well as condition (cl) (see e.g.\ Definition 2.3.4 of \cite{Loregian-Riehl20} for the notion of (not necessarily split) two"/sided $1$"/fibration). Likewise the cartesian cells $\lambda(\alpha, s)$ and opcartesian cells $\lambda(t, \beta)$ above, satisfying the equations on the right of (v) as well as condition (cc), make $A \xlar p J \xrar q B$ ``locally'' a ``reversed'' split two"/sided $1$"/fibration, that is the reversed spans of hom"/functors $B(qi, qj) \leftarrow J(i, j) \rightarrow A(pi, pj)$ are split two"/sided $1$"/fibrations, for each pair of objects $i$, $j \in J$.
	\end{remark}
	
	\begin{proof}[Proof of \thmref{split two-sided 2-fibration description}.]
		By \defsref{internal split two-sided fibration}{internal profunctor} the span $\hmap{(p, q)}AB$ is an internal split two"/sided fibration in $\und{\TwoCatLax}$ (\exsref{lax natural transformation}{lax comma 2-category}) whenever it comes equipped with a left action $\map\lambda{A^\2 \times_A J}J$ and right action $\map\rho{J \times_B B^\2}J$ that (1) are both associative and unital, (2) are span morphisms and (3) satisfy a compatibility axiom. By \cororef{cleavages are left actions} any associative and unital left action $\lambda$ is corresponds bijectively to a cleavage $\lambda(\dash, \dash)$ making $p$ into a split co"/$2$"/fibration, that is (f) above holds, while by \thmref{opcleavages are right actions} a right such action $\rho$ corresponds to a cleavage $\rho(\dash, \dash)$ making $q$ into a split op"/$2$"/fibration, that is (o) holds. Assuming that (f) and (o) hold it is straightforward to show that $\rho$ and $\lambda$ are span morphisms if and only if condition (v) above holds.
		
		Assume that (f), (o) and (v) hold. We claim that $\rho$ and $\lambda$ are compatible, that is the diagram below left commutes, precisely if both (cl) and (cc) above hold, thus completing the proof. Using \conref{cleavage from a right action}(ml) and its coop"/dual, clearly the diagram commutes at objects \mbox{$(\map ua{pi}, i, \map v{qi}b)$} precisely if $v_!(u^* i) = u^*(v_! i)$, the first condition of (cl). Assuming the latter, below we will show that the diagram commutes at morphisms of the form below right, that is triples of morphisms $(r, \zeta, ps) \in A^\2$ (see \exref{lax comma 2-category}), $s \in J$ and $(ps, \xi, t) \in B^\2$, precisely if the second condition of (cl) and the first condition of (cc) hold. Finally, the author leaves it to the reader to show that, under the assumption of all conditions above except for the second condition of (cc), the latter condition holds precisely if the diagram below left commutes at the cells of $A^\2 \times_A J \times_B B^\2$ (see \exref{lax comma 2-category}).
		\begin{displaymath}
			\begin{tikzpicture}[textbaseline]
				\matrix(m)[math35, column sep={9.5em,between origins}]
					{A^\2 \times_A J \times_B B^\2 & A^\2 \times_A J \\ J \times_B B^\2 & J \\};
				\path[map]	(m-1-1) edge node[above, xshift=2pt] {$\id_{A^\2} \times_A \rho$} (m-1-2)
														edge node[left] {$\lambda \times_B \id_{B^\2}$} (m-2-1)
										(m-1-2) edge node[right] {$\lambda$} (m-2-2)
										(m-2-1) edge node[below] {$\rho$} (m-2-2);
			\end{tikzpicture} \qquad \Bigl(\begin{tikzpicture}[textbaseline]
				\matrix(m)[math35]{a & pi \\ a' & pi' \\};
				\path[map]	(m-1-1) edge node[above] {$u$} (m-1-2)
														edge node[left] {$r$} (m-2-1)
										(m-1-2) edge node[right] {$ps$} (m-2-2)
										(m-2-1) edge node[below] {$u$} (m-2-2);
				\path				(m-1-2) edge[cell, shorten >= 8pt, shorten <= 8pt] node[below right] {$\zeta$} (m-2-1);
			\end{tikzpicture}, \begin{tikzpicture}[textbaseline]
				\matrix(m)[math35]{i \\ i' \\};
				\path[map]	(m-1-1) edge node[right] {$s$} (m-2-1);
			\end{tikzpicture}, \begin{tikzpicture}[textbaseline]
				\matrix(m)[math35]{qi & b \\ qi' & b' \\};
				\path[map]	(m-1-1) edge node[above] {$v$} (m-1-2)
														edge node[left] {$qs$} (m-2-1)
										(m-1-2) edge node[right] {$t$} (m-2-2)
										(m-2-1) edge node[below] {$v'$} (m-2-2);
				\path				(m-1-2) edge[cell, shorten >= 8pt, shorten <= 10.5pt] node[below right] {$\xi$} (m-2-1);
			\end{tikzpicture}\Bigr)
		\end{displaymath}
		
		Assume (f), (o), (v) as well as the commuting of the diagram above left on objects and morphisms so that, in particular, the first condition of (cl) holds. That the first condition of (cc) holds too follows immediately from the commutativity of the diagram at the morphism $\bigpars{(u, \alpha, ps), s, (qs, \beta, v)}$ and (the coop"/dual of) \conref{cleavage from a right action}(cl). To prove the second condition of (cl) consider the commuting of the diagram at the morphism
		\begin{displaymath}
			\bigpars{(u, \id_u, \id_{pi}), \id_i, (\id_{qi}, \id_v, \id_b)}.
		\end{displaymath}
		We will use \conref{cleavage from a right action}(ml) and its coop"/dual repeatedly. Since $(\id_{qi}, \id_v, \id_b) \in J \times_B B^\2$ corresponds to $\id_{(i, v, b)} \in q \slash B$ (\propref{comma objects from 2-cotensors}), which is mapped to $\id_{v_! i}$ by $\rho$, we find that applying the top leg \mbox{$\lambda \of (\id_{A^\2} \times_A \rho)$} results in $\lambda(u, \id_u, \id_{v_! i}) = \lambda(u, v_!i)$. Applying the other leg $\rho \of (\lambda \times_B \id_{B^\2})$ instead we obtain $\rho(\lambda(u,i), \id_v, \id_b)$, where $q(\lambda(u, i)) = \id_{qi}$ by condition (v). Using \conref{right action from a cleavage}(m) we conclude that $\lambda(u, v_!i) = \rho(\lambda(u, i), \id_v, \id_b) = \widetilde{\id_b}$ with $\widetilde{\id_b}$ the unique lift of $\id_b$ such that $\widetilde{\id_b} \of \rho(u^*i, v) = (\id_v)^*\bigpars{\rho(i, v) \of \lambda(u, i)} = \rho(i, v) \of \lambda(u, i)$ (\defref{split op-2-fibration}(sc1)), which proves the second condition of (cl).
		 
		 Conversely assume that all conditions (f)--(cc) of the statement hold except for the second condition of (cc) so that, in particular, the diagram above left commutes at objects. We will show that it commutes at morphisms $M \dfn \bigpars{(r, \zeta, ps), s, (qs, \xi, t)}$ above right too. By \conref{right action from a cleavage}(m) and its coop-dual the images of $M$ under the $2$"/functors of the diagram involve unique lifts $\tilde r$, $\hat r$, $\tilde t$ and $\hat t$ of $r$ and $t$ as follows (here too we apply \propref{comma objects from 2-cotensors} to the iterated pullbacks of the diagram):
		 \begin{enumerate}[label=-]
		 	\item $(\id_{A^\2} \times_A \rho)(M) = (r, \zeta, \tilde t) \in A \slash p$ such that $\tilde t \of \rho(i, v) = \xi^*\bigpars{\rho(i', v') \of s}$;
		 	\item $\lambda(r, \zeta, \tilde t) = \tilde r$ such that $\lambda(u', v'_!i') \of \tilde r = \zeta_! \bigpars{\tilde t \of \lambda(u, v_!i)}$;
		 	\item $(\lambda \times_B \id_{B^\2})(M) = (\hat r, \xi, t) \in q \slash B$ such that $\lambda(u', i') \of \hat r = \zeta_! \bigpars{s \of \lambda(u, i)}$;
		 	\item $\rho(\hat r, \xi, t) = \hat t$ such that $\hat t \of \rho(u^*i, v) = \xi^*\bigpars{\rho(u'^*i', v') \of \hat r}$.
		 \end{enumerate}
		 Showing that the diagram commutes at $M$ means showing that $\tilde r = \hat t$. Since factorisations through the (op)cartesian morphisms $\lambda(u', v'_!i')$ and $\rho(u^*i, v)$ below are unique, the required equality follows from the string of equalities below.
		 \begin{align*}
		 		\lambda(u', v'_!i'& ) \of \tilde r \of \rho(u^*i, v) \overset{\tilde r, \textup s}= \zeta_!\bigpars{\tilde t \of \lambda(u, v_!i)} \of \id_{\id_a!}\bigpars{\rho(u^*i, v)} \\
		 		&\overset{\textup s}= \zeta_!\bigpars{\tilde t \of \lambda(u, v_!i) \of \rho(u^*i, v)} \overset{\textup{cl2}}= \zeta_!\bigpars{\tilde t \of \rho(i, v) \of \lambda(u, i)} \\
		 		&\overset{\tilde t, \textup s}= \zeta_!\bigpars{\xi^*(\rho(i', v') \of s) \of \id_{\id_{qi}}^*(\lambda(u, i))} \overset{\textup s}= \zeta_!\xi^*\bigpars{\rho(i', v') \of s \of \lambda(u, i)} \\
		 		&\overset{\textup{cc1}}= \xi^*\zeta_!\bigpars{\rho(i', v') \of s \of \lambda(u, i)} \overset{\textup s, \hat r}= \xi^*\bigpars{\rho(i', v') \of \lambda(u', i') \of \hat r} \\
		 		&\overset{\textup{cl2}}= \xi^*\bigpars{\lambda(u', v'_!i') \of \rho(u'^*i', v') \of \hat r} \overset{\textup s, \hat t}= \lambda(u', v'_!i') \of \hat t \of \rho(u^*i, v)
		 \end{align*}
		 In deriving the equalities labelled `s' above the splitting equations (\defref{split op-2-fibration}) are used; those labelled $\tilde r$, $\tilde t$, $\hat r$ or $\hat t$ use the defining equation of $\tilde r$, $\tilde t$, $\hat r$ or $\hat t$ respectively, as given above; those labelled cl2 or cc1 use the second condition of (cl) or the first condition of (cc) respectively.
	\end{proof}
	
	The previous theorem gives rise to the following definition.
	\begin{definition} \label{split two-sided 2-fibration}
		A \emph{split two"/sided $2$"/fibration} $\hmap JAB$ is an internal split two"/sided fibration in the sesquicategory $\und{\TwoCatLax}$ of lax natural transformations (\defref{internal split two-sided fibration} and \exref{lax natural transformation}), that is a span $A \xlar p J \xrar q B$ of $2$"/functors satisfying all conditions \textup{(f)}--\textup{(cc)} of the previous theorem.
		
		We write $\SpTwoTwoFib \dfn \inSpTwoFib{\und{\TwoCatLax}}$ for the unital virtual equipment of split two"/sided $2$"/fibrations between (possibly large) $2$"/categories (\defref{internal split two-sided fibration} and \cororef{internal split two-sided fibrations form a unital virtual equipment}).
	\end{definition}
	
	\begin{example} \label{lax comma 2-category is a split two-sided 2-fibration}
		Consider the lax comma $2$"/category $f \slash g$ of a cospan of $2$"/functors $A \xrar f C \xlar g B$ (\exref{lax comma 2-category}). The span $C(f, g) \dfn \brks{A \xlar{\pi_A} f \slash g \xrar{\pi_B} B}$ of its projections is a split two"/sided $2$"/fibration. \exref{lax comma 2-category is split op-2-fibered} describes the cleavage for $\pi_B$; the cleavage for $\pi_A$ is defined coop-dually.	In particular the span $I_A \dfn \brks{A \xlar{\src} A^\2 \xrar{\tgt} A}$ associated to a lax arrow $2$"/category \mbox{$A^\2 \dfn \id_A \slash \id_A$} (\defref{lax comma 2-category}) is a split two"/sided $2$"/fibration.
		
		Applying parts (b) and (c) of \cororef{internal split two-sided fibrations form a unital virtual equipment} to $\SpTwoTwoFib \dfn \inSpTwoFib{\und{\TwoCatLax}}$ we see that $C(f, g)$ forms the nullary restriction of $C$ along $f$ and $g$ in the unital virtual equipment $\SpTwoTwoFib$ and that $I_A$ forms the horizontal unit of $A$, both in the sense of \defref{cartesian cells}. In \exref{lax comma 2-categories are locally discrete} below we will see that $C(f, g)$ and $I_A$ are in fact `locally discrete' split two-sided $2$"/fibrations.
	\end{example}
	
	\subsubsection*{Multimorphisms of split two"/sided $2$"/fibrations}
	The remainder of this section is devoted to the description of the multimorphisms of split two"/sided $2$"/fibrations that form the unary cells of $\SpTwoTwoFib$. The description of nullary cells we defer to \cororef{nullary cells description}. Remember that $\SpTwoTwoFib$ embeds locally"/fully into the unital virtual equipment $\inProf{\und{\TwoCatLax}}$ of internal profunctors in $\und{\TwoCatLax}$ (\defsref{internal split two-sided fibration}{internal profunctor}).
	
	Let $\ul J = (A_0 \xlar{p_1} J_1 \xrar{q_1} \dotsb \xlar{p_n} J_n \xrar{q_n} A_n)$ be an iterated span of split two"/sided $2$"/fibrations $J_m = (A_{m'} \xlar{p_m} J_m \xrar{q_m} A_m)$ of length $n \geq 1$. Like in \propref{profunctors form an augmented virtual double category} we write $\prod_{\dash} \ul J \dfn (A_0 \leftarrow J_1 \times_{A_1} \dotsb \times_{A_{n'}} J_n \rightarrow A_n)$ for the span induced by its iterated pullback.
	\begin{theorem} \label{unary cells of split two-sided 2-fibrations}
		Besides the iterated span $\ul J$ above consider a split two"/sided $2$"/fibration $C \xlar p K \xrar q D$ as well as $2$"/functors $\map f{A_0}C$ and $\map g{A_n}D$, that together make up the boundary of the unary cells below.
		
		Unary cells
		\begin{displaymath}
			\begin{tikzpicture}
				\matrix(m)[math35]{A_0 & A_1 & A_{n'} & A_n \\ C & & & D \\};
				\path[map]	(m-1-1) edge[barred] node[above] {$J_1$} (m-1-2)
														edge node[left] {$f$} (m-2-1)
										(m-1-3) edge[barred] node[above] {$J_n$} (m-1-4)
										(m-1-4) edge node[right] {$g$} (m-2-4)
										(m-2-1) edge[barred] node[below] {$K$} (m-2-4);
				\path[transform canvas={xshift=1.75em}]	(m-1-2) edge[cell] node[right] {$\phi$} (m-2-2);
				\draw				($(m-1-2)!0.5!(m-1-3)$) node {$\dotsb$};
			\end{tikzpicture}
		\end{displaymath}
		in $\SpTwoTwoFib$ correspond precisely to $2$"/functors $\map\phi{J_1 \times_{A_1} \dotsb \times_{A_{n'}} J_n}K$ over $f$ and $g$ that satisfy the following conditions:
		\begin{enumerate}
			\item[\textup{(pcm)}] $\phi\bigpars{\lambda_{p_1}(u, i_1), \id_{i_2}, \dotsc, \id_{i_n}} = \lambda_p\bigpars{fu, \phi(i_1, \dotsc, i_n)}$ for any object $(i_1, \dotsc, i_n)$ of $\prod_{\dash} \ul J$ and $\map ua{p_1i_1}$ in $A_0$;
			\item[\textup{(poc)}] $\phi\bigpars{\lambda_{p_1}(s_1, \alpha), \id_{s_2}, \dotsc, \id_{s_n}} = \lambda_p\bigpars{\phi(s_1, \dotsc, s_n), f\alpha}$ for any morphism $(s_1, \dotsc, s_n)$ of $\prod_{\dash} \ul J$ and $\cell \alpha{p_1s_1}u$ in $A_0$;
			\item[\textup{(iem)}] for any $1 \leq m < n$, objects $(i_1, \dotsc, i_m)$ in $\prod_{\dash} (J_1, \dotsc, J_m)$ and $(i_{m+1}, \dotsc, i_n)$ in $\prod_{\dash} (J_{m+1}, \dotsc, J_n)$, as well as $\map u{q_mi_m}{p_{m+1}i_{m+1}}$ in $A_m$, the morphism
			\begin{displaymath}
				\phi\bigpars{\id_{i_1}, \dotsc, \id_{i_{m'}}, \rho_{q_m}(i_m, u), \lambda_{p_{m+1}}(u, i_{m+1}), \id_{i_{m+2}}, \dotsc, \id_{i_n}}
			\end{displaymath}
			is an identity morphism;
			\item[\textup{(iec)}] for any $1 \leq m < n$, morphisms $(s_1, \dotsc, s_m)$ in $\prod_{\dash} (J_1, \dotsc, J_m)$ and $(s_{m+1}, \dotsc, s_n)$ in $\prod_{\dash} (J_{m+1}, \dotsc, J_n)$, as well as $\cell \alpha{p_{m+1}s_{m+1}}{q_ms_m}$ in $A_m$, the cell
			\begin{displaymath}
				\phi\bigpars{\id_{s_1}, \dotsc, \id_{s_{m'}}, \rho_{q_m}(\alpha, s_m), \lambda_{p_{m+1}}(s_{m+1}, \alpha), \id_{s_{m+2}}, \dotsc, \id_{s_n}}
			\end{displaymath}
			is an identity cell;
			\item[\textup{(pom)}] $\phi\bigpars{\id_{i_1}, \dotsc, \id_{i_{n'}}, \rho_{q_n}(i_n, v)} = \rho_q\bigpars{\phi(i_1, \dotsc, i_n), gv}$ for any object $(i_1, \dotsc, i_n)$ of $\prod_{\dash} \ul J$ and $\map v{q_ni_n}b$ in $A_n$;
			\item[\textup{(pcc)}] $\phi\bigpars{\id_{s_1}, \dotsc, \id_{s_{n'}}, \rho_{q_n}(\beta, s_n)} = \rho_q\bigpars{g\beta, \phi(s_1, \dotsc, s_n)}$ for any morphism $(s_1, \dotsc, s_n)$ of $\prod_{\dash} \ul J$ and $\cell \beta u{q_ns_n}$ in $A_n$.
		\end{enumerate}
	\end{theorem}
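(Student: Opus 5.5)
By \defref{internal profunctor} (applied in $\und\s = \und{\TwoCatLax}$), a unary cell $\phi$ as in the statement is a morphism of spans $\map\phi{J_1 \times_{A_1} \dotsb \times_{A_{n'}} J_n}K$. Such a morphism is a $2$"/functor over $f$ and $g$. It must satisfy the external left equivariance axiom, the external right equivariance axiom, and the internal equivariance axioms for $2 \leq i \leq n$. So the plan is to show three equivalences. External left equivariance should be equivalent to (pcm) together with (poc). External right equivariance should be equivalent to (pom) together with (pcc). The $i$"/th internal equivariance axiom, with $m = i'$, should be equivalent to (iem) together with (iec).

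For the external axioms, write the left action $\map{\lambda_{J_1}}{A_0 \slash p_1}{J_1}$ in terms of its cleavage, using \conref{right action from a cleavage} coop"/dually and \cororef{cleavages are left actions}. Equivariance says that two $2$"/functors $A_0 \slash p_1 \times_{A_1} J_2 \times \dotsb \to K$ agree.
\begin{itemize}
\item On objects $(u, (i_1, \dotsc, i_n))$ both sides give $(fu)^*\phi(\ul i)$. This uses that $\phi$ preserves identities and that $\phi$ lies over $f$.
\item Evaluated at the morphism $((u, \id_u, \id), \id_{\ul i})$, the two sides give exactly (pcm). This mirrors \conref{cleavage from a right action}(ml).
\item Evaluated at morphisms of the form $((ps_1, \alpha, \dotsc), \ul s)$, the two sides give (poc). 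This is in the spirit of \conref{cleavage from a right action}(cl).
\end{itemize}
The right external axiom is handled dually, giving (pom) and (pcc).

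For the converse direction I would mimic the proof of \thmref{split two-sided 2-fibration description}. By \conref{right action from a cleavage}(m), a general morphism $(r, \zeta, s_1)$ of $A_0 \slash p_1$ is sent by $\lambda_{J_1}$ to the unique factorisation $\tilde r$ through cartesian lifts. Then:
\begin{itemize}
\item Applying the $2$"/functor $\phi$ to the defining equation of $\tilde r$, and using (pcm) and (poc), shows that $\phi(\tilde r, \ul s)$ satisfies the defining equation of the corresponding factorisation in $K$.
\item Uniqueness of factorisations through the cartesian morphism $\lambda_p(fu', \phi(\ul i'))$ then gives equality on morphisms.
\item Cells are handled in the same way using (om2)/(cc)"/type uniqueness.
\end{itemize}

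For the internal axioms, compare $\phi \of (\dotsb \rho_{J_m} \dotsb)$ with $\phi \of (\dotsb \lambda_{J_{m+1}} \dotsb)$ on $\dotsb J_m \times_{A_m} A_m^\2 \times_{A_m} J_{m+1} \dotsb$. On objects $(\dotsc, i_m, u, i_{m+1}, \dotsc)$ they give $\phi(\dotsc, u_!i_m, i_{m+1}, \dotsc)$ and $\phi(\dotsc, i_m, u^*i_{m+1}, \dotsc)$. At the morphism built from $(\id, \id_u, u)$ and $(u, \id_u, \id)$, the necessity of (iem) should follow by comparing the images. At $(s_m, \alpha, s_{m+1})$"/type morphisms and cells, the necessity of (iec) should follow similarly.

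I expect the main obstacle to be exactly this internal case, and in particular the sufficiency direction. The two composites are not even visibly equal on objects, and (iem) and (iec) only say that certain lifts are sent to identities. The plan is as follows.
\begin{itemize}
\item Factor a general morphism $(s_m, \zeta, s_{m+1})$ of $J_m \times_{A_m} A_m^\2 \times_{A_m} J_{m+1}$ as a composite of three pieces: a morphism with identity middle component, a lift $(\rho(i_m, u), \lambda(u, i_{m+1}))$, and a cell"/induced piece $(\rho(\zeta, \dotsb), \lambda(\dotsb, \zeta))$.
\item Use functoriality of $\phi$ and the splitting equations to reduce both sides to the same expression.
\item The (iem)/(iec) identities kill the connecting lifts. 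On objects this gives $\phi(\dotsc, u_!i_m, i_{m+1}, \dotsc) = \phi(\dotsc, i_m, u^*i_{m+1}, \dotsc)$, since the image of an identity morphism has equal source and target.
\end{itemize}
Verifying this decomposition carefully at the level of cells is the routine but lengthy part, which I would organise as in the final string of equalities in the proof of \thmref{split two-sided 2-fibration description}.
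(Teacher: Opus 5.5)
Your overall strategy is the paper's. You split the cell axioms into left external, right external and internal equivariance. You extract the six conditions by evaluating at the lifts. You prove sufficiency externally by uniqueness of factorisations through (op)cartesian lifts, and internally by padding with (iem)-identities and swapping via (iec). Two steps, however, fail as you state them.

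\emph{Objects in the external case.} The two legs do \emph{not} agree on objects for free. At $(u, \ul i)$ one leg gives $\phi(u^*i_1, i_2, \dotsc, i_n)$ and the other gives $(fu)^*\phi(\ul i)$. A $2$-functor over $f$ and $g$ need not preserve chosen lifts on objects, so these can differ. Their equality is exactly the equality of the sources of the two sides of (pcm). This is how the paper treats the dual case: commutativity at objects holds iff the targets in (pom) agree. So in the sufficiency direction you must derive it from (pcm), just as you correctly do in the internal case. Similarly, (poc) is an equality of cells, so it comes from evaluating at a cell of the domain, namely the one sent by the action to $\lambda_{p_1}(s_1,\alpha)$. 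Evaluating at the morphisms you indicate only gives equality of the targets of its two sides.

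\emph{The internal case.} A general morphism $M = (s_m, \xi, s_{m+1})\colon (i_m, u, i_{m+1}) \to (i'_m, u', i'_{m+1})$ does not factor as a composite through the connecting lifts. For instance, $s_m$ need not factor through $\rho_{q_m}(i_m, u)$. What does work is to \emph{compose} $M$ with connecting morphisms whose images under both legs are identities by (iem). Precompose with $d = \bigpars{\id_{i_m}, (\id, \id_u, u), \lambda_{p_{m+1}}(u, i_{m+1})}$ and postcompose with $e' = \bigpars{\rho_{q_m}(i'_m, u'), (u', \id_{u'}, \id), \id_{i'_{m+1}}}$. Then $e' \of M \of d$ runs between objects with identity middle component. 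Its images under the two legs are
$\phi\bigpars{\dotsc, \xi^*(\rho_{q_m}(i'_m, u') \of s_m), s_{m+1} \of \lambda_{p_{m+1}}(u, i_{m+1}), \dotsc}$ and
$\phi\bigpars{\dotsc, \rho_{q_m}(i'_m, u') \of s_m, \xi_!(s_{m+1} \of \lambda_{p_{m+1}}(u, i_{m+1})), \dotsc}$,
which coincide by the source/target part of (iec). This is precisely the paper's chain (1)--(5). It pads with the (iem)-identity and rewrites via the defining equations of $\widetilde{p_{m+1}s_{m+1}}$ and $\widetilde{q_ms_m}$. It then swaps using (iec) and strips off the (iem)-identity at the target.
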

	
	Comparing the sources of the morphisms on either side of the equality required by condition (pcm) above we find that $\phi(u^*i_1, i_2, \dotsc, i_n) = (fu)^*\phi(i_1, \dotsc, i_n)$. Likewise condition (iec) above implies the equality of morphisms
	\begin{displaymath}
		\phi(s_1, \dotsc, s_{m'}, \alpha^* s_m, s_{m+1}, s_{m+2}, \dotsc, s_n) \! = \! \phi(s_1, \dotsc, s_{m'}, s_m, \alpha_! s_{m+1}, s_{m+2}, \dotsc, s_n).
	\end{displaymath}
	Similarly each of the other conditions above implies an equality of pairs of objects or morphisms respectively.
	
	Taking $n = 1$ and $A_0 = 1 = C$ the terminal $2$"/category in the theorem above, so that $J_1$ and $K$ are split op"/$2$"/fibrations by \defref{internal split (op)fibration} and \thmref{opcleavages are right actions}, notice that in this case the conditions on the $2$"/functor $\map\phi{J_1}K$ are precisely those that make $\phi$ into a morphism of split $2$"/opfibrations, in the sense of \defref{3-category of split op-2-fibrations}. Thus we obtain the following corollary.
	\begin{corollary} \label{split 2-opfibrations as split two-sided 2-fibrations}
		The horizontal slice category $1 \hs \SpTwoTwoFib$ (\defref{internal split (op)fibration}) of split two"/sided $2$"/fibrations $1 \brar B$ is isomorphic to the $1$"/category $\und{\und{\SpOpTwoFib}}$ of split op"/$2$"/fibrations (\defref{3-category of split op-2-fibrations}). Likewise $\SpTwoTwoFib \hs 1 \iso \und{\und{\SpCoTwoFib}}$ is the $1$"/category of split co"/$2$"/fibrations.
	\end{corollary}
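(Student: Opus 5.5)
The plan is to unwind both sides using \thmref{split two-sided 2-fibration description} for objects and \thmref{unary cells of split two-sided 2-fibrations} for morphisms, specialised to $A = 1$. First I will note that the lax arrow $2$"/category $1^\2$ of the terminal $2$"/category is again terminal (\exref{lax comma 2-category}). Hence for any span $1 \xlar{!} J \xrar q B$ the pullback $1^\2 \times_1 J$ is isomorphic to $J$. The unit axiom for a left action $\lambda$ (\defref{internal profunctor}) then forces $\lambda$ to be this projection, so a left action exists and is unique.

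In terms of \thmref{split two-sided 2-fibration description}, this means the following for the leg $\map !J1$. It carries exactly one co"/$2$"/fibration cleavage, namely $\lambda(\id, i) = \id_i$ and $\lambda(s, \id) = \id_s$, forced by the splitting equations (sm1) and (sc1) (or dually, by \cororef{cleavages are left actions} together with the uniqueness of the action). With these identity lifts, the conditions on $\lambda$ in (v), as well as (cl) and (cc), hold trivially. The conditions on $\rho$ in (v) are also automatic, since $p = {!}$. Consequently the objects of $1 \hs \SpTwoTwoFib$, namely split two"/sided $2$"/fibrations $1 \brar B$ (\defref{internal split (op)fibration}), are exactly $2$"/functors $\map qJB$ equipped with a right action. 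By \thmref{opcleavages are right actions} these are exactly split op"/$2$"/fibrations.

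For morphisms, I will take $n = 1$, $A_0 = C = 1$ and $f = \id_1$ in \thmref{unary cells of split two-sided 2-fibrations}. A cell in $1 \hs \SpTwoTwoFib$ is then a $2$"/functor $\map\phi JK$ over $g$, meaning $r \of \phi = g \of q$. The conditions imposed on it are as follows.
\begin{enumerate}[label=-]
\item Conditions (pcm) and (poc) reduce to $\phi(\id) = \id$, since all $\lambda$"/lifts are identities, so they hold automatically.
\item Conditions (iem) and (iec) are vacuous, since $n = 1$.
\item Conditions (pom) and (pcc) are exactly the two cleavage"/preservation equations of \defref{3-category of split op-2-fibrations}.
\end{enumerate}
So these cells are exactly the morphisms $(\phi, g)$ of $\und{\und{\SpOpTwoFib}}$.

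It remains to check functoriality of this bijection. By \propref{profunctors form an augmented virtual double category}, vertical composition of unary cells of the form $(1,1)$ is composition of the underlying span morphisms, that is, composition of $2$"/functors, paired with composition of the $g$'s. Identity cells are identity span morphisms. Both match composition and identities in $\SpOpTwoFib$, so the bijections on objects and on morphisms assemble into an isomorphism of $1$"/categories.

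The second claim, $\SpTwoTwoFib \hs 1 \iso \und{\und{\SpCoTwoFib}}$, follows by the same argument with the roles of the two legs exchanged. This uses \cororef{cleavages are left actions} in place of \thmref{opcleavages are right actions}, and conditions (pcm) and (poc) in place of (pom) and (pcc).

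The only point requiring care is the triviality of the cleavage on $\map !J1$, namely that it is unique and consists of identities. I expect this to follow directly from terminality of $1^\2$ together with the unit axiom; everything else is a direct reading of the two theorems.
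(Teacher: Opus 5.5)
Your proposal is correct and follows the paper's route. The paper likewise specialises \thmref{unary cells of split two-sided 2-fibrations} to $n = 1$ and $A_0 = C = 1$, identifies the objects with split op"/$2$"/fibrations via \defref{internal split (op)fibration} and \thmref{opcleavages are right actions}, and observes that the remaining conditions are exactly cleavage preservation. Your extra detail makes explicit what the paper leaves implicit: the uniqueness of the trivial action on $\map !J1$, which mirrors the sketch proof of the lemma following \defref{internal split (op)fibration}, the triviality of (pcm), (poc), (iem) and (iec), and the compatibility with composition.
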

	
	\begin{proof}[Proof of \thmref{unary cells of split two-sided 2-fibrations}.]
		By \defthreeref{split two-sided 2-fibration}{internal split two-sided fibration}{internal profunctor} the $2$"/functor $\phi$ forms a unary cell in $\SpTwoTwoFib$ whenever it statisfies the left and right external equivariance axioms, which ensure coherence of $\phi$ with the left"/action of $A_0^\2$ and the right action of $A_n^\2$, as well as the internal equivariance axioms, which ensure coherence with the actions of \mbox{$A_1^\2$, \dots, $A_{n'}^\2$}. We claim that the left external equivariance axiom is equivalent to conditions (pcm) and (poc) above, that right external equivariance is equivalent to (pom) and (pcc) and that internal equivariance is equivalent to (iem) and (iec). We will partly prove the latter two equivalences, leaving the remainder to the reader.
		\begin{displaymath}
			\hspace{-35pt}\begin{tikzpicture}[textbaseline]
				\matrix(m)[math35, column sep={9.5em,between origins}]
					{	J_1 \times_{A_1} \dotsb \times_{A_{n'}} J_n \times_{A_n} A_n^\2 & K \times_D D^\2 \\
						J_1 \times_{A_1} \dotsb \times_{A_{n'}} J_n & K \\};
				\path[map]	(m-1-1) edge node[above, inner sep=6pt] {$\phi \times_g g^\2$} (m-1-2)
										(m-1-1)	edge[transform canvas={xshift=35pt}] node[left, inner sep=0.5pt] {$\id_{J_1} \times_{A_1} \dotsb \times_{A_{n''}} \id_{J_{n'}}\times_{A_{n'}} \rho_{q_n}$} (m-2-1)
										(m-1-2) edge node[right] {$\rho_q$} (m-2-2)
										(m-2-1) edge node[below] {$\phi$} (m-2-2);
			\end{tikzpicture} \hspace{0.25cm} \Bigl(\!\begin{tikzpicture}[textbaseline]
				\matrix(m)[math35]{i_1 \\ i'_1 \\};
				\path[map]	(m-1-1) edge node[left] {$s_1$} (m-2-1);
			\end{tikzpicture}\hspace{-0.25cm}, \dotsc, \begin{tikzpicture}[textbaseline]
				\matrix(m)[math35]{i_n \\ i'_n \\};
				\path[map]	(m-1-1) edge node[left] {$s_n$} (m-2-1);
			\end{tikzpicture}\hspace{-0.25cm}, \!\begin{tikzpicture}[textbaseline]
				\matrix(m)[math35]{q_ni_n & b \\ q_ni'_n & b' \\};
				\path[map]	(m-1-1) edge node[above] {$v$} (m-1-2)
														edge node[left] {$q_ns_n$} (m-2-1)
										(m-1-2) edge node[right] {$t$} (m-2-2)
										(m-2-1) edge node[below] {$v'$} (m-2-2);
				\path				(m-1-2) edge[cell, shorten >= 9pt, shorten <= 10pt] node[below right] {$\xi$} (m-2-1);
			\end{tikzpicture}\!\Bigr)
		\end{displaymath}
		
		The right external equivariance axiom is the commuting of the diagram above left. Using \conref{cleavage from a right action}, firstly notice that it commutes at objects $(i_1, \dotsc, i_n, \map v{q_ni_n}b)$ if and only if $\phi(i_1, \dotsc, i_{n'}, v^*i_n) = (gv)^*\phi(i_1, \dotsc, i_n)$, that is the targets of the two sides of the equality of the condition (pom) above coincide. Secondly it is easily checked that the actual equality follows from the diagram commuting at the morphism $\bigpars{\id_{i_1}, \dotsc, \id_{i_n}, (\id_{i_n}, \id_v, v)}$. That it likewise implies condition (pcc) is similar.
		
		Conversely assume that the conditions (pom) and (pcc) hold. We will show that the diagram above commutes at every morphism $M \dfn \bigpars{s_1, \dotsc, s_n, (s_n, \xi, t)}$ of \mbox{$J_1 \times_{A_1} \dotsb \times_{A_{n'}} J_n \times_{A_n} A_n^\2$}, which are of the form as above right; leaving the commutativity at the cells to the reader. Regarding the last two coordinates of $M$ as a morphism $(s_n, \xi, t)$ of $q_n \slash A_n$ (\propref{comma objects from 2-cotensors}), recall from \conref{right action from a cleavage}(m) that they are mapped by $\rho_{q_n}$ to the unique lift $\tilde t$ of $t$ such that \mbox{$\tilde t \of \rho_{q_n}(i_n, v) = \xi^*\bigpars{\rho_{q_n}(i_n', v') \of s_n}$}. To show that the diagram commutes at $M$ is to show that $\rho_q\bigpars{\phi(s_1, \dotsc, s_n), g\xi, gt} = \phi(s_1, \dotsc, s_{n'}, \tilde t)$, which follows from the string of equalities below and the fact that factorisations through the opcartesian morphism $\rho_q\bigpars{\phi(i_1, \dotsc, i_n), gv}$ are unique. The equalities here follow from \conref{right action from a cleavage}(m); condition (pom) and the functoriality of $\phi$; the equality of the (sources of the) cells of (pcc); the defining equation of $\tilde t$ and the functoriality of $\phi$; condition (pom). 
		\begin{align*}
			\rho_q\bigpars{\phi(&s_1, \dotsc, s_n), g\xi, gt} \of \rho_q\bigpars{\phi(i_1, \dotsc, i_n), gv} \\
			&= (g\xi)^*\bigpars{\rho_q(\phi(i_1', \dotsc, i_n'), gv') \of \phi(s_1, \dotsc, s_n)} \\
			&= (g\xi)^*\phi\bigpars{s_1, \dotsc, s_{n'}, \rho_{q_n}(i'_n, v') \of s_n} = \phi\bigpars{s_1, \dotsc, s_{n'}, \xi^*(\rho_{q_n}(i'_n, v') \of s_n)} \\
			&= \phi(s_1, \dotsc, s_{n'}, \tilde t) \of \phi\bigpars{\id_{i_1}, \dotsc, \id_{i_{n'}}, \rho_{q_n}(i_n, v)} \\
			&= \phi(s_1, \dotsc, s_{n'}, \tilde t) \of \rho_q\bigpars{\phi(i_1, \dotsc, i_n), gv}
		\end{align*}
		
		The internal equivariance axiom concerning the actions of $A_m^\2$, where $1 \leq m < n$, is the commuting of the diagram below (\defref{internal profunctor}). We claim that it is equivalent to the $m$th case of the conditions (iem) and (iec) above, as we will partly prove. That the source and target of the morphism considered in (iem) are equal is equivalent to the commuting of the diagram at the object \mbox{$(i_1, \dotsc, i_m, \map u{q_mi_m}{p_{m+1}i_{m+1}}, i_{m+1}, \dotsc, i_n)$} follows immediately from \conref{cleavage from a right action}(ml) and its coop"/dual. To see that the morphism itself is the identity consider the morphism $\bigpars{(\id_{i_m}, \id_u, u), (u, \id_u, \id_{p_{m+1}i_{m+1}})}$ in \mbox{$q_m \slash A_m \times_{A_m} A_m^\2 \iso J_m \times_{A_m} A_m^\2 \times_{A_m} A_m^\2$} (\propref{comma objects from 2-cotensors}). Notice that composition $\bar\alpha$ of $A_m^\2$ (\exref{lax arrow 2-category as internal category}) maps this morphism to the identity of the object $(i_m, u, p_{m+1}i_{m+1})$ in $q_m \slash A_m$, which is mapped to the identity of $u_!i_m$ in $J_m$ by $\rho_{q_m}$; see \conref{cleavage from a right action}(ml). Also from (the coop"/dual of) the latter recall that $\rho_{q_m}(i_m, u) \dfn \rho_{q_m}(\id_{i_m}, \id_u, u)$ and $\lambda_{p_{m+1}}(u, i_{m+1}) \dfn \lambda_{p_{m+1}}(u, \id_u, \id_{i_{m+1}})$. It follows that
		\begin{align*}
			\id&_{\phi(i_1, \dotsc, i_{m'}, u_!i_m, i_{m+1}, \dotsc, i_n)} \\
			&= \phi\bigpars{\id_{i_1}, \dotsc, \id_{i_{m'}}, \rho_{q_m} \of \bar\alpha((\id_{i_m}, \id_u, u), (u, \id_u, \id_{p_{m+1}i_{m+1}})), \id_{i_{m+1}}, \dotsc, \id_{i_n}} \\
			&= \phi\bigpars{\id_{i_1}, \dotsc, \id_{i_{m'}}, \rho_{q_m}(\rho_{q_m}(i_m, u), \id_u, \id_{p_{m+1}i_{m+1}}), \id_{i_{m+1}}, \dotsc, \id_{i_n}} \\
			&= \phi\bigpars{\id_{i_1}, \dotsc, \id_{i_{m'}}, \rho_{q_m}(i_m, u), \lambda_{p_{m+1}}(u, \id_{m+1}), \id_{i_{m+2}}, \dotsc, \id_{i_n}},
		\end{align*}
		where the middle equality follows from associativity of $\rho_{q_m}$ (\defref{internal profunctor}) and the last equality follows from the commuting of the diagram below. That condition (iec) similarly follows from the latter is left to the reader.
		\begin{displaymath}
			\begin{tikzpicture}
				\matrix(m)[math35, column sep={18em,between origins}, row sep={4em,between origins}]
					{	\prod_{\dash} (J_1, \dotsc, J_m) \times_{A_m} A_m^\2 \times_{A_m} \prod_{\dash} (J_{m+1}, \dotsc, J_n) & \prod_{\dash} \ul J \\
						\prod_{\dash} \ul J &  K \\ };
				\path[map]	(m-1-1) edge node[fill=white] {$\id_{\prod_{\dash} (J_1, \dotsc, J_{m'})} \times_{A_{m'}} \rho_{q_m} \times_{A_m} \id_{\prod_{\dash} (J_{m+1}, \dotsc, J_n)}$} (m-2-1)
														edge node[above, inner sep=7pt, xshift=4pt] {$\id_{\prod_{\dash} (J_1, \dotsc, J_m)} \times_{A_m} \lambda_{p_{m+1}} \times_{A_{m+1}} \id_{\prod_{\dash} (J_{m+2}, \dotsc, J_n)}$} (m-1-2)
										(m-1-2) edge node[right] {$\phi$} (m-2-2)
										(m-2-1) edge node[below] {$\phi$} (m-2-2);
			\end{tikzpicture}
		\end{displaymath}
		
		Conversely assume that the $m$th case of conditions (iem) and (iec) holds. We will show that the diagram above commutes at all morphisms of its top left corner
		\begin{displaymath}
			M \dfn \Bigl(\begin{tikzpicture}[textbaseline]
				\matrix(m)[math35]{i_1 \\ i'_1 \\};
				\path[map]	(m-1-1) edge node[left] {$s_1$} (m-2-1);
			\end{tikzpicture}, \dotsc, \begin{tikzpicture}[textbaseline]
				\matrix(m)[math35]{i_m \\ i'_m \\};
				\path[map]	(m-1-1) edge node[left] {$s_m$} (m-2-1);
			\end{tikzpicture}, \begin{tikzpicture}[textbaseline]
				\matrix(m)[math35, column sep={5em,between origins}]{q_mi_m & p_{m+1}i_{m+1} \\ q_mi'_m & p_{m+1}i_{m+1}' \\};
				\path[map]	(m-1-1) edge node[above] {$u$} (m-1-2)
														edge node[left] {$q_ms_m$} (m-2-1)
										(m-1-2) edge node[right] {$p_{m+1}s_{m+1}$} (m-2-2)
										(m-2-1) edge node[below] {$u'$} (m-2-2);
				\path				(m-1-2) edge[cell, shorten >= 11pt, shorten <= 12pt] node[below right] {$\xi$} (m-2-1);
			\end{tikzpicture}, \begin{tikzpicture}[textbaseline]
				\matrix(m)[math35]{i_{m+1} \\ i'_{m+1} \\};
				\path[map]	(m-1-1) edge node[left] {$s_{m+1}$} (m-2-1);
			\end{tikzpicture}, \dotsc, \begin{tikzpicture}[textbaseline]
				\matrix(m)[math35]{i_n \\ i'_n \\};
				\path[map]	(m-1-1) edge node[left] {$s_n$} (m-2-1);
			\end{tikzpicture}\Bigr),
		\end{displaymath}
		where each $s_k \in J_k$ and $\xi \in A_m$, leaving the case of commutativity at the cells to the reader. Using \conref{right action from a cleavage}(m), applying the bottom leg of the diagram above to $M$ results in the morphism $\phi\bigpars{s_1, \dotsc, s_{m'}, \widetilde{p_{m+1} s_{m+1}}, s_{m+1}, \dotsc, s_n}$ where $\widetilde{p_{m+1} s_{m+1}}$ is the unique lift of $p_{m+1} s_{m+1}$ such that $\widetilde{p_{m+1} s_{m+1}} \of \rho_{q_m}(i_m, u) = \xi^*\bigpars{\rho_{q_m}(i_m', u') \of s_m}$. Analogously the image of $M$ under the top leg involves the lift $\widetilde{q_ms_m}$ of $q_ms_m$ such that $\lambda_{p_{m+1}}(u', i'_{m+1}) \of \widetilde{q_ms_m} = \xi_!\bigpars{s_{m+1} \of \lambda_{p_{m+1}}(u, i_{m+1})}$. That the diagram commutes at $M$ is shown by
		\begin{align*}
			\phi\bigpars{s_1&, \dotsc, s_{m'}, \widetilde{p_{m+1} s_{m+1}}, s_{m+1}, s_{m+2}, \dotsc, s_n} \\
			&\overset{(1)}= \phi\bigpars{s_1, \dotsc, s_{m'}, \widetilde{p_{m+1} s_{m+1}}, s_{m+1}, s_{m+2}, \dotsc, s_n} \\
			&\hspace{1.5cm}\of \phi\bigpars{\id_{i_1}, \dotsc, \id_{i_{m'}}, \rho_{q_m}(i_m, u), \lambda_{p_{m+1}}(u, i_{m+1}), \id_{i_{m+2}}, \dotsc, \id_{i_n}} \\
			&\overset{(2)}= \phi\bigpars{s_1, \dotsc, s_{m'}, \xi^*(\rho_{q_m}(i_m', u') \of s_m), s_{m+1} \of \lambda_{p_{m+1}}(u, i_{m+1}), s_{m+2}, \dotsc, s_n} \\
			&\overset{(3)}= \phi\bigpars{s_1, \dotsc, s_{m'}, \rho_{q_m}(i_m', u') \of s_m, \xi_!(s_{m+1} \of \lambda_{p_{m+1}}(u, i_{m+1})), s_{m+2}, \dotsc, s_n} \\
			&\overset{(4)}= \phi\bigpars{s_1, \dotsc, s_{m'}, \rho_{q_m}(i_m', u') \of s_m, \lambda_{p_{m+1}}(u', i_{m+1}') \of \widetilde{q_ms_m}, s_{m+2}, \dotsc, s_n} \\
			&\overset{(5)}= \phi\bigpars{s_1, \dotsc, s_{m'}, s_m, \widetilde{q_ms_m}, s_{m+2}, \dotsc, s_n},
		\end{align*}
		where the equalities (1) and (5) follow from (iem); the equalities (2) and (4) follow from the defining equations of $\widetilde{p_{m+1}s_{m+1}}$ and $\widetilde{q_ms_m}$ together with the functorialty of $\phi$; equality (3) follows from the source and target morphisms of the cell considered in (iec) being equal.
	\end{proof}
	
	Closing this section, the following result describes the $(0, 1)$"/ary cells of $\SpTwoTwoFib$ (\defthreeref{split two-sided 2-fibration}{internal split two-sided fibration}{internal profunctor}). Its proof, which involves arguments similar to those of the proof of \thmref{split two-sided 2-fibration description}, is left to the reader.
	\begin{proposition} \label{(0,1)-ary cells of split two-sided 2-fibrations}
		Let $C \xlar p K \xrar q D$ be a split two"/sided $2$"/fibration. The $(0,1)$"/ary cells
		\begin{displaymath}
			\begin{tikzpicture}
				\matrix(m)[math35, column sep={1.75em,between origins}]{& A & \\ C & & D \\};
				\path[map]	(m-1-2) edge[transform canvas={xshift=-1pt}] node[left] {$f$} (m-2-1)
														edge[transform canvas={xshift=1pt}] node[right] {$g$} (m-2-3)
										(m-2-1) edge[barred] node[below] {$K$} (m-2-3);
				\path				(m-1-2) edge[cell, transform canvas={yshift=-0.25em}] node[right, inner sep=2.5pt] {$\phi$} (m-2-2);
			\end{tikzpicture}
		\end{displaymath}
		of $\SpTwoTwoFib$ are precisely the $2$"/functors $\map\phi AK$ over $f$ and $g$ that satisfy the following conditions:
		\begin{enumerate}
			\item[\textup{(cm)}]	for each morphism $\map uab$ of $A$ the lifts $\map{\lambda(fu, \phi b)}{(fu)^*(\phi b)}{\phi b}$ and $\map{\rho(\phi a, gu)}{\phi a}{(gu)_!(\phi a)}$ in $K$ satisfy $(fu)^*(\phi b) = (gu)_!(\phi a)$ and \mbox{$\lambda(fu, gb) \of \rho(\phi a, gu) = \phi u$};
			\item[\textup{(cc)}]	for each cell $\cell\alpha uv$ of $A$ the lifts $\cell{\lambda(\phi u, f\alpha)}{\phi u}{(f\alpha)_!(\phi u)}$ and $\cell{\rho(g\alpha, \phi v)}{(g\alpha)^*(\phi v)}{\phi v}$ in $K$ satisfy $(f\alpha)_!(\phi u) = (g\alpha)^*(\phi v)$ and \mbox{$\rho(g\alpha, \phi v) \of \lambda(\phi u, f\alpha) = \phi\alpha$}.
		\end{enumerate}
	\end{proposition}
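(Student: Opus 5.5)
By \defref{internal profunctor}, a $(0,1)$-ary cell $\phi\colon A \Rar K$ with vertical sides $f$ and $g$ is a morphism of spans $\map\phi AK$ satisfying one external equivariance axiom. Here the spans are in $\und{\TwoCatLax}$, the domain is the category $A^\2$ and the vertical morphisms are $f^\2$, $g^\2$. So $\phi$ is a $2$-functor over $f$ and $g$, and the axiom reads
\begin{displaymath}
	\lambda \of (\bar f, \phi \of \tgt) = \rho \of (\phi \of \src, \bar g)
\end{displaymath}
as $2$-functors $A^\2 \to K$. Here $\bar f\colon A^\2 \to C^\2$ is the $2$-functor of \thmref{embedding of strict cells}, sending $(a,u,b)$ to $(fa,fu,fb)$ and similarly on morphisms and cells. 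The plan is to evaluate both sides on objects, morphisms and cells of the lax arrow $2$-category $A^\2$ (\exref{lax comma 2-category}), and to compare them using \conref{cleavage from a right action}, \conref{right action from a cleavage} and their coop-duals. This is the pattern of the proof of \thmref{split two-sided 2-fibration description}.

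First I will check objects. By \conref{cleavage from a right action}(ml) and its coop-dual, the left-hand side sends $(a,u,b)$ to $(fu)^*(\phi b)$ and the right-hand side sends it to $(gu)_!(\phi a)$. So commutativity on objects is exactly the first equation of (cm).

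Next I will check the special morphisms $(\id_a, \id_u, u)\colon (a,\id_a,a) \to (a,u,b)$ and $(u,\id_u,\id_b)\colon(a,u,b)\to(b,\id_b,b)$ in $A^\2$. Their composite is $(u, \id_u, u)$, the image of $u$ under the unit $\tilde\alpha$. The two sides applied to these morphisms give $\rho(\phi a, gu)$ and $\lambda(fu,\phi b)$ respectively; this uses the unit axioms, by which both actions are trivial on identity arrows. Meanwhile $\phi\circ\tgt$ and $\phi\circ\src$ send $\tilde\alpha(u)$ to $\phi u$. So evaluating at $\tilde\alpha(u)$ yields $\lambda(fu,\phi b)\of\rho(\phi a,gu)=\phi u$, the second equation of (cm). (The statement writes $\lambda(fu,gb)$; I read this as $\lambda(fu,\phi b)$.) In the same way, evaluating at the cells $\tilde\alpha(\sigma)=(\sigma,\sigma)$ of $A^\2$, together with \conref{cleavage from a right action}(cl) and its coop-dual, yields (cc). This proves necessity.

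For sufficiency, assume (cm) and (cc). Every morphism $(r,\zeta,t)\colon(a,u,b)\to(a',u',b')$ of $A^\2$ has an image on each side that is determined by the (op)cartesian factorisations of \conref{right action from a cleavage}(m). I will show the two images agree by the uniqueness argument used in the proof of \thmref{split two-sided 2-fibration description}. Both are morphisms $(fu)^*(\phi b)\to(fu')^*(\phi b')$. I will postcompose with the cartesian $\lambda(fu',\phi b')$ and precompose with the opcartesian $\rho(\phi a,gu)$, and reduce both sides, via the splitting equations, to $(g\zeta)^*\bigl(\ldots\bigr)$ and $(f\zeta)_!\bigl(\ldots\bigr)$ applied to a common morphism of the form $\phi t\of\ldots\of\phi r$, built from $\phi$ of the two legs of $(r,\zeta,t)$. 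The bridge between the two sides is the equality $(f\zeta)_!(\phi(\ldots))=(g\zeta)^*(\phi(\ldots))$, which is (cc) applied to the cell $\zeta\colon t\of u\Rar u'\of r$ of $A$, combined with functoriality of $\phi$ and (cm). I expect this step to be the main obstacle: it needs careful bookkeeping to express $\phi(t\of u)$ and $\phi(u'\of r)$ via (cm) as composites of lifts, and to see that (cc) gives exactly the required interchange of cartesian and opcartesian cell-lifts. Commutativity on cells of $A^\2$ then follows from this morphism-level equality, using (om2) and (cc) of \defref{split op-2-fibration}.
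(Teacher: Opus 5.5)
Your strategy is the one the paper intends (it leaves this proof to the reader and points to the pattern of the proof of \thmref{split two-sided 2-fibration description}). Your necessity argument and the object- and morphism-level parts of sufficiency are sound. One small correction: at a morphism $(r,\zeta,t)$ of $A^\2$, the two whiskered images reduce to $(f\zeta)_!\phi(t\of u)$ and $(g\zeta)^*\phi(u'\of r)$. These are the images of the source and target of $\zeta$, not of a common morphism, and they agree by the first equation of (cc) for $\zeta$.

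The genuine gap is at the level of cells. Your sufficiency argument never uses the second equation of (cc), namely $\phi\alpha = \lambda(\phi u,f\alpha)\hc\rho(g\alpha,\phi v)$. Instead it asserts that commutativity on cells follows from the morphism-level equality together with (om2) and (cc) of \defref{split op-2-fibration}. That cannot work, as the following example shows.
\begin{itemize}
\item Take $C = D = 1$. Then any $2$-category $K$, with identity cleavages, is a split two-sided $2$-fibration $1 \brar 1$, and the equivariance axiom reads $\phi \of \tgt = \phi \of \src$.
\item Let $A = K$ be the $2$-category with a single object, a single morphism and cells $(\mathbb N,+)$, and let $\phi = \id$.
\item Then (cm) and the first equation of (cc) hold, both sides agree on objects and morphisms of $A^\2$, and all lifting properties hold trivially.
\item Yet the axiom fails at the cell $(0,1)\colon(\id,1,\id)\Rar(\id,0,\id)$ of $A^\2$.
\end{itemize}

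The missing step is as follows. Take a cell $(\sigma,\tau)\colon(r_0,\zeta_0,t_0)\Rar(r_1,\zeta_1,t_1)$ between morphisms $(a,u,b)\to(a',u',b')$ of $A^\2$. Whisker its two images with $\lambda(fu',\phi b')$ and $\rho(\phi a,gu)$. Then precompose with the opcartesian cell $\lambda(\phi(t_0\of u),f\zeta_0)$ and postcompose with the cartesian cell $\rho(g\zeta_1,\phi(u'\of r_1))$. Using \conref{right action from a cleavage}(c), its coop-dual, the splitting equations and (cm), you obtain
\begin{itemize}
\item $\phi(\tau\of u)\hc\lambda(\phi(t_1\of u),f\zeta_1)\hc\rho(g\zeta_1,\phi(u'\of r_1))$, and
\item $\lambda(\phi(t_0\of u),f\zeta_0)\hc\rho(g\zeta_0,\phi(u'\of r_0))\hc\phi(u'\of\sigma)$.
\end{itemize}
It is precisely the second equation of (cc), applied to $\zeta_1$ and $\zeta_0$, that rewrites these as $\phi\bigl((\tau\of u)\hc\zeta_1\bigr)$ and $\phi\bigl(\zeta_0\hc(u'\of\sigma)\bigr)$. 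These agree by the defining condition on cells of $A^\2$. Only after that do the (op)cartesian cancellations give the required equality, using that both images have $p$-image $f\sigma$ and $q$-image $g\tau$.

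This matches the remark in the proof of \thmref{split two-sided 2-fibration description} that, given the other conditions, commutativity at cells is equivalent to the second condition of (cc) there.
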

		
	\section[Locally discrete split two-sided 2-fibrations]{Locally discrete split two"/sided $2$"/fibrations} \label{locally discrete split two-sided 2-fibrations section}
	Under the ``$2$"/categorical Grothendieck correspondence'' of \cororef{equivalence from yoneda embedding} below $2$"/functors $B \to \Cat^{\op A}$ correspond to split two"/sided $2$"/fibrations $A \brar B$ that are `locally discrete' in the sense of \defref{locally discrete split two-sided 2-fibration} below.
	
	We start by recalling the $1$"/categorical notion of discrete two"/sided fibration; see e.g.\ Definition~2.3.1 of \cite{Loregian-Riehl20}. Given a morphism $\map sij$ in the source category $J$ of a $1$"/functor $\map fJX$, we say that $s$ is \emph{$f$"/vertical} if $fs$ is an identity morphism.
	\begin{definition} \label{discrete two-sided 1-fibration}
		A \emph{reversed discrete two"/sided $1$"/fibration} is a span $A \xlar p J \xrar q B$ of $1$"/functors satisfying
		\begin{itemize}
			\item[\textup{(l)}] each $\map u{pi}a$ in $A$ has a unique $q$"/vertical lift $\map{\lambda(i, u)}i{u_!i}$ in $J$;
			\item[\textup{(r)}] each $\map vb{qj}$ in $B$ has a unique $p$"/vertical lift $\map{\rho(v, j)}{v^*j}j$ in $J$;
			\item[\textup{(c)}] for each $\map sij$ in $J$ the lifts $\map{\lambda(i,ps)}i{(ps)_!i}$ and $\map{\rho(qs, j)}{(qs)^*j}j$ satisfy $(ps)_!i = (qs)^*j$ and $s = \rho(qs, j) \of \lambda(i, ps)$.
		\end{itemize}
	\end{definition}
	
	A span $A \xlar p J \xrar q B$ of $1$"/functors is said to \emph{jointly reflect identity morphisms} if for any morphism $\map sij$ in $J$, the equalities $ps = \id_{pi}$ and $qs = \id_{qj}$ imply that $i = j$ and $s = \id_i$. The following folklore result shows that, as the notation suggests, the lifts $\lambda(i, u)$ and $\rho(v, j)$ above are opcartesian and cartesian respectively.
	\begin{proposition} \label{equivalent description of discrete two-sided 1-fibration}
		For a span $J = \brks{A \xlar p J \xrar q B}$ of $1$"/functors the following are equivalent:
		\begin{enumerate}[label=\textup{(\alph*)}]
			\item $J$ is a reversed split two"/sided $1$"/fibration (\remref{relation to two-sided 1-fibration}) that jointly reflects identity morphisms;
			\item $J$ is a reversed discrete two"/sided $1$"/fibration.
		\end{enumerate}
		In either case the cleavages for $p$ and $q$ are unique.
	\end{proposition}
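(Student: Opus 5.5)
The plan is to prove the two implications directly, using only the universal properties of (op)cartesian morphisms and the uniqueness of vertical lifts. Throughout, ``reversed'' in (a) is read as in \remref{relation to two-sided 1-fibration}. That is, $p$ is a split opfibration with opcartesian $q$"/vertical lifts $\map{\lambda(i,u)}i{u_!i}$ of $\map u{pi}a$. Likewise $q$ is a split fibration with cartesian $p$"/vertical lifts $\map{\rho(v,j)}{v^*j}j$ of $\map vb{qj}$. Finally the two kinds of lift commute: $u_!(v^*j) = v^*(u_!j)$ and $\lambda(j,u) \of \rho(v,j) = \rho(v,u_!j) \of \lambda(v^*j,u)$.

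\emph{(a) $\Rightarrow$ (b).} The cleavages supply $q$"/vertical lifts $\lambda(i,u)$ and $p$"/vertical lifts $\rho(v,j)$, so existence in (l) and (r) is clear.

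For uniqueness in (l), let $\map sij$ be another $q$"/vertical lift of $u$. Opcartesianness of $\lambda(i,u)$ factors $s = t \of \lambda(i,u)$ with $pt = \id$. Since $q\lambda(i,u) = \id$ and $qs = \id$, we also get $qt = \id$. Joint reflection of identities then forces $t = \id$, so $s = \lambda(i,u)$. Uniqueness in (r) is dual. This argument also shows that any cleavage consists of these unique vertical lifts, which gives the final assertion that the cleavages are unique.

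For (c), take $\map sij$ and factor it through the opcartesian $\lambda(i,ps)$ as $s = t \of \lambda(i,ps)$, where $pt = \id$ and $qt = qs$. Next factor $t$ through the cartesian $\rho(qt,j)$ as $t = \rho(qs,j) \of w$, where $qw = \id$. We also have $pw = \id$, because both $pt$ and $p\rho$ are identities. Joint reflection gives $w = \id$, so the codomains agree, $(ps)_!i = (qs)^*j$, and $s = \rho(qs,j) \of \lambda(i,ps)$.

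\emph{(b) $\Rightarrow$ (a).} Take the unique lifts as cleavages; the steps are as follows.
\begin{enumerate}[label=\textup{(\roman*)}]
\item Verticality holds by definition.
\item The splitting equations hold by uniqueness, because composites of vertical lifts are again vertical lifts, and identities are vertical lifts of identities.
\item Joint reflection: if $ps$ and $qs$ are identities, then (c) gives $s = \rho(\id,j) \of \lambda(i,\id) = \id$.
\item Commutation of lifts: apply (c) to $s \dfn \lambda(j,u) \of \rho(v,j)$. Since $ps = u$ and $qs = v$, this gives $s = \rho(v,u_!j) \of \lambda(v^*j,u)$, together with the required equality of objects.
\end{enumerate}

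The main work is opcartesianness of $\lambda(i,u)$; cartesianness of $\rho$ is dual. Let $\map sij$ satisfy $ps = w \of u$. By (c) and splitting, $s = \rho(qs,j) \of \lambda(u_!i,w) \of \lambda(i,u)$, so $t \dfn \rho(qs,j) \of \lambda(u_!i,w)$ is a factorisation with $pt = w$. For uniqueness, let $t'$ be any factorisation with $pt' = w$. Then $qt' = qs$, since $q\lambda = \id$. Decomposing $t'$ by (c) gives $t' = \rho(qs,j) \of \lambda(u_!i,w) = t$.

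The only delicate point is keeping track of which leg each lift is vertical for, so that joint reflection and the uniqueness of lifts apply at each step.
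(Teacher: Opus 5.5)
Your proof is correct and follows essentially the same route as the paper's. In (a)$\Rightarrow$(b) you factor through the (op)cartesian lifts and apply joint reflection of identities to the comparison morphism. In (b)$\Rightarrow$(a) you get opcartesianness from (c) plus uniqueness of vertical lifts, splitting from uniqueness, and commutation by applying (c) to $\lambda(j,u) \of \rho(v,j)$. You also spell out the check that (b) jointly reflects identity morphisms and the uniqueness of cleavages, which the paper leaves implicit.
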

	\begin{proof}
		(a) $\Rar$ (b). In \defref{discrete two-sided 1-fibration}(l) and (r) we can take $\lambda(i, u)$ and $\rho(v, j)$ to be the $q$"/vertical cartesian lift of $u$ and the $p$"/vertical opcartesian lift of $v$ respectively (see \thmref{split two-sided 2-fibration description}); that they are unique follows from their universal properties and the assumption that $J$ jointly reflects identity morphisms. To show \defref{discrete two-sided 1-fibration}(c) use that the universal properties of $\lambda(i, ps)$ and $\rho(qs, j)$ imply that $s$ factors as $s = \rho(qs, j) \of \hat s \of \lambda(i, ps)$; see the diagram on the right in the statement of \thmref{Yoneda axiom} below. Applying the same assumption to $\hat s$ we find that it is an identity morphism.
		
		(b) $\Rar$ (a). We claim that, for each $\map u{pi}a$ in $A$, the unique $q$"/vertical lift $\map{\lambda(i, u)}i{u_!i}$ is opcartesian. To show this consider morphisms $\map sij$ in $J$ and $\map va{pj}$ in $A$ making the diagram below right commute. To obtain the unique lift $\tilde v$ of $v$ first factorise $s = \rho(qs, j) \of \lambda(i, ps)$ as in the diagram below left, using \defref{discrete two-sided 1-fibration}(c). Consider the unique $q$"/vertical lift $\map{\lambda(u_!i, v)}{u_!i}{v_!(u_!i)}$ of $v$. Uniqueness of $q$"/vertical lifts means that $\lambda(u_!i, v) \of \lambda(i, u) = \lambda(i, ps)$; in particular $v_!(u_! i) = h$.
		\begin{displaymath}
			\begin{tikzpicture}
				\matrix(m)[math35, column sep={5.25em,between origins}, row sep={4.25em,between origins}, xshift=-3cm]{i & u_!i \\ & j \\};
					\draw				($(m-1-1)!0.5!(m-2-2)$) node[text height=1.5ex, text depth=0.25ex] (h) {$h$};
					\path[map]	(m-1-1) edge node[above] {$\lambda(i, u)$} (m-1-2)
															edge[bend right=80] node[left] {$s$} (m-2-2)
											(m-1-2) edge[dashed] node[right] {$\tilde v$} (m-2-2);
					\path[map, transform canvas={xshift=2pt, yshift=-1pt}]	(m-1-1) edge[dashed] node[above right, inner sep=-1pt] {$\lambda(i, ps)$} (h)
											(h)			edge[dashed] node[below left, inner sep=-1.25pt, yshift=-1pt] {$\rho(qs, j)$} (m-2-2);
				
				\matrix(m)[math35, column sep={5.25em,between origins}, row sep={4.25em,between origins}, xshift=3cm]{pi & a \\ & pj \\};
					\path[map]	(m-1-1) edge node[above] {$u$} (m-1-2)
															edge node[below left] {$ps$} (m-2-2)
											(m-1-2) edge node[right] {$v$} (m-2-2);
				\draw	(-0.5cm, 0) edge[mapsto] node[above] {$p$} (0.5cm, 0);
			\end{tikzpicture}
		\end{displaymath}
		
		The required lift of $v$ is $\tilde v \dfn \rho(qs, j) \of \lambda(u_!i, v)$. Indeed this satisfies $\tilde v \of \rho(i, u) = s$; to show that it is unique consider $\map w{u_!i}j$ satisfying $pw = v$ and $w \of \lambda(i, u) = s$ as well. Factorise $w = \rho(qw, j) \of \lambda(u_!i, v)$. Here $\rho(qw, j) = \rho(qs, j)$ by uniqueness of $p$"/vertical lifts since $q\rho(qw, j) = qw = q(w \of \lambda(i, u)) = qs$, and we conclude that $w = \tilde v$. This completes the proof of $\lambda(i, u)$ being opcartesian. That together the $q$"/vertical opcartesian lifts $\lambda(i, u)$ form a cleavage making $p$ into a split $1$"/opfibration follows from their uniqueness. That $q$ is a split $1$"/fibration is dual. That the ``reverse'' of condition (cl) of \thmref{split two-sided 2-fibration description} holds (i.e.\ with $p$ and $q$ as well as $\lambda$ and $\rho$ interchanged) follows immediately from applying \defref{discrete two-sided 1-fibration}(c) to $s \dfn \lambda(v, i) \of \rho(u, i)$.
	\end{proof}
	
	A span $J = \brks{A \xlar p J \xrar q B}$ of $2$"/functors is said to \emph{jointly reflect identity cells} if for any cell $\cell\sigma st$ in $J$, the equalities $p\sigma = \id_{ps}$ and $q\sigma = \id_{qt}$ imply that $s = t$ and $\sigma = \id_s$. Assume that $J$ is a split two"/sided $2$"/fibration (\defref{split two-sided 2-fibration}). By the previous proposition and \remref{relation to two-sided 1-fibration} the two conditions on $J$ below are equivalent.
	\begin{definition} \label{locally discrete split two-sided 2-fibration}
		A split two"/sided 2"/fibration $J = \brks{A \xlar p J \xrar q B}$ is called \emph{locally discrete} if the following equivalent conditions hold:
		\begin{enumerate}
			\item[\textup{(jr)}] $J$ jointly reflects identity cells, in the above sense;
			\item[\textup{(ld)}] $J$ is locally a reversed discrete two"/sided $1$"/fibration, that is each span of hom"/functors $A(pi, pj) \leftarrow J(i, j) \rightarrow B(qi, qj)$ is a reversed discrete two"/sided $1$"/fibration (\defref{discrete two-sided 1-fibration}).
		\end{enumerate}
		
		We denote by $\LdSpTwoTwoFib \subset \SpTwoTwoFib$ the locally full sub"/augmented virtual double category generated by locally discrete split two"/sided 2"/fibrations.
	\end{definition}
	
	Notice that a span $\hmap JAB$ of $2$"/functors is a locally discrete split two"/sided $2$"/fibration precisely if it satisfies conditions (f), (o), (v) and (cl) of \thmref{split two-sided 2-fibration description} as well as either (jr) or (ld) above. Condition (cc) of the theorem is then implied by condition (ld), by \remref{relation to two-sided 1-fibration} and \propref{equivalent description of discrete two-sided 1-fibration}.
	
	\begin{example} \label{lax comma 2-categories are locally discrete}
		Consider the split two"/sided $2$"/fibration $A \xlar{\pi_A} f \slash g \xrar{\pi_B} B$ associated to the lax comma $2$"/category $f \slash g$ of a cospan of $2$"/functors $A \xrar f C \xlar g B$ (\exref{lax comma 2-category is a split two-sided 2-fibration}). Clearly it jointly reflects identity cells; hence it is locally discrete. In particular the split two"/sided unit $2$"/fibration $A \xlar{\src} A^\2 \xrar{\tgt} A$ associated to a lax arrow $2$"/category $A^\2 \dfn \id_A \slash \id_A$ is locally discrete.
	\end{example}
	
	It is easily checked that the restriction $K(f, g)$ (\cororef{internal split two-sided fibrations form a unital virtual equipment}(a)) of a locally discrete split two"/sided $2$"/fibration $K$ is again locally discrete. Together with the previous example we conclude the following.
	\begin{proposition} \label{LdSpTwoTwoFib is a unital virtual equipment}
		$\LdSpTwoTwoFib$ is a unital virtual equipment (\defref{augmented virtual equipment}).
	\end{proposition}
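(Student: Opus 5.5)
Since $\LdSpTwoTwoFib$ is defined as the locally full sub"/augmented virtual double category of $\SpTwoTwoFib$ generated by the locally discrete split two"/sided $2$"/fibrations, it has the same objects and vertical morphisms as $\SpTwoTwoFib$, and all cells of $\SpTwoTwoFib$ between its horizontal morphisms. By \defref{augmented virtual equipment} we must show two things: every object has a horizontal unit, and every restriction $\ul K(f, g)$ exists in $\LdSpTwoTwoFib$. The plan is to take the cartesian cells supplied by \cororef{internal split two-sided fibrations form a unital virtual equipment} for $\SpTwoTwoFib$ and check that their horizontal sources are locally discrete. Cartesianness then transfers for free. If $\chi\colon J \Rar \ul K$ is cartesian in $\SpTwoTwoFib$ and $J$, together with $\ul K$ when unary, lies in $\LdSpTwoTwoFib$, then every cell $\phi$ of $\LdSpTwoTwoFib$ as in \defref{cartesian cells} is a cell of $\SpTwoTwoFib$. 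It therefore factors uniquely there as $\chi \of \phi'$, and by local fullness $\phi'$ is a cell of $\LdSpTwoTwoFib$.

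For the nullary restrictions, \cororef{internal split two-sided fibrations form a unital virtual equipment}(c) together with \exref{lax comma 2-category is a split two-sided 2-fibration} identifies $C(f, g)$ with the span of projections $A \xlar{\pi_A} f \slash g \xrar{\pi_B} B$. This span is locally discrete by \exref{lax comma 2-categories are locally discrete}. The horizontal unit $I_C$ is the special case $f = g = \id_C$.

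For the unary restrictions, let $\hmap KCD$ be locally discrete with legs $C \xlar p K \xrar q D$, and take $\map fAC$ and $\map gBD$. By \cororef{internal split two-sided fibrations form a unital virtual equipment}(a), $K(f, g)$ is the $2$"/pullback $A \times_C K \times_D B$, with its projections to $A$ and $B$. I would verify condition (jr) of \defref{locally discrete split two-sided 2-fibration}. A cell of $K(f, g)$ is a triple $(\alpha, \sigma, \beta)$ with $f\alpha = p\sigma$ and $q\sigma = g\beta$. Suppose its images under the two legs, namely $\alpha$ and $\beta$, are identity cells. Then $p\sigma = f\alpha$ and $q\sigma = g\beta$ are identity cells too. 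Joint reflection of identity cells for $K$ then gives that $\sigma$ is an identity between equal morphisms. Hence the triple is an identity cell.

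The main point to double"/check is that the structure making $K(f, g)$ a split two"/sided $2$"/fibration is the one inherited via the pullback. Local discreteness is a property of the underlying span only, via (jr), and does not depend on the chosen actions, so this is harmless.
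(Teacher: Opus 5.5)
Your proposal is correct and is essentially the paper's argument: the nullary restrictions $C(f,g)$, including the units $I_C$, are lax comma $2$-categories, which are locally discrete by the example just before the statement, and a unary restriction $K(f,g)$ of a locally discrete $K$ is again locally discrete, which you check directly via (jr) where the paper only says it is ``easily checked''. Your further observation that cartesian cells in $\SpTwoTwoFib$ remain cartesian in the locally full sub-structure makes explicit a step the paper leaves implicit.
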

	
	\subsubsection*{Locally discrete split op"/$2$"/fibrations}
	Recall that a $1$"/functor $\map qJB$ is a \emph{discrete $1$"/fibration} if every $\map vb{qj}$ in $B$ has a unique lift $\map{\rho(v, j)}{v_*j}j$ in $J$. This is equivalent to either the span \mbox{$1 \xlar ! J \xrar q B$} being a discrete two"/sided $1$"/fibration (\defref{discrete two-sided 1-fibration}) or $q$ being a split $1$"/fibration that reflects identity morphisms (use \propref{equivalent description of discrete two-sided 1-fibration}).
	
	The following result is essentially the first part of Proposition~3.17 of \cite{Lambert24}.
	\begin{proposition}[Lambert] \label{locally discrete split op-2-fibration description}
		For a $2$"/functor $\map qJB$ the following are equivalent:
		\begin{enumerate}[label=\textup{(\alph*)}]
			\item $1 \leftarrow J \xrar q B$ is a locally discrete split two"/sided $2$"/fibration;
			\item $\map{\und q}{\und J}{\und B}$ is a split $1$"/opfibration and $q$ is locally a discrete $1$"/fibration.
		\end{enumerate}
	\end{proposition}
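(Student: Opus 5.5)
Both conditions will be reduced to statements about $q$ alone, by specialising \thmref{split two-sided 2-fibration description} and \defref{locally discrete split two-sided 2-fibration} to $A = 1$.

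\textbf{Reduction for (a).} Take $p = \,!\colon J \to 1$. Every morphism and cell of $1$ is an identity, so $p$ carries the trivial cleavage $\lambda(\id, i) = \id_i$, $\lambda(s, \id) = \id_s$. This cleavage clearly makes $p$ a split co"/$2$"/fibration. The remaining conditions of \thmref{split two-sided 2-fibration description} then become automatic:
\begin{enumerate}[label=-]
\item both halves of condition (v) hold, since $q(\id) = \id$ and $p$ sends everything to an identity;
\item conditions (cl) and (cc) hold, since all $\lambda$"/lifts are identities.
\end{enumerate}
Hence (a) holds precisely if two things are true. First, $q$ admits a cleavage making it a split op"/$2$"/fibration. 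Second, condition (jr) holds, which here says: any cell $\sigma$ of $J$ with $q\sigma$ an identity is itself an identity. Equivalently, each hom"/functor $q_{i,j}\colon J(i,j) \to B(qi,qj)$ reflects identity morphisms.

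\textbf{(a) $\Rightarrow$ (b).} By \remref{relation to 1-fibration}, $\und q$ is a split $1$"/opfibration. The same remark shows that each $q_{i,j}$ is a split $1$"/fibration, via (cl), (cc) and the splitting equations (sc$\hc$) and (sc1). Each $q_{i,j}$ also reflects identities. Apply \propref{equivalent description of discrete two-sided 1-fibration} to the span $1 \leftarrow J(i,j) \to B(qi,qj)$, as in the paragraph preceding this proposition. It shows that each $q_{i,j}$ is a discrete $1$"/fibration, which is (b).

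\textbf{(b) $\Rightarrow$ (a).} Take the morphism lifts $\rho(i,u)$ from the split $1$"/opfibration $\und q$. These give (ml) and (om1), and the equations (sm$\of$) and (sm1). For a cell $\beta\colon u \Rightarrow qs$, let $\rho(\beta,s)\colon \beta^*s \Rightarrow s$ be the unique lift of $\beta$ with target $s$, which exists because $q_{i,j}$ is a discrete fibration.

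The cell"/level axioms then follow from uniqueness of such lifts.
\begin{enumerate}[label=-]
\item The property (cc) holds because lifts in a discrete fibration are cartesian.
\item Both sides of (sc$\hc$) and of (sc1) are lifts with the same target, so they coincide.
\item For (sc$\of$), the composite $\rho(\delta,t) \of \rho(\beta,s)$ is a cell in $J(i,k)$ lying over $\delta \of \beta$ with target $t \of s$. It therefore equals $\rho(\delta \of \beta, t \of s)$.
\item Reflection of identity cells holds because a discrete fibration reflects identities.
\end{enumerate}

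\textbf{The main obstacle: (om2).} Here we must derive a two"/dimensional property from one"/dimensional data. Suppose given $\sigma\colon s \Rightarrow t$ with $q\sigma = \beta \of u$ for some $\beta\colon v \Rightarrow w$. By (om1) write $s = \tilde v \of \rho(i,u)$ and $t = \tilde w \of \rho(i,u)$. The plan is as follows.
\begin{enumerate}[label=-]
\item Let $\tilde\beta \dfn \rho(\beta, \tilde w)\colon \beta^*\tilde w \Rightarrow \tilde w$.
\item The whiskering $\tilde\beta \of \rho(i,u)$ lies over $\beta \of u = q\sigma$ and has target $t$. By uniqueness of lifts in the discrete fibration $q_{i,j}$, it equals $\sigma$.
\item Comparing sources gives $\beta^*\tilde w \of \rho(i,u) = s$. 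Also $q(\beta^*\tilde w) = v$. So uniqueness in (om1) forces $\beta^*\tilde w = \tilde v$.
\item Uniqueness of $\tilde\beta$ again follows from discreteness of $q_{i,j}$, since any other candidate is a lift of $\beta$ with target $\tilde w$.
\end{enumerate}
This completes the verification that $q$ is a split op"/$2$"/fibration reflecting identity cells, which is (a).
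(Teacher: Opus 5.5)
Your proposal is correct and follows the paper's route: (a)$\Rightarrow$(b) via \remref{relation to 1-fibration} together with the characterisation of local discreteness, and (b)$\Rightarrow$(a) by extending the cleavage of $\und q$ to cells and then noting that identity cells are reflected. The paper only calls that extension straightforward and cites Lambert, whereas you carry it out in full; your derivation of (om2) from uniqueness of lifts in the discrete fibrations $q_{i,j}$ is valid because every cell is vertical over $J \to 1$, which is exactly what is unavailable in the two"/sided situation of the paper's later remark.
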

	\begin{proof}
		(a) $\Rar$ (b) follows from \remref{relation to two-sided 1-fibration} and \defref{locally discrete split two-sided 2-fibration}. (b) $\Rar$ (a). That the $1$"/dimensional cleavage making $\und q$ into a split $1$"/opfibration extends to a $2$"/dimensional cleavage making $q$ into a split op"/$2$"/fibration (\defref{split op-2-fibration}) is straightforward; see also the proof of Proposition~3.17 of \cite{Lambert24}. Hence $1 \leftarrow J \xrar q B$ is a split two"/sided $2$"/fibration by \cororef{split 2-opfibrations as split two-sided 2-fibrations}, which is locally discrete because it jointly reflects identity cells. 
	\end{proof}
	
	The following are variants on Definition~2.4 of \cite{Lambert24}, which introduces the notion of ``split discrete $2$"/fibration''.
	\begin{definition}[Lambert] \label{locally discrete split op-2-fibration}
		A $2$"/functor $\map qJB$ is a \emph{locally discrete split op"/$2$"/fibration} if it satisfies either of the equivalent conditions of \propref{locally discrete split op-2-fibration description} above. Coop"/dually a $2$"/functor $\map pJA$ is a \emph{locally discrete split co"/$2$"/fibration} if $A \xlar p J \rightarrow 1$ is a locally discrete split two"/sided $2$"/fibration or, equivalently, if $\und p$ is a split $1$"/fibration and $p$ is locally a discrete $1$"/opfibration. A \emph{locally discrete split $2$"/fibration} is a $2$"/functor $f$ such that $\op f$ is a locally discrete split op"/$2$"/fibration.
	\end{definition}
	
	\begin{remark}
		Consider a span $J = \brks{A \xlar p J \xrar q B}$ of $2$"/functors that is locally a reversed discrete two"/sided $1$"/fibration (\defref{discrete two-sided 1-fibration}) and whose underlying span of $1$"/functors is a split two"/sided $1$"/fibration (\remref{relation to two-sided 1-fibration}). In light of \propref{locally discrete split op-2-fibration description} one might hope to be able to show that $J$ is a split two"/sided $2$"/fibration (and hence a locally discrete such). That does not seem to be possible in general; in particular it is not clear to the author how to obtain the $2$"/dimensional universal property (om2) of \defref{split op-2-fibration}, that is required of the cleavage of $q$.
	\end{remark}
	
	\begin{remark}
		Consider the $2$"/category $\TwoCat$ of small $2$"/categories, $2$"/functors and $2$"/natural transformations. Lemma~3.30 of \cite{Lambert24} shows that the notion of `discrete fibration $\map fEB$ internal to $\TwoCat$', in the sense of e.g.\ Section~2.2 of \cite{Weber07}, is more restrictive than the notion of locally discrete split $2$"/fibration above. In particular the former notion implies that the underlying $1$"/functor $\und f$ is a discrete $1$"/fibration, while the latter notion's weaker requirement is that $\und f$ be a split $1$"/fibration.
	\end{remark}
	
	\subsubsection*{Multimorphisms of locally discrete split two"/sided $2$"/fibrations}
	The remainder of this section is largely devoted to describing the cells of $\LdSpTwoTwoFib$. The following is \thmref{unary cells of split two-sided 2-fibrations} restricted to cells $\ul J \Rar K$ with a locally discrete target $K$; here too we will use the notation $\prod_{\dash} \ul J$ of \propref{profunctors form an augmented virtual double category}.
	\begin{proposition} \label{cells of locally discrete split two-sided 2-fibrations}
		Consider an iterated span \mbox{$\ul J = (A_0 \xlar{p_1} J_1 \xrar{q_1} \dotsb \xlar{p_n} J_n \xrar{q_n} A_n)$} of (locally discrete) split two"/sided $2$"/fibrations (\defref{split two-sided 2-fibration}), a locally discrete split two"/sided $2$"/fibration $C \xlar p K \xrar q D$ and $2$"/functors $\map f{A_0}C$ and $\map g{A_n}D$, together making up the boundary of the unary cell below.
		
		Any $2$"/functor \mbox{$\map\phi{J_1 \times_{A_1} \dotsb \times_{A_{n'}} J_n}K$} over $f$ and $g$ satisfies the following conditions:
		\begin{enumerate}
			\item[\textup{(poc)}] $\phi\bigpars{\lambda_{p_1}(s_1, \alpha), \id_{s_2}, \dotsc, \id_{s_n}} = \lambda_p\bigpars{\phi(s_1, \dotsc, s_n), f\alpha}$ for any morphism $(s_1, \dotsc, s_n)$ of $\prod_{\dash} \ul J$ and $\cell \alpha{p_1s_1}u$ in $A_0$;
			\item[\textup{(iec)}] for any $1 \leq m < n$, morphisms $(s_1, \dotsc, s_m)$ in $\prod_{\dash} (J_1, \dotsc, J_m)$ and $(s_{m+1}, \dotsc, s_n)$ in $\prod_{\dash} (J_{m+1}, \dotsc, J_n)$, as well as $\cell \alpha{p_{m+1}s_{m+1}}{q_ms_m}$ in $A_m$, the cell
			\begin{displaymath}
				\phi\bigpars{\id_{s_1}, \dotsc, \id_{s_{m'}}, \rho_{q_m}(\alpha, s_m), \lambda_{p_{m+1}}(s_{m+1}, \alpha), \id_{s_{m+2}}, \dotsc, \id_{s_n}}
			\end{displaymath}
			is an identity cell;
			\item[\textup{(pcc)}] $\phi\bigpars{\id_{s_1}, \dotsc, \id_{s_{n'}}, \rho_{q_n}(\beta, s_n)} = \rho_q\bigpars{g\beta, \phi(s_1, \dotsc, s_n)}$ for any morphism $(s_1, \dotsc, s_n)$ of $\prod_{\dash} \ul J$ and $\cell \beta u{q_ns_n}$ in $A_n$.
		\end{enumerate}
		
		Moreoever unary cells
		\begin{displaymath}
			\begin{tikzpicture}
				\matrix(m)[math35]{A_0 & A_1 & A_{n'} & A_n \\ C & & & D \\};
				\path[map]	(m-1-1) edge[barred] node[above] {$J_1$} (m-1-2)
														edge node[left] {$f$} (m-2-1)
										(m-1-3) edge[barred] node[above] {$J_n$} (m-1-4)
										(m-1-4) edge node[right] {$g$} (m-2-4)
										(m-2-1) edge[barred] node[below] {$K$} (m-2-4);
				\path[transform canvas={xshift=1.75em}]	(m-1-2) edge[cell] node[right] {$\phi$} (m-2-2);
				\draw				($(m-1-2)!0.5!(m-1-3)$) node {$\dotsb$};
			\end{tikzpicture}
		\end{displaymath}
		of $\SpTwoTwoFib$ (respectively $\LdSpTwoTwoFib$) correspond precisely to those $2$"/functors \mbox{$\map\phi{J_1 \times_{A_1} \dotsb \times_{A_{n'}} J_n}K$} that satisfy the following additional conditions:
		\begin{enumerate}
			\item[\textup{(pcm)}] $\phi\bigpars{\lambda_{p_1}(u, i_1), \id_{i_2}, \dotsc, \id_{i_n}} = \lambda_p\bigpars{fu, \phi(i_1, \dotsc, i_n)}$ for any object $(i_1, \dotsc, i_n)$ of $\prod_{\dash} \ul J$ and $\map ua{p_1i_1}$ in $A_0$;
			\item[\textup{(iem)}] for any $1 \leq m < n$, objects $(i_1, \dotsc, i_m)$ in $\prod_{\dash} (J_1, \dotsc, J_m)$ and $(i_{m+1}, \dotsc, i_n)$ in $\prod_{\dash} (J_{m+1}, \dotsc, J_n)$, as well as $\map u{q_mi_m}{p_{m+1}i_{m+1}}$ in $A_m$, the morphism
			\begin{displaymath}
				\phi\bigpars{\id_{i_1}, \dotsc, \id_{i_{m'}}, \rho_{q_m}(i_m, u), \lambda_{p_{m+1}}(u, i_{m+1}), \id_{i_{m+2}}, \dotsc, \id_{i_n}}
			\end{displaymath}
			is an identity morphism;
			\item[\textup{(pom)}] $\phi\bigpars{\id_{i_1}, \dotsc, \id_{i_{n'}}, \rho_{q_n}(i_n, v)} = \rho_q\bigpars{\phi(i_1, \dotsc, i_n), gv}$ for any object $(i_1, \dotsc, i_n)$ of $\prod_{\dash} \ul J$ and $\map v{q_ni_n}b$ in $A_n$.
		\end{enumerate}
	\end{proposition}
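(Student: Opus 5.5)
The plan has two parts. First, I show that the three cell-level conditions (poc), (iec) and (pcc) hold automatically for every $2$"/functor $\phi$ over $f$ and $g$, because $K$ is locally discrete. Then the second assertion follows from \thmref{unary cells of split two-sided 2-fibrations}: once (poc), (iec) and (pcc) are automatic, the six conditions there reduce to (pcm), (iem) and (pom). For $\LdSpTwoTwoFib$ nothing more is needed, since it is locally full in $\SpTwoTwoFib$.

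The key tool is joint reflection of identity cells (\defref{locally discrete split two-sided 2-fibration}(jr)), together with the uniqueness of vertical lifts in the local reversed discrete two-sided $1$"/fibrations (condition (ld) and \propref{equivalent description of discrete two-sided 1-fibration}). For (iec), put $\sigma \dfn \phi\bigpars{\id_{s_1}, \dotsc, \rho_{q_m}(\alpha, s_m), \lambda_{p_{m+1}}(s_{m+1}, \alpha), \dotsc, \id_{s_n}}$. Its image under $p$ is $f$ applied to the first coordinate's image under $p_1$. If $m > 1$ this coordinate is an identity. If $m = 1$ it is $p_1\rho_{q_1}(\alpha, s_1) = \id$ by (v) of \thmref{split two-sided 2-fibration description}. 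Either way $p\sigma$ is an identity, and symmetrically $q\sigma = g(q_n(\dotsb))$ is an identity by (v). So (jr) gives $\sigma = \id$. One should first check that the source and target of $\sigma$ agree, or that (jr) as stated already forces $s = t$; it does, since (jr) concludes $s = t$.

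For (poc), set $\sigma_1 \dfn \phi\bigpars{\lambda_{p_1}(s_1, \alpha), \id, \dotsc}$. It is a cell in $K$ with source $\phi(s_1, \dotsc, s_n)$. By (v) its image under $q$ is $g(\id) = \id$, and its image under $p$ is $f\alpha$. So $\sigma_1$ is a $q$"/vertical lift of $f\alpha$ starting at $\phi(s_1,\dotsc,s_n)$. By (ld), the hom-span $C(\dash,\dash) \leftarrow K(k,k') \rightarrow D(\dash,\dash)$ is a reversed discrete two-sided $1$"/fibration, whose lifts are unique by \defref{discrete two-sided 1-fibration}(l) together with \propref{equivalent description of discrete two-sided 1-fibration}. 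The canonical $q$"/vertical lift $\lambda_p(\phi(s_1,\dotsc,s_n), f\alpha)$ is such a lift, so $\sigma_1$ must equal it. (pcc) is dual, using uniqueness of $p$"/vertical lifts in (r).

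The main obstacle is bookkeeping rather than substance. One must confirm that the cleavage cells of $K$ are exactly the unique vertical lifts in the local discrete fibrations; the ``in either case the cleavages are unique'' clause of \propref{equivalent description of discrete two-sided 1-fibration} gives this. One must also track the index cases $m = 1$ and $m = n-1$ when applying (v) to the outer coordinates.
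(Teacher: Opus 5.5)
Your proposal is correct and follows the paper's own argument: (poc) and (pcc) come from uniqueness of the $q$-vertical (resp.\ $p$-vertical) lifts in the local reversed discrete two-sided $1$-fibrations of $K$, (iec) comes from $p$ and $q$ jointly reflecting identity cells, and the characterisation of unary cells then reduces to \thmref{unary cells of split two-sided 2-fibrations}. The only imprecision is cosmetic: in (poc) the $q$-image of $\phi\bigpars{\lambda_{p_1}(s_1,\alpha),\id_{s_2},\dotsc,\id_{s_n}}$ is trivially an identity when $n>1$, and condition (v) is needed only when $n=1$.
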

	\begin{proof}
		The six conditions above are those of \thmref{unary cells of split two-sided 2-fibrations}. Assuming that $K$ is locally discrete we will prove that the three conditions (poc), (iec) and (pcc) are satisfied by any $2$"/functor \mbox{$\map\phi{J_1 \times_{A_1} \dotsb \times_{A_{n'}} J_n}K$}.
		
		Using the fact that $\phi$ is a morphism of spans, notice that the left"/hand side of the equality asserted by (poc) is a $q$"/vertical lift of $\cell{f\alpha}{\brks{(p \of \phi)(s_1, \dotsc, s_n) = (f \of p_1)(s_1)}}{fu}$ in $K$. Since the right"/hand side is so too by definition, the equality follows by uniqueness of such lifts. Proving (pcc) is similar.
		
		Consider $s_1 \in J_1$, $(s_2, \dotsc, s_n) \in \prod_{\dash}(J_2, \dotsc, J_n)$ and $\cell\alpha{p_2s_2}{q_1s_1}$ in $A_1$. The case $m = 1$ of condition (iec) asserts that $C \dfn \phi\bigpars{\rho_{q_1}(\alpha, s_1), \lambda_{p_2}(s_2, \alpha), \id_{s_3}, \dotsc, \id_{s_n}}$ is an identity cell. Using that $\phi$ is a morphism of spans and that $\rho_{q_1}(\alpha, s_1)$ is $p_1$"/vertical we find that the $p$ and $q$ images of $C$ are $\id_{(f \of p_1)(s_1)}$ and $\id_{(g \of q_n)(s_n)}$ respectively, so that the assertion follows from the fact that $p$ and $q$ jointly reflect identity cells. Proving the other cases of (iec) is similar.
	\end{proof}
	
	\subsubsection*{Marked lax natural transformations}
	Using the fact that nullary cells $\ul J \Rar C$ in $\SpTwoTwoFib$ correspond bijectively to unary cells $\ul J \Rar C^\2$ (\cororef{correspondences induced by horizontal units}), with $C^\2$ the lax arrow $2$"/category that forms the horizontal unit of $C$ (\exref{lax comma 2-categories are locally discrete}), the previous proposition implies the following description of such nullary cells too. The proof of the following corollary easily follows from the result above, by using the universal properties of $C^\2$ as a comma object in $\SpTwoTwoFib$ and its description as a split two"/sided $2$"/fibration (\exsref{lax comma 2-category}{lax comma 2-category is a split two-sided 2-fibration}).
	\begin{corollary} \label{nullary cells description}
		Consider a path $\hmap{(J_1, \dotsc, J_n)}{A_0}{A_n}$ of $n \geq 1$ (locally discrete) split two"/sided $2$"/fibrations as well as $2$"/functors $\map f{A_0}C$ and $\map g{A_n}C$, making up the boundary of the nullary cell below left.
		\begin{displaymath}
			\begin{tikzpicture}[baseline]
				\matrix(m)[math35, column sep={1.75em,between origins}]
					{A_0 & & A_1 & & A_{n'} & & A_n \\ & & & C & & & \\};
				\path[map]	(m-1-1) edge[barred] node[above] {$J_1$} (m-1-3)
														edge node[below left] {$f$} (m-2-4)
										(m-1-5) edge[barred] node[above] {$J_n$} (m-1-7)
										(m-1-7) edge node[below right] {$g$} (m-2-4);
				\path				(m-1-4) edge[cell] node[right] {$\phi$} (m-2-4);
				\draw				(m-1-4) node {$\dotsb$};
			\end{tikzpicture} \qquad\qquad\qquad \begin{tikzpicture}[baseline]
						\matrix(m)[math35]
							{& J_1 \times_{A_1} \dotsb \times_{A_{n'}} J_n & \\ A_0 & \phantom{X} & A_n \\ & C & \\};
						\path[map]	(m-1-2) edge node[left] {$p_1 \of \pi_{J_1}$} (m-2-1)
																edge node[right] {$q_n \of \pi_{J_n}$} (m-2-3)
												(m-2-1) edge node[below left] {$f$} (m-3-2)
												(m-2-3)	edge node[below right] {$g$} (m-3-2);
						\path[transform canvas={xshift=1.75em}]	(m-2-1) edge[cell] node[above] {$\phi$} (m-2-2);
					\end{tikzpicture}
		\end{displaymath}
		
		Any lax natural transformation (\exref{lax natural transformation}) above right satisfies the following conditions:
				\begin{enumerate}
			\item[\textup{(poc)}] $\phi_{\pars{\alpha_! s_1, s_2, \dotsc, s_n}} = \phi_{(s_1, \dotsc,s_n)} \hc (\phi_{(j_1, \dotsc, j_n)} \of f\alpha)$ for any morphism $(s_1, \dotsc, s_n)$ with target $(j_1, \dotsc, j_n)$ in $\prod_{\dash} \ul J$ and $\cell \alpha{p_1s_1}u$ in $A_0$;
			\item[\textup{(iec)}] for any $1 \leq m < n$, morphisms $(s_1, \dotsc, s_m)$ in $\prod_{\dash} (J_1, \dotsc, J_m)$ and $(s_{m+1}, \dotsc, s_n)$ in $\prod_{\dash} (J_{m+1}, \dotsc, J_n)$, as well as $\cell \alpha{p_{m+1}s_{m+1}}{q_ms_m}$ in $A_m$, the lax naturality cells $\phi_{(s_1, \dotsc, s_{m'}, \alpha^*s_m, s_{m+1}, s_{m+2}, \dotsc, s_n)}$ and $\phi_{(s_1, \dotsc, s_{m'}, s_m, \alpha_!s_{m+1}, s_{m+2}, \dotsc, s_n)}$ are equal;
			\item[\textup{(pcc)}] $\phi_{\pars{s_1, \dotsc, s_{n'}, \alpha^*s_n}} = (g\alpha \of \phi_{(i_1, \dotsc, i_n)}) \hc \phi_{(s_1, \dotsc, s_n)}$ for any morphism $(s_1, \dotsc, s_n)$ with source $(i_1, \dotsc, i_n)$ in $\prod_{\dash} \ul J$ and $\cell \alpha u{q_ns_n}$ in $A_n$.
		\end{enumerate}

		Moreover nullary cells in $\SpTwoTwoFib$ (respectively $\LdSpTwoTwoFib$) of the form above left correspond precisely to lax natural transformations above right satisfying the following additional conditions:
		\begin{enumerate}
			\item[\textup{(cm)}] $\phi_{(u^*i_1, i_2, \dotsc, i_n)} = \phi_{(i_1, \dotsc, i_n)} \of fu$ for any object $(i_1, \dotsc, i_n)$ of $\prod_{\dash} \ul J$ and morphism $\map ua{p_1i_1}$ in $A_0$, and the lax naturality cell $\phi_{\pars{\lambda_{p_1}(u, i_1), \id_{i_2}, \dotsc, \id_{i_n}}}$ is an identity cell;
			\item[\textup{(im)}] for any $1 \leq m < n$, objects $(i_1, \dotsc, i_m)$ in $\prod_{\dash} (J_1, \dotsc, J_m)$ and $(i_{m+1}, \dotsc, i_n)$ in $\prod_{\dash} (J_{m+1}, \dotsc, J_n)$, as well as $\map u{q_mi_m}{p_{m+1}i_{m+1}}$ in $A_m$, the components $\phi_{(i_1, \dotsc, i_{m'}, i_m, u^*i_{m+1}, i_{m+2}, \dotsc, i_n)}$ and $\phi_{(i_1, \dotsc, i_{m'}, u_!i_m, i_{m+1}, i_{m+2}, \dotsc, i_n)}$ are equal and the lax naturality cell $\phi_{\pars{\id_{i_1}, \dotsc, \id_{i_{m'}}, \rho_{q_m}(i_m, u), \lambda_{p_{m+1}}(u, i_{m+1}), \id_{i_{m+2}}, \dotsc, \id_{i_n}}}$ is an identity cell;
			\item[\textup{(om)}] $\phi_{(i_1, \dotsc, i_{n'}, u_!i_n)} = gu \of \phi_{(i_1, \dotsc, i_n)}$ for any object $(i_1, \dotsc, i_n)$ of $\prod_{\dash} \ul J$ and morphism $\map u{q_ni_n}a$ in $A_n$, and the lax naturality cell $\phi_{\pars{\id_{i_1}, \dotsc, \id_{i_{n'}}, \rho_{q_n}(i_n, u)}}$ is an identity cell.
		\end{enumerate}
	\end{corollary}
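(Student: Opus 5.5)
The plan is to reduce the corollary to \propref{cells of locally discrete split two-sided 2-fibrations} applied with target $K \dfn C^\2$, the horizontal unit of $C$. By \cororef{correspondences induced by horizontal units}(a), nullary cells $\ul J \Rar C$ correspond bijectively to unary cells $\ul J \Rar C^\2$ with the same vertical morphisms $f$ and $g$. By \exref{lax comma 2-categories are locally discrete}, $C^\2$ is locally discrete, so such unary cells are exactly the $2$"/functors $\map{\hat\phi}{J_1 \times_{A_1} \dotsb \times_{A_{n'}} J_n}{C^\2}$ over $f$ and $g$ satisfying (pcm), (iem) and (pom). The conditions (poc), (iec) and (pcc) hold automatically.

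Next I translate $2$"/functors into lax natural transformations. A $2$"/functor $\hat\phi$ with $\src \of \hat\phi = f \of p_1 \of \pi_{J_1}$ and $\tgt \of \hat\phi = g \of q_n \of \pi_{J_n}$ is, by the $1$"/dimensional universal property of $C^\2 = \id_C \slash \id_C$ in $\und{\TwoCatLax}$ (\defref{comma object}, \exref{lax comma 2-category}), the same thing as a lax natural transformation $\phi$ as in the diagram on the right. Explicitly, $\hat\phi(i) = (f p_1 i_1, \phi_i, g q_n i_n)$, $\hat\phi(s) = (fp_1s_1, \phi_s, gq_ns_n)$, and cells are sent to pairs of image cells. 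I check that this cell-level assignment agrees with \cororef{correspondences induced by horizontal units}, that is, that the cartesian cell $C^\2 \Rar C$ is the identity of $C^\2$ as in \cororef{internal split two-sided fibrations form a unital virtual equipment}(b).

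It then remains to rewrite each condition in terms of $\phi$, using the cleavage of $C^\2$ from \exref{lax comma 2-category is split op-2-fibered} and its coop"/dual. For the projection $\tgt$, the opcartesian lift of $w$ at $(a,x,b)$ is $(\id_a, \id, w)$ with target $(a, w\of x, b')$. For $\src$, the cartesian lift of $u$ at $(a,x,b)$ is $(u,\id,\id_b)$ with source $(a', x\of u, b)$. The cartesian lift of a cell $\beta$ into $t$ precomposes $\zeta$ with $\beta \of x_0$, and the opcartesian lift of a cell out of $s$ postcomposes with $x_1 \of \alpha$.
\begin{itemize}
	\item (pom) says that $\hat\phi$ sends $\rho_{q_n}(i_n,u)$ to $(\id, \id, gu)$. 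Comparing targets gives $\phi_{(\dots,u_!i_n)} = gu \of \phi_{i}$, and comparing middle components says the naturality cell is an identity. This is (om).
	\item (pcm) gives (cm) in the same way.
	\item (iem) says $\hat\phi$ sends $(\dots,\rho_{q_m}(i_m,u),\lambda_{p_{m+1}}(u,i_{m+1}),\dots)$ to an identity. Its source and target give equality of the two components, and its middle component gives the identity naturality cell. This is (im).
	\item The automatic conditions (poc), (pcc) and (iec) translate into the stated identities for the naturality cells $\phi_s$. Here I use that a cell's image in $C^\2$ determines the naturality cell of the transformed morphism, e.g.\ $\hat\phi(\alpha_!s_1,\dots)$ has middle component $\phi_s \hc (\phi_j \of f\alpha)$.
\end{itemize}

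The main obstacle is bookkeeping in the first-stated conditions (poc), (iec) and (pcc). I have to identify precisely the middle component of the (op)cartesian cell lifts in $C^\2$ and then read off the equality of the targets of those lifts. That targets agree is an equality of morphisms of $C^\2$, so it follows from $\hat\phi$ preserving the lift, as established in \propref{cells of locally discrete split two-sided 2-fibrations}. I would do (poc) carefully and obtain (pcc) by coop"/duality and (iec) by joint reflection of identities.
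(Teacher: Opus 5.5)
Your proposal is correct and takes essentially the paper's approach. The paper's sketch likewise passes from nullary cells $\ul J \Rar C$ to unary cells $\ul J \Rar C^\2$ via the horizontal unit, applies \propref{cells of locally discrete split two-sided 2-fibrations} to the locally discrete target $C^\2$, and translates each condition through the $1$-dimensional universal property and explicit cleavage of the lax arrow $2$-category. Your explicit reading-off of components, such as the middle component $\phi_{\ul s} \hc (\phi_{\ul j} \of f\alpha)$ for (poc), fills in exactly what the paper leaves to the reader.
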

	
	\begin{definition} \label{marked lax natural transformation}
		Let $\hmap{(J_1, \dotsc, J_n)}{A_0}{A_n}$ be a path of $n \geq 1$ split two"/sided $2$"/fibrations and $\map f{A_0}C$ and $\map g{A_n}C$ be $2$"/functors. A lax natural transformation $\cell\phi{f \of p_1 \of \pi_{J_1}}{g \of q_n \of \pi_{J_n}}$ above right is called \emph{marked} if it satisfies the conditions \textup{(cm)}, \textup{(im)} and \textup{(om)} above, that is it forms a nullary cell $\ul J \Rar C$ above left in $\SpTwoTwoFib$.
	\end{definition}
	
	\begin{remark} \label{cartesian-marked}
		Here we use the terminology of \cite{Mesiti24} as follows. Given a $2$"/functor $\map gB\Cat$ into the $2$"/category $\Cat$ of small $1$"/categories, $1$"/functors and $1$"/natural transformations, consider its `$2$"/category of elements' split op"/$2$"/fibration $g^\yon$ as constructed in \conref{2-category of elements} below (take $A = 1$ the terminal $2$"/category). Given parallel $2$"/functors $h$, $\map k{g^\yon}C$, a lax natural transformation $\cell\phi hk$ is called ``cartesian"/marked'' in Definition~2.7 of op.\ cit.\ whenever its lax naturality cells $\phi_s$ are identity cells at each opcartesian morphism $s = \rho((*, i, b_0), v)$ in $g^\yon$ (see \conref{2-category of elements}). In Section I,2.4 of \cite{Gray74} a more general, unnamed notion of ``marked lax natural transformation'' is considered which subsumes Mesiti's notion.
	\end{remark}
	
	The following is \propref{(0,1)-ary cells of split two-sided 2-fibrations} restricted to $(0,1)$"/ary cells $A \Rar K$ with a locally discrete target $K$.
	\begin{corollary}
		Consider a locally discrete split two"/sided $2$"/fibration $C \xlar p K \xrar q D$ as well as $2$"/functors $\map fAC$ and $\map gAD$. Any $2$"/functor $\map\phi AK$ over $f$ and $g$ satisfies condition \textup{(cc)} of \propref{(0,1)-ary cells of split two-sided 2-fibrations}. The $(0,1)$"/ary cells
		\begin{displaymath}
			\begin{tikzpicture}
				\matrix(m)[math35, column sep={1.75em,between origins}]{& A & \\ C & & D \\};
				\path[map]	(m-1-2) edge[transform canvas={xshift=-1pt}] node[left] {$f$} (m-2-1)
														edge[transform canvas={xshift=1pt}] node[right] {$g$} (m-2-3)
										(m-2-1) edge[barred] node[below] {$K$} (m-2-3);
				\path				(m-1-2) edge[cell, transform canvas={yshift=-0.25em}] node[right, inner sep=2.5pt] {$\phi$} (m-2-2);
			\end{tikzpicture}
		\end{displaymath}
		of $\SpTwoTwoFib$ and $\LdSpTwoTwoFib$ are precisely the $2$"/functors $\map\phi AK$ over $f$ and $g$ that satisfy condition \textup{(cm)} of \propref{(0,1)-ary cells of split two-sided 2-fibrations}.
	\end{corollary}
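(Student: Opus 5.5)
The plan is to mirror the proof of \propref{cells of locally discrete split two-sided 2-fibrations}, but now for the $(0,1)$"/ary cells of \propref{(0,1)-ary cells of split two-sided 2-fibrations}. That proposition already says the $(0,1)$"/ary cells $A \Rar K$ of $\SpTwoTwoFib$ are exactly the $2$"/functors $\map\phi AK$ over $f$ and $g$ satisfying both (cm) and (cc). Since $\LdSpTwoTwoFib$ is a locally full sub"/augmented virtual double category (\defref{locally discrete split two-sided 2-fibration}), its $(0,1)$"/ary cells into the locally discrete $K$ are the same as those of $\SpTwoTwoFib$. So it suffices to show that, when $K$ is locally discrete, every $2$"/functor $\phi$ over $f$ and $g$ automatically satisfies (cc).

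To prove this, fix a cell $\cell\alpha uv$ in $A$ with $\map{u, v}ab$. There are two equalities to check.

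For the equality of objects, I would compute the $p$- and $q$"/images of the relevant lifts.
\begin{itemize}[label=-]
	\item Since $p\phi = f$ and $\lambda(\phi u, f\alpha)$ is $q$"/vertical by condition (v) of \thmref{split two-sided 2-fibration description}, its target $(f\alpha)_!(\phi u)$ is a morphism $\phi a \to \phi b$ in $K$ with $p$"/image $fv$ and $q$"/image $q\phi u = gu$.
	\item Dually, $(g\alpha)^*(\phi v)$ is a morphism $\phi a \to \phi b$ with $p$"/image $p\phi v = fv$ and $q$"/image $gu$.
\end{itemize}
Now I would use the local description (ld) of \defref{locally discrete split two-sided 2-fibration}: the hom"/span $C(p\phi a, p\phi b) \leftarrow K(\phi a, \phi b) \rightarrow D(q\phi a, q\phi b)$ is a reversed discrete two"/sided $1$"/fibration. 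Apply \defref{discrete two-sided 1-fibration}(c) to the $2$"/cell $\cell{\phi\alpha}{\phi u}{\phi v}$ of this hom"/span; it factors as the unique $q$"/vertical lift of $p\phi\alpha = f\alpha$ followed by the unique $p$"/vertical lift of $q\phi\alpha = g\alpha$.

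By uniqueness of vertical lifts in (ld), these two lifts must be the cleavage cells. That is, the $q$"/vertical lift of $f\alpha$ at $\phi u$ is $\lambda(\phi u, f\alpha)$, and the $p$"/vertical lift of $g\alpha$ at $\phi v$ is $\rho(g\alpha, \phi v)$. Both are $q$- respectively $p$"/vertical by (v), and under \propref{equivalent description of discrete two-sided 1-fibration} the cleavages are unique. The factorisation then gives both $(f\alpha)_!(\phi u) = (g\alpha)^*(\phi v)$ and $\rho(g\alpha, \phi v) \hc \lambda(\phi u, f\alpha) = \phi\alpha$, which is exactly (cc).

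The only delicate point is identifying the cleavage cells of the $2$"/fibration structure with the unique vertical lifts of the local discrete fibration. This follows from (v) together with uniqueness in \defref{discrete two-sided 1-fibration}(l),(r), in the same way it was used for (poc) in the proof of \propref{cells of locally discrete split two-sided 2-fibrations}. With (cc) automatic, the characterisation of the $(0,1)$"/ary cells in both $\SpTwoTwoFib$ and $\LdSpTwoTwoFib$ reduces to condition (cm).
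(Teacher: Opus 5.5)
Your proposal is correct and follows the paper's proof. The paper likewise applies \defref{discrete two-sided 1-fibration}(c) to $\phi\alpha$, viewed as a morphism in the reversed discrete two-sided $1$-fibration $C(fa, fb) \leftarrow K(\phi a, \phi b) \rightarrow D(ga, gb)$ supplied by (ld). Your identification of the cleavage cells with the unique vertical lifts, and your local-fullness remark, only make explicit what the paper leaves implicit.

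One small notational slip: $\hc$ is written in diagrammatic order in this paper, so the factorisation should read $\lambda(\phi u, f\alpha) \hc \rho(g\alpha, \phi v) = \phi\alpha$.
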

	\begin{proof}
		Let $\cell\alpha uv$ be a cell of $A$ between morphisms $u$ and $\map vab$. That condition (cc) of \propref{(0,1)-ary cells of split two-sided 2-fibrations} holds for $\alpha$ follows from applying \defref{discrete two-sided 1-fibration}(c) to the image \mbox{$\cell{\phi\alpha}{\phi u}{\phi v}$} that is contained in the apex of the reversed discrete two"/sided $1$"/fibration of hom"/categories $C(fa, fb) \xlar p K(\phi a, \phi b) \xrar q D(ga, gb)$ (\defref{locally discrete split two-sided 2-fibration}).
	\end{proof}
	
	To complete the description of cells in $\SpTwoTwoFib$ and $\LdSpTwoTwoFib$ we state the following corollary, which follows directly from \defsref{split two-sided 2-fibration}{internal split two-sided fibration}, \thmref{embedding of strict cells} and the fact that strict lax natural transformations are precisely the $2$"/natural ones (\exref{lax natural transformation}).
	\begin{corollary} \label{vertical cells description}
		Vertical cells in $\SpTwoTwoFib$ (\defref{split two-sided 2-fibration}) and $\LdSpTwoTwoFib$ (\defref{locally discrete split two-sided 2-fibration}), of the form below, are precisely $2$"/natural transformations \mbox{$\cell\psi fg$}.
		\begin{displaymath}	
			\begin{tikzpicture}[baseline]
				\matrix(m)[math35]{A_0 \\ C \\};
				\path[map]	(m-1-1) edge[bend right=45] node[left] {$f$} (m-2-1)
														edge[bend left=45] node[right] {$g$} (m-2-1);
				\path				(m-1-1) edge[cell] node[right] {$\psi$} (m-2-1);
			\end{tikzpicture}
		\end{displaymath}
	\end{corollary}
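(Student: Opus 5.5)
The plan is to unwind the definitions along the chain the paper has set up. By \defref{split two-sided 2-fibration}, $\SpTwoTwoFib$ is $\inSpTwoFib{\und{\TwoCatLax}}$. Its vertical cells $f \Rar g$ between $2$-functors $\map f{A_0}C$ and $\map g{A_0}C$ are, by \defref{internal split two-sided fibration} and the definition of the sub-augmented virtual double category, the vertical cells of $\inProf{\und{\und{\TwoCatLax}}}$ between the induced functors $f^\2$ and $g^\2$. By \defref{internal profunctor} these are transformations $f^\2 \Rar g^\2$ in the $2$-category $\inCat{\und{\und{\TwoCatLax}}} = V(\inProf{\und{\und{\TwoCatLax}}})$.

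The key step is to invoke \thmref{embedding of strict cells}. It gives a faithful and locally full $2$-functor $\map{\dash^\2}{\und{\TwoCatLax}_{\str}}{\inCat{\und{\und{\TwoCatLax}}}}$. Hence every transformation $f^\2 \Rar g^\2$ is of the form $\phi^\2$ for a unique strict cell $\cell\phi fg$ of $\und{\TwoCatLax}$. Local fullness gives existence; faithfulness gives uniqueness, because $\phi^\2$ is defined as the unique factorisation of $\phi$ through $\delta_C$.

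I would spell out the fullness argument rather than only cite it. A transformation $f^\2 \Rar g^\2$ is a morphism $\map\psi{A_0}{C^\2}$ with $\src \of \psi = f$ and $\tgt \of \psi = g$. By the $1$-dimensional universal property of $C^\2$ (\defref{comma object}), such a $\psi$ is precisely a lax natural transformation $\delta_C \of \psi \colon f \Rar g$. The external equivariance axiom then translates, via the Yoneda argument in the proof of \thmref{embedding of strict cells}, into strictness of this cell. Along the way I would note that \corref{correspondences induced by horizontal units} gives the same identification of vertical cells with strict cells.

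Finally, I would use the characterisation recalled in \exref{lax natural transformation}, from Section~5 of \cite{Lack10}: the strict cells of $\und{\TwoCatLax}$ are exactly the $2$-natural transformations. The statement for $\LdSpTwoTwoFib$ then follows at once. By \defref{locally discrete split two-sided 2-fibration} it is a locally full sub-augmented virtual double category on the same objects and vertical morphisms, so it has the same vertical cells.

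The only step with real content is identifying strictness with $2$-naturality, and that is quoted from Lack. If I wanted to avoid the citation, I would test strictness of $\phi$ against the cells $\phi'$ out of the terminal $2$-category $1$ that pick out a morphism $s$. Middle-four interchange with such a cell forces the naturality cell $\phi_s$ to be an identity. The converse, that $2$-natural transformations satisfy interchange with any lax $\phi'$, is a direct componentwise check.
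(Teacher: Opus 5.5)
Your main argument is correct and is essentially the paper's own. You unwind \defref{split two-sided 2-fibration} and \defref{internal split two-sided fibration}, identify vertical cells $f \Rightarrow g$ with strict cells of $\und{\TwoCatLax}$ via \thmref{embedding of strict cells}, and quote the characterisation recalled in \exref{lax natural transformation} that strict lax natural transformations are the $2$-natural ones. The $\LdSpTwoTwoFib$ case then follows by local fullness.

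Your optional last paragraph, however, does not work as stated: testing against cells out of the terminal $2$-category $1$ does \emph{not} force naturality cells to be identities. Let $\cell\psi hk$ with $h, k\colon C \to E$. A lax natural transformation $\cell{\phi'}{f'}{g'}$ between $2$-functors $1 \to C$ is just a morphism $\map sxy$, since its naturality cell at $\id_*$ is an identity by the unit axiom. Both $(h \of \phi') \hc (\psi \of g')$ and $(\psi \of f') \hc (k \of \phi')$ are then transformations out of $1$, namely the morphisms $\psi_y \of hs$ and $ks \of \psi_x$. So interchange only says that the source and target of $\psi_s$ coincide, not that $\psi_s$ is an identity. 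This test suffices for unnatural transformations (\exref{unnatural transformation}), where naturality is purely $1$-dimensional, but not here. For example, take a lax transformation from the walking arrow into a one-object, one-morphism $2$-category whose $2$-cells form a nontrivial commutative monoid. It has commuting naturality squares and passes every such test, yet its naturality cell at the arrow can be a non-identity.

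The fix is to test against a cell out of the walking arrow $2$-category $\2 = (0 \to 1)$. Take $f'$ constant at $x$, take $g'$ picking out $s$, and take $\phi'$ with components $\id_x$ and $s$ and identity naturality cell. At $0 \to 1$, the naturality cell of $(h \of \phi') \hc (\psi \of g')$ is $\psi_s$. The naturality cell of $(\psi \of f') \hc (k \of \phi')$ there is an identity, because $\psi_{\id_x}$ and $k(\id_s)$ are identities. Interchange therefore gives $\psi_s = \id$. Your converse direction, that $2$-natural transformations satisfy interchange with every lax $\phi'$, is fine.
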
 
	
	\subsubsection*{Locally discrete split two"/sided $2$"/fibrations with small fibres}
	We close this section by considering locally discrete split two"/sided $2$"/fibrations with small fibres, a notion that we will use in stating our main result (\cororef{main result}).
	\begin{definition} \label{small fibres}
		Given a locally discrete split two"/sided $2$"/fibration \mbox{$J = \brks{A \xlar p J \xrar q B}$}, consider a pair of objects $a \in A$ and $b \in B$.
		\begin{enumerate}[label=-]
			\item The \emph{fibre of $(a, b)$} is the sub"/$1$"/category $J_{(a, b)} \subset \und J$ whose objects $i \in J$ satisfy $pi = a$ and $qi = b$ and whose morphisms $\map sij$ in $J$ are such that $ps = \id_a$ and $qs = \id_b$.
			\item If all fibres $J_{(a, b)}$ are small categories then $J$ is said to have \emph{small fibres}.
		\end{enumerate}
		We denote by $\SfLdSpTwoTwoFib \subset \LdSpTwoTwoFib$ the locally full sub"/augmented virtual equipment (\defref{augmented virtual equipment}) generated by those locally discrete split two"/sided $2$"/fibrations that have small fibres.
	\end{definition}
	
	That $\SfLdSpTwoTwoFib$ has all unary restrictions $K(f, g)$ (\cororef{internal split two-sided fibrations form a unital virtual equipment}(a) and \propref{LdSpTwoTwoFib is a unital virtual equipment}) follows from the isomorphism of fibres $K(f, g)_{(a, b)} \iso K_{(fa, fb)}$. Likewise nullary restrictions $C(f, g) \dfn f \slash g$ (\exref{lax comma 2-category is a split two-sided 2-fibration}) have hom"/categories $C(f, g)_{(a, b)} \iso C(fa, gb)$ as fibres, so that $C(f, g)$ exists in $\SfLdSpTwoTwoFib$ if and only if $C(fa, gb)$ is small for all pairs $(a, b)$. In particular a large $2$"/category $A$ is unital (\defref{cartesian cells}) in $\SfLdSpTwoTwoFib$ precisely if it is locally small.
	
	\section[The Yoneda 2-functor is dense]{The Yoneda $2$"/functor is dense} \label{density section}	
	In the final pair of sections we will show that the Yoneda $2$"/functors $\map\yon A{\Cat^{\op A}}$, whose definition is recalled below, form formal Yoneda embeddings, in the sense of Definition~4.5 of \cite{Koudenburg24}, in the augmented virtual equipment $\SfLdSpTwoTwoFib$ of locally discrete split two"/sided $2$"/fibrations with small fibers (\defref{small fibres}). This means proving that $\yon$ is formally dense, in the sense recalled in \defref{density} below, and that it satisfies a formal `Yoneda axiom', as recalled in \defref{Yoneda embedding} below. In \thmref{Yoneda 2-functor is dense} below we prove that $\yon$ is dense; in the next section (\thmref{Yoneda axiom}) we prove that it satisfies the Yoneda axiom.
	
	Recall that $\Cat$ denotes the $2$"/category of small $1$"/categories, functors and $1$"/natural transformations.
	\begin{construction}[Yoneda $2$"/functor] \label{Yoneda 2-functor}
		Let $A$ be a locally small $2$"/category, that is each hom"/category $A(a_0, a_1)$ is small, and consider the $2$"/functor $2$"/category $\Cat^{\op A}$ of $2$"/functors $\op A \to \Cat$, $2$"/natural transformations and modifications (see e.g.\ Definition~4.4.1 of \cite{Johnson-Yau21}). The \emph{Yoneda $2$"/functor} $\map\yon A{\Cat^{\op A}}$ maps each object $a \in A$ to the contravariant hom"/$2$"/functor $\map{A(\dash, a)}{\op A}\Cat$. It maps each morphism $\map ua{a'}$ to the $2$"/natural transformation given by postcomposition with $u$, and each cell $\cell\alpha uv$ to the modification given by postcomposition with $\alpha$.
	\end{construction}
	
	\subsubsection*{Formal density}
	The following definition combines Definitions~1.9 and 4.3 of \cite{Koudenburg24}. The juxtaposition of the nullary cells $\eta$ and $\phi'$ on the right"/hand side of the equation below denotes their horizontal composition $\eta \hc \phi' \dfn \id_M \of (\eta, \phi')$, in the sense of Lemma~1.3 of \cite{Koudenburg20}. If $\eta$ and $\phi'$ are nullary cells of $\LdSpTwoTwoFib$ which, using \cororef{nullary cells description}, we view as lax natural transformations, then $\eta \hc \phi'$ is the vertical composite of lax natural transformations $(\eta \of \pi_J) \hc (\phi' \of \pi_{\prod_{\dash} \ul H})$ (see e.g.\ Definition~4.2.15 of \cite{Johnson-Yau21}), where $\pi_J$ and $\pi_{\prod_{\dash} \ul H}$ denote the projections $J \leftarrow \prod_{\dash} (J \conc \ul H) \to \prod_{\dash} \ul H$ (see \propref{profunctors form an augmented virtual double category}). To see this use the definition of the composition $2$"/functor $\map{\bar\mu}{M^\2 \times_M M^\2}{M^\2}$ (\exref{lax arrow 2-category as internal category}).
	\begin{definition} \label{density}
		In any augmented virtual double category consider a nullary cell $\eta$ as in the composite on the right"/hand side below. It is said to define $\map l BM$ as the \emph{left Kan extension} of $\map d AM$ along $\hmap JAB$ if any nullary cell $\phi$ as on the left"/hand side below, where $\ul H = (H_1, \dotsc, H_n)$ is any (possibly empty) path, factors uniquely through $\eta$ as a nullary cell $\phi'$, as shown. In that case the cell $\eta$ is called \emph{left Kan}.
		\begin{displaymath}
			\begin{tikzpicture}[textbaseline]
				\matrix(m)[math35]{A & B & B_1 & B_{n'} & B_n \\ & & M & & \\};
				\path[map]	(m-1-1) edge[barred] node[above] {$J$} (m-1-2)
														edge[transform canvas={yshift=-2pt}] node[below left] {$d$} (m-2-3)
										(m-1-2) edge[barred] node[above] {$H_1$} (m-1-3)
										(m-1-4) edge[barred] node[above] {$H_n$} (m-1-5)
										(m-1-5) edge[transform canvas={yshift=-2pt}] node[below right] {$k$} (m-2-3);
				\path				(m-1-3) edge[cell] node[right] {$\phi$} (m-2-3);
				\draw				($(m-1-3)!0.5!(m-1-4)$) node {$\dotsb$};
			\end{tikzpicture} = \begin{tikzpicture}[textbaseline]
				\matrix(m)[math35]{A & B & B_1 & B_{n'} & B_n \\ & M & & & \\};
				\path[map]	(m-1-1) edge[barred] node[above] {$J$} (m-1-2)
														edge[transform canvas={yshift=-2pt}] node[below left] {$d$} (m-2-2)
										(m-1-2) edge[barred] node[above] {$H_1$} (m-1-3)
														edge node[right] {$l$} (m-2-2)
										(m-1-4) edge[barred] node[above] {$H_n$} (m-1-5)
										(m-1-5) edge[transform canvas={yshift=-2pt}] node[below right] {$k$} (m-2-2);
				\path				(m-1-2) edge[cell, transform canvas={shift={(3em,0.333em)}}] node[right] {$\phi'$} (m-2-2)
										(m-1-2) edge[cell, transform canvas={shift={(-1.2em,0.333em)}}] node[right] {$\eta$} (m-2-2);
				\draw				($(m-1-3)!0.5!(m-1-4)$) node {$\dotsb$};
			\end{tikzpicture}
  	\end{displaymath}
  	
  	The morphism $\map dAM$ is called \emph{dense} if, for any morphism $\map gBM$ such that the nullary restriction $\hmap{M(d, g)}AB$ exists (\defref{cartesian cells}), the nullary cartesian cell $M(d, g) \Rar M$ defining $M(d, g)$ is left Kan.
  \end{definition}
  
  \begin{remark}
  	Mesiti in Definition~3.12 of \cite{Mesiti24} defines the notion of pointwise left Kan extension $\map lBM$ of a $2$"/functor $\map dAM$ along a (using our terminology) locally discrete (not necessarily split) op"/$2$"/fibration $\map qAB$. Such an extension is exhibited by a marked lax natural transformation $d \Rar l \of q$ (see \propref{nullary cells description} and \remref{cartesian-marked}). In general the relation between Mesiti's notion and the one above, when applied to the unital virtual equipment $\LdSpTwoTwoFib$ of locally discrete split two"/sided $2$"/fibrations (\defref{locally discrete split two-sided 2-fibration}), is presently unclear to the author. In particular two"/sided $2$"/fibrations are not considered in \cite{Mesiti24}.
  	
  	However in the specific case of the pointwise left Kan extensions that are relevant to the density of the Yoneda $2$"/functor $\map\yon 1\Cat$ (\conref{Yoneda 2-functor}), for the terminal $2$"/category $A = 1$, Mesiti's notion is an instance of the above notion restricted to the `$\ul H = (B)$ is empty'"/case; see \remref{comparison with Mesiti's notion of density} below.
  \end{remark}
  
  \begin{proposition} \label{n > 0}
  	In a unital virtual double category (\defref{augmented virtual equipment}) the `$\ul H = (B)$ is empty'"/case of the universal property above is subsumed by the `$\ul H = (H_1)$ is a singleton'"/case.
  \end{proposition}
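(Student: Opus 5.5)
The plan is to reduce the empty case to the singleton case by inserting the horizontal unit $I_B$ of $B$ into the path. Let $\phi\colon J \Rar M$ be a nullary cell with empty $\ul H = (B)$, so that its vertical boundary is $d$ on the left and $k\colon B \to M$ on the right. Since the augmented virtual double category is unital, $B$ has a horizontal unit $I_B$, defined by a cartesian nullary cell $\varepsilon\colon I_B \Rar B$ whose vertical morphisms are identities. The property we will use (Lemma~5.9 of \cite{Koudenburg20}, recalled before \cororef{correspondences induced by horizontal units}) is that vertical precomposition with such a unit cell gives bijections between cells with source path $(\dotsc, B, \dotsc)$, where an empty entry sits at $B$, and cells with source path $(\dotsc, I_B, \dotsc)$. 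In particular, nullary cells $J \Rar M$ correspond bijectively to nullary cells $(J, I_B) \Rar M$, and nullary cells $B \Rar M$, that is vertical cells $l \Rar k$, correspond bijectively to nullary cells $I_B \Rar M$ with the same vertical boundary. We denote this correspondence by $\psi \mapsto \psi \of (\id_J, \varepsilon)$ and its analogue for $\phi'$.

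We first transport $\phi$ to $\hat\phi \dfn \phi \of (\id_J, \varepsilon)\colon (J, I_B) \Rar M$. The singleton case of the universal property, applied with $H_1 = I_B$, gives a unique $\hat\phi'\colon I_B \Rar M$ with $\eta \hc \hat\phi' = \hat\phi$. By the bijection above, $\hat\phi' = \phi' \of \varepsilon$ for a unique vertical cell $\phi'\colon l \Rar k$. Next we check compatibility of horizontal composition with this precomposition, namely
\[
(\eta \hc \phi') \of (\id_J, \varepsilon) = \eta \hc (\phi' \of \varepsilon).
\]
This follows from the definition $\eta \hc \phi' = \id_M \of (\eta, \phi')$ (Lemma~1.3 of \cite{Koudenburg20}) together with the associativity of vertical composition in an augmented virtual double category. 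Combining these, we get $(\eta \hc \phi') \of (\id_J, \varepsilon) = \hat\phi = \phi \of (\id_J, \varepsilon)$. The injectivity of the bijection then yields $\eta \hc \phi' = \phi$, which establishes existence.

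For uniqueness, suppose $\chi\colon l \Rar k$ also satisfies $\eta \hc \chi = \phi$. Precomposing with $(\id_J, \varepsilon)$ gives $\eta \hc (\chi \of \varepsilon) = \hat\phi$. Uniqueness in the singleton case then forces $\chi \of \varepsilon = \hat\phi'$, and the bijection gives $\chi = \phi'$.

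The main obstacle is justifying the displayed compatibility equation, and, before that, the bijection for nullary cells whose source path contains $J$ besides the empty entry. The lemma as recalled concerns cells with an isolated unit. So I would first rederive the general form, either by using that a cartesian nullary cell from a unit induces bijections on cells with arbitrary surrounding paths (the universal property in \defref{cartesian cells} allows arbitrary paths $\ul H$), or by explicitly constructing the inverse via the $(0,1)$-ary cell $B \Rar I_B$. I would then verify the equation $\varepsilon \of (\text{unit}) = \id$ needed for invertibility by the unit axioms of vertical composition.
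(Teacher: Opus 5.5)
Your proposal is correct and follows essentially the same route as the paper. The paper also inserts the horizontal unit $I_B$ and uses the bijections given by vertical precomposition with the cartesian cell $I_B \Rar B$, which are the horizontal unit identities of Lemma~5.9 of \cite{Koudenburg20}. It then transports the universal property via the interchange law, as you do with associativity. The obstacle you flag is exactly what that lemma supplies, through the $(0,1)$-ary cell $B \Rar I_B$ obtained by factoring $\id_{\id_B}$ through the cartesian cell.
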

  \begin{proof}
  	Let $\cell\chi{I_B}B$ be the cartesian cell defining the horizontal unit $I_B$ of $B$ (\defref{cartesian cells}). The `horizontal unit identities' of Lemma~5.9 of \cite{Koudenburg20} imply that vertically precomposing with $\chi$ gives bijective correspondences between nullary cells $\cell\phi JM$, of the form as in the $n = 0$ case of the universal property above, and nullary cells $\cell\psi{(J, I_B)}M$, as in the $n = 1$ case, as well as between vertical cells $\cell{\phi'}BM$, as in the $n =0$ case above, and $(1, 0)$"/ary cells $\cell{\psi'}{I_B}M$, as in the $n = 1$ case. Let $(\phi, \psi)$ and $(\phi', \psi')$ be pairs of corresponding cells. It follows from the interchange law (Lemma~1.3 of \cite{Koudenburg20}) that, under these correspondences, the $n = 0$ case of the universal property above for $\phi$ and $\phi'$ is equivalent to the $n = 1$ case of the universal property for $\psi$ and $\psi'$.
  \end{proof}
  
  \subsubsection*{$2$"/Category of elements}
  Consider any $2$"/functor $\map gB{\Cat^{\op A}}$ and recall from \exref{lax comma 2-category is a split two-sided 2-fibration} that the nullary restriction $\Cat^{\op A}(\yon, g)$ is the lax comma $2$"/category $\yon \slash g$ (\exref{lax comma 2-category}). In order to prove that $\map\yon A{\Cat^{\op A}}$ is dense in $\LdSpTwoTwoFib$ we need to prove that the nullary cartesian cell defining $\Cat^{\op A}(\yon, g) \iso \yon \slash g$ which, under the correspondence of \cororef{nullary cells description}, corresponds to the lax natural transformation defining $\yon \slash g$, is left Kan.
  
  We start by showing that the lax comma $2$"/category $\yon \slash g$ is a ``$2$"/category of elements'', in the sense of the following construction, which appears in Section~5 of \cite{Gray69}. Restricting it to $A = 1$, the terminal $2$"/category, recovers the ``$2$"/categorical Grothendieck construction'' described in Section~5 of \cite{Street76}.
  \begin{construction}[Gray; $2$"/category of elements] \label{2-category of elements}
  	Let $A$ and $B$ be $2$"/categories and $\map gB{\Cat^{\op A}}$ a $2$"/functor. Given a morphism $\map v{b_0}{b_1}$ in $B$ we denote all $1$"/functor components $\map{(gv)_a}{(gb_0)(a)}{(gb_1)(a)}$ of the natural transformation $gv$ by $v_! \dfn (gv)_a$. Similarly the natural transformation components $\cell{(g\beta)_a}{(gv_0)_a}{(gv_1)_a}$, of the $g$"/image of a cell $\cell\beta{v_0}{v_1}$ of $B$, are denoted by $\cell{\beta_! \dfn (g\beta)_a}{v_{0!}}{v_{1!}}$. Likewise given a morphism $\map u{a_0}{a_1}$ of $A$ we denote \mbox{$\map{u^* \dfn (gb)(u)}{(gb)(a_1)}{(gb)(a_0)}$} for any $b \in B$ and, given a cell $\cell\alpha{u_0}{u_1}$ of $A$, we denote $\cell{\alpha^* \dfn (gb)(\alpha)}{u_0^*}{u_1^*}$.
  	
  	The \emph{$2$"/category of elements} $g^\yon$ is the $2$"/category whose
  	\begin{enumerate}[label=-]
  		\item objects are triples $(a, i, b)$ with $a \in A$, $b \in B$ and $i \in (gb)(a)$;
  		\item morphisms $(a_0, i_0, b_0) \to (a_1, i_1, b_1)$ are triples $(a_0 \xrar ua_1, v_!i_0 \xrar s u^*i_1, b_0 \xrar v b_1)$ with $s \in (gb_1)(a_0)$;
  		\item cells $(u_0, s_0, v_0) \Rar (u_1, s_1, v_1)$ are pairs $(\alpha, \beta)$ of cells $\cell\alpha{u_0}{u_1}$ in $A$ and $\cell\beta{v_0}{v_1}$ in $B$ that make the diagram below commute.
  	\end{enumerate}
  	\begin{displaymath}
  		\begin{tikzpicture}
  			\matrix(m)[math35]{v_{0!}i_0 & u_0^* i_1 \\ v_{1!} i_0 & u_1^*i_1 \\ };
  			\path[map]	(m-1-1) edge node[above] {$s_0$} (m-1-2)
  													edge node[left] {$(\beta_!)_{i_0}$} (m-2-1)
  									(m-1-2) edge node[right] {$(\alpha^*)_{i_1}$} (m-2-2)
  									(m-2-1) edge node[below] {$s_1$} (m-2-2);
  		\end{tikzpicture}
  	\end{displaymath}
  	The composition of $\map{(u', s', v')}{(a_1, i_1, b_1)}{(a_2, i_2, b_2)}$ with the morphism $(u, s, v)$ above is defined as
  	\begin{displaymath}
  		(u' \of u, v'_! v_!i_0 \xrar{v'_!s} v'_!u^*i_1 = u^*v'_!i_1 \xrar{u^*s'} u^* u'^* i_2, v' \of v),
  	\end{displaymath}
  	while the compositions of cells in $g^\yon$ is induced by that of cells in $A$ and $B$.
  	
  	It is straightforward to check that the span of projections $A \xlar{\pi_A} g^\yon \xrar{\pi_B} B$ forms a locally discrete split two"/sided $2$"/fibration (\defref{locally discrete split two-sided 2-fibration}). Indeed the cartesian lift of the morphism $a_0 \xrar u a_1 = \pi_A(a_1, i, b)$ of $A$ is the morphism
  	\begin{displaymath}
  		\lambda\bigpars{u, (a_1, i, b)} \dfn \bigbrks{u^*(a_1, i, b) \dfn (a_0, u^*i, b) \xrar{(u, \id_{u^*i}, \id_b)} (a_1, i, b)}
  	\end{displaymath}
  	in $g^\yon$, while the opcartesian lift of $\pi_A(u_0, s, v) = u_0 \xRar\alpha u_1$ of $B$ is the cell
  	\begin{displaymath}
  		\lambda\bigpars{(u_0, s, v), \alpha} \dfn \bigbrks{(u_0, s, v) \xRar{(\alpha, \id_v)} (u_1, (\alpha^*)_{i_1} \of s, v) \nfd \alpha_!(u_0, s, v)}
  	\end{displaymath}
  	in $g^\yon$; the cleavage for $\pi_B$ is defined coop"/dually.
  \end{construction}
	
	In showing that the $2$"/category of elements $g^\yon$ is the lax comma $2$"/category of $\map\yon A{\Cat^{\op A}}$ and $\map gB{\Cat^{\op A}}$ (\exref{lax comma 2-category}) we use the $2$"/categorical Yoneda lemma, which we recall here; see e.g.\ Section~2.4 of \cite{Kelly82}, or Section~6.9 of \cite{Hedman17} for a detailed proof.
	
	\begin{construction}[Evaluation and its inverse] \label{evaluation}
		Consider an object $a$ of a locally small $2$"/category $A$ and a $2$"/functor $\map f{\op A}\Cat$. The \emph{evaluation} $1$"/functor \mbox{$\map{\yev}{\Cat^{\op A}(\yon a, f)}{fa}$}, where $\Cat^{\op A}(\yon a, f)$ is the $1$"/category of $2$"/natural transformations $\cell\phi{\yon a}f$ and their modifications, is defined as follows. It maps natural transformations $\phi$ to the objects $\yev(\phi) \dfn \phi_a(\id_a)$, where $\map{\phi_a}{A(a, a)}{fa}$ is the $1$"/functor component at $a$. Similarly each modification $M\colon \phi_0 \Rrightarrow \phi_1$ is mapped to the morphism $\map{\yev(M) \dfn (M_a)_{\id_a}}{\yev(\phi_0)}{\yev(\phi_1)}$, where $\cell{M_a}{(\phi_0)_a}{(\phi_1)_a}$ is the natural transformation component at $a$.
		
		Analogous to the notation used in \conref{2-category of elements} we denote by $\map{u^* \dfn fu}{fa_1}{fa_0}$ the $1$"/functor $f$"/image of a morphism $\map u{a_0}{a_1}$ of $A$, and by $\cell{\alpha^* \dfn f\alpha}{u_0^*}{u_1^*}$ the natural transformation $f$"/image of a cell $\cell\alpha{u_0}{u_1}$ of $A$. These assignments combine into the $1$"/functors $\map{\dash^*}{fa}{\Cat^{\op A}(\yon a, f)}$ that send $i \in fa$ to the $2$"/natural transformation $\cell{\dash^*i}{\yon a}f$ with $1$"/functor components $\map{(\dash^* i)_{a'}}{A(a', a)}{fa'}$ defined by $(\dash^*i)_{a'}(u) \dfn u^*i$ and $(\dash^*)_{a'}(\alpha) \dfn (\alpha^*)_i$. Morphisms $\map sij$ of $fa$ are sent to the modifications $\dash^* s\colon \dash^* i \Rrightarrow \dash^* j$, each of whose natural transformation components $\cell{(\dash^* s)_{a'}}{(\dash^* i)_{a'}}{(\dash^* j)_{a'}}$ has component morphisms $\map{((\dash^*s)_{a'})_u \dfn u^*s}{u^*i}{u^*j}$ in $fa'$.
	\end{construction}
	
	\begin{lemma}[Yoneda lemma] \label{Yoneda lemma}
		The $1$"/functors
		\begin{displaymath}
			\begin{tikzpicture}
				\matrix(m)[math35, column sep=2.5em]{fa & \Cat^{\op A}(\yon a, f) \\ };
				\path[map, transform canvas={yshift=2pt}]	(m-1-1) edge node[above] {$\dash^*$} (m-1-2);
				\path[map, transform canvas={yshift=-2pt}]	(m-1-2) edge node[below] {$\yev$} (m-1-1);
			\end{tikzpicture}
		\end{displaymath}
		described above form isomorphisms $fa \iso \Cat^{\op A}(\yon a, f)$ of $1$"/categories that are $2$"/natural in $a$ and $f$.
	\end{lemma}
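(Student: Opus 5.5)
The plan is to check directly that $\yev$ and $\dash^*$ are well-defined $1$"/functors, that they are mutually inverse, and that the resulting isomorphism is $2$"/natural in $a$ and $f$.

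\emph{Well-definedness.} For $i \in fa$ the family $(\dash^*i)_{a'}$ is a $1$"/functor $A(a',a) \to fa'$ by functoriality of $f$ on hom"/categories. It is $2$"/natural in $a'$ because $f$ is a $2$"/functor: for $\map w{a''}{a'}$ we have $w^*u^*i = (u\of w)^*i$, and likewise on cells $(\alpha \of w)^* = w^*\alpha^*$. For $\map sij$ the family $u \mapsto u^*s$ is natural in $u$, by naturality of the $f$"/images $\alpha^*$ of cells. It is also a modification, because $w^*$ commutes with $u^*$ and is applied componentwise. Functoriality of $\dash^*$ and of $\yev$ in morphisms is immediate.

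\emph{Mutually inverse.} One composite is clear:
\begin{displaymath}
	\yev(\dash^*i) = \id_a^*\,i = i \qquad \text{and} \qquad \yev(\dash^*s) = s.
\end{displaymath}
For the other composite, take a $2$"/natural $\cell\phi{\yon a}f$. Naturality of $\phi$ at $\map u{a'}a$, evaluated at $\id_a$, gives
\begin{displaymath}
	\phi_{a'}(u) = \phi_{a'}(\id_a \of u) = u^*\phi_a(\id_a).
\end{displaymath}
This is the key Yoneda step. The same naturality on cells gives $\phi_{a'}(\alpha) = (\alpha^*)_{\phi_a(\id_a)}$, using $2$"/naturality with respect to the cell $\alpha$ regarded as whiskering $\id_a$. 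Hence $\phi = \dash^*\yev(\phi)$. In the same way a modification $M$ is determined by its value at $\id_a$, since the modification axiom forces $(M_{a'})_u = u^*(M_a)_{\id_a}$.

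\emph{$2$"/naturality.} Naturality in $f$ with respect to a $2$"/natural $\cell\theta f{f'}$ reduces to $\theta_a(\phi_a(\id_a)) = (\theta\of\phi)_a(\id_a)$, which holds by definition. For a modification $\Theta$ it is again a componentwise evaluation. Naturality in $a$ with respect to $\map v{a}{b}$ amounts to the identity
\begin{displaymath}
	\yev(\phi \of \yon v) = \phi_a(v) = v^*\yev(\phi),
\end{displaymath}
which is exactly the formula established in the previous step. For a cell $\beta$ of $A$ the argument is the same with the modification $\yon\beta$.

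I expect the main obstacle to be this $2$"/dimensional bookkeeping: showing that $\phi$ is determined on cells $\alpha$ and not only on morphisms, and checking naturality with respect to cells of $A$. The approach is to express $\alpha$ as $\id_a \of \alpha$ and then apply the cell part of the $2$"/naturality of $\phi$. Everything else is routine verification, which can also be cited from Section~2.4 of \cite{Kelly82}, since $\Cat^{\op A}$ is the $\Cat$"/enriched functor category and this is precisely the enriched Yoneda lemma for $\mathcal V = \Cat$.
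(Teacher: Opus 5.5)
Your proposal is correct. The paper gives no argument of its own here: it cites the $\Cat$-enriched Yoneda lemma (Section~2.4 of \cite{Kelly82}) and the detailed $2$-categorical proof in Section~6.9 of \cite{Hedman17}, which matches the citation you close with. Your elementwise verification is the standard argument those references carry out, including the key cell-level step $\phi_{a'}(\alpha) = (\alpha^*)_{\phi_a(\id_a)}$ obtained from $2$-naturality at $\alpha = \id_a \of \alpha$. The one check you leave implicit is the $2$-naturality of $\dash^* i$ with respect to cells $\gamma$ between morphisms $w\colon a'' \to a'$; this is the identity $((u \of \gamma)^*)_i = (\gamma^*)_{u^*i}$, which again follows from the $2$-functoriality of $f$.
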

	
	Recall from \exref{lax comma 2-category is a split two-sided 2-fibration} that nullary restrictions $C(f, g)$ in $\SpTwoTwoFib$ (\defsref{cartesian cells}{split two-sided 2-fibration}) are constructed as the lax comma $2$"/categories $f \slash g$ (\defref{lax comma 2-category}). The restriction of the result below to $A = 1$, the terminal $2$"/category, is stated in Section~5 of \cite{Street76} and as Proposition~2.18 of \cite{Lambert24}.
	\begin{proposition} \label{2-category of elements is a lax comma 2-category}
		Let $A$ be a locally small $2$"/category and let $\map gB{\Cat^{\op A}}$ be a $2$"/functor. The $2$"/category of elements $g^\yon$ (\conref{2-category of elements}) forms the lax comma $2$"/category $\yon \slash g$ (\exref{lax comma 2-category}) of the Yoneda $2$"/functor $\map\yon A{\Cat^{\op A}}$ (\conref{Yoneda 2-functor}) and $g$.
		
		Under the correspondence of \cororef{nullary cells description} the nullary cartesian cell $\cell\chi{g^\yon}{\Cat^{\op A}}$, defining $g^\yon$ as the nullary restriction $\Cat^{\op A}(g, \yon)$ in $\SpTwoTwoFib$, corresponds to the marked lax natural transformation (\defref{marked lax natural transformation}) below left, whose components and lax naturality cells are defined as (take $f \dfn gb$ in \conref{evaluation})
		\begin{displaymath}
			\cell{\chi_{(a, i, b)} \dfn \dash^* i}{\yon a}{gb} \qquad \quad \text{and} \qquad \quad \chi_{(u, s, v)} \dfn \dash^* s\colon v_!\dash^* i_0 \Rrightarrow (u \of \dash)^*i_1;
		\end{displaymath}
		here $\map s{v_!i_0}{u^*i_1}$ and $\chi_{(u, s, v)}$ is of the form below right.
		\begin{displaymath}
			\begin{tikzpicture}[baseline]
				\matrix(m)[math35]{g^\yon & A \\ B & \Cat^{\op A} \\};
				\path[map]	(m-1-1) edge node[above] {$\pi_A$} (m-1-2)
														edge node[left] {$\pi_B$} (m-2-1)
										(m-1-2) edge node[right] {$\yon$} (m-2-2)
										(m-2-1) edge node[below] {$g$} (m-2-2);
				\path				(m-1-2) edge[cell, shorten >= 9pt, shorten <= 9pt] node[below right] {$\chi$} (m-2-1);
			\end{tikzpicture} \qquad \qquad \qquad \begin{tikzpicture}[baseline]
				\matrix(m)[math35, column sep={4.2em,between origins}]{\yon a_0 & gb_0 \\ \yon a_1 & gb_1 \\};
				\path[map]	(m-1-1) edge node[above] {$\dash^*i_0$} (m-1-2)
														edge node[left] {$\yon u \dfn (u \of \dash)$} (m-2-1)
										(m-1-2) edge node[right] {$v_! \dfn gv$} (m-2-2)
										(m-2-1) edge node[below] {$\dash^*i_1$} (m-2-2);
				\path[transform canvas={xshift=-1.4em}]	(m-1-2) edge[cell, shorten >= 11pt, shorten <= 9pt] node[below right, inner sep=2pt] {$\chi_{(u, s, v)}$} (m-2-1);
			\end{tikzpicture}
		\end{displaymath}
	\end{proposition}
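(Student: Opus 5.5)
The plan is to compare the lax comma $2$"/category $\yon \slash g$ of \exref{lax comma 2-category} with $g^\yon$ level by level, using the Yoneda lemma (\lemref{Yoneda lemma}) with $f \dfn gb$. The result is an isomorphism of $2$"/categories commuting with the projections to $A$ and $B$.

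\emph{Objects.} An object of $\yon \slash g$ is a triple $(a, \phi, b)$ with $\cell\phi{\yon a}{gb}$ a $2$"/natural transformation. By \lemref{Yoneda lemma} these correspond bijectively to triples $(a, i, b)$ with $i \in (gb)(a)$, via $\phi \mapsto \yev(\phi)$ and $i \mapsto \dash^* i$.

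\emph{Morphisms.} A morphism $(a_0, \dash^*i_0, b_0) \to (a_1, \dash^*i_1, b_1)$ of $\yon\slash g$ is a triple $(u, \zeta, v)$, where $\zeta$ is a modification $gv \of \dash^*i_0 \Rrightarrow \dash^*i_1 \of \yon u$. I would first check that
\[
	gv \of \dash^* i_0 = \dash^*(v_! i_0) \qquad\text{and}\qquad \dash^* i_1 \of \yon u = \dash^*(u^* i_1)\colon \yon a_0 \Rar gb_1.
\]
The first follows from $2$"/naturality of $gv$: $v_!u'^* = u'^*v_!$. The second follows from functoriality of $gb_1$: $(u\of u')^* = u'^*u^*$. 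Having these, \lemref{Yoneda lemma} at $a_0$ and $f = gb_1$ identifies modifications $\zeta$ with morphisms $\map s{v_!i_0}{u^*i_1}$ in $(gb_1)(a_0)$, via $\zeta = \dash^*s$. This matches the morphisms of $g^\yon$.

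\emph{Cells.} A cell $(\sigma,\tau)$ of $\yon\slash g$ must satisfy the equation of \exref{lax comma 2-category}. Both sides are modifications out of $\yon a_0$, so by \lemref{Yoneda lemma} they are equal exactly when their evaluations at $\id_{a_0}$ are equal. Evaluating the whiskerings $g\tau \of \dash^*i_0$ and $\dash^*i_1 \of \yon\sigma$ gives $(\tau_!)_{i_0}$ and $(\sigma^*)_{i_1}$. Hence the equation becomes exactly the commuting square in \conref{2-category of elements}.

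\emph{Compositions.} It remains to see that the compositions agree. Composition of morphisms in $\yon\slash g$ pastes $\zeta$ and $\zeta'$. Evaluating the pasted modification at $\id_{a_0}$ gives $u^*s' \of v'_!s$, which is the composition formula of \conref{2-category of elements}. Identities and cells are handled the same way. I expect this compatibility check to be the main obstacle: it is a bookkeeping exercise in the naturality of $\yev$ and of $gv$. I would organise it by noting that $\dash^*$ is functorial and $2$"/natural in $f$ (\lemref{Yoneda lemma}), so that whiskering $\dash^*s$ by $gv'$ yields $\dash^*(v'_!s)$, and whiskering by $\yon u$ reindexes to $\dash^*(u^*s')$.

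\emph{The cartesian cell.} For the second claim, I would use \cororef{internal split two-sided fibrations form a unital virtual equipment}(c). It says the cartesian nullary cell defining $\yon\slash g$ is the factorisation of the universal cell $\pi$ of the comma object through $\delta$. Under \cororef{nullary cells description} this factorisation corresponds to the lax natural transformation $\pi$ itself, with components the second coordinates $\phi$ and naturality cells the modifications $\zeta$. Transported along the isomorphism above, these become $\chi_{(a,i,b)} = \dash^*i$ and $\chi_{(u,s,v)} = \dash^*s$. Finally, being marked follows from \defref{marked lax natural transformation}, since $\chi$ comes from a nullary cell.
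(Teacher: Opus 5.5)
Your argument is essentially the paper's. The paper defines the $2$-functor $g^\yon \to \yon\slash g$ by $(a,i,b)\mapsto(a,\dash^*i,b)$, $(u,s,v)\mapsto(u,\dash^*s,v)$ and the identity on cells. It uses the Yoneda lemma to see that this is bijective on objects and locally invertible, and then composes the cartesian cell defining $\yon\slash g$ with this invertible cell. Your level-by-level comparison is the same construction run in the other direction, and your explicit checks of the cell equation and of composition fill in what the paper calls ``straightforward''.

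There is one step you skip that the paper states explicitly: the isomorphism must preserve the cleavages, i.e.\ it must form a horizontal $(1,1)$-ary cell $g^\yon \Rar \yon\slash g$ in $\SpTwoTwoFib$. Your final ``transport'' depends on this.
\begin{itemize}
\item An isomorphism of spans alone makes the transported $\chi$ equivariant only for the actions carried over from $\yon\slash g$.
\item But $g^\yon$ comes with its own cleavage from \conref{2-category of elements}, and that cleavage is not forced. The fibres of $g^\yon$ contain non-identity vertical isomorphisms $(\id_a, s, \id_b)$, so (op)cartesian lifts are unique only up to these.
\item The proof of \thmref{Yoneda 2-functor is dense} uses that specific cleavage, e.g.\ $\rho_{\pi_B}((a,i,b),v) = (\id_a, \id_{v_!i}, v)$.
\end{itemize}

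The check is immediate.
\begin{itemize}
\item The morphism lifts $(u,\id_{u^*i},\id_b)$ and $(\id_a,\id_{v_!i},v)$ are sent to $(u,\id,\id_b)$ and $(\id_a,\id,v)$. These are the lifts of \exref{lax comma 2-category is split op-2-fibered} and its coop-dual.
\item The cell lifts $(\alpha,\id_v)$ and $(\id_u,\beta)$ are sent to themselves, with matching (co)domains, because $\yev(\dash^*i_1 \of \yon\alpha) = (\alpha^*)_{i_1}$ and $\yev(g\beta \of \dash^*i_0) = (\beta_!)_{i_0}$.
\end{itemize}
With this added, the isomorphism is an invertible cleavage-preserving $2$-functor and hence an invertible cell. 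Its composite with the cartesian cell is therefore cartesian, as you conclude.
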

	\begin{proof}
		Consider $A \xlar{\pi_A} \yon \slash g \xrar{\pi_B} B$ as constructed in \exref{lax comma 2-category}. We will define an invertible horizontal $(1, 1)$"/ary cell $\cell\phi{g^\yon}{\yon \slash g}$ in $\SpTwoTwoFib$ that, when composed with the cartesian cell defining $\yon \slash g$, results in the marked lax natural transformation $\chi$ described above. Define the underlying $2$"/functor $\map\phi{g^\yon}{\yon \slash g}$ as follows, with $\dash^* i$ and $\dash^* s$ as in \conref{evaluation}:
		\begin{enumerate}[label=-]
			\item on objects $\phi(a, i, b) \dfn (a, \yon a \xRar{\dash^* i} gb, b)$;
			\item on morphisms $(a_0 \xrar u a_1, v_!i_0 \xrar s u^*i_1, b_0 \xrar v b_1)$, using that $\cell{v_! \dfn gv}{gb_0}{gb_1}$ and $\cell{\yon u \dfn u \of \dash}{\yon a_0}{\yon a_1}$:
				\begin{displaymath}
					\phi(u, s, v) \dfn (u, v_! \dash^*i_0 = \dash^* v_!i_0 \xRrightarrow{\dash^* s} \dash^*u^*i_1 = (u \of \dash)^*i_1, v)
				\end{displaymath}
			\item on cells $(u_0 \xRar\alpha u_1, v_0, \xRar\beta v_1)$, satisfying $(\alpha^*)_{i_1} \of s_0 = s_1 \of (\beta_!)_{i_0}$, $\phi$ acts as the identity: $\phi(\alpha, \beta) \dfn (\alpha, \beta)$.
		\end{enumerate}
		
		It is straightforward to check that so defined $\phi$ is a $2$"/functor over $A$ and $B$ that preserves the cleavages of $g^\yon$ and $\yon \slash g$, and thus forms a horizontal $(1, 1)$"/ary cell \mbox{$\cell\phi{g^\yon}{\yon \slash g}$} in $\SpTwoTwoFib$. It follows from the Yoneda lemma above that $\phi$ as a $2$"/functor restricts to a bijection on objects and is locally invertible, that is all hom"/functors $g^\yon\bigpars{(a_0, i_0, b_0), (a_1, i_1, b_1)} \to \yon \slash g \bigpars{(a_0, \dash^*i_0, b_0), (a_1, \dash^*i_1, b_1)}$ are invertible, so that $\phi$ is an invertible $2$"/functor. As a consequence $\phi$ as a cell in $\SpTwoTwoFib$ is invertible; hence its composition with the cartesian cell defining \mbox{$\yon \slash g$} is again cartesian. That this composition corresponds to the marked lax natural transformation $\chi$ of the statement is easily checked.
	\end{proof}
	
	\subsubsection*{The Yoneda $2$"/functor is dense}
	We are now ready to prove that the Yoneda $2$"/functor is dense.
	\begin{theorem} \label{Yoneda 2-functor is dense}
		Let $A$ be a locally small $2$"/category. The Yoneda $2$"/functor \mbox{$\map\yon A{\Cat^{\op A}}$} (\conref{Yoneda 2-functor}) is dense (\defref{density}) in the unital virtual equipment $\LdSpTwoTwoFib$ of locally discrete split two"/sided $2$"/fibrations (\defref{locally discrete split two-sided 2-fibration}).
	\end{theorem}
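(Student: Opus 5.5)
By \defref{density} and \propref{2-category of elements is a lax comma 2-category}, it suffices to fix a $2$"/functor $\map gB{\Cat^{\op A}}$ and show that the marked lax natural transformation $\chi$ defining $g^\yon \iso \yon \slash g$ is left Kan in $\LdSpTwoTwoFib$. By \propref{n > 0} we only need paths $\ul H = (H_1, \dotsc, H_n)$ with $n \geq 1$. Using \cororef{nullary cells description}, a nullary cell $\phi$ as in \defref{density} is a marked lax natural transformation
\[
\phi\colon \yon \of \pi_A \of \pi_{g^\yon} \Rar k \of q_n \of \pi_{H_n}
\]
of $2$"/functors $g^\yon \times_B H_1 \times_{B_1} \dotsb \times_{B_{n'}} H_n \to \Cat^{\op A}$. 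Likewise a candidate factorisation $\phi'$ is a marked lax natural transformation $g \of p_1 \of \pi_{H_1} \Rar k \of q_n \of \pi_{H_n}$ on $\prod_{\dash} \ul H$. The composite $\chi \hc \phi'$ is the vertical composite $(\chi \of \pi) \hc (\phi' \of \pi)$. So the task is a bijection $\phi' \mapsto (\chi \of \pi) \hc (\phi' \of \pi)$ between such marked transformations.

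The inverse is built with the Yoneda lemma (\lemref{Yoneda lemma}), ``evaluating at identities''. For an object $\ul h = (h_1, \dotsc, h_n)$ of $\prod_{\dash}\ul H$ with $b = p_1 h_1$, the $2$"/natural transformation $\phi'_{\ul h}\colon gb \to k(q_n h_n)$ has, at $a \in A$, the $1$"/functor component
\[
i \mapsto \yev\bigpars{\phi_{((a, i, b), \ul h)}} = \bigpars{\phi_{((a,i,b),\ul h)}}_a(\id_a)
\]
on $(gb)(a)$. On morphisms of $(gb)(a)$ it is defined by the lax naturality cells of $\phi$ at the morphisms $(\id_a, s, \id_b)$ of $g^\yon$ paired with identities. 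Its lax naturality cell at a morphism $\ul t$ of $\prod_{\dash}\ul H$ is a modification whose component at $i \in (gb)(a)$ is obtained by evaluating at $\id_a$ the cell of $\phi$ at the morphism $\bigpars{(\id_a, \id_{t_{1!}i}, p_1t_1), \ul t}$.

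The verification then splits into routine and non"/routine parts. First, each $\phi'_{\ul h}$ must be $2$"/natural in $a$. This uses condition (cm) of \cororef{nullary cells description}: the lax naturality cells of $\phi$ are identities at the cartesian lifts $\lambda(u, (a_1, i, b)) = (u, \id, \id_b)$ of \conref{2-category of elements}, which gives $\phi_{((a_0, u^*i, b), \ul h)} = \phi_{((a_1, i, b), \ul h)} \of \yon u$. Second, $\phi'$ must be lax natural and marked; conditions (im) and (om) for $\phi'$ follow from those for $\phi$ together with (im) at the $m = 1$ junction between $g^\yon$ and $H_1$. That junction condition identifies $\phi$ at $(v_!(a, i, b), \ul h)$ with $\phi$ at $((a, i, b), v^*\ul h)$, which is what lets $\phi'$ forget the $B$"/component of $g^\yon$.

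Third, $(\chi \of \pi) \hc (\phi' \of \pi) = \phi$. On components this is the Yoneda lemma: a $2$"/natural $\yon a \Rar k(q_nh_n)$ is determined by its value at $\id_a$, and $\chi_{(a, i, b)} = \dash^* i$. On lax naturality cells, each morphism $(u, s, v)$ of $g^\yon$ is decomposed using the cleavage of $g^\yon$ as a cartesian lift composed with a morphism $(\id, s, \id)$ composed with an opcartesian lift. Via the composition axiom of lax naturality, this reduces the check to these three kinds of generators, on which marking makes it immediate. Uniqueness of $\phi'$ follows the same way: precomposing with $\chi$ and evaluating at $\id_a$ recovers $\phi'$.

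I expect the main obstacle to be the second step. Showing that the cells produced by evaluation assemble into a lax natural $\phi'$, with the correct compatibility with cells of $\prod_{\dash}\ul H$ and with the marking conditions, requires carefully using the lax naturality axioms of $\phi$ for $2$"/cells of $g^\yon \times_B \prod_{\dash}\ul H$ and the condition (iec). I would handle this by treating each axiom as the image under $\yev$ of the corresponding axiom of $\phi$ at suitably chosen morphisms whose $A$"/component is an identity.
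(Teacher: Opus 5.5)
Your proposal is correct and follows essentially the paper's proof. It makes the same reduction via \propref{n > 0}, and defines $\phi'$ the same way, by evaluating $\phi$ at $\id_a$ on objects, on the morphisms $(\id_a, s, \id_b)$ and on the opcartesian lifts $\rho_{\pi_B}\bigl((a,i,b), p_1t_1\bigr)$. It likewise uses (cm) for naturality in $a$, and factors morphisms of $g^\yon$ into cartesian lift, fibre morphism and opcartesian lift to check $\chi \hc \phi' = \phi$ and uniqueness.

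One bookkeeping correction concerns which marking condition comes from where. The $m=1$ junction instance of (im) for $\phi$ is precisely what gives condition (cm) for $\phi'$. Conditions (im) and (om) for $\phi'$ instead come from those of $\phi$ together with the splitting equation (sm1). Also, (iec) needs no separate argument, since it holds automatically in $\LdSpTwoTwoFib$.
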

	\begin{proof}
		The nullary restriction $\Cat^{\op A}(\yon, g)$ exists in $\LdSpTwoTwoFib$ for any $2$"/functor $\map g B{\Cat^{\op A}}$: indeed it is the $2$"/category of elements $g^\yon$ defined in \conref{2-category of elements}, defined as such by the nullary cartesian cell $\chi$ in the right"/hand side below, as defined in the previous proposition. Thus, according to \defref{density}, we need to prove that for any such $2$"/functor $g$ and any iterated span \mbox{$\ul J = (A_0 \xlar{p_1} J_1 \xrar{q_1} \dotsb \xlar{p_n} J_n \xrar{q_n} A_n)$} of locally discrete split two"/sided $2$"/fibrations, any marked lax natural transformation $\phi$ on the left"/hand side below factors uniquely through $\chi$ as a lax natural transformation $\phi'$, as shown. Recall that $\phi$ here is a transformation $\yon \of \pi_A \of \pi_{g^\yon} \Rar h \of q_n \of \pi_{J_n}$ of $2$"/functors $g^\yon \times_B \prod_{\dash} \ul J \to \Cat^{\op A}$; see \defref{marked lax natural transformation} and \cororef{nullary cells description}. Since $\LdSpTwoTwoFib$ is unital (\propref{LdSpTwoTwoFib is a unital virtual equipment}) it suffices to restrict to the case $n > 0$ by \propref{n > 0}. The proof consists of three steps: (1) the equation below uniquely determines $\phi'$ and, thus defined, $\phi'$ satisfies the equation, (2) is a well"/defined lax natural transformation and (3) is marked.
		\begin{displaymath}
			\begin{tikzpicture}[textbaseline]
				\matrix(m)[math35]{A & B & B_1 & B_{n'} & B_n \\ & & \Cat^{\op A} & & \\};
				\path[map]	(m-1-1) edge[barred] node[above] {$g^\yon$} (m-1-2)
														edge[transform canvas={yshift=-2pt}] node[below left] {$\yon$} (m-2-3)
										(m-1-2) edge[barred] node[above] {$J_1$} (m-1-3)
										(m-1-4) edge[barred] node[above] {$J_n$} (m-1-5)
										(m-1-5) edge[transform canvas={yshift=-2pt}] node[below right] {$h$} (m-2-3);
				\path				(m-1-3) edge[cell] node[right] {$\phi$} (m-2-3);
				\draw				($(m-1-3)!0.5!(m-1-4)$) node {$\dotsb$};
			\end{tikzpicture} \quad = \quad \begin{tikzpicture}[textbaseline]
				\matrix(m)[math35]{A & B & B_1 & B_{n'} & B_n \\ & \Cat^{\op A} & & & \\};
				\path[map]	(m-1-1) edge[barred] node[above] {$g^\yon$} (m-1-2)
														edge[transform canvas={yshift=-2pt}, ps] node[below left] {$\yon$} (m-2-2)
										(m-1-2) edge[barred] node[above] {$J_1$} (m-1-3)
														edge[ps] node[right] {$g$} (m-2-2)
										(m-1-4) edge[barred] node[above] {$J_n$} (m-1-5)
										(m-1-5) edge[transform canvas={yshift=-2pt}] node[below right] {$h$} (m-2-2);
				\path				(m-1-2) edge[cell, transform canvas={shift={(3em,0.333em)}}] node[right] {$\phi'$} (m-2-2)
										(m-1-2) edge[cell, transform canvas={shift={(-1.2em,0.333em)}}] node[right] {$\chi$} (m-2-2);
				\draw				($(m-1-3)!0.5!(m-1-4)$) node {$\dotsb$};
			\end{tikzpicture}
  	\end{displaymath}
  	
  	\emph{Step 1: $\phi = \chi \hc \phi'$ uniquely determines $\phi'$.} We claim that a marked lax natural transformation $\cell{\phi'}{g \of p_1 \of \pi_{J_1}}{h \of q_n \of \pi_{J_n}}$ of $2$"/functors $\prod_{\dash} \ul J \to \Cat^{\op A}$ (see \cororef{nullary cells description}) satisfies the equation $\phi = \chi \hc \phi'$ above if and only if the following two conditions hold. Firstly, for each $\ul i = (i_1, \dotsc, i_n) \in \prod_{\dash} \ul J$, the $2$"/natural transformation component $\cell{\phi'_{\ul i}}{gp_1i_1}{hq_ni_n}$, with $gp_1i_1$, $\map{hq_ni_n}{\op A}{\Cat}$, consists of $1$"/functor components $\map{(\phi'_{\ul i})_a}{(gp_1i_1)(a)}{(hq_ni_n)(a)}$ that are defined as in (a) and (b) below (here $\yev$ is the evaluation $1$"/functor of \conref{evaluation}). Secondly, for each $\map{\ul s}{\ul i}{\ul j}$ in $\prod_{\dash} \ul J$ the lax naturality modification (i.e.\ a cell in $\Cat^{\op A}$) $\phi'_{\ul s}\colon hq_ns_n \of \phi'_{\ul i} \Rrightarrow \phi'_{\ul j} \of gp_1s_1$ (\exref{lax natural transformation}) consists of $1$"/natural transformation components $(\phi'_{\ul s})_a$ whose components in $(hq_nj_n)(a)$ are defined as in (c) below.
  	\begin{enumerate} [label=(\alph*)]
  		\item objects $i \in (gp_1i_1)(a)$ are sent to $(\phi'_{\ul i})_a(i) \dfn \yev(\phi_{(a, i, p_1i_1) \conc \ul i})$, where \mbox{$(a, i, p_1i_1) \in g^\yon$} and $\cell{\phi_{(a, i, p_1i_1) \conc \ul i}}{\yon a}{hq_ni_n}$;
  		\item morphisms $\map sij$ in $(gp_1i_1)(a)$ are sent to $(\phi'_{\ul i})_a(s) \dfn \yev(\phi_{(\id_a, s, \id_{p_1i_1})\conc \id_{\ul i}})$, where $\map{(\id_a, s, \id_{p_1i_1})}{(a, i, p_1i_1)}{(a, j, p_1i_1)}$ is in $g^\yon$ and where $\phi_{(\id_a, s, \id_{p_1i_1}) \conc \id_{\ul i}}\colon \phi_{(a, i, p_1i_1) \conc \ul i} \Rrightarrow \phi_{(a, j, p_1i_1) \conc \ul i}$ is a lax naturality modification of $\phi$;
  		\item at $i \in (gp_1i_1)(a)$ the component $((\phi'_{\ul s})_a)_i$ is
  		\begin{align*}
  			(hq_ns_n \of \phi'_{\ul i})_a(i) &= \yev(\phi_{(a, i, p_1i_i) \conc \ul i} \hc hq_ns_n) \\ 
  			&\xrar{\yev({\phi_{\rho_{\pi_B}((a, i, p_1i_1), p_1s_1) \conc \ul s}})} \yev(\phi_{(a, (p_1s_1)_!i, p_1j_1) \conc \ul j})\\
  			&= (\phi'_{\ul j})_a((p_1s_1)_!i) = (\phi'_{\ul j} \of gp_1s_1)_a(i),
  		\end{align*}
  		where the morphism is the evaluation of the lax naturality modification of $\phi$ at the concatenation of the opcartesian lift of $\pi_B(a, i, p_1i_1) = p_1i_1 \xrar{p_1s_1} p_1j_1$ in $g^\yon$ (\conref{2-category of elements}) and $\ul s \in \prod_{\dash} \ul J$.
  	\end{enumerate}
  	
  	To prove the if"/part of the claim assume that $\phi'$ is defined as above. To show that the $2$"/natural transformation components of $\chi \hc \phi'$ and $\phi$ coincide at the object $(a, i, p_1i_1) \conc \ul i$ of $g^\yon \times_B \prod_{\dash} \ul J$, by the Yoneda lemma (\lemref{Yoneda lemma}) it suffices to show that their images under evaluation coincide, which they do, by (a) above:
  	\begin{align*}
  		\yev \bigpars{\chi \hc \phi')_{(a, i, p_1i_1) \conc \ul i}} &= \yev \bigbrks{\yon a \xRar{\dash^*  i} gp_1i_1 \xRar{\phi'_{\ul i}} hq_ni_n} \\
  		&= (\phi'_{\ul i})_a(\id_a^* i) = (\phi'_{\ul i})_a(i) = \yev(\phi_{(a, i, p_1i_1) \conc \ul i}).
  	\end{align*}
  	That the lax naturality modifications of $\chi \hc \phi'$ and $\phi$ too coincide, at any morphism $\map{(u, s, p_1s_1) \conc \ul s}{(a_0, i, p_1i_1) \conc \ul i}{(a_1, j, p_1j_1) \conc \ul j}$ in $g^\yon \times_B \prod_{\dash} \ul J$ with $\map s{(p_1s_1)_!i}{u^*j}$ in $(gp_1j_1)(a_0)$, similarly follows from
  	\begin{align*}
  		\yev\bigpars{(\chi \hc \phi'&)_{(u, s, p_1s_1) \conc \ul s}} \overset{(1)}= \yev\bigpars{(\phi'_{\ul s} \of \chi_{(a_0, i, p_1i_1)}) \hc (\phi'_{\ul j} \of \chi_{(u, s, p_1s_1)})} \\
  		&\overset{(2)}= \yev\bigpars{(\phi'_{\ul s} \of \dash^* i) \hc (\phi'_{\ul j} \of \dash^* s)} = (\phi'_{\ul j})_{a_0}(s) \of ((\phi'_{\ul s})_{a_0})_i \\
  		&\overset{(3)}= \yev(\phi_{(\id_{a_0}, s, \id_{p_1j_1}) \conc \id_{\ul j}}) \of \yev({\phi_{\rho_{\pi_B}((a_0, i, p_1i_1), p_1s_1) \conc \ul s}}) \\
  		&\overset{(4)}= \yev(\phi_{\rho_{\pi_B}((a_0, i, p_1i_1), p_1s_1) \conc \ul s} \hc \phi_{(\id_{a_0}, s, \id_{p_1j_1}) \conc \id_{\ul j}} \hc \phi_{\lambda_{\pi_A}(u, (a_1, j, p_1j_1)) \conc \id_{\ul j}}) \\
  		&\overset{(5)}= \yev(\phi_{(u, s, p_1s_1) \conc \ul s}).
  	\end{align*}
  	Here the labelled identities follow from (1) the definition of composition of lax natural transformations (see e.g.\ Definition~4.2.15 of \cite{Johnson-Yau21}); (2) the definition of $\chi$ (\propref{2-category of elements is a lax comma 2-category}); (3) assumptions (b) and (c) above; (4) the functoriality of $\yon$ and the fact that $\phi$ is marked, so that $\phi_{\lambda_{\pi_A}(u, (a_1, j, p_1j_1))\conc \id_{\ul j}}$ is an identity modification by \cororef{nullary cells description}(cm); (5) the axioms satisfied by the lax naturality modifications of $\phi$ (\exref{lax natural transformation}) together with the definition of (op)cartesian morphisms lifts and that of morphism composition in $g^\yon$ (\conref{2-category of elements}).
  	
  	The only"/if"/part follows similarly: if $\chi \hc \phi' = \phi$ then, reusing instances of parts of the equalities above as well as that $\rho_{\pi_B}((a, i, p_1i_1), p_1s_1) = (\id_a, \id_{(p_1s_1)_!i}, p_1s_1)$ in $g^\yon$ (\conref{2-category of elements}), we have
  	\begin{align*}
  		(\phi'_{\ul i})_a(i) &= \yev\bigpars{(\chi \hc \phi')_{(a, i, p_1i_1) \conc \ul i}} = \yev(\phi_{(a, i, p_1i_1) \conc \ul i}); \\
  		(\phi'_{\ul i})_a(s) &= (\phi'_{\ul i})_a(s) \of ((\phi'_{\id_{\ul i}})_a)_i =  \yev\bigpars{(\chi \hc \phi')_{(\id_a, s, \id_{p_1i_1}) \conc \id_{\ul i}}} = \yev(\phi_{(\id_a, s, \id_{p_1i_1}) \conc \id_{\ul i}}); \\
  		((\phi'_{\ul s})_a)_i &= (\phi'_{\ul j})_a(\id_{(p_1s_1)_!i}) \of ((\phi'_{\ul s})_a)_i = \yev\bigpars{(\chi \hc \phi')_{(\id_a, \id_{(p_1s_1)_!i}, p_1s_1) \conc \ul s}} \\
  		&=\yev({\phi_{\rho_{\pi_B}((a, i, p_1i_1), p_1s_1) \conc \ul s}}).
  	\end{align*}
  	
  	\emph{Step 2: $\phi'$ is well"/defined.} Next we prove that the assignments directly above make $\phi'$ into a well"/defined lax natural transformation \mbox{$g \of p_1 \of \pi_{J_1} \Rar h \of q_n \of \pi_{J_n}$} of $2$"/functors $\prod_{\dash} \ul J \to \Cat^{\op A}$ (\cororef{nullary cells description}). That $s \mapsto (\phi'_{\ul i})_a(s)$ is functorial, that is $\map{(\phi'_{\ul i})_a}{(gp_1i_1)(a)}{(hq_ni_n)(a)}$ is a well"/defined $1$"/functor, follows immediately from the axioms for the lax naturality modifications of $\phi$ (\exref{lax natural transformation}).
  	
  	To show that $a \mapsto (\phi'_{\ul i})_a$ is $1$"/natural with respect to $\map u{a_0}{a_1}$ means proving the equation $(\phi'_{\ul i})_{a_0} \of (gp_1i_1)(u) = (hq_ni_n)(u) \of (\phi'_{\ul i})_{a_1}$ of $1$"/functors \mbox{$(gp_1i_1)(a_1) \to (hq_ni_n)(a_0)$}. That the equation holds at any morphism $\map sij$ in $(gp_1i_1)(a_1)$ is shown below, where the labelled identities follow from (1) \cororef{nullary cells description}(cm); (2) the definitions of cartesian lifts and morphism composition in $g^\yon$ (\conref{2-category of elements}) as well as the lax naturality axioms (\exref{lax natural transformation}); (3)~the naturality of the components of the modification $\phi_{(\id_{a_1}, s, p_1i_1) \conc \ul i}\colon \yon a_1 \Rrightarrow hq_ni_n$.
  	\begin{align*}
  		(\phi'_{\ul i})_{a_0} \of (gp_1i_1&)(u)(s) = (\phi'_{\ul i})_{a_0}(u^*s) = \yev(\phi_{(\id_{a_0}, u^*s, \id_{p_1i_1}) \conc \id_{\ul i}}) \\
  		&\overset{(1)}= \yev(\phi_{(\id_{a_0}, u^*s, \id_{p_1i_1}) \conc \id_{\ul i}} \hc \phi_{\lambda_{\pi_A}(u, (a_1, j, p_1i_1)) \conc \id_{\ul i}}) \\
  		&\overset{(2)}= \yev\bigpars{\phi_{\lambda_{\pi_A}(u, (a_1, i, p_1i_1)) \conc \id_{\ul i}} \hc (\phi_{(\id_{a_1}, s, \id_{p_1i_1}) \conc \id_{\ul i}} \of \yon u)} \\
  		&\overset{(1)}= \yev(\phi_{(\id_{a_1}, s, \id_{p_1i_1}) \conc \id_{\ul i}} \of \yon u) = \bigpars{(\phi_{(\id_{a_1}, s, \id_{p_1i_1}) \conc \id_{\ul i}})_{a_0}}_u \\
  		&= \bigpars{(\phi_{(\id_{a_1}, s, \id_{p_1i_1}) \conc \id_{\ul i}})_{a_0} \of (\yon a_1)(u)}_{\id_{a_1}} \\
  		&\overset{(3)}= \bigpars{(hq_ni_n)(u) \of (\phi_{(\id_{a_1}, s, \id_{p_1i_1}) \conc \id_{\ul i}})_{a_1}}_{\id_{a_1}} = (hq_ni_n)(u) \of (\phi'_{\ul i})_{a_1}(s)
  	\end{align*}
  	
  	The author leaves to the reader the proofs of the $2$"/naturality of $a \mapsto (\phi'_{\ul i})_a$, with respect to cells $\cell\alpha{u_0}{u_1}$ in $A$, and the $1$"/naturality of $i \mapsto ((\phi'_{\ul s})_a)_i$, with respect to morphisms $\map sij$ in $(gp_1i_1)(a)$; their arguments are similar to the one directly above. We conclude that each $\cell{\phi'_{\ul i}}{gp_1i_1}{hq_ni_n}$ is a well"/defined $2$"/natural transformation and that each $\cell{(\phi'_{\ul s})_a}{(hq_ns_n \of \phi'_{\ul i})_a}{(\phi'_{\ul j} \of gp_1s_1)_a}$ is a well"/defined $1$"/natural transformation. Naturality of the modification $\phi'_{\ul s}$ means that the equation $(\phi'_{\ul s})_{a_0} \of (gp_1i_1)(u) = (hq_ni_n)(u) \of (\phi'_{\ul s})_{a_1}$ of $1$"/natural transformations holds for any morphism $\map u{a_0}{a_1}$ in $A$. That their components at $i \in (gp_1i_1)(a_1)$ coincide is shown below, where (1) follows from \cororef{nullary cells description}(cm); (2) follows from the lax naturality axioms and \thmref{split two-sided 2-fibration description}(cl); and (3) combines several identities, analogous to the last five identities above.
  	\begin{align*}
  		\bigpars{(\phi'_{\ul s})_{a_0} \of (g&p_1i_1)(u)}_i = ((\phi'_{\ul s})_{a_0})_{u^* i} = \yev(\phi_{\rho_{\pi_B}((a_0, u^*i, p_1i_1), p_1s_1) \conc \ul s}) \\
  		&\overset{(1)}= \yev(\phi_{\rho_{\pi_B}((a_0, u^*i, p_1i_1), p_1s_1) \conc \ul s} \hc \phi_{\lambda_{\pi_A}(u, (a_1, (p_1s_1)_!i, p_1j_1)) \conc \id_{\ul j}}) \\
  		&\overset{(2)}= \yev\bigpars{(hq_ns_n \of \phi_{\lambda_{\pi_A}(u, (a_1, i, p_1i_1)) \conc \id_{\ul i}}) \hc (\phi_{\rho_{\pi_B}((a_1, i, p_1i_1), p_1s_1) \conc \ul s} \of \yon u)} \\
  		&\overset{(3)}= \bigpars{(hq_ni_n)(u) \of (\phi'_{\ul s})_{a_1}}_i
  	\end{align*}
  	
  	It remains to prove the three axioms required for the lax naturality modifications $\phi'_{\ul s}$; see e.g.\ Section~I,2.4 of \cite{Gray74} or Definition~4.2.1 of \cite{Johnson-Yau21}. That \mbox{$\ul s \mapsto \phi'_{\ul s}$} is natural with respect to cells $\map{\ul\sigma}{\ul s}{\ul t}$ in $\prod_{\dash} \ul J$ follows from the equality below.
  	\begin{align*}
  		\bigpars{\bigpars{(hq_n\sigma_n \of \phi'_{\ul i}) \hc \phi'_{\ul t}}_a}_i &= \yev\bigpars{(hq_n\sigma_n \of \phi_{(a, i, p_1i_1) \conc \ul i}) \hc \phi_{\rho_{\pi_B}((a, i, p_1i_1), p_1t_1) \conc \ul t)}} \\
  		&= \yev\bigpars{\phi_{\rho_{\pi_B}((a, i, p_1i_1), p_1s_1) \conc \ul s} \hc \phi_{(\id_a, (p_1\sigma_1)_!i, \id_{p_1j_1}) \conc \id_{\ul j})}} \\
  		&= (\phi'_{\ul j})_a((p_1\sigma_1)_{!i}) \of ((\phi'_{\ul s})_a)_i = \bigpars{\bigpars{\phi'_{\ul s} \hc (\phi'_{\ul j} \of gp_1\sigma_1)}_a}_i
  	\end{align*}
  	Here the middle identity follows from the naturality of the lax naturality modifications of $\phi$ with respect to the cell $(\id_{\id_a}, p_1\sigma_1) \conc \ul\sigma \in g^\yon \times_B \prod_{\dash} \ul J$ where
  	\begin{displaymath}
  		\cell{(\id_{\id_a}, p_1\sigma_1)}{(\id_a, (p_1\sigma_1)_{!i}, \id_{p_1j_1}) \of \rho_{\pi_B}((a, i, p_1i_1), p_1s_1)}{\rho_{\pi_B}((a, i, p_1i_1), p_1t_1)}
  	\end{displaymath}
  	in $g^\yon$.
  	
  	That $\ul s \mapsto \phi'_{\ul s}$ preserves identities, that is $\phi'_{\id_{\ul i}} = \id_{\phi'_{\ul i}}$, is easily checked. That it preserves the composite of $\map{\ul s}{\ul i}{\ul j}$ and $\map{\ul t}{\ul j}{\ul k}$ is shown below, where the middle identity follows from the fact that the lax naturality modifications of $\phi$ do so, together with the splitting equation \defref{split op-2-fibration}(sm$\of$). This completes the proof of $\phi'$ being a well"/defined lax natural transformation.
  	\begin{align*}
  		\bigpars{\bigpars{(hq_nt_n \of \phi'_{\ul s}&) \hc (\phi'_{\ul t} \of gp_1s_1)}_a}_i \\
  		&= \yev\bigpars{(hq_nt_n \of \phi_{\rho_{\pi_B}((a, i, p_1i_1), p_1s_1) \conc \ul s}) \hc \phi_{\rho_{\pi_B}((p_1s_1)_!(a, i, p_1i_1), p_1t_1) \conc \ul t}} \\
  		&= \yev(\phi_{\rho_{\pi_B}((a, i, p_1i_1), p(t_1 \of s_1)) \conc \ul t \of \ul s}) = ((\phi'_{\ul t \of \ul s})_a)_i
  	\end{align*}
  	
  	\emph{Step 3: $\phi'$ is marked.} Finally we show that $\phi'$, as defined in the first step, is marked in the sense of \defref{marked lax natural transformation} and \cororef{nullary cells description}. To show that it satisfies (cm) of the latter corollary consider $\ul i \in \prod_{\dash} \ul J$ and $\map ub{p_1i_1}$ in $B$ as well as $i \in (gb)(a)$. By definition of $\phi'$
  	\begin{displaymath}
  		\bigpars{\bigpars{\phi'_{(\lambda_{p_1}(u, i_1), \id_{i_2}, \dotsc, \id_{i_n})}}_a}_i = \yev\bigpars{\phi_{(\rho_{\pi_B}((a, i, b), u), \lambda_{p_1}(u, i_1), \id_{i_2}, \dotsc, \id_{i_n})}}
  	\end{displaymath}
  	which is an identity cell, as required, by \cororef{nullary cells description}(im) applied to $\phi$ at $B$. That $\phi'$ satisfies conditions (im) and (om) of the corollary follows easily from the fact that $\phi$ does so, using that $\rho_{\pi_B}((a, i, p_1i_1), \id_{p_1i_1}) = \id_{(a, i, p_1i_1)}$ by the splitting equation \defref{split op-2-fibration}(sm1). This completes the proof.
	\end{proof}
	
	In closing this section we compare, in the case of the Yoneda $2$"/functor $\map\yon 1\Cat$ for the terminal $2$"/category $A = 1$, our notion of density (\defref{density}) to the notion of density considered in \cite{Mesiti24}.
	\begin{remark} [Partly recovering Mesiti's notion of density] \label{comparison with Mesiti's notion of density}
		Notice that $\map\yon 1\Cat$ simply picks out the terminal $1$"/category $1 \in \Cat$. Theorem~4.11 of \cite{Mesiti24} describes the density of $\yon$ as follows. Given any $2$"/functor $\map gB\Cat$ with $B$ a small $2$"/category, consider the marked lax natural transformation $\chi$ of \propref{2-category of elements is a lax comma 2-category}, that defines the $2$"/category of elements $g^\yon$ as the comma $2$"/category of $\map\yon 1\Cat$ and $g$. Mesiti proves that $\chi$ defines $g$ as the pointwise left Kan extension of the constant $2$"/functor $\map{\Delta_1}{g^\yon}\Cat$ at $1$ along the locally discrete split op"/$2$"/fibration $\map{\pi_B}{g^\yon}B$ (\conref{2-category of elements}), in the sense of Definition~3.12 of op.\ cit. The latter is unpacked in the proof of Theorem~4.11 of the reference as follows: for each object $b \in B$ and $1$"/category $X \in \Cat$, composition with $\chi$ induces an isomorphism of the $1$"/functor"/category $\Cat(gb, X)$ and the category of ``cartesian"/marked'' oplax natural transformations (Definition~2.16 of op.\ cit.) of the form $B(\pi_B, b) \Rar \Cat(\Delta_1, gb)\colon \op{(g^\yon)} \to \Cat$.
		
		The bijection"/on"/objects obtained by restricting the latter isomorphism of categories can be recovered from the `$\ul H = (B)$ is empty'"/case of \defref{density} applied to the marked lax natural transformation $\chi$ associated to the $2$"/category of elements $(gb)^\yon$, as in the right"/hand side of the equation below, and which is a left Kan cell in $\LdSpTwoTwoFib$ by \thmref{Yoneda 2-functor is dense}, as follows. Firstly it is straightforward to check that the cartesian"/marked oplax natural transformations in question correspond precisely to marked lax natural transformations (\defref{marked lax natural transformation}) $\cell\phi{\Delta_1}{\map{\Delta_X}{g^\yon \times_B B \slash b}\Cat}$ as below left, where $B \slash b$ is the lax comma $2$"/category of $\id_B$ and $\map b1B$ (\exref{lax comma 2-category}). By \cororef{nullary cells description} such transformations correspond precisely to the nullary cells $\cell\phi{(g^\yon, B \slash b)}\Cat$ of $\LdSpTwoTwoFib$ as on the left"/hand side of the equation below. Using the universal properties of $\chi$ and the ``cocartesian cell'' in the right"/hand side (see Lemma~8.1 of \cite{Koudenburg20}, using that $B \slash b$ is the ``companion'' of $\map b1B$), this equation determines a bijection between such nullary cells $\phi$ and vertical cells (that is $2$"/natural transformations) $\cell\psi{gb}X$, as in the right"/hand side. We recover the asserted bijection"/on"/objects by noticing that the latter are simply $1$"/functors $gb \to X$.
		\begin{displaymath}
			\begin{tikzpicture}[textbaseline]
				\matrix(m)[math35]{g^\yon \times_B B \slash b \\ \Cat \\};
				\path[map]	(m-1-1) edge[bend right=45] node[left] {$\Delta_1$} (m-2-1)
														edge[bend left=45] node[right] {$\Delta_X$} (m-2-1);
				\path[transform canvas={yshift=-4pt}]	([xshift=-8.5pt]$(m-1-1)!0.5!(m-2-1)$) edge[cell] node[above, xshift=-1pt] {$\phi$} ([xshift=9.5pt]$(m-1-1)!0.5!(m-2-1)$);
			\end{tikzpicture} \qquad \qquad \begin{tikzpicture}[textbaseline]
				\matrix(m)[math35]{ 1 & B & 1 \\ & \Cat & \\ };
					\path[map]	(m-1-1) edge[barred] node[above] {$g^\yon$} (m-1-2)
															edge node[below left] {$\yon$} (m-2-2)
											(m-1-2) edge[barred] node[above] {$B \slash b$} (m-1-3)
											(m-1-3) edge node[below right] {$X$} (m-2-2);
					\path				(m-1-2) edge[cell] node[right] {$\phi$} (m-2-2);
			\end{tikzpicture} = \begin{tikzpicture}[textbaseline]
				\matrix(m)[math35, column sep={1.75em,between origins}]{ 1 & & B & & 1 \\ & 1 & & 1 & \\ & & \Cat & & \\ };
					\path[map]	(m-1-1) edge[barred] node[above] {$g^\yon$} (m-1-3)
											(m-1-3) edge[barred] node[above] {$B \slash b$} (m-1-5)
											(m-2-2) edge[barred] node[above, inner sep=1pt] {$(gb)^\yon$} (m-2-4)
															edge node[left] {$\yon$} (m-3-3)
											(m-2-4) edge[bend right = 15] node[above left, xshift=8pt, yshift=2pt] {$gb$} (m-3-3)
											(m-2-4) edge[bend left = 45] node[right] {$X$} (m-3-3);
					\path				(m-1-1) edge[eq] (m-2-2)
											(m-1-5) edge[eq] (m-2-4);
					\path[transform canvas={yshift=0.25em, xshift=-4pt}]	(m-2-3) edge[cell] node[right, inner sep=2pt] {$\chi$} (m-3-3);
					\path[transform canvas={xshift=-6pt}]				(m-2-4) edge[cell] node[right, inner sep=2pt] {$\psi$} (m-3-4);
					\draw				($(m-1-3)!0.4!(m-2-3)$) node[font=\scriptsize] {$\cocart$};
			\end{tikzpicture}
		\end{displaymath}
	\end{remark}
	
	\section[The Yoneda 2-functor satisfies the Yoneda axiom]{The Yoneda $2$"/functor satisfies the Yoneda axiom} \label{Yoneda axiom section}
	Here we complete the proof of the Yoneda $2$"/functors $\map\yon A{\Cat^{\op A}}$ (\conref{Yoneda 2-functor}) forming a formal Yoneda embedding in the augmented virtual equipment $\SfLdSpTwoTwoFib$ of locally discrete split two"/sided $2$"/fibrations with small fibers (\defsref{locally discrete split two-sided 2-fibration}{small fibres}). Having shown that $\yon$ is dense in \thmref{Yoneda 2-functor is dense}, it remains to show that $\yon$ satisfies the `Yoneda axiom' below; this is Definition~4.5 of \cite{Koudenburg24}.
	\begin{definition} \label{Yoneda embedding}
		Let $A$ be a unital object (\defref{cartesian cells}) in an augmented virtual double category. A dense morphism (\defref{density}) $\map\yon A{\ps A}$ is called a \emph{Yoneda embedding} if it satisfies the \emph{Yoneda axiom}: for every horizontal morphism \mbox{$\hmap JAB$} there exists a vertical morphism $\map{\cur J}B{\ps A}$ equipped with a nullary cartesian cell (\defref{cartesian cells})
		\begin{displaymath}
			\begin{tikzpicture}
				\matrix(m)[math35, column sep={1.75em,between origins}]{A & & B \\ & \ps A. & \\};
				\path[map]	(m-1-1) edge[barred] node[above] {$J$} (m-1-3)
														edge[transform canvas={xshift=-2pt}] node[left] {$\yon$} (m-2-2)
										(m-1-3) edge[transform canvas={xshift=2pt}] node[right] {$\cur J$} (m-2-2);
				\draw				([yshift=0.333em]$(m-1-2)!0.5!(m-2-2)$) node[font=\scriptsize] {$\cart$};
			\end{tikzpicture}
	  \end{displaymath}
	\end{definition}
	
	\subsubsection*{Inverse Grothendieck construction}
	We begin by associating to each locally discrete split two"/sided $2$"/fibration $\hmap JAB$ with small fibres a $2$"/functor $\map{\cur J}B{\Cat^{\op A}}$. In \thmref{Yoneda axiom} below we will show that it comes equipped with a cartesian cell as above. The assignment $\dash \mapsto \cur\dash$ is called the \emph{inverse Grothendieck construction}. Restricting the construction below to $A = 1$, the terminal $2$"/category, recovers the op"/dual of Construction~3.2 of \cite{Lambert24}.
	\begin{construction}[Inverse Grothendieck construction] \label{curry J}
		Let $A$ and $B$ be (possibly large) $2$"/categories and let $J = \brks{A \xlar p J \xrar q B}$ be a locally discrete two"/sided $2$"/fibration with small fibres (\defsref{locally discrete split two-sided 2-fibration}{small fibres}). We will associate to $J$ a $2$"/functor $\map f{\op A \times B}\Cat$ that, under the cartesian closed structure on the $2$"/category $\TwoCat'$ of large $2$"/categories, $2$"/functors and $2$"/natural transformations, corresponds to a $2$"/functor $B \to \Cat^{\op A}$ that we denote by $\cur J$. The proposition below proves that the following assignments define a $2$"/functor $\map f{\op A \times B}\Cat$.
		\begin{enumerate}[label=-]
			\item Send objects $(a, b)$ to the fibres $f(a, b) \dfn J_{(a, b)}$ (\defref{small fibres}).
			\item Send morphisms $(a_0 \xrar u a_1, b_0 \xrar v b_1)$ to the $1$"/functors \mbox{$\map{f(u, v) \dfn v_!u^*}{J_{(a_1, b_0)}}{J_{(a_0, b_1)}}$} given by $f(u, v)(i) = v_!u^* i$ and $f(u, v)(i \xrar s j) = v_!u^* s$, the unique lift making the right square below commute.
			\begin{displaymath}
				\begin{tikzpicture}
					\matrix(m)[math35, column sep={4.5em,between origins}]{ i & u^*i & v_!u^*i \\ j & u^*j & v_!u^* j \\ };
					\path[map]	(m-1-1) edge node[left] {$s$} (m-2-1)
											(m-1-2) edge node[above] {$\lambda(u, i)$} (m-1-1)
															edge[dashed] node[right] {$u^*s$} (m-2-2)
															edge node[above] {$\rho(u^*i, v)$} (m-1-3)
											(m-1-3) edge[dashed] node[right] {$v_!u^* s$} (m-2-3)
											(m-2-2) edge node[below] {$\lambda(u, j)$} (m-2-1)
															edge node[below] {$\rho(u^*j, v)$} (m-2-3);
				\end{tikzpicture}
			\end{displaymath}
			\item Send cells $(u_0 \xRar\alpha u_1, v_0 \xRar\beta v_1)$ to the $1$"/natural transformation $f(\alpha, \beta) \dfn \beta_!\alpha^*$ below left, whose component at $i \in J(a_1, b_0)$ is the unique lift $(\beta_!\alpha^*)_i$ making the right square below right commute. Here $\alpha_!\lambda(u_0, i)$ is the target of the opcartesian lift of $\cell\alpha{p\lambda(u_0, i)}{u_1}$ and $\beta^*\rho(u_1^*i, v_1)$ is the source of the cartesian lift of $\cell\beta{v_0}{q\rho(u_1^*i, v_1)}$ (\defref{split op-2-fibration}(cl) and its coop"/dual).
			\begin{displaymath}
				\begin{tikzpicture}[baseline]
					\matrix(m)[math35]{J_{(a_1, b_0)} \\ J_{(a_0, b_1)} \\};
					\path[map]	(m-1-1) edge[bend right=45] node[left] {$v_{0!}u_0^*$} (m-2-1)
															edge[bend left=45] node[right] {$v_{1!}u_1^*$} (m-2-1);
					\path[transform canvas={yshift=-4pt}]	([xshift=-8.5pt]$(m-1-1)!0.5!(m-2-1)$) edge[cell] node[above, xshift=-1pt] {$\beta_!\alpha^*$} ([xshift=9.5pt]$(m-1-1)!0.5!(m-2-1)$);
				\end{tikzpicture} \qquad\qquad \begin{tikzpicture}[baseline]
					\matrix(m)[math35, column sep={6.5em,between origins}]
						{ i & u_0^*i & v_{0!}u_0^*i \\
								& u_1^*i & v_{1!}u_1^*i \\ };
					\path[map]	(m-1-2) edge node[above] {$\alpha_!\lambda(u_0, i)$} (m-1-1)
															edge[dashed] node[right] {$\alpha^*_i$} (m-2-2)
															edge node[above] {$\rho(u_0^*i, v_0)$} (m-1-3)
											(m-1-3) edge[dashed] node[right] {$(\beta_!\alpha^*)_i$} (m-2-3)
											(m-2-2) edge node[below left] {$\lambda(u_1, i)$} (m-1-1)
															edge node[below] {$\beta^*\rho(u_1^*i, v_1)$} (m-2-3);
				\end{tikzpicture}
			\end{displaymath}
		\end{enumerate}
	\end{construction}
	\begin{proposition}
		The assignments above combine into a well"/defined $2$"/functor $\map f{\op A \times B}\Cat$.
	\end{proposition}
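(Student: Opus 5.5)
I will verify in turn that the assignments of \conref{curry J} are well defined, that each $f(u,v)$ is a $1$-functor, that each $f(\alpha,\beta)$ is $1$-natural, and finally that $f$ strictly preserves composition and identities of morphisms and cells of $\op A \times B$. Throughout, the main tools are the splitting equations of \defref{split op-2-fibration} (and their coop-duals for $\lambda$), the verticality conditions \thmref{split two-sided 2-fibration description}(v), the commutation conditions (cl) and (cc), and the uniqueness of lifts coming from local discreteness (\defref{locally discrete split two-sided 2-fibration}(ld)).

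\emph{Well-definedness on objects and morphisms.} By (v), $\lambda(u,i)$ is $q$-vertical and $\rho(u^*i,v)$ is $p$-vertical. Hence $u^*i$ lies in $J_{(a_0,b_0)}$ and $v_!u^*i$ lies in $J_{(a_0,b_1)}$. For a fibre morphism $s\colon i\to j$, the lift $u^*s$ exists uniquely by the cartesian property of $\lambda(u,j)$ with respect to $p$ (dual of (om1)), taking the lift of $\id_{a_0}$. It lies in the fibre because $q(u^*s)$ is forced by $q\lambda=\id$ to be $qs=\id$. Likewise $v_!u^*s$ exists uniquely by (om1) for $\rho(u^*i,v)$, and its $p$-image is an identity by (v). Functoriality of $v_!u^*$ then follows from uniqueness of these factorisations.

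\emph{Cells.} For a cell $(\alpha,\beta)$ I define $(\beta_!\alpha^*)_i$ by the two universal properties indicated. The lift $\alpha^*_i$ uses the cartesianness of $\lambda(u_1,i)$ applied to the morphism $\alpha_!\lambda(u_0,i)$, whose $p$-image is $u_1$ and whose $q$-image is an identity. The lift $(\beta_!\alpha^*)_i$ uses the opcartesianness of $\rho(u_0^*i,v_0)$ applied to $\beta^*\rho(u_1^*i,v_1)\of\alpha^*_i$, whose $q$-image is $v_0$. Naturality in $i$ I will deduce by postcomposing with the relevant (op)cartesian morphisms and using uniqueness of factorisations. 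This needs that $\alpha_!(-)$ and $\beta^*(-)$ commute with whiskering by fibre morphisms, which I get from the splitting equation (sc$\of$) together with uniqueness of vertical lifts of cells (local discreteness).

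\emph{$2$-functoriality.} Preservation of identities follows from (sm1) and (sc1). For composition of morphisms, $f(u'\of u,v'\of v)=f(u,v')\of\ldots$ reduces to showing $(v'v)_!(uu')^*=v'_!u'^{*}v_!u^*$ appropriately reindexed. Separately, $u^*$ and $v_!$ compose strictly by (sm$\of$) and its dual. The interchange $v_!u^*=u^*v_!$ is exactly \thmref{split two-sided 2-fibration description}(cl), including compatibility of the lifts of fibre morphisms, which again follows by uniqueness of factorisation.

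For cells, I need vertical composition, $(\beta'\hc\beta)_!(\alpha'\hc\alpha)^*=\beta_!\alpha^*\hc\beta'_!\alpha'^*$, using (sc$\hc$). I also need horizontal composition (whiskering) compatibility, using (sc$\of$), (cc) of the theorem, and the fact that $f$ on a product decomposes as the separate actions in $\op A$ and $B$. The main obstacle will be this last horizontal-composition check for cells. There the components involve $\alpha_!\lambda(u_0,i)$ composed with morphism lifts, and one must show that two different uniquely-defined lifts agree. My approach is to reduce everything to the case where one of $\alpha$ or $\beta$ is an identity. This is legitimate since $(\alpha,\beta)=(\alpha,\id)\hc(\id,\beta)$ and the vertical-composition law holds. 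Then only a one-sided argument is needed, which is the coop-dual or op-dual of the corresponding step for split op-$2$-fibrations in Lambert's Construction~3.2. Finally I will verify the mixed case via condition (cc) to ensure $\beta_!$ and $\alpha^*$ commute.
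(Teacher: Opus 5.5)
Your overall architecture matches the paper: fibres via (v), functoriality and naturality by uniqueness of factorisations, identities via (sm1)/(sc1), composition of $1$-cells via (sm$\of$) and (cl), and vertical composition via the splitting equations. For horizontal composition you take a genuinely different, modular route. You decompose $(\alpha,\beta)=(\alpha,\id_{v_0})\hc(\id_{u_1},\beta)$, use interchange and the vertical law, and import one-sided $2$-functoriality from Lambert. The paper instead computes directly with general pairs $(\alpha_0,\beta_0)$, $(\alpha_1,\beta_1)$ via its auxiliary identities (a) and (b).

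However, the step you yourself flag as the main obstacle rests on the wrong hypothesis. Condition (cc) of \thmref{split two-sided 2-fibration description} is about lifting two cells at a single morphism of $J$. Here it carries no information, since it is implied by local discreteness, and it says nothing about how the components $(\alpha^*)_i$ interact with the opcartesian morphisms $\rho(\dash,v)$. The factors $(\alpha,\id_{v_0})$ and $(\id_{u_1},\beta)$ still involve non-identity $1$-cells. So your reduction to the one-sided case is only complete once you have the two mixed whiskering identities
\[
	(\alpha^*)_{v_!i} = v_!\bigl((\alpha^*)_i\bigr) \qquad\text{and}\qquad u^*\bigl((\beta_!)_i\bigr) = (\beta_!)_{u^*i},
\]
that is, $\alpha^*\of v_! = v_!\of\alpha^*$ and $u^*\of\beta_! = \beta_!\of u^*$. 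The other whiskerings, $f(\alpha,\id_v) = v_!\of\alpha^*$ and $f(\id_u,\beta) = \beta_!\of u^*$, are immediate from (sc1). The `commutation' of $\alpha^*$ and $\beta_!$ as cells is automatic, since both orders give $f(\alpha,\beta)$ by the vertical law.

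These identities come from (cl), not (cc). For the first, postcompose with $\lambda(u_1,v_!i)$ and precompose with $\rho(u_0^*i,v)$.
By definition and one use of (cl), the composite through $v_!((\alpha^*)_i)$ becomes $\rho(i,v)\of\alpha_!\lambda(u_0,i)$.
The composite through $(\alpha^*)_{v_!i}$ is $\alpha_!\lambda(u_0,v_!i)\of\rho(u_0^*i,v)$.
By (the coop-duals of) (sc$\of$) and (sc1), these are $\alpha_!$ applied to the two sides of the (cl)-square $\rho(i,v)\of\lambda(u_0,i) = \lambda(u_0,v_!i)\of\rho(u_0^*i,v)$, hence equal.
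Uniqueness of factorisations through $\lambda(u_1,v_!i)$ and $\rho(u_0^*i,v)$ then concludes.
The second identity is the same argument with $\beta^*$ and (cl) at $v_0$ and $v_1$. This is precisely the content the paper packages into its identity (b).

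With (cc) replaced by (cl) at this point your route goes through. It has the merit of isolating the genuinely two-sided content in these two short identities. To import the one-sided part from Lambert, note via (v) that $q$ restricted to the $p$-vertical part of $J$ over a fixed $a$ is a locally discrete split op-$2$-fibration with fibres $J_{(a,b)}$, and dually for $p$ over a fixed $b$.

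Minor: with the diagrammatic $\hc$ your vertical law should read $(\beta'\hc\beta)_!(\alpha'\hc\alpha)^* = \beta'_!\alpha'^*\hc\beta_!\alpha^*$.
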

	\begin{proof}
			That the images of $f(u, v)$ and the components of $f(\alpha, \beta)$ above land in the fibre $J_{(a_0, b_1)}$ is a consequence of \defref{split two-sided 2-fibration description}(v). That $f(u, v)$ as defined is $1$"/functorial follows from the uniqueness of lifts. It remains to prove the naturality of $f(\alpha, \beta)$ and the $2$"/functoriality of $f$. Here we will prove the functoriality of $f$ on composites of cells, while the author leaves the naturality of $f(\alpha, \beta)$ and the $1$"/functoriality of $f$, whose proofs are similar, to the reader.
		
		To see that $f$ preserves vertical composition of cells consider cells \mbox{$u_0 \xRar{\alpha_0} u_1 \xRar{\alpha_1} u_2$} in $A$ and $v_0 \xRar{\beta_0} v_1 \xRar{\beta_1} v_2$ in $B$; we have to prove that
		\begin{displaymath}
			 \bigbrks{v_{0!}u_0^*i \xrar{(\beta_{0!}\alpha_0^*)_i} v_{1!}u_1^*i \xrar{(\beta_{1!}\alpha_1^*)_i} v_{2!}u_2^*i} = \bigpars{(\beta_0 \hc \beta_1)_!(\alpha_0 \hc \alpha_1)^*}_i
		\end{displaymath}
		for all $i \in J_{(a_1, b_0)}$. First notice that $\map{(\alpha_1^*)_i \of (\alpha_0^*)_i = (\alpha_0 \hc \alpha_1)^*_i}{u_0^*i}{u_2^*i}$, which follows from the equality below and the fact that factorisations through cartesian cells are unique. The identities below labelled `d' follow from the definitions of $(\alpha_0^*)_i$, $(\alpha_1^*)_i$ and $(\alpha_0 \hc \alpha_1)^*_i$ while those denoted `s' use the (coop"/duals of the) splitting equations (sc$\hc$) and (sc1) of \defref{split op-2-fibration}.
		\begin{align*}
			\lambda(u_2, i) \of (\alpha_1^*&)_i \of (\alpha_0^*)_i \overset{\textup{d,s}}= \alpha_{1!}\lambda(u_1, i) \of \id_{\id_{a_0}!}(\alpha_0^*)_i \overset{\textup s}= \alpha_{1!}\bigpars{\lambda(u_1, i) \of (\alpha_0^*)_i} \\
			&\overset{\textup d}= \alpha_{1!}\bigpars{\alpha_{0!}\lambda(u_0, i)} \overset{\textup s}= (\alpha_0 \hc \alpha_1)_!\lambda(u_0, i) \overset{\textup d}= \lambda(u_2, i) \of (\alpha_0 \hc \alpha_1)^*_i
		\end{align*}
		The required equality similarly follows from the equality below.
		\begin{align*}
			(\beta_{1!} \alpha_1^*&)_i \of (\beta_{0!}\alpha_0^*)_i \of \rho(u_0^*i, v_0) \overset{\textup d}= (\beta_{1!} \alpha_1^*)_i \of \beta_{0*}\rho(u_1^*i, v_1) \of (\alpha_0^*)_i \\
			&\overset{\textup s}= \beta_0^*\bigpars{(\beta_{1!}\alpha_1^*)_i \of \rho(u_1^*i, v_1)} \of (\alpha_0^*)_i \overset{\textup d}= \beta_0^*\bigpars{\beta_1^*\rho(u_2^*i, v_2) \of (\alpha_1^*)_i} \of (\alpha_0^*)_i \\
			&\overset{\textup s}= (\beta_0 \hc \beta_1)^*\rho(u_2^*i, v_2) \of (\alpha_1^*)_i \of (\alpha_0^*)_i = (\beta_0 \hc \beta_1)^*\rho(u_2^*i, v_2) \of (\alpha_0 \hc \alpha_1)^*_i \\
			&\overset{\textup d}= \bigpars{(\beta_0 \hc \beta_1)_!(\alpha_0 \hc \alpha_1)^*}_i \of \rho(u_0^*i, v_0)
		\end{align*}
		
		To show that $f$ preserves horizontal composition consider horizontally composable cells $\cell{\alpha_0}{u_0}{u_0^\prime}$ and $\cell{\alpha_1}{u_1}{u_1^\prime}$ in $A$ and horizontally composable cells $\cell{\beta_0}{v_0}{v_0^\prime}$ and $\cell{\beta_1}{v_1}{v_1^\prime}$ in $B$. We have to show that, for any $i \in J_{(a_2, b_0)}$, the composite
		\begin{displaymath}
			v_{1!}u_0^*v_{0!}u_1^*i \xrar{v_{1!}u_0^*(\beta_{0!}\alpha_1^*)_i} v_{1!}u_0^*v^\prime_{0!}u_1^{\prime*}i \xrar{(\beta_{1!}\alpha_0^*)_{v^\prime_{0!}u_1^{\prime*}i}} v^\prime_{1!} u_0^{\prime*}v^\prime_{0!}u_1^{\prime*}i
		\end{displaymath}
		equals $\bigpars{(\beta_1 \of \beta_0)_!(\alpha_1 \of \alpha_0)^*}_i$. That their sources and targets coincide follows from \thmref{split two-sided 2-fibration description}(cl). In proving this equality we use the following equalities, which we will prove first.
		\begin{enumerate}[label=(\alph*)]
			\item $(\alpha_1^*)_i \of \alpha_{0!}\lambda(u_0, u_1^*i) = \lambda(u^\prime_0, u_1^{\prime*}i) \of (\alpha_1 \of \alpha_0)^*_i$
			\item $(\alpha_0^*)_{v^\prime_{0!}u_1^{\prime*}i} \of u_0^*(\beta_{0!}\alpha_1^*)_i \of \rho(u_0^*u_1^*i, v_0) = \beta_0^*\rho(u_0^{\prime*}u_1^{\prime*}i, v^\prime_0) \of (\alpha_1 \of \alpha_0^*)_i$
		\end{enumerate}
		That (a) holds follows from the equality below, where the identities labelled `d' follow from the definitions of \conref{curry J} and those labelled `s' follow from the (coop"/duals of the) splitting equations (\defref{split op-2-fibration}).
		\begin{align*}
			\lambda(u^\prime_1, i) \of (\alpha_1^*&)_i \of \alpha_{0!}\lambda(u_0, u_1^*i) \overset{\textup d}= \alpha_{1!}\lambda(u_1, i) \of \alpha_{0!}\lambda(u_0, u_1^*i) \\
			&\overset{\textup s}= (\alpha_1 \of \alpha_0)_!\bigpars{\lambda(u_1 \of u_0, i)} \overset{\textup{d,s}}= \lambda(u^\prime_1, i) \of \lambda(u^\prime_0, u_1^{\prime*}i) \of (\alpha_1 \of \alpha_0)_i 
		\end{align*}
		That (b) holds follows similarly from the equality below, where the identities labelled `cl' follow from \thmref{split two-sided 2-fibration description}(cl).
		\begin{align*}
			\lambda&(u^\prime_0, v^\prime_{0!}u_1^{\prime*}i) \of (\alpha_0^*)_{v^\prime_{0!}u_1^{\prime*}i} \of u_0^*(\beta_{0!}\alpha_1^*)_i \of \rho(u_0^*u_1^*i, v_0) \\
			&\overset{\textup d}= \alpha_{0!}\lambda(u_0, v^\prime_{0!}u_1^{\prime*}i) \of u_0^*(\beta_{0!}\alpha_1^*)_i \of \rho(u_0^*u_1^*i, v_0) \\
			&\overset{\textup s}= \alpha_{0!}\bigpars{\lambda(u_0, v^\prime_{0!}u_1^{\prime*}i) \of u_0^*(\beta_{0!}\alpha_1^*)_i} \of \rho(u_0^*u_1^*i, v_0) \\
			&\overset{\textup d}= \alpha_{0!}\bigpars{(\beta_{0!}\alpha_1^*)_i \of \lambda(u_0, v_{0!}u_1^*i)} \of \rho(u_0^*u_1^*i, v_0) \\
			&\overset{\textup s}= (\beta_{0!}\alpha_1^*)_i \of \alpha_{0!}\bigpars{\lambda(u_0, v_{0!}u_1^*i) \of \rho(u_0^*u_1^*i, v_0)}\\
			&\overset{\textup{cl}}= (\beta_{0!}\alpha_1^*)_i \of \alpha_{0!}\bigpars{\rho(u_1^*i, v_0) \of \lambda(u_0, u_1^*i)} \overset{\textup s}= (\beta_{0!}\alpha_1^*)_i \of \rho(u_1^*i, v_0) \of \alpha_{0!}\lambda(u_0, u_1^*i) \\
			&\overset{\textup d}= \beta_0^*\rho(u_1^{\prime*}i, v^\prime_0) \of (\alpha^*_1)_i \of \alpha_{0!}\lambda(u_0, u_1^*i) \overset{\textup{(a)}}= \beta_0^*\rho(u_1^{\prime*}i, v^\prime_0) \of \lambda(u^\prime_0, u_1^{\prime*}i) \of (\alpha_1 \of \alpha_0)^*_i \\
			&\overset{\textup{s,cl}}= \lambda(u^\prime_0, v^\prime_{0!}u_1^{\prime*}i) \of \beta_0^*\rho(u_0^{\prime*}u_1^{\prime*}i, v^\prime_0) \of (\alpha_1 \of \alpha_0)^*_i
		\end{align*}
		Using equation (b) the required equation follows from the equality
		\begin{align*}
			(\beta_{1!}\alpha_0^*)_{v^\prime_{0!} u_1^{\prime*} i} \of v_{1!}&u_0^*(\beta_{0!} \alpha_1^*)_i \of \rho(u_0^*v_{0!}u_1^*i, v_1) \of \rho(u_0^*u_1^*i, v_0) \\
			&\overset{\textup d}= (\beta_{1!}\alpha_0^*)_{v^\prime_{0!} u_1^{\prime*} i} \of \rho(u_0^*v^\prime_{0!}u_1^{\prime*}i, v_1) \of u_0^*(\beta_{0!} \alpha_1^*)_i \of \rho(u_0^*u_1^*i, v_0) \\
			&\overset{\textup d}= \beta_1^*\rho(u_0^{\prime*}v^\prime_{0!}u_1^{\prime*}i, v^\prime_1) \of (\alpha_0^*)_{v^\prime_{0!} u_1^{\prime*} i} \of u_0^*(\beta_{0!} \alpha_1^*)_i \of \rho(u_0^*u_1^*i, v_0) \\
			&\overset{\textup{(b)}}= \beta_1^*\rho(u_0^{\prime*}v^\prime_{0!}u_1^{\prime*}i, v^\prime_1) \of \beta_0^*\rho(u_0^{\prime*}u_1^{\prime*}i, v^\prime_0) \of (\alpha_1 \of \alpha_0)^*_i \\
			&\overset{\textup s}= (\beta_1 \of \beta_0)^*\rho(u_0^{\prime*}u_1^{\prime*}i, v^\prime_1 \of v^\prime_0) \of (\alpha_1 \of \alpha_0)^*_i \\
			&\overset{\textup{d,s}} = \bigpars{(\beta_1 \of \beta_0)_!(\alpha_1 \of \alpha_0)^*}_i \of \rho(u_0^*v_{0!}u_1^*i, v_1) \of \rho(u_0^*u_1^*i, v_0).
		\end{align*}
		This completes the proof.
	\end{proof}
	
	\subsubsection*{The Yoneda $2$"/functor satisfies the Yoneda axiom}
	The following theorem is the main result of this section. It shows that the Yoneda $2$"/functor $\map\yon A{\Cat^{\op A}}$ (\conref{Yoneda 2-functor}) satisfies the Yoneda axiom (\defref{Yoneda embedding}) in the augmented virtual equipment $\SfLdSpTwoTwoFib$ of locally discrete split two"/sided $2$"/fibrations with small fibres (\defsref{locally discrete split two-sided 2-fibration}{small fibres}).
	\begin{theorem} \label{Yoneda axiom}
		Let $A$ be a locally small $2$"/category and $J = \brks{A \xlar p J \xrar q B}$ a locally discrete split two"/sided $2$"/fibration with small fibres. Consider the $2$"/functor $\map{\cur J}B{\Cat^{\op A}}$ of \conref{curry J}. The two"/sided $2$"/fibration $J$ forms the the lax comma $2$"/category $\yon \slash \cur J$ (\exref{lax comma 2-category}) of the Yoneda $2$"/functor $\map\yon A{\Cat^{\op A}}$ and $\cur J$.
		\begin{displaymath}
			\begin{tikzpicture}[baseline]
				\matrix(m)[math35]{J & A \\ B & \Cat^{\op A} \\};
				\path[map]	(m-1-1) edge node[above] {$p$} (m-1-2)
														edge node[left] {$q$} (m-2-1)
										(m-1-2) edge node[right] {$\yon$} (m-2-2)
										(m-2-1) edge node[below] {$\cur J$} (m-2-2);
				\path				(m-1-2) edge[cell, shorten >= 9pt, shorten <= 9pt] node[below right] {$\chi$} (m-2-1);
			\end{tikzpicture} \qquad \qquad \qquad \qquad \begin{tikzpicture}[baseline]
					\matrix(m)[math35, column sep={5em,between origins}]{i & (qs)_!i \\ j & (ps)^*j \\};
					\path[map]	(m-1-1) edge node[above] {$\rho(i, qs)$} (m-1-2)
															edge[dashed] node[below left, inner sep=1pt] {$\widetilde{\id_{pi}}$} (m-2-2)
															edge node[left] {$s$} (m-2-1)
											(m-1-2) edge[dashed] node[right] {$\hat s$} (m-2-2)
											(m-2-2) edge node[below] {$\lambda(ps, j)$} (m-2-1); 
			\end{tikzpicture}
		\end{displaymath}
		
		Under the correspondence of \cororef{nullary cells description} the defining nullary cartesian cell \mbox{$\cell\chi J{\Cat^{\op A}}$} in $\SpTwoTwoFib$ (\defref{split two-sided 2-fibration}) corresponds to the marked lax natural transformation (\defref{marked lax natural transformation}) above left with components (take $f \dfn \cur Jqi$ in \conref{evaluation})
		\begin{displaymath}
			\cell{\chi_i \dfn \dash^* i}{\yon pi}{\cur Jqi} \quad \quad \text{and} \qquad \qquad \chi_{(i \xrar s j)} \dfn \dash^* \hat s\colon (qs)_!\dash^* i \Rrightarrow (ps \of \dash)^*j,
		\end{displaymath}
		where $\hat s$ is the unique lift in the diagram above right.
	\end{theorem}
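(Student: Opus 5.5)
The cleanest route is to reduce to \propref{2-category of elements is a lax comma 2-category}. Write $g \dfn \cur J$. That proposition already identifies $g^\yon$ with $\yon \slash g$, via the invertible horizontal cell sending $(a,i,b)$ to $(a, \dash^*i, b)$, with the marked lax natural transformation $\chi$ given by $\dash^* i$ and $\dash^* s$. So it suffices to build an isomorphism of $2$"/categories $\map\theta J{g^\yon}$ over $A$ and $B$ that preserves the cleavages. Then $\theta$ is an invertible horizontal $(1,1)$"/ary cell in $\SpTwoTwoFib$ by \thmref{unary cells of split two-sided 2-fibrations}, and composing it with the cartesian cell defining $g^\yon$ gives a cartesian cell $J \Rar \Cat^{\op A}$. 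Under \cororef{nullary cells description} this cell should be the transformation in the statement.

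\emph{Defining $\theta$.} By \conref{curry J}, $(gb)(a) = J_{(a,b)}$, $v_! = (\rho(\dash, v))$ and $u^* = \lambda(u,\dash)$ on fibres.
\begin{enumerate}[label=-]
\item On objects, set $\theta(i) \dfn (pi, i, qi)$.
\item On a morphism $\map sij$, use the factorisation displayed on the right of the statement: $s = \lambda(ps, j) \of \hat s \of \rho(i, qs)$, where $\hat s \colon (qs)_!i \to (ps)^*j$ lies in the fibre $J_{(pi, qj)}$. It exists and is unique by the opcartesianness of $\rho(i,qs)$, the cartesianness of $\lambda(ps,j)$ and condition (v) of \thmref{split two-sided 2-fibration description}. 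Set $\theta(s) \dfn (ps, \hat s, qs)$.
\item On cells, set $\theta(\sigma) \dfn (p\sigma, q\sigma)$.
\end{enumerate}

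\emph{Checking $\theta$.} The checks, in order, are as follows.
\begin{enumerate}[label=(\arabic*)]
\item $\theta$ is well"/defined on cells. The square required in \conref{2-category of elements} must commute; I would check this by comparing the lifts through the cleavage.
\item $\theta$ preserves composites. $\widehat{t\of s}$ must equal $(qt)_!\hat s$ followed by $(ps)^*\hat t$, modulo the identification $v_!u^* = u^*v_!$ given by condition (cl). This follows from the splitting equations and uniqueness of factorisations, as in the proof of \propref{equivalent description of discrete two-sided 1-fibration}.
\item $\theta$ is bijective on objects, which is immediate.
\item $\theta$ is bijective on morphisms. The inverse sends $(u, \hat s, v)$ to $\lambda(u,j)\of\hat s\of\rho(i,v)$.
\item $\theta$ is locally bijective on cells. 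This is the main obstacle. I would prove it from local discreteness (ld): each span $A(pi,pj) \leftarrow J(i,j) \rightarrow B(qi,qj)$ is a reversed discrete two"/sided $1$"/fibration. So a cell $\sigma\colon s\Rar t$ is determined by $(p\sigma, q\sigma)$, via the joint reflection of identity cells (jr) and the factorisation $\sigma = \rho(q\sigma, t)\hc\lambda(s, p\sigma)$. Conversely, a pair $(\alpha,\beta)$ lifts exactly when $\alpha_! s = \beta^* t$. I expect this condition to be equivalent to the commuting square in $g^\yon$, namely $(\alpha^*)_j\of\hat s = \hat t\of(\beta_!)_i$. To show this I would use the defining squares of $\alpha^*$ and $\beta_!$ in \conref{curry J} together with condition (cc) of \thmref{split two-sided 2-fibration description}.
\end{enumerate}

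\emph{Cleavages and the final identification.} Next I check that $\theta$ preserves the cleavages; only the morphism lifts need checking, since the cell lifts are forced by local discreteness via \propref{cells of locally discrete split two-sided 2-fibrations}. We have $\theta(\lambda(u,i)) = (u, \id, \id)$ because $\widehat{\lambda(u,i)} = \id_{u^*i}$, and similarly $\theta(\rho(i,v)) = (\id, \id, v)$. These are exactly the lifts in \conref{2-category of elements}. Finally, composing with \propref{2-category of elements is a lax comma 2-category} gives the component $\chi_i = \dash^* i$ with $i \in (\cur J qi)(pi)$, and the lax naturality cell $\chi_s = \dash^*\hat s$. This is the statement.
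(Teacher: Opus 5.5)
Your proposal is correct and follows essentially the same route as the paper. You reduce to \propref{2-category of elements is a lax comma 2-category}, use the same $2$-functor $i \mapsto (pi, i, qi)$, $s \mapsto (ps, \hat s, qs)$, $\sigma \mapsto (p\sigma, q\sigma)$, check only (pcm) and (pom) via \propref{cells of locally discrete split two-sided 2-fibrations}, and invert on morphisms by $(u, s, v) \mapsto \lambda(u, j) \of s \of \rho(i, v)$.

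One ingredient in step~(5) is misattributed. The equivalence of $\alpha_! s = \beta^* t$ with the square $(\alpha^*)_j \of \hat s = \hat t \of (\beta_!)_i$ does not come from condition (cc). It comes from the defining squares of $\alpha^*_j$ and $(\beta_!)_i$, together with the coop-dual splitting equations (sc$\of$) and (sc1), which let $\alpha_!$ and $\beta^*$ pass through the composite $\lambda(u_0,j) \of \hat s \of \rho(i,v_0)$. Uniqueness of factorisations through $\lambda(u_1,j)$ and $\rho(i,v_0)$ then gives the converse direction, exactly as in the paper's well-definedness and inverse-on-cells computations.
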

	\begin{proof}
		By \propref{2-category of elements is a lax comma 2-category} the $2$"/category of elements $A \xlar{\pi_A} (\cur J)^\yon \xrar{\pi_B} B$ (\conref{2-category of elements}) of $\map{\cur J}B{\Cat^{\op A}}$ forms the lax comma $2$"/category $\yon \slash \cur J$. Thus it suffices to construct an  invertible $2$"/functor $\map\phi J{(\cur J)^\yon}$ over $A$ and $B$ that forms a horizontal $(1,1)$"/ary cell in $\SpTwoTwoFib$, that is it satifies the conditions (pcm) and (pom) of \propref{cells of locally discrete split two-sided 2-fibrations}. Define $\phi$ as follows:
		\begin{enumerate}[label=-]
			\item $\phi(i) = (pi, i, qi)$;
			\item $\phi(i \xrar s j) = (ps, (qs)_!i \xrar{\hat s} (ps)^*j, qs)$ where $\hat s$ is the unique lift in the diagram above right;
			\item $\phi(\cell\sigma st) = (p\sigma, q\sigma)$.
		\end{enumerate}
		
		That $s \mapsto \phi(s)$ is well"/defined follows from \thmref{split two-sided 2-fibration description}(v). To see that $\phi$ is $1$"/functorial consider morphisms $\map sij$ and $\map tjk$ in $J$ and recall that the composite $\phi(t) \of \phi(s) = (pt, \hat t, qt) \of (ps, \hat s, qs)$ in $(\cur J)^\yon$ is (\conref{2-category of elements})
		\begin{displaymath}
			\bigpars{p(t \of s), (qt)_!(qs)_!i \xrar{(qt)_!\hat s} (qt)_!(ps)^*j = (ps)^*(qt)_!j \xrar{(ps)^*\hat t} (ps)^*(pt)^*k, q(t \of s)}.
		\end{displaymath}
		That this coincides with $\phi(t \of s)$ follows from the equality below, where the identities follow from the splitting equation \defref{split op-2-fibration}(sm$\of$), the definition of $\cur J$ (\conref{curry J}) and \thmref{split two-sided 2-fibration description}(cl).
		\begin{align*}
			\lambda(pt \of ps, k) \of (ps&)^*\hat t \of (qt)_!\hat s \of \rho(i, qt \of qs) \\
			&= \lambda(pt, k) \of \lambda(ps, (pt)^*k) \of (ps)^*\hat t \of (qt)_!\hat s \of \rho((qs)_!i, qt) \of \rho(i, qs) \\
			&= \lambda(pt, k) \of \hat t \of \lambda(ps, (qt)_!j) \of \rho((ps)^*j, qt) \of \hat s \of \rho(i, qs) \\
			&= \lambda(pt, k) \of \hat t \of \rho(j, qt) \of \lambda(ps, j) \of \hat s \of \rho(i, qs) = t \of s
		\end{align*}
		
		Showing that $\phi(\sigma) = (p\sigma, q\sigma)$ is well"/defined means proving that $(p\sigma)^*_j \of \hat s = \hat t \of (q\sigma)_{!i}$. This follows from the equality below, where the identity labelled `c' follows from \defref{discrete two-sided 1-fibration}(c). That $\phi$ is $2$"/functorial and a span morphism over $A$ and $B$ is clear. To see that $\phi$ satisfies \propref{cells of locally discrete split two-sided 2-fibrations}(pcm) notice that $\phi(\lambda(u, i)) = (u, \id_{u^*i}, \id_{qi})$, which is the cartesian lift of $\map ua{pi = (\pi_A \of \phi)(i)}$ (see \conref{2-category of elements}). That $\phi$ satisfies condition (pom) of the same proposition is similar. We conclude that $\phi$ forms a horizontal $(1,1)$"/ary cell $J \Rar (\cur J)^\yon$ in $\LdSpTwoTwoFib$.
		\begin{align*}
			\lambda(&pt, j) \of (p\sigma)^*_j \of \hat s \of \rho(i, qs) = (p\sigma)_!\lambda(ps, j) \of \hat s \of \rho(i, qs) \\
			&= (p\sigma)_!\bigpars{\lambda(ps, j) \of \hat s \of \rho(i, qs)} = (p\sigma)_!s \overset{\textup c}= (q\sigma)^*t = (q\sigma)^*\bigpars{\lambda(pt, j) \of \hat t \of \rho(i, qt)} \\
			&= \lambda(pt, j) \of \hat t \of (q\sigma)^*\rho(i, qt) = \lambda(pt, j) \of \hat t \of (q\sigma)_{!i} \of \rho(i, qs)
		\end{align*}
		
		It remains to prove that $\phi$ is invertible. To do so we check that $\phi$ is bijective on objects, morphisms and cells. For objects this follows from the fact that any $(a, i, b) \in (\cur J)^\yon$ satisfies $i \in (\cur J b)(a) = J_{(a, b)}$ (\conref{2-category of elements}), so that $a = pi$ and $b = qi$. For morphisms the inverse assignment is
		\begin{displaymath}
			\psi(pi \xrar u pj, v_!i \xrar s u^*j, qi \xrar v qj) \dfn \lambda(u, j) \of s \of \rho(i, v).
		\end{displaymath}
	Clearly $\psi \of \phi$ is the identity on morphisms. That $\phi \of \psi$ is too follows from the fact that the $p$"/ and $q$"/images of $\psi(u, s, v)$ are $u$ and $v$ respectively, while $\widehat{\psi(u, s, v)} = s$ by the uniqueness of $\widehat\dash$. For cells the inverse assignment $\psi$ maps $(\cell\alpha{u_0}{u_1}, \cell\beta{v_0}{v_1})$ to the composite
		\begin{align*}
			\lambda(u_0, j&) \of s_0 \of \rho(i, v_0) \xRar{\lambda(\lambda(u_0, j), \alpha) \of s_0 \of \rho(i, v_0)} \alpha_!\lambda(u_0, j) \of s_0 \of \rho(i, v_0) \\
			&= \lambda(u_1, j) \of s_1 \of \beta^*\rho(i, v_1) \xRar{\lambda(u_1, j) \of s_1 \of \rho(\beta,\rho(i, v_1))} \lambda(u_1, j) \of s_1 \of \rho(i, v_1),
		\end{align*}
		where the middle identity is the equality
		\begin{align*}
			\alpha_!\lambda(u_0, j) \of s_0 \of \rho(i, v_0) &= \lambda(u_1, j) \of \alpha^*_j \of s_0 \of \rho(i, v_0) \\
			&= \lambda(u_1, j) \of s_1 \of \beta_{!i} \of \rho(i, v_0) = \lambda(u_1, j) \of s_1 \of \beta^*\rho(i, v_1)
		\end{align*}
		whose middle identity follows from the definition of cells in $(\cur J)^\yon$ (\conref{2-category of elements}). That $\phi \of \psi$ is the identity on cells follows from \thmref{split two-sided 2-fibration description}(v) and that $s_0$ and $s_1$ are contained in the fibre $J_{(pi, qj)}$. That $\psi \of \phi$ is too is shown by
		\begin{align*}
			\psi(p\sigma, q\sigma) &= \bigpars{\lambda(\lambda(u_0, j), p\sigma) \of \widehat{s_0} \of \rho(i, v_0)} \hc \bigpars{\lambda(u_1, j) \of \widehat{s_1} \of \rho(q\sigma,\rho(i, v_1))} \\
			&= \lambda(s_0, p\sigma) \hc \rho(q\sigma, s_1) = \sigma
		\end{align*}
		where the middle identity follows from the (coop"/duals of the) splitting equations (sc$\of$) and (sc1) of \defref{split op-2-fibration} and the last identity from \defref{discrete two-sided 1-fibration}(c). This completes the proof.
	\end{proof}
	
	\subsubsection*{Main result}
	Combining \thmref{Yoneda 2-functor is dense} and \thmref{Yoneda axiom} we obtain the following, which is the main result of this paper.	
	\begin{corollary} \label{main result}
		Let $A$ be a locally small $2$"/category. The Yoneda $2$"/functor \mbox{$\map\yon A{\Cat^{\op A}}$} (\conref{Yoneda 2-functor}) is a Yoneda embedding in the augmented virtual equipment $\SfLdSpTwoTwoFib$ of locally discrete split two"/sided $2$"/fibrations whose fibres are small (\defref{small fibres}), in the sense of \defref{Yoneda embedding}.
	\end{corollary}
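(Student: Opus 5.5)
The plan is to check the two clauses of \defref{Yoneda embedding} for $\map\yon A{\Cat^{\op A}}$ inside $\SfLdSpTwoTwoFib$, using the two theorems just proved. Before that I will verify the standing hypothesis that $A$ is unital in $\SfLdSpTwoTwoFib$. The horizontal unit $I_A$ is the lax arrow $2$"/category $A^\2$ (\cororef{internal split two-sided fibrations form a unital virtual equipment}(b) and \exref{lax comma 2-categories are locally discrete}). Its fibres are the hom"/categories $A(a, b)$, so by the discussion following \defref{small fibres} the object $A$ is unital exactly when $A$ is locally small, which is assumed.

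For density, recall that \thmref{Yoneda 2-functor is dense} establishes it in $\LdSpTwoTwoFib$, and I will transfer it to the locally full sub"/augmented virtual equipment $\SfLdSpTwoTwoFib$. Let $\map gB{\Cat^{\op A}}$ be such that the nullary restriction $\Cat^{\op A}(\yon, g)$ exists in $\SfLdSpTwoTwoFib$. First I will argue that this restriction coincides with the one in $\LdSpTwoTwoFib$, namely $g^\yon \iso \yon \slash g$ (\propref{2-category of elements is a lax comma 2-category}). Its fibres $g^\yon_{(a,b)} \iso (gb)(a)$ are small, and since the sub"/structure is locally full, the cartesian cell $\chi$ remains cartesian there. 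The left Kan property of $\chi$ asks for unique factorisations of nullary cells whose sources are paths $\ul H$. In $\SfLdSpTwoTwoFib$ these are a subclass of those in $\LdSpTwoTwoFib$, and the factorising cells $\phi'$ live on the same boundaries. By local fullness, existence and uniqueness therefore restrict directly, so $\chi$ is left Kan in $\SfLdSpTwoTwoFib$ as well.

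For the Yoneda axiom, take a horizontal morphism $\hmap JAB$ in $\SfLdSpTwoTwoFib$. Construct $\map{\cur J}B{\Cat^{\op A}}$ by \conref{curry J}, which needs small fibres to land in $\Cat$, and apply \thmref{Yoneda axiom}. That theorem gives $J \iso \yon \slash \cur J$ over $A$ and $B$, together with a nullary cartesian cell $\chi\colon J \Rar \Cat^{\op A}$ in $\SpTwoTwoFib$. Again by local fullness, and because the isomorphic span $\yon \slash \cur J$ is also small"/fibred, this cell is cartesian in $\SfLdSpTwoTwoFib$.

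The only real subtlety is making sure that cartesianness and the left Kan property survive passage to a locally full substructure; everything else has already been done. This works because both properties are stated purely in terms of cells with given boundaries, and the substructure contains all such cells between its objects.
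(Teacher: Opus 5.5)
Your proposal is correct and takes essentially the paper's route: the paper's proof of the corollary consists only of combining \thmref{Yoneda 2-functor is dense} with \thmref{Yoneda axiom}. Your extra checks make explicit what the paper leaves implicit, namely that $A$ is unital because it is locally small, that $g^\yon$ has small fibres $(gb)(a)$, and that cartesianness and the left Kan property pass to the locally full sub-structure $\SfLdSpTwoTwoFib$.
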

	
	The universal properties of the Yoneda $2$"/functors imply the following bijective correspondence between cells of locally discrete split two"/sided $2$"/fibrations; this is Lemma~4.22 of \cite{Koudenburg24}. The cells denoted `cart' on either side of the equation below are the nullary cartesian cells described by \thmref{Yoneda axiom}, which are left Kan by \thmref{Yoneda 2-functor is dense}.
	\begin{corollary} \label{bijection between cells induced by Yoneda 2-functor}
		In the diagrams below let $J$ and $K$ be locally discrete split two"/sided $2$"/fibrations (\defref{locally discrete split two-sided 2-fibration}) and let $\ul H = (H_1, \dotsc, H_n)$ be a (possibly empty) path of locally discrete split two"/sided 2"/fibrations. Assume that $J$ and $K$ have small fibres (\defref{small fibres}) and that the $2$"/category $A$ is locally small. The equation below determines a bijection between multimorphisms $\phi$ and marked lax natural transformations $\psi$ in $\LdSpTwoTwoFib$ (\propref{cells of locally discrete split two-sided 2-fibrations} and \defref{marked lax natural transformation}) of the forms as shown.
		\begin{displaymath}
			\begin{tikzpicture}[textbaseline]
				\matrix(m)[math35, column sep={1.75em,between origins}]
					{	A & & B & & B_1 & \cdots & B_{n'} & & B_n \\
						& & & A & & D & & & \\
						& & & & \Cat^{\op A} & & & & \\};
				\path[map]	(m-1-1) edge[barred] node[above] {$J$} (m-1-3)
										(m-1-3) edge[barred] node[above] {$H_1$} (m-1-5)
										(m-1-7) edge[barred] node[above] {$H_n$} (m-1-9)
										(m-1-9) edge[transform canvas={xshift=2pt}] node[below right] {$s$} (m-2-6)
										(m-2-4) edge[barred] node[below, inner sep=2pt] {$K$} (m-2-6)
														edge[transform canvas={xshift=-2pt}] node[left] {$\yon$} (m-3-5)
										(m-2-6) edge[transform canvas={xshift=2pt}, ps] node[right] {$\cur K$} (m-3-5);
				\path				(m-1-1) edge[transform canvas={xshift=-1pt}, eq] (m-2-4)
										(m-1-5) edge[cell] node[right] {$\phi$} (m-2-5);
				\draw				([yshift=0.25em]$(m-2-5)!0.5!(m-3-5)$) node[font=\scriptsize] {$\cart$};
			\end{tikzpicture} \quad = \quad \begin{tikzpicture}[textbaseline]
				\matrix(m)[math35, column sep={1.75em,between origins}]
					{	A & & B & & B_1 & \cdots & B_{n'} & & B_n \\
						& & & & & D & & & \\
						& & \Cat^{\op A} & & & & & & \\};
				\path[map]	(m-1-1) edge[barred] node[above] {$J$} (m-1-3)
														edge[transform canvas={xshift=-2pt}] node[left] {$\yon$} (m-3-3)
										(m-1-3) edge[barred] node[above] {$H_1$} (m-1-5)
														edge[ps] node[right] {$\cur J$} (m-3-3)
										(m-1-7) edge[barred] node[above] {$H_n$} (m-1-9)
										(m-1-9) edge[transform canvas={xshift=2pt}] node[below right] {$s$} (m-2-6)
										(m-2-6) edge[transform canvas={xshift=2pt}, ps] node[below right] {$\cur K$} (m-3-3);
				\path				(m-1-6) edge[transform canvas={xshift=-1.5em,yshift=-0.5em}, cell] node[right] {$\psi$} (m-2-6);
				\draw				([yshift=1.15em,xshift=0.5em]$(m-1-2)!0.5!(m-3-2)$) node[font=\scriptsize] {$\cart$};
			\end{tikzpicture}
		\end{displaymath}
	\end{corollary}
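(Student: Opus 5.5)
The plan is to obtain the bijection purely formally, from the two universal properties established above: the cartesian cells `cart' are cartesian (\thmref{Yoneda axiom}) and left Kan (\thmref{Yoneda 2-functor is dense}). This is the content of Lemma~4.22 of \cite{Koudenburg24}, and I would reproduce its argument in $\SfLdSpTwoTwoFib$.

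The first step sends $\phi$ to $\psi$. Given a multimorphism $\phi\colon (J, \ul H) \Rar K$ with left vertical morphism the identity on $A$, compose it vertically with the nullary cartesian cell $\cell{\mathrm{cart}}K{\Cat^{\op A}}$ defining $K \iso \yon \slash \cur K$. This gives a nullary cell $\mathrm{cart} \of \phi\colon (J, \ul H) \Rar \Cat^{\op A}$ with vertical boundary $\yon$ and $\cur K \of s$. Since $\yon$ is dense and $J \iso \Cat^{\op A}(\yon, \cur J)$ is a nullary restriction, the cartesian cell defining $J$ is left Kan (\defref{density}). Hence this nullary cell factors uniquely as $\mathrm{cart}_J \hc \psi$, where $\psi\colon \ul H \Rar \Cat^{\op A}$ is a nullary cell with boundary $\cur J$ and $\cur K \of s$. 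When $\ul H$ is empty, $\psi$ is a vertical cell, that is a $2$-natural transformation (\cororef{vertical cells description}); otherwise it is a marked lax natural transformation (\cororef{nullary cells description}). This shows that for each $\phi$ there is a unique $\psi$ satisfying the displayed equation.

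The second step sends $\psi$ back to $\phi$. Given $\psi$, the right-hand side $\mathrm{cart}_J \hc \psi$ is a nullary cell $(J, \ul H) \Rar \Cat^{\op A}$ whose vertical boundary is $\yon$ and $\cur K \of s$. The cell $\mathrm{cart}_K$ is cartesian, so by \defref{cartesian cells} applied with $h = \id_A$ and $k = s$, this nullary cell factors uniquely through $\mathrm{cart}_K$ as a unary cell $\phi\colon (J, \ul H) \Rar K$. Thus for each $\psi$ there is a unique $\phi$ satisfying the equation.

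The two uniqueness statements together make the two assignments mutually inverse, since each is characterised by the same equation. One point needs care: the horizontal composite $\mathrm{cart}_J \hc \psi$ must be formed in the augmented virtual double category with the same target boundary, so that the left Kan property applies directly. This holds by \defref{density} and does not need unitality, because the path $\ul H$ may be empty. I also need all the objects involved to lie in $\SfLdSpTwoTwoFib$. This is guaranteed by the small-fibre hypotheses on $J$ and $K$ and by local smallness of $A$, which makes $A$ unital and makes $\yon$ available. No step is genuinely hard. The main thing to verify is that the left Kan property of \thmref{Yoneda 2-functor is dense} covers exactly the shape used here, including the identity morphism $A = A$ on the left of $\phi$.
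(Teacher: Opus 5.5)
Your proposal is correct and is the argument behind the paper's citation of Lemma~4.22 of \cite{Koudenburg24}. The cartesian property of $\mathrm{cart}_K$ (\thmref{Yoneda axiom}) and the left Kan property of $\mathrm{cart}_J$ (\thmref{Yoneda 2-functor is dense}) each determine one side of the equation uniquely from the other, so the two assignments are mutually inverse. One small inaccuracy: the $H_i$ need not have small fibres, and the factorisations take place in $\LdSpTwoTwoFib$, where density was proved; only $J$ and $K$ need small fibres, so that $\cur J$ and $\cur K$ exist.
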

	
	\subsubsection*{Grothendieck correspondence for locally discrete split two"/sided $2$"/fibrations}
	Restricting the previous result to empty paths $\ul H$ we obtain the following `Grothendieck correspondence' for locally discrete split two"/sided $2$"/fibrations with small fibres (\defref{small fibres}); this result is Proposition~4.24 of \cite{Koudenburg24}. To be able to state the correspondence in its most general form we introduce the following notations. Given a $2$"/category $A$ the \emph{horizontal slice $1$"/category} $A \hs \SfLdSpTwoTwoFib$ has as objects locally discrete split two"/sided $2$"/fibrations $\hmap JAB$ with small fibres and as morphisms $J \to K$ cells $\phi$ of the form as on the left below (\propref{cells of locally discrete split two-sided 2-fibrations}). Fixing a target $B$ in $\K$ we denote by $\SfLdSpTwoTwoFib(A, B) \subseteq A \hs \SfLdSpTwoTwoFib$ the sub"/$1$"/category consisting of morphisms $J \to K$ with vertical target $s = \id_B$.
	
	Next recall that $\SfLdSpTwoTwoFib$ contains a $2$"/category $V(\SfLdSpTwoTwoFib) = \TwoCat'$ of vertical morphisms and vertical cells, consisting of large $2$"/categories, $2$"/functors and $2$"/natural transformations (\cororef{vertical cells description}). Given a large $2$"/category $P$ we denote by $\TwoCat' \slash P$ the \emph{lax slice $1$"/category} consisting of vertical morphisms $\map gAP$ as objects and cells $\psi$ of the form as on the right below as morphisms $g \to h$; equivalently $\TwoCat' \slash P$ is the $1$"/category underlying the lax comma $2$"/category $\TwoCat' \slash \Delta P$ (\exref{lax comma 2-category}) where $\Delta P$ denotes the constant $2$"/functor at $P$. Notice that $\TwoCat' \slash P$ contains the hom"/categories $\TwoCat'(A, P)$ as sub"/$1$"/categories.
	\begin{displaymath}
		\begin{tikzpicture}
			\matrix(m)[math35]{A & B \\ A & D \\};
				\path[map]	(m-1-1) edge[barred] node[above] {$J$} (m-1-2)
										(m-1-2) edge node[right] {$s$} (m-2-2)
										(m-2-1) edge[barred] node[below] {$K$} (m-2-2);
				\path				(m-1-1) edge[eq] (m-2-1)
										(m-1-1) edge[cell, transform canvas={xshift=1.75em}] node[right] {$\phi$} (m-2-1);
		\end{tikzpicture} \qquad\qquad\qquad\qquad\qquad\qquad\qquad \begin{tikzpicture}
			\matrix(m)[math35, column sep={1.75em,between origins}]{A & & B \\ & P & \\};
				\path[map]	(m-1-1) edge node[above] {$s$} (m-1-3)
														edge[transform canvas={xshift=-2pt}] node[left] {$g$} (m-2-2)
										(m-1-3) edge[transform canvas={xshift=2pt}] node[right] {$h$} (m-2-2);
				\path				($(m-1-1)!0.5!(m-2-2)$) edge[transform canvas={}, cell, shorten >= 6.5pt, shorten <= 6.5pt] node[below, xshift=2pt] {$\psi$} (m-1-3);
		\end{tikzpicture}
	\end{displaymath}
	
	For any locally small $2$"/category $A$ the $2$"/category $\Cat^{\op A}$ of small"/category"/valued $2$"/functors (\conref{Yoneda 2-functor}) is a large $2$"/category, and hence an object of the $2$"/category $\TwoCat'$. Consider the associated lax slice $1$"/category $\TwoCat' \slash \Cat^{\op A}$, as described above.
	\begin{corollary}[Grothendieck correspondence for locally discrete split two-sided 2-fibrations] \label{equivalence from yoneda embedding}
		Let $A$ be a locally small $2$"/category. The $2$"/category of elements construction (\conref{2-category of elements}) and the inverse Grothendieck construction (\conref{curry J}) induce an equivalence of slice $1$"/categories
		\begin{displaymath}
			\cur{(\dash)}\colon A \hs \SfLdSpTwoTwoFib \simeq \TwoCat' \slash \Cat^{\op A} \colon (\dash)^\yon
		\end{displaymath}
		under which a cell $\cell\phi JK$ above left (\propref{cells of locally discrete split two-sided 2-fibrations}) corresponds to the $2$"/natural transformation $\cell{\cur\psi}{\cur J}{\cur K \of s}$ as above right under the bijection of \cororef{bijection between cells induced by Yoneda 2-functor} (with $\ul H = (B)$ empty). Fixing the target $B$ restricts this equivalence to an equivalence of hom"/$1$"/categories
		\begin{displaymath}
			\cur{(\dash)}\colon \SfLdSpTwoTwoFib(A, B) \simeq \TwoCat'(B, \Cat^{\op A}) \colon (\dash)^\yon.
		\end{displaymath}
	\end{corollary}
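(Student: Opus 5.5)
We derive the corollary from the formal result cited (Proposition~4.24 of \cite{Koudenburg24}), with concrete inputs in place of the abstract ones. The Yoneda axiom (\thmref{Yoneda axiom}) gives the object assignment $J \mapsto \cur J$. Construction~\ref{2-category of elements} gives the inverse assignment $g \mapsto g^\yon$.

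We first fix the functors on objects and exhibit the natural isomorphisms.
\begin{itemize}
	\item For $\hmap JAB$ with small fibres, \thmref{Yoneda axiom} yields the invertible horizontal cell $J \iso (\cur J)^\yon$ over $A$ and $B$.
	\item Conversely, for a $2$"/functor $\map gB{\Cat^{\op A}}$ we compare $g$ with $\cur{(g^\yon)}$. The fibre $(g^\yon)_{(a,b)}$ is the category with objects $(a,i,b)$ and morphisms $(\id_a, s, \id_b)$, which is isomorphic to $(gb)(a)$. Under this isomorphism the functors $v_!u^*$ and transformations $\beta_!\alpha^*$ of \conref{curry J} are computed from the cleavages of Construction~\ref{2-category of elements}. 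They recover $(gv)_a \of (gb)(u)$ and the components of $g\beta$, $g\alpha$.
	\item Hence $\cur{(g^\yon)} \iso g$ by a $2$"/natural isomorphism, which is a vertical cell by \cororef{vertical cells description}.
	\item The small-fibres hypothesis and local smallness of $A$ ensure that $g^\yon$ lies in $\SfLdSpTwoTwoFib$, since its fibres are the small categories $(gb)(a)$, and that $\Cat^{\op A}$ is large.
\end{itemize}

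Next we define the assignment on morphisms. Take $\ul H = (B)$ empty in \cororef{bijection between cells induced by Yoneda 2-functor}. This gives a bijection between cells $\cell\phi JK$ with vertical target $s$, as in the statement, and nullary cells $\psi$ from $\yon$ and $\cur J$ to $\cur K \of s$ with empty source path. Such a nullary cell is a vertical cell, that is a $2$"/natural transformation $\cur J \Rar \cur K \of s$, exactly a morphism of $\TwoCat' \slash \Cat^{\op A}$.

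The bijection is characterised by the equation $\cart_K \of \phi = \cart_J \hc \psi$. Uniqueness of factorisations through left Kan cells (\thmref{Yoneda 2-functor is dense}) therefore forces it to respect identities and composition. For composition:
\begin{itemize}
	\item Consider $J \xrar\phi K \xrar{\phi_2} L$.
	\item Both $\cart_J \hc (\psi \hc (\psi_2 \of s))$ and the corresponding factorisation for $\phi_2 \of \phi$ equal $\cart_L \of \phi_2 \of \phi$.
	\item This uses the interchange law of Lemma~1.3 of \cite{Koudenburg20}.
\end{itemize}
So $\cur{(\dash)}$ is a fully faithful functor $A \hs \SfLdSpTwoTwoFib \to \TwoCat' \slash \Cat^{\op A}$. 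By the previous paragraph it is essentially surjective: every $g$ is isomorphic to $\cur{(g^\yon)}$. It is therefore an equivalence. A pseudo-inverse is $(\dash)^\yon$: extend it to morphisms by transporting $\psi$ along the isomorphisms $\cur{(g^\yon)} \iso g$ and applying the bijection. This agrees with the direct functoriality of the $2$"/category of elements.

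The hom-equivalence follows by restricting to $s = \id_B$. On the left this gives $\SfLdSpTwoTwoFib(A,B)$. On the right the morphisms become $2$"/natural transformations $\cur J \Rar \cur K$, the morphisms of $\TwoCat'(B, \Cat^{\op A})$. The unit isomorphisms from \thmref{Yoneda axiom} are horizontal, so they lie in the restricted subcategory, and essential surjectivity persists.

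The main obstacle is the verification that $\cur{(g^\yon)} \iso g$ is genuinely $2$"/natural, including on cells. This requires tracking the unique lifts defining $\beta_!\alpha^*$ in $g^\yon$. I would first check it on the generating pieces $u^*$ and $v_!$ separately, using the splitting equations.
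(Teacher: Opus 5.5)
Your proof is correct. Its full-faithfulness part is exactly the formal argument behind the paper's proof, which simply applies Proposition~4.24 of \cite{Koudenburg24} to the Yoneda embedding of \cororef{main result}. The $\ul H$-empty case of \cororef{bijection between cells induced by Yoneda 2-functor} gives the bijection on hom-sets, and uniqueness of factorisations through the left Kan cells $\cart_J$, together with interchange, gives functoriality. (A small slip: for empty $\ul H$ the cell $\psi$ is just a vertical cell $\cur J \Rar \cur K \of s$, and $\yon$ does not occur in its boundary.)

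You genuinely diverge only on essential surjectivity, where you compare $g$ with $\cur{(g^\yon)}$ by hand. That computation does go through. The cleavages of \conref{2-category of elements} give $v_!u^*(a_1, x, b_0) = (a_0, v_!u^*x, b_1)$ and $(\beta_!\alpha^*)_{(a_1, x, b_0)} = \bigl(\id_{a_0}, (\beta_!)_{u_1^*x} \of v_{0!}(\alpha^*)_x, \id_{b_1}\bigr)$. So $\cur{(g^\yon)}$ is $g$ up to the relabelling $(g^\yon)_{(a,b)} \iso (gb)(a)$, which is extra explicit information your route provides.

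The formal route, however, removes what you call the main obstacle entirely. The cell $\chi$ of \propref{2-category of elements is a lax comma 2-category} is a nullary cartesian cell exhibiting $g^\yon$ as $\Cat^{\op A}(\yon, g)$. The cell $\cart$ of \thmref{Yoneda axiom} is a nullary cartesian cell exhibiting the same $g^\yon$ as $\Cat^{\op A}(\yon, \cur{(g^\yon)})$. Both are therefore left Kan by \thmref{Yoneda 2-functor is dense}. Factorising each through the other, using the empty case of \defref{density}, gives vertical cells $\cur{(g^\yon)} \Rar g$ and $g \Rar \cur{(g^\yon)}$. These are mutually inverse by uniqueness of factorisations, so they form a $2$-natural isomorphism over $\id_B$. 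This also covers the hom-level statement, and no lifts need to be tracked. It is the reason the correspondence holds for any Yoneda embedding $\map\yon A{\ps A}$ whose restrictions $\ps A(\yon, g)$ exist, which is why the paper can just cite the formal result.
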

	
	Restricting the above to $A = 1$, the terminal $2$"/category, recall from \defref{locally discrete split op-2-fibration} that locally discrete split two"/sided $2$"/fibrations $\hmap J1B$ with small fibres are locally discrete split op"/$2$"/fibrations with small fibres. Writing $\und{\und{\SpOpTwoFib}}_\textup{ld,sf} \dfn 1 \slash_\textup h \SfLdSpTwoTwoFib$ for the horizontal $1$"/categorical slice (see also \cororef{split 2-opfibrations as split two-sided 2-fibrations}), the previous corollary restricts to a Grothendieck correspondence for locally discrete split op"/$2$"/fibrations as follows. This is the equivalence of $1$"/categories underlying the equivalence of $2$"/categories obtained in Theorem~3.7 of \cite{Lambert24}, which in turn is a restriction of a $3$"/categorical correspondence for bicategorical fibrations (Sections~5 and 6 of \cite{Bakovic10} and Theorem~3.3.12 of \cite{Buckley14}).
	\begin{corollary}[Lambert]
		Let $A$ be a locally small $2$"/category. The $2$"/category of elements construction (\conref{2-category of elements}) and the inverse Grothendieck construction (\conref{curry J}) induce an equivalence of slice $1$"/categories
		\begin{displaymath}
			\cur{(\dash)}\colon \und{\und{\SpOpTwoFib}}_\textup{ld,sf} \simeq \TwoCat' \slash \Cat  \colon (\dash)^\yon.
		\end{displaymath}
	\end{corollary}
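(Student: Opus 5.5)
The plan is to obtain the statement as the specialisation to $A = 1$ of \cororef{equivalence from yoneda embedding}, checking that both sides of that equivalence reduce to the two sides claimed here. (The statement's hypothesis ``let $A$ be a locally small $2$-category'' is vacuous in this case; only $A = 1$ is used.) First, since $\op 1 = 1$ and $\Cat^{1} \iso \Cat$, the Yoneda $2$-functor $\map\yon 1\Cat$ picks out the terminal category. Next, $1$ is locally small, so \cororef{equivalence from yoneda embedding} applies and yields an equivalence
\[1 \hs \SfLdSpTwoTwoFib \simeq \TwoCat' \slash \Cat^{\op 1} \iso \TwoCat' \slash \Cat,\]
induced by $(\dash)^\yon$ and $\cur{(\dash)}$.

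It remains to identify the left-hand side with $\und{\und{\SpOpTwoFib}}_\textup{ld,sf}$. By definition this $1$-category is exactly $1 \hs \SfLdSpTwoTwoFib$. To justify the name, I will note the following. By \defref{locally discrete split op-2-fibration} and \propref{locally discrete split op-2-fibration description}, the objects are precisely the locally discrete split op-$2$-fibrations $\map qJB$ whose fibres $J_{(*,b)}$ are small. By \cororef{split 2-opfibrations as split two-sided 2-fibrations}, restricted along the locally full inclusion $\SfLdSpTwoTwoFib \subseteq \SpTwoTwoFib$, the morphisms are the cleavage-preserving morphisms $(\phi, s)$ of \defref{3-category of split op-2-fibrations}. \propref{cells of locally discrete split two-sided 2-fibrations} shows that in the locally discrete case only (pom) needs to be checked, which recovers Lambert's morphisms.

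Finally I will check that the constructions specialise correctly. For $A = 1$, \conref{2-category of elements} becomes Street's $2$-categorical Grothendieck construction, and \conref{curry J} becomes the op-dual of Lambert's Construction~3.2, as already remarked before those constructions. The equivalence therefore coincides with the $1$-categorical part of Lambert's equivalence. No step is a real obstacle. The only point needing care is the identification of the morphisms: a morphism in $\TwoCat' \slash \Cat$ is a $2$-natural transformation $\cell\psi g{h \of s}$, which under $A = 1$ matches the description in \cororef{equivalence from yoneda embedding} with $\ul H$ empty.
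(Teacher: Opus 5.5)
Your proposal is correct and is essentially the paper's own argument: specialise \cororef{equivalence from yoneda embedding} to $A = 1$, use $\Cat^{\op 1} \iso \Cat$, and note that $\und{\und{\SpOpTwoFib}}_{\textup{ld,sf}}$ is by definition the horizontal slice $1 \hs \SfLdSpTwoTwoFib$ (with \propref{locally discrete split op-2-fibration description} and \cororef{split 2-opfibrations as split two-sided 2-fibrations} justifying the name). Your additional remarks are also accurate: for $n = 1$ and $A_0 = C = 1$ only (pom) is left to check, and the hypothesis on $A$ in the statement plays no role.
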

	
	\begin{remark} \label{extending functoriality}
		Notice that the full and faithfulness of the equivalence directly above is the bijective correspondence of cells of \cororef{bijection between cells induced by Yoneda 2-functor} above when restricted to $A = 1$, the terminal $2$"/category, and $\ul H = (B)$ the empty path. As mentioned at the end of the \outlineref, when letting $\ul H$ range over non"/empty paths of locally discrete split two"/sided $2$"/fibrations, the latter corollary can be thought of as ``extending the functoriality'' of Lambert's Grothendieck correspondence for locally discrete split op"/$2$"/fibrations.
	\end{remark}
	
	\begin{remark} \label{other formal results}
		Besides the corollaries above all of the formal category theory developed in \cite{Koudenburg24} can be applied to the augmented virtual equipment $\SfLdSpTwoTwoFib$ of locally discrete split two"/sided $2$"/fibrations with small fibres (\defref{small fibres}), such as its results on exact cells, totality and cocompleteness; see its Sections~5, 6 and 7 respectively.
	\end{remark}
	
	\backmatter
	
	\begin{appendices}
	\section{On comma objects in sesquicategories} \label{comma objects appendix}
	Here we briefly recall the way in which sesquicategories can be regarded as enriched categories. Thus we obtain an enriched notion of comma object in sesquicategories, which we compare to the present notion of \defref{comma object}.
	
	Consider the closed symmetric monoidal structure $(\tens, 1, \inhom{\dash, \dash})$ on the $1$"/category $\und{\inCat{\Cls}}$ of class"/sized $1$"/categories with $\inhom{A, C}$ the $1$"/category of functors $A \to C$ and unnatural transformations (\exref{unnatural transformation}). The monoidal structure $\tens$ corresponding to $\inhom{\dash, \dash}$ is called the \emph{funny tensor product}; see Section~2 of \cite{Weber13} for an explicit description. Sesquicategories can alternatively be defined as class"/sized categories enriched over $(\und{\inCat{\Cls}}, \tens)$; see e.g.\ Section~3 of \cite{Stell94}. Thus we obtain enriched notions of functor of sesquicategories and of natural transformations between them. Moreover we find that the $1$"/category $\und{\inCat{\Cls}}$, besides underlying the $2$"/category $\inCat{\Cls}$ of $2$"/natural transformations, is also enriched over itself with respect to $(\tens, 1, \inhom{\dash, \dash})$ and thus underlies too the sesquicategory $\inCat{\Cls}_\textup{un}$ of class"/sized $1$"/categories, functors and unnatural transformations (see \exref{unnatural transformation}). Every sesquicategory $\s$ then admits a $(\und{\inCat{\Cls}}, \tens)$"/enriched hom"/functor \mbox{$\map{\s(\dash, \dash)}{\op\s \tens \s}{\inCat{\Cls}_\textup{un}}$} (see e.g.\ Section~1.6 of \cite{Kelly82}). Given cells $\phi$ and $\psi$ in $\s$ notice that $\s(\dash, \dash)$ does not define an unnatural transformation $\s(\phi, \psi)$, since the pair $(\phi, \psi)$ does not form a cell in the funny tensor product $\op \s \tens \s$.
	
	Finally, since $\und{\inCat{\Cls}}$ is complete, the $1$"/category $\und{\inhom{\s, \L}}$ of functors of sesquicategories $\s \to \L$ and their natural transformations underlies a sesquicategory $\inhom{\s, \L}$ whose cells are modifications of natural transformations, whenever $\s$ is small (see e.g.\ Section~2.2 of \cite{Kelly82}). Hence we can consider $(\und{\inCat{\Cls}}, \tens)$"/enriched weighted limits in sesquicategories, in the sense of e.g.\ Section~3.1 of \cite{Kelly82} (therein called ``indexed limits''), as follows.
	\begin{definition}
		Let $\A$ be a small sesquicategory and $\map W\A{\inCat{\Cls}_\textup{un}}$ and $\map D\A\s$ be functors of sesquicategories. By the \emph{$W$"/weighted sesquilimit of $D$} we mean the $W$"/weighted $(\und{\inCat{\Cls}}, \tens)$"/enriched limit of $D$, that is an object $L \in \s$ equipped with an isomorphism
		\begin{displaymath}
			\s(\dash, L) \iso \inhom{\A, \inCat{\Cls}_\textup{un}}(W, \s(\dash, D))
		\end{displaymath}
		of functors of sesquicategories $\op\s \to \inCat{\Cls}_\textup{un}$.
	\end{definition}
	
	\begin{example} \label{conical limit as enriched limit}
		Any $1$"/functor $\map D{\catvar I}{\und\s}$ into the $1$"/category $\und\s$ underlying $\s$ can be regarded as a functor $\map D{\catvar I}\s$ of sesquicategories, by regarding the $1$"/category $\catvar I$ as a locally discrete sesquicategory (a $2$"/category in fact). Taking \mbox{$\map{W \dfn \Delta 1}{\catvar I}{\inCat{\Cls}_\textup{un}}$} to be the constant functor at the terminal $1$"/category $1$, we recover the conical limit of the $1$"/functor $D$ (\exref{conical limit}) as the $\Delta 1$"/weighted sesquilimit of $D$.
	\end{example}
	
	Now take $\A = (\bullet \rightarrow \bullet \leftarrow \bullet)$ to be the ``walking cospan'' $1$"/category regarded as a locally discrete sesquicategory and $\map W\A{\inCat{\Cls}_\textup{un}}$ the functor that picks out the cospan $\1 \xrightarrow 0 \2 \xleftarrow 1 \1$, where $\1$ denotes the terminal $1$"/category and $\2 = (0 \to 1)$ denotes the ``walking arrow'' $1$"/category. Analogously to the classical $2$"/categorical notion of comma object (see e.g.\ Example~13 of \cite{Lack09}) we define a \emph{sesquicomma object} in a sesquicategory to be a sesquilimit weighted by this $W$.
	\begin{proposition}
		In a sesquicategory consider the cell $\pi$ on the left below. It defines the object $f \slash g$ as the sesquicomma object of the morphisms $\map fAC$ and $\map gBC$, in the sense above, if and only if the following conditions hold.
		\begin{enumerate}[label=\textup{(\alph*)}]
			\item The cell $\pi$ defines $f \slash g$ as the $1$"/universal comma object of $f$ and $g$, in the sense of \defref{comma object}.
			\item Consider cells $\phi$ and $\psi$ as in the middle below. By condition \textup{(a)} they induce morphisms $\phi'$ and $\map{\psi'}X{f \slash g}$ as on the right. For any pair of cells $\xi_A$ and $\xi_B$ as in the middle there exists a unique cell $\xi'$ as on the right satisfying $\pi_A \of \xi' = \xi_A$ and $\pi_B \of \xi' = \xi_B$.
		\end{enumerate}
		\begin{displaymath}
			\begin{tikzpicture}
				\matrix(m)[math35]{f \slash g & A \\ B & C \\};
				\path[map]	(m-1-1) edge node[above] {$\pi_A$} (m-1-2)
														edge node[left] {$\pi_B$} (m-2-1)
										(m-1-2) edge node[right] {$f$} (m-2-2)
										(m-2-1) edge node[below] {$g$} (m-2-2);
				\path				(m-1-2) edge[cell, shorten >= 9pt, shorten <= 9pt] node[below right] {$\pi$} (m-2-1);
			\end{tikzpicture} \quad \qquad \begin{tikzpicture}
				\matrix(m)[math35]{X & A \\ B & C \\};
				\path[map]	(m-1-1) edge node[above] {$\phi_A$} (m-1-2)
														edge[bend left=40] node[above right, inner sep=0.5pt, yshift=3pt] {$\phi_B$} (m-2-1)
														edge[bend right=40] node[left] {$\psi_B$} (m-2-1)
										(m-1-2) edge node[right] {$f$} (m-2-2)
										(m-2-1) edge node[below] {$g$} (m-2-2);
				\path[transform canvas={yshift=-0.25em, xshift=0.25em}]	(m-1-2) edge[cell, shorten >= 9pt, shorten <= 9pt] node[below right, inner sep=3pt] {$\phi$} (m-2-1);
				\path				([xshift=8pt]$(m-1-1)!0.5!(m-2-1)$) edge[cell] node[above] {$\xi_B$} ([xshift=-8pt]$(m-1-1)!0.5!(m-2-1)$);
			\end{tikzpicture} \quad \qquad \begin{tikzpicture}
				\matrix(m)[math35]{X & A \\ B & C \\};
				\path[map]	(m-1-1) edge[bend left=40] node[above] {$\phi_A$} (m-1-2)
														edge[bend right=40] node[below left, inner sep=0.1pt, xshift=-3pt] {$\psi_A$} (m-1-2)
														edge node[left] {$\psi_B$} (m-2-1)
										(m-1-2) edge node[right] {$f$} (m-2-2)
										(m-2-1) edge node[below] {$g$} (m-2-2);
				\path[transform canvas={yshift=-0.25em, xshift=0.25em}]	(m-1-2) edge[cell, shorten >= 9pt, shorten <= 9pt] node[below right] {$\psi$} (m-2-1);
				\path				([yshift=8pt]$(m-1-1)!0.5!(m-1-2)$) edge[cell] node[right] {$\xi_A$} ([yshift=-8pt]$(m-1-1)!0.5!(m-1-2)$);
			\end{tikzpicture} \quad \qquad \begin{tikzpicture}
				\matrix(m)[math35]{X \\ f \slash g \\};
				\path[map]	(m-1-1) edge[bend right=45] node[left] {$\phi'$} (m-2-1)
														edge[bend left=45] node[right] {$\psi'$} (m-2-1);
				\path				(m-1-1) edge[cell] node[right, inner sep=2.5pt] {$\xi'$} (m-2-1);
			\end{tikzpicture}
		\end{displaymath}
	\end{proposition}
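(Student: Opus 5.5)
The plan is to unpack the enriched universal property explicitly. First compute the weighted-limit cone side: for each object $X$, the category $\inhom{\A, \inCat{\Cls}_\textup{un}}(W, \s(X, D))$ of natural transformations from $W$ to the cospan $\s(X,A) \to \s(X,C) \leftarrow \s(X,B)$ and their modifications. Since $\A$ is locally discrete, a natural transformation $W \Rar \s(X, D)$ amounts to functors $\1 \to \s(X,A)$, $\2 \to \s(X,C)$, $\1 \to \s(X,B)$ compatible with the cospan maps. Such a transformation is a pair $\phi_A$, $\phi_B$ together with a cell $\phi\colon f\of\phi_A \Rar g \of \phi_B$. A modification between two such, say between the data of $\phi$ and $\psi$, is a family of unnatural transformations: components $\xi_A\colon\phi_A\Rar\psi_A$ and $\xi_B\colon\phi_B\Rar\psi_B$, together with a component on the $\2$-vertex. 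That last component is determined by $\xi_A$, $\xi_B$ through the naturality along the legs $0$ and $1$, and it imposes no compatibility with $\phi$, $\psi$, because unnatural transformations out of $\2$ carry no naturality condition. Hence the objects of the weighted cone category are exactly the data of \defref{comma object}, and its morphisms are arbitrary pairs $(\xi_A, \xi_B)$ with no condition.

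Next, the comparison map $\s(X, f\slash g) \to \inhom{\A,\inCat{\Cls}_\textup{un}}(W,\s(X,D))$ is induced by whiskering with $\pi_A$, $\pi_B$ and $\pi$. On objects it sends $h$ to $(\pi_A\of h, \pi_B\of h, \pi\of h)$, and on cells it sends $\xi'$ to $(\pi_A\of\xi', \pi_B\of\xi')$. Being an isomorphism of categories for each $X$ is then precisely: bijectivity on objects, which is condition (a), and bijectivity on hom-sets, which is condition (b). In condition (b) the pair $(\xi_A,\xi_B)$ is arbitrary, matching the middle diagrams of the statement, where $\xi_B$ is placed on one side and $\xi_A$ on the other with no equation relating them to $\phi$, $\psi$.

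It remains to check that a family of such isomorphisms, natural in $X$ as functors of sesquicategories $\op\s\to\inCat{\Cls}_\textup{un}$, is the same as the pointwise statement. Naturality in the enriched sense requires compatibility with precomposition by morphisms $X'\to X$ and with the action of cells. Both hold automatically for the whiskering-induced comparison, by associativity of whiskering and the sesquicategory axioms; an enriched natural isomorphism $\s(\dash,L)\iso\dots$ must be of this form by the enriched Yoneda lemma, with unit cell $\pi$.

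The main obstacle is the careful description of modifications in $\inhom{\A,\inCat{\Cls}_\textup{un}}$. In particular one must confirm that the $\2$-component imposes no extra condition: the funny tensor product makes the relevant naturality squares trivial. This is exactly why (b) has no compatibility equation, unlike the $2$-categorical comma object. I would verify it directly from the formula for the end defining the hom in the enriched functor sesquicategory, using the explicit description of $\tens$ from \cite{Weber13}.
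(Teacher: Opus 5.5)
Your proposal is correct. You compute $\inhom{\A,\inCat{\Cls}_\textup{un}}(W,\s(X,D))$ as the end $\int_a\inhom{Wa,\s(X,Da)}$, whose morphisms are arbitrary pairs $(\xi_A,\xi_B)$ because $\inhom{\2,\s(X,C)}$ has unnatural transformations as morphisms; you then use the enriched Yoneda lemma to identify the comparison with whiskering by $\pi_A$, $\pi_B$ and $\pi$, which reduces the sesquicomma property to bijectivity on objects, i.e.\ (a), and on homs, i.e.\ (b). The paper states this proposition without proof, pointing only to the analogous $2$-categorical comma object, and your unwinding is exactly the verification it leaves implicit.
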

	Notice that in condition (b) the two middle composites above are not required to be equal.
	\begin{example}
		Comma objects in the sesquicategory $\Catun$ of unnatural transformations (\exref{comma objects for unnatural transformations}) are sesquicomma objects in the above sense.
	\end{example}
	
	The lax comma $2$"/categories in the sesquicategory $\und{\TwoCatLax}$ of lax natural transformations between $2$"/functors (\exref{lax comma 2-category}) are $1$"/universal comma objects (\defref{comma object}) but they do not satisfy the $2$"/dimensional universal property (b) of the proposition above. Instead they satisfy the following variation.
	\begin{enumerate}
		\item[(b')] Consider cells $\phi$ and $\psi$ as in the middle above inducing, by condition \textup{(a)}, morphisms $\phi'$ and $\map{\psi'}X{f \slash g}$ as on the right. For any pair of cells $\xi_A$ and $\xi_B$ making equal the two composites in the middle above there exists a unique cell $\xi'$ (\defref{strict cells}) as on the right satisfying $\pi_A \of \xi' = \xi_A$ and $\pi_B \of \xi' = \xi_B$.
	\end{enumerate}
	Notice that the above implies that $\xi'$ is $\pi$"/costrict in the sense of \defref{strict cells}, that is the interchange equality below holds.
	\begin{displaymath}
		(f \of \pi_A \of \xi') \hc (\pi \of \psi') = (\pi \of \phi') \hc (g \of \pi_B \of \xi')	
	\end{displaymath}
	\end{appendices}
	
	\subsubsection*{Declarations}
	\begin{itemize}
		\item Funding declaration: none.
		\item Ethics declaration: not applicable.
	\end{itemize}
	
	%\bibliography{../_other/main}
	%\bibliographystyle{sn-mathphys-ay}
	
\end{document}